%
%
%


\documentclass{amsproc}




\usepackage{verbatim}
\usepackage{xcolor}
\usepackage{tikz}
\usepackage{tikz-cd}
\usepackage{arydshln}
\usepackage{caption}
\usepackage{subcaption}
\usepackage{graphicx}
\captionsetup[subfigure]{labelfont=rm}

\tikzset{
	symbol/.style={
		draw=none,
		every to/.append style={
			edge node={node [sloped, allow upside down, auto=false]{$#1$}}}
	}
}

\usepackage{amsmath}
\usepackage{bbm}
\usepackage{hyperref}


\newtheorem{theorem}{Theorem}[section]
\newtheorem{proposition}[theorem]{Proposition}
\newtheorem{lemma}[theorem]{Lemma}
\newtheorem{corollary}[theorem]{Corollary}

\theoremstyle{definition}

\newtheorem{example}[theorem]{Example}

\theoremstyle{remark}
\newtheorem{remark}[theorem]{Remark}

\numberwithin{equation}{section}

\begin{document}

\title[Counting hyperbolic multi-geodesics]{Counting hyperbolic multi-geodesics with respect to the lengths of individual components}



\author{Francisco Arana--Herrera}

\email{farana@stanford.edu}

\address{Department of Mathematics, Stanford University, 450 Jane Stanford Way,
	Building 380, Stanford, CA 94305-2125, USA}



\date{}

\begin{abstract}
Given a connected, oriented, complete, finite area hyperbolic surface $X$ of genus $g$ with $n$ punctures, Mirzakhani showed that the number of multi-geodesics on $X$ of total hyperbolic length $\leq L$ in the mapping class group orbit of a given simple or filling closed multi-curve is asymptotic as $L \to \infty$ to a polynomial in $L$ of degree $6g-6+2n$. We establish asymptotics of the same kind for countings of multi-geodesics in mapping class group orbits of simple or filling closed multi-curves that keep track of the hyperbolic lengths of individual components, proving and generalizing a conjecture of Wolpert. In the simple case we consider more precise countings that also keep track of the class of the multi-geodesics in the space of projective measured geodesic laminations. We provide a unified geometric and topological description of the leading terms of the asymptotics of all the countings considered. Our proofs combine techniques and results from several papers of Mirzakhani as well as ideas introduced by Margulis in his thesis.
\end{abstract}

\maketitle


\thispagestyle{empty}

\tableofcontents

\section{Introduction}

$ $

Let $X$ be a connected, oriented, complete, finite area hyperbolic surface of genus $g$ with $n$ punctures. In \cite{Mir08b}, Mirzakhani showed that the number of rational simple closed multi-geodesics on $X$ of total hyperbolic length $\leq L$ in a given mapping class group orbit is asymptotic as $L \to \infty$ to a polynomial in $L$ of degree $6g-6+2n$. In \cite{Mir16}, using completely different methods, Mirzakhani established analogous counting results for filling closed hyperbolic multi-geodesics.\\

The main goal of this paper is to extend Mirzakhani's results in both the simple and filling cases to countings of closed hyperbolic multi-geodesics that keep track of the lengths of individual components. More precisely, given a connected, oriented, complete, finite area hyperbolic surface $X$ of genus $g$ with $n$ punctures, an ordered closed multi-curve $\gamma := (\gamma_1,\dots,\gamma_k)$ on $X$ with $k \geq 1$ components, and a vector $\mathbf{b} := (b_1,\dots,b_k) \in (\mathbf{R}_{>0})^k$, the aim of this paper is to understand the asymptotics as $L \to \infty$ of the counting function
\[
c(X,\gamma,\mathbf{b},L) := \# \{\alpha :=(\alpha_1,\dots,\alpha_k) \in \text{Mod}(X) \cdot \gamma \ | \ \ell_{\alpha_i}(X) \leq b_i L, \ \forall i =1, \dots, k \},
\]
where $\text{Mod}(X)$ denotes the mapping class group of $X$ and $\ell_{\alpha_i}(X)$ denotes the hyperbolic length of the unique geodesic representative of $\alpha_i$ on $X$. In this paper we prove the following result, originally conjectured by Wolpert in the case of simple closed multi-curves. \\

\begin{theorem}
	\label{theo:wolp_conj}
	If $\gamma$ is either simple or filling, the counting function $c(X,\gamma,\mathbf{b},L)$ is asymptotic as $L \to \infty$ to a polynomial in $L$ of degree $6g-6+2n$, i.e., the following limit exists and is a positive real number,
	\[
	c(X,\gamma,\mathbf{b}) := \lim_{L \to \infty} \frac{c(X,\gamma,\mathbf{b},L)}{L^{6g-6+2n}}.
	\]
\end{theorem}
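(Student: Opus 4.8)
Set $d := 6g - 6 + 2n$. The plan is to reduce the statement, via a reweighting trick, to Mirzakhani's orbit--counting theorems of \cite{Mir08b} (simple case) and \cite{Mir16} (filling case) applied to infinitely many auxiliary weighted multi-curves, and then to recover the measure governing the refined count from its values on a one-parameter family of regions by a Laplace transform argument.

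Write $\boldsymbol{\ell}(\alpha) := (\ell_{\alpha_1}(X), \dots, \ell_{\alpha_k}(X)) \in (\mathbf{R}_{>0})^k$ for $\alpha \in \mathrm{Mod}(X) \cdot \gamma$, and consider the Radon measures
\[
\mu_L := \frac{1}{L^d} \sum_{\alpha \in \mathrm{Mod}(X) \cdot \gamma} \delta_{\boldsymbol{\ell}(\alpha)/L}
\]
on $(\mathbf{R}_{\geq 0})^k$, so that $c(X,\gamma,\mathbf{b},L)/L^d = \mu_L\bigl(\prod_{i=1}^k [0,b_i]\bigr)$. It suffices to prove that $\mu_L$ converges vaguely to a Radon measure $\mu$ which restricts to an absolutely continuous measure with positive density on $(\mathbf{R}_{>0})^k$ and assigns no mass to $\partial (\mathbf{R}_{\geq 0})^k$: then $\mu$ charges no affine hyperplane, hence not the boundary of the box $\prod_i [0,b_i]$, and the desired limit equals $\mu\bigl(\prod_i [0,b_i]\bigr) \in (0,\infty)$. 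The crucial observation is that for each $\mathbf{s} = (s_1,\dots,s_k) \in (\mathbf{R}_{>0})^k$ the assignment $\phi \cdot \gamma \mapsto \phi \cdot \gamma^{\mathbf{s}}$, where $\gamma^{\mathbf{s}}$ is the weighted multi-curve with underlying components $\gamma_1,\dots,\gamma_k$ and weights $s_1, \dots, s_k$, defines a finite-to-one map $\mathrm{Mod}(X) \cdot \gamma \to \mathrm{Mod}(X) \cdot \gamma^{\mathbf{s}}$ along which $\sum_i s_i \ell_{\alpha_i}(X) = \ell_{\phi \cdot \gamma^{\mathbf{s}}}(X)$; keeping track of the symmetry multiplicities, this identifies the $\mu_L$-mass of the half-space $H_{\mathbf{s}} := \{x \in (\mathbf{R}_{\geq 0})^k : \mathbf{s} \cdot x \leq 1\}$ with a positive constant times the usual length-$\leq L$ count for the $\mathrm{Mod}(X)$-orbit of $\gamma^{\mathbf{s}}$.

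Since $\gamma^{\mathbf{s}}$ is again simple (resp. filling) whenever $\gamma$ is, the theorems of \cite{Mir08b} and \cite{Mir16} apply to each $\gamma^{\mathbf{s}}$ (by homogeneity it is enough to consider $\mathbf{s} \in (\mathbf{Q}_{>0})^k$) and give $\mu_L(H_{\mathbf{s}}) \to \Psi(\mathbf{s})$ as $L \to \infty$, where $\Psi(\mathbf{s}) > 0$ is an explicit constant, continuous in $\mathbf{s}$ and homogeneous of degree $-d$; in the simple case $\Psi(\mathbf{s})$ equals, up to the universal factor $B(X)/b_{g,n}$, the integral of $V_{g,n}(X \setminus \gamma; x_1,\dots,x_k)\, x_1 \cdots x_k$ over $H_{\mathbf{s}}$, with $V_{g,n}(X \setminus \gamma; \cdot)$ the Weil--Petersson volume polynomial of the moduli space of the surface obtained by cutting $X$ along $\gamma$. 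Taking $\mathbf{s} = R^{-1}\mathbf{1}$ bounds $\mu_L(\{x : \sum_i x_i \leq R\})$ uniformly in $L$, so $\{\mu_L\}_L$ is tight on compacta; a portmanteau-and-homogeneity argument using the continuity of $\Psi$ then shows that every subsequential vague limit $\mu_\infty$ satisfies $\mu_\infty(\{x : \mathbf{s}\cdot x \leq t\}) = t^d \Psi(\mathbf{s})$ exactly, for all $t > 0$ and $\mathbf{s} \in (\mathbf{R}_{>0})^k$, so in particular carries no mass on any hyperplane $\{\mathbf{s}\cdot x = t\}$. Consequently the Laplace transform of $\mu_\infty$ is the explicit function $\mathbf{s} \mapsto d!\,\Psi(\mathbf{s})$, which determines $\mu_\infty$ uniquely; hence $\mu := \lim_L \mu_L$ exists, and inverting the Laplace transform by means of Mirzakhani's volume formulas identifies $\mu$ as an explicit absolutely continuous measure on $(\mathbf{R}_{>0})^k$ (with density proportional to $V_{g,n}(X\setminus\gamma; x_1,\dots,x_k)\, x_1\cdots x_k$ in the simple case) with no mass on $\partial(\mathbf{R}_{\geq 0})^k$. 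This completes the proof and, at the same time, furnishes a unified geometric and topological description of the leading coefficient $c(X,\gamma,\mathbf{b})$.

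The main difficulty lies not in this reduction but in the input it invokes: the passage from the moduli-space-averaged information provided by Mirzakhani's integration formula to genuine orbit-counting asymptotics on the \emph{fixed} surface $X$, which requires uniform a priori upper bounds and control of escape of mass and is precisely the place where Margulis's ideas enter. The same circle of ideas yields the stronger statement that $L^{-d}\sum_{\alpha} \delta_{(\alpha_1/L,\dots,\alpha_k/L)}$ equidistributes in $\mathcal{ML}(X)^k$---respectively in the $k$-fold power of the space of geodesic currents, in the filling case---toward an explicit $\mathrm{Mod}(X)$-invariant measure; this refinement, proved by combining Mirzakhani's integration formula with Margulis-type non-divergence and a priori counting estimates, underlies the more precise countings available in the simple case that also record the class of the multi-geodesics in $\mathcal{PML}(X)$. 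A last technical point is the bookkeeping of the symmetry multiplicities above, which is handled by reducing to the case of pairwise non-isotopic components and, if convenient, passing to a torsion-free finite-index subgroup of $\mathrm{Mod}(X)$.
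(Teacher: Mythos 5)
Your reduction in the \emph{simple} case is essentially sound, and it is a genuinely different route from the paper's. You feed Mirzakhani's weighted-length counts (Theorem \ref{theo:mir_count}, simple case, together with the explicit formula of Proposition \ref{prop:mir_freq}) for all rational weight vectors $\mathbf{s}$ into a tightness--portmanteau sandwich plus a Laplace-transform (Cram\'er--Wold on the positive orthant) uniqueness argument; writing $d=6g-6+2n$ and $\Psi(\mathbf{s})=B(X)r(\gamma,\mathbf{s})/b_{g,n}$, every subsequential vague limit has half-space masses $t^{d}\Psi(\mathbf{s})$, hence Laplace transform $d!\,\Psi(\mathbf{s})$, and since the explicit candidate $\frac{B(X)}{b_{g,n}}W_{g,n}(\gamma,\mathbf{x})\,d\mathbf{x}$ has exactly these masses, all limits coincide with it; boxes are then continuity sets of positive mass. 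Be aware that this is precisely the point flagged in Remark \ref{rem:not_consequence}: the simplices are not a $\pi$-system, so no abstract measure-theoretic argument lets you pass from simplex masses to the measure, and your Laplace-transform uniqueness step (using local finiteness, polynomial growth, and support in the closed orthant) is the load-bearing step that must be written out carefully --- it does appear to work. The trade-offs against the paper's proof (Margulis-style spreading out and averaging, unfolding over $\mathcal{T}_{g,n}/\mathrm{Stab}(\gamma)$, and the horoball equidistribution of Corollary \ref{cor:horoball_equid}): your route is more elementary if one takes \cite{Mir08b} as a black box, but the paper's argument is independent of Mirzakhani's counting theorem (in fact reproves it), yields the refinement Theorem \ref{theo:length_proj_spec_simple} recording the class in $P\mathcal{ML}_{g,n}$ (which weighted-length counts cannot see), and would give power-saving error terms under effective equidistribution.

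In the \emph{filling} case there is a genuine gap. Your argument produces a limit measure characterized by its Laplace transform $d!\,\Psi(\mathbf{s})$, but to conclude that $\prod_{i}[0,b_i]$ is a continuity set with positive mass you must identify this measure, and ``inverting the Laplace transform by means of Mirzakhani's volume formulas'' has no content here: for filling $\gamma$ the complementary pieces are polygons, there is no Weil--Petersson volume polynomial expression for $r(\gamma,\mathbf{s})$, and \cite[Theorem 8.1]{Mir16} gives only existence and positivity of the volume limit. The correct identification is $\mu=\frac{B(X)}{b_{g,n}}(\widetilde{I}_\gamma)_*\widetilde{\mu}_{\mathrm{Thu}}^{\gamma}$, which need not be absolutely continuous at all (e.g.\ when $k>6g-6+2n$); what one actually needs is that the faces $\{x_i=b_i\}$ are null for $\mu$, and these are boundaries of half-spaces whose normals are \emph{not} strictly positive, so they are not controlled by your half-space computation. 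Supplying this identification is exactly where the paper does real work in \S 5: either via the generalized counting theorem (Theorem \ref{theo:mir_closed_count_gen}) applied to the max-type functional $\max_i \ell_{\gamma_i}/b_i$ --- the paper's actual route, which bypasses the Laplace-transform step altogether --- or via the asymptotics of rescaled Weil--Petersson measures on the enlarged Yamabe space (Proposition \ref{prop:asymp_wp_meas}, through shear coordinates) and Proposition \ref{prop:top_int_c}, with Lemma \ref{lem:thu_meas_zero} handling the box boundary. Without one of these ingredients, neither the existence of the limit over boxes nor its positivity follows, so the filling case of the theorem is not proved by your outline.
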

$ $

The limit $c(X,\gamma,\mathbf{b}) \in \mathbf{R}_{>0}$ in Theorem \ref{theo:wolp_conj} can be described in terms of the geometry of $X$, the topology of $\gamma$, and the vector $\mathbf{b}$; see Theorem \ref{theo:simple_comp_count} for a description in the simple case and Theorem \ref{theo:filling_comp_count} for a description in the filling case. These seemingly unrelated descriptions are unified in Theorem \ref{theo:top_int_asymp_spec}. The simple case of Theorem \ref{theo:wolp_conj} can be strengthened to obtain asymptotics of countings that also keep track of the class of the multi-geodesics in the space of projective measured geodesic laminations; see Theorem \ref{theo:simple_proj_comp_count}. \\

Mirzakhani's results and techniques in \cite{Mir08b} can be used to establish asymptotics for countings of simple closed multi-curves with respect to length functions much more general than total hyperbolic length. Recent generalizations by several authors also deal with countings of objects much more general than simple closed multi-curves; see \cite{ES16}, \cite{EPS16}, \cite{RS19}. The simple case of Theorem \ref{theo:wolp_conj} does not fit into this framework; see Remark \ref{rem:not_consequence}. Instead, our proof of the simple case of Theorem \ref{theo:wolp_conj} draws inspiration from ideas introduced by Margulis in his thesis, see \cite{Mar04} for an English translation. Using general averaging and \textit{unfolding} techniques for parametrized countings, we reduce the proof of the simple case of Theorem \ref{theo:wolp_conj} to an application of equidistribution results for analogues of \textit{expanding horoballs} on moduli spaces of hyperbolic surfaces. A first version of these results was established by Mirzakhani in \cite{Mir07b} and was later generalized by the author in \cite{Ara19b}. To prove the filling case of Theorem \ref{theo:wolp_conj} we use techniques introduced by Mirzakhani in \cite{Mir16}. \\

A result closely related to the simple case of Theorem \ref{theo:wolp_conj} was independently established by Liu in \cite{Liu19}. In forthcoming work of Erlandsson and Souto, see \cite{ES20}, other results related to the simple case of Theorem \ref{theo:wolp_conj} are discussed.\\

\textit{Setting.}  Let $g,n \geq 0$ be a pair of non-negative integers satisfying $2 - 2g - n < 0$.  For the rest of this paper we fix a connected, oriented, smooth surface $S_{g,n}$ of genus $g$ with $n$ punctures (and negative Euler characteristic). \\

\textit{Notation.} Denote by $\mathcal{T}_{g,n}$ the Teichmüller space of marked, oriented, complete, finite area hyperbolic structures on $S_{g,n}$ up to isotopy, by $\text{Mod}_{g,n}$ the mapping class group of $S_{g,n}$, and by $\mathcal{M}_{g,n} := \mathcal{T}_{g,n}/\text{Mod}_{g,n}$ the moduli space of oriented, complete, finite area hyperbolic structures on $S_{g,n}$. Denote by $\mathcal{ML}_{g,n}$ the space of measured geodesic laminations on $S_{g,n}$ and by $P\mathcal{ML}_{g,n}:= \mathcal{ML}_{g,n}/\mathbf{R}_{>0}$ the space of projective measured geodesic laminations on $S_{g,n}$. The projective class of a measured geodesic laminations $\lambda \in \mathcal{ML}_{g,n}$ will be denoted by $\overline{\lambda} \in P\mathcal{ML}_{g,n}$. \\

Let $\alpha := (\alpha_1,\dots,\alpha_k)$ with $k \geq 1$ be an ordered tuple of closed curves on $S_{g,n}$, \textit{ordered closed multi-curve} for short, and $X \in \mathcal{T}_{g,n}$. The \textit{hyperbolic length vector} of $\alpha$ with respect to $X$ is given by
\[
\vec{\ell}_{\alpha}(X) := (\ell_{\alpha_1}(X),\dots,\ell_{\alpha_k}(X)) \in (\mathbf{R}_{>0})^k,
\]
where, for every $i \in \{1,\dots,k\}$, $\ell_{\alpha_i}(X) >0 $ denotes the hyperbolic length of the unique geodesic representative of $\alpha_i$ on $X$. Given a vector $\mathbf{a} := (a_1,\dots,a_k) \in (\mathbf{Q}_{>0})^k$ of positive rational weights on the components of $\alpha$, consider the \textit{rational closed multi-curve} on $S_{g,n}$ given by 
\begin{equation}
\label{eq:weighted_curve}
\mathbf{a} \cdot \alpha := a_1 \alpha_1 + \dots + a_k \alpha_k.
\end{equation}
The \textit{total hyperbolic length} of $\mathbf{a} \cdot \alpha$ with respect to $X$ is given by
\[
\ell_{\mathbf{a} \cdot \alpha}(X) := \mathbf{a} \cdot \vec{\ell}_\alpha(X) = a_1 \ell_{\alpha_1}(X) + \cdots + a_k \ell_{\alpha_k}(X) > 0.
\]
Unless otherwise specified, the term \textit{length} will always refer to \textit{hyperbolic length}.\\

\textit{Mirzakhani's counting results.} Let $\gamma := (\gamma_1,\dots,\gamma_k)$ with $k \geq 1$ be an ordered closed multi-curve on $S_{g,n}$, $\mathbf{a} := (a_1,\dots,a_k) \in (\mathbf{Q}_{>0})^k$ be a vector of positive rational weights on the components of $\gamma$, and $X \in \mathcal{T}_{g,n}$. For every $L > 0$ consider the counting function 
\[
t(X,\gamma,\mathbf{a},L) := \# \{\alpha \in \text{Mod}_{g,n} \cdot \gamma \ | \ \ell_{\mathbf{a} \cdot \alpha}(X) \leq L \}.
\]
This function does not depend on the marking of $X \in \mathcal{T}_{g,n}$ but only on the corresponding hyperbolic structure $X \in \mathcal{M}_{g,n}$. Mirzakhani's counting results describe the asymptotic behavior of $t(X,\gamma,\mathbf{a},L)$ as $L \to \infty$ when $\gamma$ is either \textit{simple}, i.e., the components of $\gamma$ are simple, pairwise disjoint, and pairwise non-isotopic, or \textit{filling}, i.e., the components of $\gamma$ cut $S_{g,n}$ into polygons with at most one puncture in their interior. \\

To give a precise statement of Mirzakhani's counting results, we first introduce some terminology. Consider the subgroup
\[
\text{Stab}(\gamma) = \bigcap_{i=1}^k \text{Stab}(\gamma_i) \subseteq \text{Mod}_{g,n}
\]
of all mapping classes of $S_{g,n}$ that fix every component of $\gamma$ up to isotopy. Let $\mu_{\text{wp}}$ be the Weil-Petersson measure on $\mathcal{T}_{g,n}$ and $\widetilde{\mu}_{\text{wp}}^\gamma$ be the local pushforward of $\mu_{\text{wp}}$ to $\mathcal{T}_{g,n}/\text{Stab}(\gamma)$. In \cite[Corollary 5.2]{Mir08b} and \cite[Theorem 8.1]{Mir16}, Mirzakhani showed that if $\gamma$ is either simple or filling, the following limit exists and is a positive rational number,
\[
r(\gamma,\mathbf{a}) := \lim_{L \to \infty} \frac{\widetilde{\mu}_{\text{wp}}^\gamma(\{Y \in \mathcal{T}_{g,n}/\text{Stab}(\gamma) \ | \ \ell_{\mathbf{a} \cdot \gamma}(Y) \leq L\})}{L^{6g-6+2n}}.
\]
$ $

In the case where $\gamma$ is simple, Mirzakhani gave an explicit formula for computing $r(\gamma,\mathbf{a})$, see \cite[Proposition 5.1]{Mir08b}. More concretely, letting $\mathbf{x} := (x_1,\dots,x_k)$ be the standard coordinates of $(\mathbf{R}_{\geq0})^k$ and $d\mathbf{x} := dx_1 \cdots dx_k$ be the standard measure of $(\mathbf{R}_{\geq0})^k$, there exists an explicit polynomial $W_{g,n}(\gamma,\mathbf{x})$ of degree $6g-6+2n-k$ on the $\mathbf{x}$ variables, all of whose non-zero monomials are of top degree, with non-negative rational coefficients, and which has $x_1 \cdots x_k$ as a factor, such that the following result holds. \\

\begin{proposition}[Mirzakhani]
	\label{prop:mir_freq}
	If $\gamma$ is simple,
	\[
	r(\gamma,\mathbf{a}) = \int_{\mathbf{a} \cdot \mathbf{x} \leq 1} W_{g,n}(\gamma,\mathbf{x}) \cdot d\mathbf{x}.
	\]
\end{proposition}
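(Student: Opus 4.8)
The plan is to compute the Weil--Petersson volume $\widetilde{\mu}_{\text{wp}}^\gamma(\{Y \in \mathcal{T}_{g,n}/\text{Stab}(\gamma) \ | \ \ell_{\mathbf{a}\cdot\gamma}(Y) \leq L\})$ \emph{exactly} by passing to Fenchel--Nielsen coordinates adapted to the simple multi-curve $\gamma$, and then to isolate the leading term of its polynomial growth in $L$. Both ingredients go back to Mirzakhani; the content of the proposition is really the explicit identification of $W_{g,n}(\gamma,\mathbf{x})$.

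First I would describe the intermediate cover $\mathcal{T}_{g,n}/\text{Stab}(\gamma)$ geometrically. Cutting $S_{g,n}$ along $\gamma = (\gamma_1,\dots,\gamma_k)$ produces a possibly disconnected hyperbolic surface with boundary, $S_{g,n}(\gamma) = \bigsqcup_c S_c$, whose boundary circles come in $k$ pairs, one pair for each $\gamma_i$. Since $\text{Stab}(\gamma)$ consists of the mapping classes fixing each $\gamma_i$ up to isotopy, and in particular contains the Dehn twists $T_{\gamma_i}$, a point $Y \in \mathcal{T}_{g,n}/\text{Stab}(\gamma)$ is determined by: the length vector $\mathbf{x} = (\ell_{\gamma_1}(Y),\dots,\ell_{\gamma_k}(Y)) \in (\mathbf{R}_{>0})^k$; the twist parameters $\tau_i \in \mathbf{R}/x_i\mathbf{Z}$ along the $\gamma_i$; and the hyperbolic structures induced on the cut pieces, i.e.\ a point of $\prod_c \mathcal{M}(S_c;\mathbf{x}_c)$, where $\mathbf{x}_c$ records the boundary lengths of $S_c$ (all among the $x_i$). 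One must additionally quotient by the finite group $\mathrm{Sym}(\gamma)$ of ambiguities coming from relabeling the $\gamma_i$, reversing their orientations, and permuting the $S_c$ compatibly. Thus $\mathcal{T}_{g,n}/\text{Stab}(\gamma)$ is, up to this finite quotient, the total space of a bundle over $(\mathbf{R}_{>0})^k \times \prod_c \mathcal{M}(S_c;\,\cdot\,)$ with torus fiber $\prod_{i=1}^k \mathbf{R}/x_i\mathbf{Z}$.

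Next I would compute $\widetilde{\mu}_{\text{wp}}^\gamma$ in these coordinates. By Wolpert's formula the Weil--Petersson symplectic form splits, with respect to any Fenchel--Nielsen system refining the $\gamma_i$, as $\omega_{\text{wp}} = \sum_{i=1}^k dx_i \wedge d\tau_i + \sum_c \omega_{\text{wp}}^{S_c}$, so in the coordinates above $\widetilde{\mu}_{\text{wp}}^\gamma = \frac{1}{|\mathrm{Sym}(\gamma)|}\, dx_1\cdots dx_k\, d\tau_1\cdots d\tau_k \prod_c d\mu_{\text{wp}}^{S_c}$. Integrating out the twist parameters contributes $x_1\cdots x_k$, and integrating $\prod_c d\mu_{\text{wp}}^{S_c}$ over $\prod_c \mathcal{M}(S_c;\mathbf{x}_c)$ contributes $\prod_c V_{g_c,n_c}(\mathbf{x}_c) =: V_{g,n}(\gamma,\mathbf{x})$; by Mirzakhani's theorem on Weil--Petersson volumes each factor, hence $V_{g,n}(\gamma,\mathbf{x})$, is a polynomial in $\mathbf{x}$ with non-negative rational coefficients, and an Euler-characteristic count gives $\deg V_{g,n}(\gamma,\mathbf{x}) = \sum_c(6g_c-6+2n_c) = 6g-6+2n-2k$. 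This yields the exact identity
\[
\widetilde{\mu}_{\text{wp}}^\gamma(\{\ell_{\mathbf{a}\cdot\gamma}(Y) \leq L\}) = \frac{1}{|\mathrm{Sym}(\gamma)|}\int_{\{\mathbf{x}\in(\mathbf{R}_{>0})^k\,:\,\mathbf{a}\cdot\mathbf{x}\leq L\}} x_1\cdots x_k\cdot V_{g,n}(\gamma,\mathbf{x})\,d\mathbf{x}.
\]
Substituting $\mathbf{x} = L\mathbf{u}$ and extracting the top-degree homogeneous part $V_{g,n}^{\mathrm{top}}(\gamma,\mathbf{x})$ of $V_{g,n}(\gamma,\mathbf{x})$ then gives
\[
\widetilde{\mu}_{\text{wp}}^\gamma(\{\ell_{\mathbf{a}\cdot\gamma}(Y)\leq L\}) = L^{6g-6+2n}\int_{\mathbf{a}\cdot\mathbf{x}\leq1} W_{g,n}(\gamma,\mathbf{x})\,d\mathbf{x} + O\!\left(L^{6g-7+2n}\right),
\]
with $W_{g,n}(\gamma,\mathbf{x}) := \frac{1}{|\mathrm{Sym}(\gamma)|}\,x_1\cdots x_k\cdot V_{g,n}^{\mathrm{top}}(\gamma,\mathbf{x})$. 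This polynomial is homogeneous of degree $k+(6g-6+2n-2k)=6g-6+2n-k$, has non-negative rational coefficients, and is divisible by $x_1\cdots x_k$, exactly as asserted; dividing by $L^{6g-6+2n}$ and letting $L\to\infty$ identifies $r(\gamma,\mathbf{a})$ with $\int_{\mathbf{a}\cdot\mathbf{x}\leq1} W_{g,n}(\gamma,\mathbf{x})\,d\mathbf{x}$.

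The step I expect to be the main obstacle is the first one: checking that the adapted Fenchel--Nielsen parametrization is a measure-preserving finite branched cover onto the stated torus bundle. This requires careful bookkeeping for $\text{Stab}(\gamma)$ relative to the subgroup generated by the Dehn twists $T_{\gamma_i}$ together with the mapping class groups of the cut pieces $S_c$, and for the precise symmetry factor $\mathrm{Sym}(\gamma)$ — including the extra $\mathbf{Z}/2$ contributed by any $\gamma_i$ bounding a one-handled subsurface. Everything after that is a change of variables and an elementary asymptotic estimate.
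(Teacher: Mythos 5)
The paper does not reprove this proposition; it quotes it from Mirzakhani (\cite[Proposition 5.1]{Mir08b}), and the closely related exact-volume identity you derive is exactly what the paper imports as Proposition \ref{prop:total_hor_meas} from \cite{Ara19b}. Your route --- unfold to $\mathcal{T}_{g,n}/\text{Stab}(\gamma)$, use Fenchel--Nielsen coordinates adapted to $\gamma$ and Wolpert's formula, integrate out the twists to get $x_1\cdots x_k$, integrate the cut pieces to get the product of volume polynomials, then rescale and keep the top-degree part --- is the standard argument and is the right one; the degree count $6g-6+2n-k$ and the scaling step are fine.

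The genuine gap is the one you yourself flag and then defer: the identification of the covering constant. In this paper $W_{g,n}(\gamma,\mathbf{x})$ is not defined up to a constant; it is defined as the top-degree part of
\[
V_{g,n}(\gamma,\mathbf{x}) = \frac{\sigma_{g,n}(\gamma)\cdot 2^{-\rho_{g,n}(\gamma)}}{[\text{Stab}(\gamma):\text{Stab}_0(\gamma)]}\cdot \prod_{j=1}^c V_{g_j,n_j}^{b_j}(\mathbf{x}_j)\cdot x_1\cdots x_k,
\]
so proving the proposition \emph{means} showing that your factor $1/|\mathrm{Sym}(\gamma)|$ equals this explicit constant, and your sketch does not do that. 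Moreover, the group you call $\mathrm{Sym}(\gamma)$ is not quite the right object: since $\gamma$ is an \emph{ordered} multi-curve and $\text{Stab}(\gamma)=\bigcap_i\text{Stab}(\gamma_i)$ fixes each component up to isotopy, no ``relabeling of the $\gamma_i$'' occurs; the residual symmetries beyond the twists and the mapping class groups of the pieces are (i) mapping classes fixing each $\gamma_i$ setwise but reversing its orientation or exchanging its two sides (hence possibly permuting cut pieces), which account for $[\text{Stab}(\gamma):\text{Stab}_0(\gamma)]$, and (ii) elements acting trivially on the relevant Teichm\"uller spaces --- the kernels entering $\sigma_{g,n}(\gamma)$ and the elliptic involutions of one-holed torus pieces responsible for $2^{-\rho_{g,n}(\gamma)}$ --- which change the covering degree without appearing as a group acting effectively on your coordinate model. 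Until this bookkeeping is carried out (or the statement is reduced to Mirzakhani's integration formula, where it is done), your argument establishes the proposition only up to an unspecified positive rational multiple, which is weaker than the statement as it is used here.
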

$ $

\begin{remark}
	Up to a constant, $W_{g,n}(\gamma,\mathbf{x})$ is equal to $x_1 \cdots x_k$ times the sum of the top degree monomials of the product of the Weil-Petersson volume polynomials of the moduli spaces of bordered Riemann surfaces associated to the components of the surface obtained by cutting $S_{g,n}$ along $\gamma$. See \S 2 for a precise definition.
\end{remark}
$ $

Let $\mu_{\text{Thu}}$ be the Thurston measure on $\mathcal{ML}_{g,n}$. Consider the function $B \colon \mathcal{M}_{g,n} \to \mathbf{R}_{>0}$ which to every $X \in \mathcal{M}_{g,n}$ assigns the value
\begin{equation}
\label{eq:mir_fn}
B(X):= \mu_{\text{Thu}}(\{\lambda \in \mathcal{ML}_{g,n} \ | \ \ell_\lambda(X) \leq 1\}),
\end{equation}
where $\ell_{\lambda}(X) > 0$ denotes the hyperbolic length of $\lambda$ with respect to $X$. We refer to this function as the \textit{Mirzakhani function}. Roughly speaking, $B(X)$ measures the shortness of simple closed geodesics on $X$. Let $\widehat{\mu}_\text{wp}$ be the local pushforward of the Weil-Petersson measure $\mu_{\text{wp}}$ on $\mathcal{T}_{g,n}$ to  $\mathcal{M}_{g,n} := \mathcal{T}_{g,n}/ \text{Mod}_{g,n}$. By work of Mirzakhani, see \cite[Proposition 3.2, Theorem 3.3]{Mir08b}, $B$ is continuous, proper, and integrable with respect to $\widehat{\mu}_{\text{wp}}$. Define
\begin{equation}
\label{eq:b_gn}
b_{g,n} := \int_{\mathcal{M}_{g,n}} B(X) \ \widehat{\mu}_{\text{wp}}(X) < +\infty.
\end{equation}
$ $

\begin{remark}
	In \cite{AA19}, upper and lower bounds of the same order describing the behavior of $B$ near the cusp of $\mathcal{M}_{g,n}$ are established. In particular, it is proved that $B$ is square-integrable with respect to $\widehat{\mu}_{\text{wp}}$.
\end{remark}
$ $

The following theorem, corresponding to \cite[Theorem 6.1]{Mir08b} and \cite[Theorem 1.1]{Mir16}, shows that if $\gamma$ is either simple or filling, the counting function $t(X,\gamma,\mathbf{a},L)$ is asymptotic to $L^{6g-6+2n}$ times an explicit constant;  see the footnote to \cite[Theorem 5.1]{Wri19} as well as \cite{RS20} and \cite{ES20} for discussions concerning the case of general closed multi-curves.\\

\begin{theorem}[Mirzakhani]
	\label{theo:mir_count}
	If $\gamma$ is either simple or filling,
	\[
	\lim_{L \to \infty} \frac{t(X,\gamma,\mathbf{a},L)}{L^{6g-6+2n}} = \frac{B(X) \cdot r(\gamma,\mathbf{a})}{b_{g,n}}.
	\]
\end{theorem}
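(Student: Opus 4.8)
The proof splits into the simple and the filling cases, which demand genuinely different methods: a filling closed multi-curve need not define a point of $\mathcal{ML}_{g,n}$, so the lamination-space techniques available in the simple case are unavailable in the filling case.

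\textit{Simple case: reduction to an ergodic statement.} After clearing denominators one may assume the entries of $\mathbf{a}$ are integers, so that $\{\mathbf{a}\cdot\alpha : \alpha \in \text{Mod}_{g,n}\cdot\gamma\}$ consists of integral points of $\mathcal{ML}_{g,n}$ and $t(X,\gamma,\mathbf{a},L)/L^{6g-6+2n} = \nu_L(\{\mu : \ell_\mu(X) \leq 1\})$ where $\nu_L := L^{-(6g-6+2n)}\sum_{\alpha \in \text{Mod}_{g,n}\cdot\gamma} \delta_{(\mathbf{a}\cdot\alpha)/L}$ on $\mathcal{ML}_{g,n}$. Each $\nu_L$ is \emph{exactly} $\text{Mod}_{g,n}$-invariant, since $\text{Mod}_{g,n}$ permutes the orbit $\text{Mod}_{g,n}\cdot\gamma$ and commutes with scaling; and $\nu_L$ is dominated, up to a fixed multiplicative constant, by the rescaled counting measure of \emph{all} integral points of $\mathcal{ML}_{g,n}$, which converges weakly to $\mu_{\text{Thu}}$ by Thurston's equidistribution theorem for integral multi-curves. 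Hence $\{\nu_L\}_{L > 0}$ is precompact in the weak-$*$ topology on Radon measures, and every weak-$*$ limit is $\text{Mod}_{g,n}$-invariant and absolutely continuous with respect to $\mu_{\text{Thu}}$; by Masur's ergodicity of $\text{Mod}_{g,n}\curvearrowright(\mathcal{ML}_{g,n},\mu_{\text{Thu}})$, every such limit equals $c\cdot\mu_{\text{Thu}}$ for some $c = c(\gamma,\mathbf{a}) \geq 0$.

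\textit{Simple case: pinning down the constant.} I would identify $c$ using two further inputs. The first is a uniform integrable majorant $t(X,\gamma,\mathbf{a},L) \leq C_0\,B(X)\,L^{6g-6+2n}$ for $L \geq 1$, obtained by counting the integral points of $\mathcal{ML}_{g,n}$ inside $\{\mu : \ell_\mu(X)\le L\}$ against the piecewise integral-affine structure, using the definition of $B$ and its continuity to absorb an $O(1)$ perturbation of $L$. The second is the unfolding identity $\int_{\mathcal{M}_{g,n}} t(X,\gamma,\mathbf{a},L)\,\widehat{\mu}_{\text{wp}}(X) = \widetilde{\mu}_{\text{wp}}^\gamma(\{Y : \ell_{\mathbf{a}\cdot\gamma}(Y) \leq L\})$ for the subgroup $\text{Stab}(\gamma) \subseteq \text{Mod}_{g,n}$, whose right-hand side is by definition asymptotic to $r(\gamma,\mathbf{a})\,L^{6g-6+2n}$. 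Dividing by $L^{6g-6+2n}$ and passing to the limit along a weak-$*$ convergent subsequence of $\{\nu_L\}$ (legitimate by the majorant, the integrability of $B$, and $b_{g,n} < \infty$) gives $c\,b_{g,n} = r(\gamma,\mathbf{a})$; as this determines $c$ uniquely, the full family satisfies $\nu_L \to (r(\gamma,\mathbf{a})/b_{g,n})\,\mu_{\text{Thu}}$. Finally $\{\mu : \ell_\mu(X) \leq 1\}$ is a compact $\mu_{\text{Thu}}$-continuity set, since $\mu_{\text{Thu}}(\{\mu : \ell_\mu(X) \leq s\}) = s^{6g-6+2n}B(X)$ is continuous in $s$ by homogeneity; evaluating the weak limit on it yields $t(X,\gamma,\mathbf{a},L)/L^{6g-6+2n} \to B(X)\,r(\gamma,\mathbf{a})/b_{g,n}$.

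\textit{Filling case.} Now $\text{Stab}(\gamma)$ is finite and $\ell_{\mathbf{a}\cdot\gamma} \colon \mathcal{T}_{g,n} \to \mathbf{R}_{>0}$ is proper, so the sublevel sets $B_L := \{Y \in \mathcal{T}_{g,n} : \ell_{\mathbf{a}\cdot\gamma}(Y) \leq L\}$ are compact and $t(X,\gamma,\mathbf{a},L) = |\text{Stab}(\gamma)|^{-1}\,\#(\text{Mod}_{g,n}\cdot X \cap B_L)$ (for $X$ with trivial orbifold stabilizer; the correction at special points is standard): the problem becomes a lattice-point count for $\text{Mod}_{g,n} \curvearrowright \mathcal{T}_{g,n}$ against the exhaustion $\{B_L\}$. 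The same unfolding identity gives $\int_{\mathcal{M}_{g,n}} t(X,\gamma,\mathbf{a},L)\,\widehat{\mu}_{\text{wp}}(X) = \widetilde{\mu}_{\text{wp}}^\gamma(\{\ell_{\mathbf{a}\cdot\gamma} \leq L\}) = |\text{Stab}(\gamma)|^{-1}\mu_{\text{wp}}(B_L)$, so by the definition of $r(\gamma,\mathbf{a})$ one has $\mu_{\text{wp}}(B_L) \sim |\text{Stab}(\gamma)|\,r(\gamma,\mathbf{a})\,L^{6g-6+2n}$. To upgrade this integrated statement to the pointwise asymptotic I would follow \cite{Mir16}, resting on two pillars. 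The first is \emph{equidistribution of the renormalized balls}: the measures $L^{-(6g-6+2n)}\mu_{\text{wp}}|_{B_L}$, pushed down to $\mathcal{M}_{g,n}$, converge weakly to $|\text{Stab}(\gamma)|\,r(\gamma,\mathbf{a})\,b_{g,n}^{-1}\,B(X)\,\widehat{\mu}_{\text{wp}}(X)$ — a measure whose total mass $|\text{Stab}(\gamma)|\,r(\gamma,\mathbf{a})$ matches the volume asymptotic, and whose density is the Mirzakhani function $B$. The second is \emph{well-roundedness}: $\ell_{\mathbf{a}\cdot\gamma}$ is Lipschitz for the Teichmüller metric, for which $\text{Mod}_{g,n}$ acts by isometries, so that a $\delta$-neighbourhood of $B_L$ lies in $B_{e^{2\delta}L}$ and, by the volume asymptotic, carries at most $o_\delta(1)\cdot L^{6g-6+2n}$ of the Weil--Petersson volume uniformly in $L$. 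Granting these, the standard equidistribution-plus-well-roundedness argument à la Eskin--McMullen and Duke--Rudnick--Sarnak — partition $\mathcal{M}_{g,n}$ into small evenly covered balls, count the $\text{Mod}_{g,n}$-translates of $X$ lying in $B_L$ over the preimage of each, sum, and let the balls shrink as $L \to \infty$, controlling the contribution near the cusp with the integrability of $B$ and a uniform bound $\#(\text{Mod}_{g,n}\cdot X \cap B_L) \leq C_0\,B(X)\,L^{6g-6+2n}$ — yields $\#(\text{Mod}_{g,n}\cdot X \cap B_L) \sim |\text{Stab}(\gamma)|\,r(\gamma,\mathbf{a})\,b_{g,n}^{-1}\,B(X)\,L^{6g-6+2n}$, and hence $t(X,\gamma,\mathbf{a},L) \sim B(X)\,r(\gamma,\mathbf{a})\,b_{g,n}^{-1}\,L^{6g-6+2n}$.

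\textit{The main obstacle.} The crux of the theorem is the equidistribution of the renormalized balls in the filling case: this is exactly where one must invoke the ergodic theory of the natural dynamical systems on moduli space — the earthquake flow together with the $\text{SL}_2(\mathbf{R})$-action on the bundle of quadratic differentials and their mixing properties, as developed and exploited in \cite{Mir16} — and the delicate point is that the density of the limiting measure is the Mirzakhani function $B$, encoding how the balls $B_L$, seen in $\mathcal{M}_{g,n}$, spread toward the cusp at a rate measured by the shortness of simple closed geodesics. In the simple case the corresponding step is milder: Masur's ergodicity alone forces every weak-$*$ limit of $\{\nu_L\}$ to be a multiple of $\mu_{\text{Thu}}$, so the only genuine work left is the uniform integrable majorant and the verification that no mass escapes to infinity, after which the unfolding identity fixes the constant.
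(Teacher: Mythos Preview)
This theorem is not proved in the paper: it is attributed to Mirzakhani and cited as \cite[Theorem 6.1]{Mir08b} and \cite[Theorem 1.1]{Mir16}. So there is no ``paper's own proof'' to compare against directly; what the paper does provide is (i) a remark that its Theorem~\ref{theo:simple_comp_count} yields an alternative proof of the simple case, via horoball equidistribution, and (ii) a summary in \S5 of the machinery from \cite{Mir16} underlying the filling case.

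Your simple-case outline is essentially Mirzakhani's original argument in \cite{Mir08b}: rescaled orbit counting measures on $\mathcal{ML}_{g,n}$, precompactness via domination by integer-point counting, Masur ergodicity forcing any limit to be a multiple of $\mu_{\text{Thu}}$, and the unfolding identity over $\mathcal{T}_{g,n}/\text{Stab}(\gamma)$ to fix the constant. This is correct in outline and matches the cited source. The paper's own alternative route, by contrast, goes through Margulis-style averaging of the counting function over Thurston-metric balls and unfolding to a horoball integral, then applies Corollary~\ref{cor:horoball_equid}; it never touches Masur ergodicity directly.

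Your filling-case sketch has the right lattice-point-counting skeleton (finite stabilizer, proper $\ell_{\mathbf{a}\cdot\gamma}$, Eskin--McMullen style equidistribution-plus-well-roundedness), but you mischaracterize the key technical input. The equidistribution step in \cite{Mir16} is \emph{not} obtained from mixing of the earthquake or Teichm\"uller flows; rather, as summarized in \S5 here, it rests on the fact that hyperbolic length functions are \emph{asymptotically piecewise linear} in Fenchel--Nielsen coordinates (Theorem~\ref{theo:length_APL}), combined with Wolpert's formula $d\mu_{\text{wp}} = \prod d\ell_i\, d\tau_i$ and the \emph{bounding} property (Proposition~\ref{prop:len_bounding}) controlling how the sets $B_L$ spread over the twist-parameter fundamental domains $\mathcal{C}_\mathcal{P}^\mathbf{m}$. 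The limiting density $B(X)$ then emerges from these direct FN-coordinate volume computations, not from dynamical ergodic theory. Your outline would not go through as written without replacing the appeal to mixing by this piecewise-linear analysis.
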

$ $

\begin{remark}
	In \cite{EMM19}, Eskin, Mirzakhani, and Mohammadi improved Theorem \ref{theo:mir_count} in the case where $\gamma$ is simple by obtaining a power saving error term for the asymptotics of $t(X,\gamma,\mathbf{a},L)$. Their methods are very different from the ones in \cite{Mir08b} and \cite{Mir16}, and rely on the exponential mixing rate of the Teichmüller geodesic flow.
\end{remark}
$ $

As a follow-up question to Theorem \ref{theo:mir_count}, it is natural to ask whether the hyperbolic length vectors (and not just the total hyperbolic lengths) with respect to complete, finite area hyperbolic structures of multi-geodesics in mapping class group orbits of simple or filling closed multi-curves equidistribute near infinity. A first result in this direction can be found in \cite[Theorem 1.2]{Mir16}. The main goal of this paper is to answer this question in as much generality as possible.\\

\textit{Length spectra of ordered closed multi-curves.} Let $\gamma := (\gamma_1,\dots,\gamma_k)$ with $k \geq 1$ be an ordered closed multi-curve on $S_{g,n}$ and $X \in \mathcal{T}_{g,n}$. One can record, with multiplicity, the hyperbolic length vector with respect to $X$ of every ordered closed multi-curve in the mapping class group orbit of $\gamma$ by considering the counting measure on $(\mathbf{R}_{\geq 0})^k$ given by
\[
\mu_{\gamma,X} := \sum_{\alpha \in \text{Mod}_{g,n} \cdot \gamma} \delta_{\vec{\ell}_{\alpha}(X)}.
\]
This measure does not depend on the marking of $X \in \mathcal{T}_{g,n}$ but only on the corresponding hyperbolic structure $X \in \mathcal{M}_{g,n}$. We refer to this measure as the \textit{length spectrum} of $\gamma$ with respect to $X$.\\

To study the asymptotic behavior of $\mu_{\gamma,X}$, consider the rescaled counting measures $\{\mu_{\gamma,X}^L\}_{L>0}$ on $(\mathbf{R}_{\geq 0})^k$ given by 
\[
\mu_{\gamma,X}^L := \sum_{\alpha \in \text{Mod}_{g,n} \cdot \gamma} \delta_{\frac{1}{L} \cdot \vec{\ell}_\alpha(X)}.
\]
$ $

\textit{Asymptotics of length spectra of ordered simple closed multi-curves.} One of the main results of this paper is the following theorem, which describes the behavior near infinity of the length spectra of ordered simple closed multi-curves with respect to complete, finite area hyperbolic structures. \\

\begin{theorem}
	\label{theo:length_spec_simple}
	Let $\gamma:=(\gamma_1,\dots,\gamma_k)$ with $1 \leq k \leq 3g-3+n$ be an ordered simple closed multi-curve on $S_{g,n}$ and $X \in \mathcal{M}_{g,n}$. Then,
	\[
	\lim_{L \to \infty} \frac{\mu_{\gamma,X}^L}{L^{6g-6+2n}} = \frac{B(X)}{b_{g,n}} \cdot W_{g,n}(\gamma,\mathbf{x}) \cdot d\mathbf{x}
	\]
	in the weak-$\star$ topology for measures on $(\mathbf{R}_{\geq 0})^k$.
\end{theorem}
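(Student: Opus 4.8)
The plan is to test the rescaled length spectra against an arbitrary $f\in C_c((\mathbf{R}_{\geq 0})^k)$ and to reduce the theorem to the convergence of the resulting functions on moduli space. Since the candidate limit $\frac{B(X)}{b_{g,n}}\,W_{g,n}(\gamma,\mathbf{x})\,d\mathbf{x}$ is absolutely continuous with continuous density, once weak-$\star$ convergence against $C_c((\mathbf{R}_{\geq 0})^k)$ is established it upgrades, by the portmanteau theorem, to convergence on indicators of boxes $\prod_i[0,b_i]$, which is precisely the simple case of Theorem~\ref{theo:wolp_conj}; so both statements would be proved together. For $X\in\mathcal{T}_{g,n}$ and $f$ as above put
\[
\Phi_f^L(X):=\frac{1}{L^{6g-6+2n}}\sum_{\alpha\in\text{Mod}_{g,n}\cdot\gamma}f\!\left(\frac{\vec{\ell}_\alpha(X)}{L}\right),
\]
a $\text{Mod}_{g,n}$-invariant function descending to $\mathcal{M}_{g,n}$; the goal becomes $\Phi_f^L(X)\to\frac{B(X)}{b_{g,n}}\int_{(\mathbf{R}_{\geq 0})^k}f(\mathbf{x})\,W_{g,n}(\gamma,\mathbf{x})\,d\mathbf{x}$ for every $X\in\mathcal{M}_{g,n}$.

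The core of the proof is the pointwise convergence of $\Phi_f^L$, which I would obtain by adapting the method of Margulis' thesis \cite{Mar04}. A parametrized \emph{unfolding} --- averaging the $\text{Mod}_{g,n}$-orbit sum over cosets of $\text{Stab}(\gamma)$ and disintegrating along the scaling action on measured laminations --- relates the family $\{\Phi_f^L\}$ to the equidistribution of analogues of expanding horoballs on $\mathcal{M}_{g,n}$; the relevant horoballs are the $k$-parameter analogues of the sets $\{\lambda\in\mathcal{ML}_{g,n}:\ell_\lambda(X)\leq L\}$, whose Thurston measure is $L^{6g-6+2n}B(X)$ by homogeneity of $\mu_{\text{Thu}}$ and \eqref{eq:mir_fn}. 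This equidistribution, towards the normalized Weil--Petersson measure, was established by Mirzakhani in \cite{Mir07b} and generalized by the author in \cite{Ara19b} (in a form that also controls $\widehat{\mu}_{\text{wp}}$-integrals); it upgrades weak convergence to pointwise (indeed locally uniform) convergence and yields
\[
\Phi_f^L(X)\ \xrightarrow[L\to\infty]{}\ c(f)\cdot\frac{B(X)}{b_{g,n}},
\]
where $f\mapsto c(f)$ is a positive linear functional on $C_c((\mathbf{R}_{\geq 0})^k)$ that does not depend on $X$: the factor $B(X)$ appears exactly because it records the transverse size of the expanding horoball through $X$, and $b_{g,n}^{-1}$ is the total mass of the limiting measure.

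It remains to identify $c(f)$, and here the $B(X)$-proportionality just obtained is what makes the identification possible. Evaluate $\lim_{L\to\infty}\int_{\mathcal{M}_{g,n}}\Phi_f^L\,d\widehat{\mu}_{\text{wp}}$ in two ways. First, passing the limit inside the integral (legitimate by the uniformity built into the cited equidistribution) and using \eqref{eq:b_gn} gives $\int_{\mathcal{M}_{g,n}}c(f)\,\frac{B(X)}{b_{g,n}}\,d\widehat{\mu}_{\text{wp}}(X)=c(f)$. Second, Mirzakhani's integration formula unfolds the $\text{Mod}_{g,n}$-sum, so that
\[
\int_{\mathcal{M}_{g,n}}\Phi_f^L\,d\widehat{\mu}_{\text{wp}}=\frac{C_\gamma}{L^{6g-6+2n}}\int_{(\mathbf{R}_{>0})^k}f(\mathbf{x}/L)\,x_1\cdots x_k\prod_{j}V_{g_j,n_j}(\mathbf{x}^{(j)})\,d\mathbf{x},
\]
where the product runs over the pieces of $S_{g,n}$ cut along $\gamma$, the volume polynomial $V_{g_j,n_j}$ of a piece is evaluated with a cut curve $\gamma_i$ contributing boundary length $x_i$ and the original punctures length zero, and $C_\gamma$ is the symmetry factor of $\gamma$. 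Substituting $\mathbf{x}=L\mathbf{u}$ and keeping only the term that survives the rescaling --- the polynomial $x_1\cdots x_k\prod_j V_{g_j,n_j}$ has degree $6g-6+2n-k$ with top-degree part $C_\gamma^{-1}W_{g,n}(\gamma,\cdot)$, which is the definition of $W_{g,n}(\gamma,\cdot)$; see the remark after Proposition~\ref{prop:mir_freq} --- the right-hand side tends to $\int_{(\mathbf{R}_{\geq 0})^k}f(\mathbf{u})\,W_{g,n}(\gamma,\mathbf{u})\,d\mathbf{u}$. Comparing the two evaluations gives $c(f)=\int_{(\mathbf{R}_{\geq 0})^k}f\,W_{g,n}(\gamma,\cdot)\,d\mathbf{x}$, and the theorem follows. (As a consistency check, specializing to $f=\mathbbm{1}_{\{\mathbf{a}\cdot\mathbf{x}\leq 1\}}$ recovers Mirzakhani's Theorem~\ref{theo:mir_count} together with Proposition~\ref{prop:mir_freq}.)

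I expect the main obstacle to lie in the second paragraph: one must push the equidistribution of expanding horoballs from the single total-length cutoff used in Mirzakhani's counting to the full $k$-dimensional family of regions governing the length vector $\vec{\ell}_\alpha(X)$, and carry out the matching refined unfolding. Simultaneously tracking the $k$ individual length coordinates, instead of one linear functional of them, is the genuinely new dynamical input and forces working with the $k$-parameter expanding family throughout. A secondary technical point is the uniform control of the functions $\Phi_f^L$ near the walls $\{x_i=0\}$ of $(\mathbf{R}_{\geq 0})^k$ and near the cusp of $\mathcal{M}_{g,n}$, needed to interchange the limit with the $\widehat{\mu}_{\text{wp}}$-integral for general $f\in C_c((\mathbf{R}_{\geq 0})^k)$.
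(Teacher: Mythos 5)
Your overall architecture is the same as the paper's (Margulis-style averaging/unfolding, equidistribution of expanding horoball analogues from \cite{Mir07b} and \cite{Ara19b}, and Mirzakhani's integration formula to produce the factor $W_{g,n}(\gamma,\mathbf{x})$), and your identification of the constant by integrating the unfolded sum over $\mathcal{M}_{g,n}$ is essentially Proposition \ref{prop:total_hor_meas}. But there is a genuine gap at the central step: you assert that the equidistribution of the $k$-parameter horoball segments ``upgrades weak convergence to pointwise (indeed locally uniform) convergence'' and hence that $\Phi_f^L(X)\to c(f)\,B(X)/b_{g,n}$ for each fixed $X$. The equidistribution statement (Theorem \ref{theo:horoball_equid}, Corollary \ref{cor:horoball_equid}) is only a weak-$\star$ statement on $\mathcal{M}_{g,n}$: it controls integrals of the normalized horoball segment measures against continuous compactly supported test functions, and says nothing by itself about the value of the counting function at a single point $X$. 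Producing the pointwise statement is exactly what the paper's ``spreading out and averaging'' step is for: by the Thurston-metric inequality (\ref{eq:thurston_met}), for $Y$ in the ball $U_X(\epsilon)$ one has the sandwich $c(Y,\gamma,f_\epsilon^{\min},L)\leq c(X,\gamma,f,L)\leq c(Y,\gamma,f_\epsilon^{\max},L)$ as in (\ref{eq:count_comparison}); one then multiplies by a bump function $\eta_\epsilon$ concentrated at $X$ with $\widehat{\mu}_{\text{wp}}$-integral $1$, unfolds the resulting average over $\mathcal{T}_{g,n}/\text{Stab}(\gamma)$ (Proposition \ref{prop:pull_push}) to rewrite it as $\int\eta_\epsilon\,d\widehat{\mu}_\gamma^{h,L}$, applies Corollary \ref{cor:horoball_equid} together with Proposition \ref{prop:total_hor_meas}, and finally lets $\epsilon\to 0$ using the continuity of $B$ and the uniform convergence $f_\epsilon^{\min},f_\epsilon^{\max}\to f$. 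Your proposal never supplies this (or any substitute) mechanism; ``averaging over cosets of $\text{Stab}(\gamma)$'' is the unfolding of an integral over moduli space, which only yields averaged information, not the value at $X$. Since the pointwise convergence with the product form $c(f)B(X)$ is essentially the whole theorem once the constant is known, this is the missing idea rather than a routine detail.

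Two smaller points. First, your two-way evaluation of $\lim_L\int_{\mathcal{M}_{g,n}}\Phi_f^L\,d\widehat{\mu}_{\text{wp}}$ requires interchanging the limit with the integral, which needs a domination or uniformity argument you do not have; the paper sidesteps this entirely because the unfolded integral equals the total mass $m_\gamma^{f,L}/L^{6g-6+2n}$ exactly for every finite $L$, and Proposition \ref{prop:total_hor_meas} computes its limit directly. Second, the relevant expanding objects are the horoball segments $B_\gamma^{f,L}=\{Y\in\mathcal{T}_{g,n}\ |\ \vec{\ell}_\gamma(Y)\in L\cdot\text{supp}(f)\}$ in Teichm\"uller space (with their Weil--Petersson measures weighted by $f(\frac{1}{L}\vec{\ell}_\gamma)$), not subsets of $\mathcal{ML}_{g,n}$; and the $k$-parameter equidistribution you flag as the main obstacle is not something to be pushed further here --- it is already available in the required form in \cite{Ara19b}, which is what the paper invokes.
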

$ $

Theorem \ref{theo:length_spec_simple} and Portmanteau's theorem directly yield the following strong version, originally conjectured by Wolpert, of the simple case of Theorem \ref{theo:wolp_conj}; a closely related result was also recently established by Liu in  \cite{Liu19}.\\

\begin{theorem}
	\label{theo:simple_comp_count}
	Let $X \in \mathcal{M}_{g,n}$, $\gamma:=(\gamma_1,\dots,\gamma_k)$ with $1 \leq k \leq 3g-3+n$ be an ordered simple closed multi-curve on $S_{g,n}$, and $\mathbf{b} := (b_1,\dots,b_k) \in (\mathbf{R}_{>0})^k$. Then,
	\[
	\lim_{L \to \infty} \frac{c(X,\gamma,\mathbf{b},L)}{L^{6g-6+2n}} = \frac{B(X)}{b_{g,n}} \cdot \int_{\prod_{i=1}^k [0,b_i]} W_{g,n}(\gamma,\mathbf{x}) \cdot d\mathbf{x}.
	\]
\end{theorem}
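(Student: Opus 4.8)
The plan is to read this statement off directly from Theorem~\ref{theo:length_spec_simple}, by evaluating the weak-$\star$ convergence of measures established there on the compact box
\[
R_{\mathbf{b}} := \prod_{i=1}^k [0,b_i] \subseteq (\mathbf{R}_{\geq 0})^k .
\]
The first step is pure bookkeeping. Unwinding the definitions of $c(X,\gamma,\mathbf{b},L)$ and of the rescaled counting measure $\mu_{\gamma,X}^L$, the system of inequalities $\ell_{\alpha_i}(X) \leq b_i L$ for every $i \in \{1,\dots,k\}$ is precisely the condition $\frac{1}{L} \cdot \vec{\ell}_\alpha(X) \in R_{\mathbf{b}}$, so
\[
c(X,\gamma,\mathbf{b},L) = \# \{ \alpha \in \text{Mod}_{g,n} \cdot \gamma \ | \ \tfrac{1}{L} \cdot \vec{\ell}_\alpha(X) \in R_{\mathbf{b}} \} = \mu_{\gamma,X}^L(R_{\mathbf{b}}).
\]
Writing $\nu_L := \mu_{\gamma,X}^L / L^{6g-6+2n}$, the quantity whose limit we seek is therefore $\nu_L(R_{\mathbf{b}})$.

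Next I would invoke Theorem~\ref{theo:length_spec_simple}, which asserts that $\nu_L \to \nu$ in the weak-$\star$ topology for measures on $(\mathbf{R}_{\geq 0})^k$, where $\nu := \frac{B(X)}{b_{g,n}} \cdot W_{g,n}(\gamma,\mathbf{x}) \cdot d\mathbf{x}$. The limit measure $\nu$ is absolutely continuous with respect to Lebesgue measure on $(\mathbf{R}_{\geq 0})^k$, with continuous --- indeed polynomial --- density $\frac{B(X)}{b_{g,n}} W_{g,n}(\gamma,\mathbf{x})$. Since the topological boundary $\partial R_{\mathbf{b}}$ is a finite union of faces of dimension strictly less than $k$, it is Lebesgue-null, hence $\nu(\partial R_{\mathbf{b}}) = 0$; in other words, $R_{\mathbf{b}}$ is a $\nu$-continuity set. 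As $R_{\mathbf{b}}$ is moreover compact, Portmanteau's theorem (for weak-$\star$ convergence of Radon measures) applies and gives
\[
\lim_{L \to \infty} \nu_L(R_{\mathbf{b}}) = \nu(R_{\mathbf{b}}) = \frac{B(X)}{b_{g,n}} \int_{\prod_{i=1}^k [0,b_i]} W_{g,n}(\gamma,\mathbf{x}) \cdot d\mathbf{x},
\]
which, together with the bookkeeping identity above, is exactly the claimed asymptotic.

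Since the ambient space $(\mathbf{R}_{\geq 0})^k$ is only locally compact and the measures $\nu_L$ have infinite total mass, I would spell this last step out rather than cite it verbatim. Fix $\varepsilon > 0$. Using that $\nu(\partial R_{\mathbf{b}}) = 0$, together with the regularity of $\nu$ and Urysohn's lemma, choose continuous functions $f_\varepsilon, h_\varepsilon \in C_c((\mathbf{R}_{\geq 0})^k)$, both supported inside $\prod_{i=1}^k [0,b_i+1]$, with $f_\varepsilon \leq \mathbf{1}_{R_{\mathbf{b}}^\circ}$, $h_\varepsilon \geq \mathbf{1}_{R_{\mathbf{b}}}$, $\nu(f_\varepsilon) > \nu(R_{\mathbf{b}}) - \varepsilon$, and $\nu(h_\varepsilon) < \nu(R_{\mathbf{b}}) + \varepsilon$. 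Since $f_\varepsilon \leq \mathbf{1}_{R_{\mathbf{b}}} \leq h_\varepsilon$ pointwise, weak-$\star$ convergence tested against $f_\varepsilon$ and $h_\varepsilon$ gives $\liminf_{L \to \infty} \nu_L(R_{\mathbf{b}}) \geq \lim_{L \to \infty} \nu_L(f_\varepsilon) = \nu(f_\varepsilon) > \nu(R_{\mathbf{b}}) - \varepsilon$ and, symmetrically, $\limsup_{L \to \infty} \nu_L(R_{\mathbf{b}}) \leq \nu(h_\varepsilon) < \nu(R_{\mathbf{b}}) + \varepsilon$; letting $\varepsilon \to 0$ completes the argument. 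I expect no genuine obstacle here: all of the analytic substance is already packaged into Theorem~\ref{theo:length_spec_simple}, and the only thing left to verify is that the box $R_{\mathbf{b}}$ is a continuity set for the absolutely continuous limit measure $\nu$, which is immediate. The lone point deserving attention is the local-compactness caveat just addressed, handled by keeping all test functions supported in one fixed compact set.
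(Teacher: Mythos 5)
Your argument is correct and is exactly the route the paper takes: the paper derives Theorem \ref{theo:simple_comp_count} directly from Theorem \ref{theo:length_spec_simple} via Portmanteau's theorem, using (implicitly) that the closed box is a continuity set of the absolutely continuous limit measure $\frac{B(X)}{b_{g,n}} W_{g,n}(\gamma,\mathbf{x})\,d\mathbf{x}$. Your extra care in spelling out the Radon-measure version of Portmanteau with compactly supported sandwiching test functions is a reasonable elaboration of the same argument, not a different approach.
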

$ $

\begin{remark}
	The simple case of Theorem \ref{theo:mir_count} can be deduced directly from Theorem \ref{theo:simple_comp_count}. This provides an alternative proof of such result which is independent of Mirzakhani's original work in \cite{Mir08b}.
\end{remark}
$ $

\begin{remark}
	\label{rem:not_consequence}
	Theorem \ref{theo:length_spec_simple} is not a direct consequence of the simple clase of Theorem \ref{theo:mir_count}. Indeed, simplices of $(\mathbf{R}_{\geq0})^k$ of the form
	\[
	\Delta_\mathbf{a} := \{(x_1,\dots,x_k) \in (\mathbf{R}_{\geq0})^k \ | \ a_1x_1 + \cdots + a_k x_k \leq 1 \}
	\]
	with $\mathbf{a} := (a_1,\dots,a_k) \in (\mathbf{R}_{>0})^k$ arbitrary do not generate the $\sigma$-algebra of Borel measurable subsets of $(\mathbf{R}_{\geq0})^k$. Moreover, Mirzakhani's counting results for mapping class group orbits of rational multi-curves, see \cite[Theorem 6.4]{Mir08b}, cannot be used to deduce Theorem \ref{theo:length_spec_simple} directly as the notion of \textit{length of topological components} does not extend continuously from the dense subset of rational multi-curves to all $\mathcal{ML}_{g,n}$.
\end{remark}
$ $

\begin{remark}
	Letting $\mathbf{b} := (1,\dots,1) \in (\mathbf{R}_{> 0})^k$ in Theorem \ref{theo:simple_comp_count} gives asymptotics for the counting functions
	\[
	m(X,\gamma,L) := \#\left\lbrace\alpha := (\alpha_1,\dots,\alpha_k) \in \text{Mod}_{g,n} \cdot \gamma \ \bigg\vert \ \max_{i=1,\dots,k} \ell_{\alpha_i}(X) \leq L \right\rbrace.
	\]
	$ $
\end{remark}

\textit{Main ideas of the proof of Theorem \ref{theo:length_spec_simple}.} Let $\gamma:=(\gamma_1,\dots,\gamma_k)$ with $1 \leq k \leq 3g-3+n$ be an ordered simple closed multi-curve on $S_{g,n}$ and $X \in \mathcal{M}_{g,n}$. It is convenient to rephrase Theorem \ref{theo:length_spec_simple} in the following equivalent way. Let $f \colon (\mathbf{R}_{\geq0})^k \to \mathbf{R}_{\geq 0}$ be an arbitrary non-negative, continuous, compactly supported function. For every $L > 0$ consider the counting function
\[
c(X,\gamma,f,L) := \int_{\mathbf{R}^k} f(\mathbf{x}) \  d\mu_{\gamma,X}^L(\mathbf{x}) = \sum_{\alpha \in \text{Mod}_{g,n} \cdot \gamma} f\left( \textstyle\frac{1}{L} \cdot \vec{\ell}_\alpha(X)\right).
\]
Theorem \ref{theo:length_spec_simple} is equivalent to the following result.\\

\begin{theorem}
	\label{theo:length_spec_simple_red}
	Let $f \colon (\mathbf{R}_{\geq0})^k \to \mathbf{R}_{\geq 0}$ be a non-negative, continuous, compactly supported function. Then,
	\[
	\lim_{L \to \infty} \frac{c(X,\gamma,f,L)}{L^{6g-6+2n}} = \frac{B(X)}{b_{g,n}} \cdot \int_{\mathbf{R}^k} f(\mathbf{x}) \cdot W_{g,n}(\gamma, \mathbf{x}) \cdot d\mathbf{x}.
	\]
\end{theorem}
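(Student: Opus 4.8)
Throughout, set $d := 6g-6+2n$, write $\mathcal{M}_{g,n}^\gamma := \mathcal{T}_{g,n}/\text{Stab}(\gamma)$, let $\pi_\gamma \colon \mathcal{M}_{g,n}^\gamma \to \mathcal{M}_{g,n}$ be the natural covering, and let $\vec{\ell}_\gamma \colon \mathcal{M}_{g,n}^\gamma \to (\mathbf{R}_{>0})^k$ be the hyperbolic length vector of $\gamma$, which is well defined on $\mathcal{M}_{g,n}^\gamma$. The plan is to follow Margulis's strategy: first establish the statement in a form averaged over $\mathcal{M}_{g,n}$, by combining an unfolding argument with an equidistribution input, and then de-average it to obtain the pointwise statement. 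Identifying $\text{Mod}_{g,n}\cdot\gamma$ with $\text{Mod}_{g,n}/\text{Stab}(\gamma)$, lifting $c(X,\gamma,f,L)$ to a $\text{Mod}_{g,n}$-invariant function on $\mathcal{T}_{g,n}$, and carrying out the standard unfolding against an arbitrary non-negative continuous compactly supported function $\psi \colon \mathcal{M}_{g,n} \to \mathbf{R}_{\geq 0}$, one obtains the identity
\[
\int_{\mathcal{M}_{g,n}} \psi \cdot c(\,\cdot\,,\gamma,f,L)\ d\widehat{\mu}_{\text{wp}} = \int_{\mathcal{M}_{g,n}^\gamma} (\psi\circ\pi_\gamma) \cdot f\!\left( \tfrac{1}{L}\vec{\ell}_\gamma \right) d\widetilde{\mu}_{\text{wp}}^\gamma.
\]

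The key input is the equidistribution of expanding horoballs on $\mathcal{M}_{g,n}$, in the refined multi-parameter form that simultaneously keeps track of all the entries of $\vec{\ell}_\gamma$. Using Fenchel--Nielsen coordinates adapted to $\gamma$ --- the length and twist along each component $\gamma_i$ together with the moduli of the surface obtained by cutting $S_{g,n}$ along $\gamma$ --- and Wolpert's formula, one decomposes $\widetilde{\mu}_{\text{wp}}^\gamma$ along the fibration $\vec{\ell}_\gamma$; the fiberwise total mass over $\mathbf{x} \in (\mathbf{R}_{>0})^k$ is a polynomial in $\mathbf{x}$ of degree $d-k$ whose top-degree part is $W_{g,n}(\gamma,\mathbf{x})$ (cf.\ the remark following Proposition \ref{prop:mir_freq}). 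Combining this with the equidistribution results of Mirzakhani \cite{Mir07b} and their generalization by the author \cite{Ara19b} --- which in particular preclude escape of mass towards the cusp of $\mathcal{M}_{g,n}$, a point where the integrability of $B$ is essential --- one obtains
\begin{multline*}
\lim_{L \to \infty} \frac{1}{L^{d}}\, (\pi_\gamma)_* \!\left( f\!\left( \tfrac{1}{L}\vec{\ell}_\gamma \right) \widetilde{\mu}_{\text{wp}}^\gamma \right) \\
= \frac{1}{b_{g,n}} \left( \int_{\mathbf{R}^k} f(\mathbf{x})\, W_{g,n}(\gamma,\mathbf{x})\, d\mathbf{x} \right) B \cdot \widehat{\mu}_{\text{wp}}
\end{multline*}
in the weak-$\star$ topology on $\mathcal{M}_{g,n}$. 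Inserting this into the unfolding identity, and using that $\int_{\mathcal{M}_{g,n}} B\, d\widehat{\mu}_{\text{wp}} = b_{g,n}$, gives the averaged form of Theorem \ref{theo:length_spec_simple_red}: for every non-negative continuous compactly supported $\psi$, the quantity $L^{-d} \int_{\mathcal{M}_{g,n}} \psi \cdot c(\,\cdot\,,\gamma,f,L)\, d\widehat{\mu}_{\text{wp}}$ converges, as $L \to \infty$, to $b_{g,n}^{-1} \big( \int_{\mathbf{R}^k} f\, W_{g,n}(\gamma,\cdot)\, d\mathbf{x} \big) \int_{\mathcal{M}_{g,n}} \psi B\, d\widehat{\mu}_{\text{wp}}$.

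To deduce the pointwise statement, fix $X \in \mathcal{M}_{g,n}$ and use the classical fact that every hyperbolic length function changes by a bounded multiplicative factor under a bounded Teichmüller perturbation: for each $\epsilon > 0$ there are non-negative continuous compactly supported functions $f_\epsilon^\pm$ with $f_\epsilon^- \leq f \leq f_\epsilon^+$ and $f_\epsilon^\pm \to f$ pointwise as $\epsilon \to 0$, and a radius $\delta = \delta(\epsilon) \in (0,\epsilon]$ with $\widehat{\mu}_{\text{wp}}(\partial B_\delta(X)) = 0$, such that $c(X',\gamma,f_\epsilon^-,L) \leq c(X,\gamma,f,L) \leq c(X',\gamma,f_\epsilon^+,L)$ for every $X'$ in the Teichmüller-metric ball $B_\delta(X) \subseteq \mathcal{M}_{g,n}$ (precompact for $\epsilon$ small) and every $L > 0$. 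Averaging this chain over $B_\delta(X)$ against $\widehat{\mu}_{\text{wp}}$, dividing by $L^{d}$, and applying the averaged statement above to the $\widehat{\mu}_{\text{wp}}$-almost-everywhere continuous compactly supported function $\mathbf{1}_{B_\delta(X)}$, one bounds $\liminf_{L\to\infty} c(X,\gamma,f,L)/L^{d}$ from below by $b_{g,n}^{-1}\, \overline{B}_\delta \int_{\mathbf{R}^k} f_\epsilon^-(\mathbf{x})\, W_{g,n}(\gamma,\mathbf{x})\, d\mathbf{x}$ and $\limsup_{L\to\infty} c(X,\gamma,f,L)/L^{d}$ from above by $b_{g,n}^{-1}\, \overline{B}_\delta \int_{\mathbf{R}^k} f_\epsilon^+(\mathbf{x})\, W_{g,n}(\gamma,\mathbf{x})\, d\mathbf{x}$, where $\overline{B}_\delta$ denotes the $\widehat{\mu}_{\text{wp}}$-average of $B$ over $B_\delta(X)$. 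Letting $\epsilon \to 0$ --- so that $\delta \to 0$ and $\overline{B}_\delta \to B(X)$ by continuity of $B$, while $\int_{\mathbf{R}^k} f_\epsilon^\pm(\mathbf{x})\, W_{g,n}(\gamma,\mathbf{x})\, d\mathbf{x} \to \int_{\mathbf{R}^k} f(\mathbf{x})\, W_{g,n}(\gamma,\mathbf{x})\, d\mathbf{x}$ by dominated convergence, the polynomial $W_{g,n}(\gamma,\cdot)$ being locally bounded and $f$ compactly supported --- identifies the limit as $b_{g,n}^{-1}\, B(X) \int_{\mathbf{R}^k} f(\mathbf{x})\, W_{g,n}(\gamma,\mathbf{x})\, d\mathbf{x}$, which is the claim.

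The unfolding and the de-averaging are routine, if technical; in the latter, the only slightly delicate point is that the averaged convergence must be upgraded to hold against indicators of small metric balls, which is why continuity and local boundedness of $B$, rather than mere integrability, are used. The substance of the argument is the equidistribution input: the prototype established by Mirzakhani in \cite{Mir07b} concerns sub-level sets of a single length function on $\mathcal{M}_{g,n}$, and the main obstacle is to have it available in the multi-parameter shape used above, keeping track of all the coordinates of $\vec{\ell}_\gamma$ at once and ruling out escape of mass towards the cusp --- precisely what the generalization in \cite{Ara19b} supplies.
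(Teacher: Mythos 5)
Your proposal is correct and follows essentially the same route as the paper: unfold the averaged counting over the intermediate cover $\mathcal{T}_{g,n}/\text{Stab}(\gamma)$ (your unfolding identity is Proposition \ref{prop:pull_push} with a general test function), invoke the horoball-segment equidistribution of \cite{Ara19b}/\cite{Mir07b} together with the total-mass asymptotics (your displayed limit is exactly Corollary \ref{cor:horoball_equid} combined with Proposition \ref{prop:total_hor_meas}), and then de-average at $X$ via the sandwich $f_\epsilon^{-} \leq f \leq f_\epsilon^{+}$ and continuity of $B$. The only differences are cosmetic: you use Teichmüller-metric balls and indicator test functions with null boundary where the paper uses the symmetric Thurston metric and continuous bump functions $\eta_\epsilon$, and you average first and sandwich second rather than the reverse.
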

$ $

Our proof of Theorem \ref{theo:length_spec_simple_red} is inspired by ideas introduced by Margulis in his thesis, see \cite{Mar04} for an English translation. The upshot of the proof is the following: approaching countings directly for a particular hyperbolic structure $X$ is rather hard but averaging them over nearby points in $\mathcal{M}_{g,n}$ should make them more tractable. After suitably \textit{spreading out} and averaging the countings over nearby points, \textit{unfolding} such averages on an appropriate intermediate cover reduces the proof of Theorem \ref{theo:length_spec_simple_red} to the question of whether certain analogues of \textit{expanding horoballs} on $\mathcal{M}_{g,n}$ equidistribute. Such equidistribution results were established by the author in \cite{Ara19b} building on ideas introduced by Mirzakhani in \cite{Mir07b}. \\

\begin{remark}
	\label{rem:effective_intro_1}
	If the analogues of expanding horoballs on $\mathcal{M}_{g,n}$ alluded to in the previous paragraph equidistributed at a polynomial rate, see Remark \ref{rem:effective_def} for a precise statement of this condition, the methods in our proof would yield an effective version of Theorem \ref{theo:length_spec_simple_red} with a power saving error term.
\end{remark}
$ $

\textit{Length and projective class spectra of ordered simple closed multi-curves.} Let $\gamma := (\gamma_1,\dots,\gamma_k)$ with $1 \leq k \leq 3g-3+n$ be an ordered simple closed multi-curve on $S_{g,n}$, $X \in \mathcal{T}_{g,n}$, and $\mathbf{a}:=(a_1,\dots,a_k) \in (\mathbf{Q}_{>0})^k$ be a vector of positive rational weights on the components of $\gamma$. One can record, with multiplicity, the hyperbolic length vector with respect to $X$ and the projective class in $P\mathcal{ML}_{g,n}$ with respect to the weights $\mathbf{a}$ of every ordered closed multi-curve in the mapping class group orbit of $\gamma$ by considering the counting measure on $(\mathbf{R}_{\geq 0})^k \times P\mathcal{ML}_{g,n}$ given by
\[
\nu_{\gamma,X,\mathbf{a}} := \sum_{\alpha \in \text{Mod}_{g,n} \cdot \gamma} \delta_{\vec{\ell}_\alpha(X)} \otimes \delta_{\overline{\mathbf{a} \cdot \alpha}}.
\]
This measure depends on marking of $X \in \mathcal{T}_{g,n}$. We refer to this measure as the \textit{length and projective class spectrum} of $\gamma$ with respect to $X$ and $\mathbf{a}$.\\

To study the asymptotic behavior of $\nu_{\gamma,X,\mathbf{a}}$, consider the family of rescaled counting measures $\{\nu_{\gamma,X,\mathbf{a}}^L\}_{L>0}$ on $(\mathbf{R}_{\geq 0})^k \times P\mathcal{ML}_{g,n}$ given by 
\[
\nu_{\gamma,X,\mathbf{a}}^L := \sum_{\alpha \in \text{Mod}_{g,n} \cdot \gamma} \delta_{\frac{1}{L} \cdot \vec{\ell}_\alpha(X)} \otimes \delta_{\overline{\mathbf{a} \cdot \alpha}}.
\]
$ $

\textit{Asymptotics of length and projective class spectra of ordered simple closed multi-curves.} Given $X \in \mathcal{T}_{g,n}$, let $\mu_{\text{Thu}}^X$ be the measure on $P\mathcal{ML}_{g,n}$ which to every Borel measurable subset $V \subseteq P\mathcal{ML}_{g,n}$ assigns the value
\[
\mu_{\text{Thu}}^X(V) := \mu_{\text{Thu}}(\{\lambda \in \mathcal{ML}_{g,n} \ | \ \ell_\lambda(X) \leq 1,  \ \overline{\lambda}\in V\}).
\]
A more refined application of the ideas in the proof of Theorem \ref{theo:length_spec_simple} yields the following stronger result, which describes the behavior near infinity of the length and projective class spectra of ordered simple closed multi-curves with respect to complete, finite area hyperbolic structures and positive rational weights.\\

\begin{theorem}
	\label{theo:length_proj_spec_simple}
	Let $\gamma := (\gamma_1,\dots,\gamma_k)$ with $1 \leq k \leq 3g-3+n$ be an ordered simple closed multi-curve on $S_{g,n}$, $X \in \mathcal{T}_{g,n}$, and $\mathbf{a}:=(a_1,\dots,a_k) \in (\mathbf{Q}_{>0})^k$ be a vector of positive rational weights on the components of $\gamma$. Then,
	\[
	\lim_{L \to \infty} \frac{\nu_{\gamma,X,\mathbf{a}}^L}{L^{6g-6+2n}} = \frac{1}{b_{g,n}} \cdot W_{g,n}(\gamma,\mathbf{x}) \cdot d\mathbf{x} \otimes \mu_{\text{Thu}}^X 
	\]
	in the weak-$\star$ topology for measures on $(\mathbf{R}_{\geq 0})^k \times P\mathcal{ML}_{g,n}$.
\end{theorem}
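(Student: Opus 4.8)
The plan is to run the Margulis-style averaging and unfolding scheme used to prove Theorem~\ref{theo:length_spec_simple_red}, but carrying along the extra weight coming from the projective class, and to feed the result into an enhancement of the expanding horoball equidistribution of \cite{Mir07b}, \cite{Ara19b} that also keeps track of the class of the relevant multi-geodesics in $P\mathcal{ML}_{g,n}$. Since $P\mathcal{ML}_{g,n}$ is compact and Radon measures on $(\mathbf{R}_{\geq 0})^k \times P\mathcal{ML}_{g,n}$ are determined by their pairings with functions $f \otimes \psi$ where $f \colon (\mathbf{R}_{\geq 0})^k \to \mathbf{R}_{\geq 0}$ is continuous and compactly supported and $\psi \colon P\mathcal{ML}_{g,n} \to \mathbf{R}_{\geq 0}$ is continuous, it suffices to prove that for all such $f,\psi$ one has $L^{-(6g-6+2n)} \sum_{\alpha \in \text{Mod}_{g,n} \cdot \gamma} f\!\left(\tfrac1L \vec{\ell}_\alpha(X)\right) \psi(\overline{\mathbf{a} \cdot \alpha}) \to \tfrac{1}{b_{g,n}}\big(\int_{\mathbf{R}^k} f\, W_{g,n}(\gamma,\mathbf{x})\, d\mathbf{x}\big)\big(\int_{P\mathcal{ML}_{g,n}} \psi\, d\mu_{\text{Thu}}^X\big)$. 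The case $\psi \equiv 1$ is exactly Theorem~\ref{theo:length_spec_simple_red}, so the new content is the presence of $\psi$. By the Stone--Weierstrass theorem on $P\mathcal{ML}_{g,n}$ one may further assume that $\psi(\overline{\lambda})$ is a fixed continuous degree-zero homogeneous function of finitely many geometric intersection numbers $i(\lambda, \delta_j)$, $j = 1,\dots,m$, against some fixed finite collection of simple closed curves $\delta_1,\dots,\delta_m$; then $\psi(\overline{\mathbf{a} \cdot \alpha})$ becomes a fixed continuous function of the auxiliary ``length vector'' $\big(i(\delta_j, \mathbf{a} \cdot \alpha)\big)_{j} = \big(\sum_i a_i\, i(\delta_j, \alpha_i)\big)_j$ of $\alpha$ measured against the $\delta_j$.

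\textbf{Spreading out and unfolding.} Fix a lift $X_0 \in \mathcal{T}_{g,n}$ of $X$, a ball $U \ni X_0$ small enough that $gU \cap U = \emptyset$ for $g \neq \text{id}$, and $\phi \geq 0$ supported in $U$ with $\int_{\mathcal{T}_{g,n}} \phi\, d\mu_{\text{wp}} = 1$. Averaging the counting function against $\phi$, interchanging sum and integral, and performing for each $\alpha = g_\alpha \cdot \gamma$ the change of variables $Z = g_\alpha^{-1} X'$ unfolds the orbit sum; since $Z \mapsto f(\tfrac1L \vec{\ell}_\gamma(Z))$ is $\text{Stab}(\gamma)$-invariant and $\mu_{\text{wp}}$ pushes to $\widetilde{\mu}_{\text{wp}}^\gamma$, this gives
\[
\int_{\mathcal{T}_{g,n}} \Bigg(\sum_{\alpha \in \text{Mod}_{g,n} \cdot \gamma} f\!\left(\tfrac1L \vec{\ell}_\alpha(X')\right) \psi(\overline{\mathbf{a} \cdot \alpha})\Bigg) \phi(X')\, d\mu_{\text{wp}}(X') = \int_{\mathcal{T}_{g,n}/\text{Stab}(\gamma)} f\!\left(\tfrac1L \vec{\ell}_\gamma(Y)\right) \Phi(Y)\, d\widetilde{\mu}_{\text{wp}}^\gamma(Y),
\]
where, for any lift $\widetilde{Y}$ of $Y$, $\Phi(Y) := \sum_{g \in \text{Mod}_{g,n}} \psi(g \cdot \overline{\mathbf{a} \cdot \gamma})\, \phi(g \cdot \widetilde{Y})$ (a locally finite sum, well defined since $\text{Stab}(\gamma)$ fixes $\overline{\mathbf{a} \cdot \gamma}$). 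As $U$ is embedded, for every $Y$ whose image in $\mathcal{M}_{g,n}$ is near $X$ there is a unique $g_Y$ with $g_Y \widetilde{Y} \in U$, so $\Phi(Y) = \phi(g_Y \widetilde{Y})\, \psi(g_Y \cdot \overline{\mathbf{a} \cdot \gamma})$ and $\Phi$ both localizes the integral near the fiber over $X$ and records the projective class of $\mathbf{a} \cdot \gamma$ as seen through the marking $g_Y$. For $\psi \equiv 1$ this $\Phi$ is precisely the weight used to prove Theorem~\ref{theo:length_spec_simple_red}.

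\textbf{Rescaling and refined horoball equidistribution.} Disintegrating $\widetilde{\mu}_{\text{wp}}^\gamma$ along $\vec{\ell}_\gamma$ into conditional measures $\sigma_{\mathbf{x}}$ on the level sets $\{\vec{\ell}_\gamma = \mathbf{x}\}$, Wolpert's formula together with Mirzakhani's volume computations give that $|\sigma_{\mathbf{x}}|$ is $x_1 \cdots x_k$ times the relevant product Weil--Petersson volume polynomial, hence asymptotic along rays to a constant multiple of $W_{g,n}(\gamma,\mathbf{x})$. The substitution $\mathbf{x} = L\mathbf{u}$ turns the right-hand side above into $L^k \int f(\mathbf{u})\big(\int_{\{\vec{\ell}_\gamma = L\mathbf{u}\}} \Phi\, d\sigma_{L\mathbf{u}}\big)\, d\mathbf{u}$, and the heart of the matter is the assertion that, as $L \to \infty$,
\[
\int_{\{\vec{\ell}_\gamma = L\mathbf{u}\}} \Phi\, d\sigma_{L\mathbf{u}} \ \sim\ |\sigma_{L\mathbf{u}}| \cdot \frac{1}{b_{g,n}} \int_{\mathcal{T}_{g,n}} \phi(X') \left( \int_{P\mathcal{ML}_{g,n}} \psi\, d\mu_{\text{Thu}}^{X'} \right) d\mu_{\text{wp}}(X') \ \sim\ |\sigma_{L\mathbf{u}}| \cdot \frac{1}{b_{g,n}} \int_{P\mathcal{ML}_{g,n}} \psi\, d\mu_{\text{Thu}}^{X};
\]
equivalently, the normalized level-set measures $\sigma_{L\mathbf{u}}/|\sigma_{L\mathbf{u}}|$, localized near the fiber over $X$ and equipped with the coordinate $g_Y \cdot \overline{\mathbf{a} \cdot \gamma} \in P\mathcal{ML}_{g,n}$, equidistribute to $\tfrac1{b_{g,n}} \int \delta_{X'} \otimes \mu_{\text{Thu}}^{X'}\, d\mu_{\text{wp}}(X')$. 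This is the sought refinement of the expanding horoball equidistribution of \cite{Mir07b}, \cite{Ara19b}: its $\mathcal{M}_{g,n}$-marginal is the equidistribution with density $B(\cdot)/b_{g,n}$ already used for Theorem~\ref{theo:length_spec_simple_red}, while the extra $P\mathcal{ML}_{g,n}$-coordinate --- made concrete through the reduction above as a second length vector of the marked curve $\mathbf{a} \cdot \gamma$ against the $\delta_j$ --- is governed by the ``unit ball'' in $\mathcal{ML}_{g,n}$ cut out by the hyperbolic-length constraint and is therefore distributed according to $\mu_{\text{Thu}}^{X'}/B(X')$. Granting this, integrating against $f(\mathbf{u})$ and restoring the factor $L^k$ produces the asserted $L^{6g-6+2n}$-asymptotics of the averaged counting function, with the various constants assembling correctly because they must, by the $\psi \equiv 1$ comparison with Theorem~\ref{theo:length_spec_simple_red}.

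\textbf{Passing to points, and the main obstacle.} To pass from the averaged statement to the pointwise statement at $X$, note that $\psi(\overline{\mathbf{a} \cdot \alpha})$ does not depend on $X' \in U$ and that $\vec{\ell}_\alpha(X')$ varies uniformly slowly over $X' \in U$ in the $\tfrac1L$-Lipschitz sense already used for Theorem~\ref{theo:length_spec_simple_red}; hence one sandwiches the counting function at $X$ between averages over $U$ of the counting functions for suitably dilated $f_\epsilon^{\mp}$, applies the previous step, and lets $\epsilon \to 0$. Undoing the Stone--Weierstrass reduction and the density of product test functions, and invoking Portmanteau's theorem, then gives Theorem~\ref{theo:length_proj_spec_simple}. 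The main obstacle is the equidistribution input of the third step: formulating and proving the correct enhancement of \cite{Mir07b}, \cite{Ara19b} that simultaneously tracks the class of $\gamma$ in $P\mathcal{ML}_{g,n}$ and identifies the limiting conditional measure as $\mu_{\text{Thu}}^{X}/B(X)$ --- equivalently, showing that the direction in $\mathcal{ML}_{g,n}$ of the marked curve $\gamma$ along an expanding $\gamma$-horoball equidistributes according to the Thurston measure, uniformly and compatibly with the $\mathcal{M}_{g,n}$-equidistribution. A secondary but delicate point is the bookkeeping of the symmetry factors, which must collapse, when $\psi \equiv 1$, to the constants in Theorem~\ref{theo:length_spec_simple_red}.
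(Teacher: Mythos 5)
Your overall strategy --- spread out and average near $X$, unfold over $\mathcal{T}_{g,n}/\text{Stab}(\gamma)$, feed the averaged count into an equidistribution statement for expanding horoball-type measures that also records the projective class of $\mathbf{a}\cdot\gamma$, then sandwich with $f_\epsilon^{\mp}$, let $\epsilon \to 0$, and finish with Stone--Weierstrass and Portmanteau --- is exactly the route the paper takes. The genuine gap is that what you declare to be ``the main obstacle,'' namely formulating and proving the refined equidistribution that tracks the class in $P\mathcal{ML}_{g,n}$ and identifies the limiting fiber measure as $\mu_{\text{Thu}}^{X}$, is left granted rather than proved; but this input is not open --- it is precisely Theorem \ref{theo:horoball_equid}, the result of \cite{Ara19b} recalled in \S 2. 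The horoball segment measures $\widehat{\nu}_{\gamma,\mathbf{a}}^{h,L}$ on $P^1\mathcal{M}_{g,n}$ already carry the fiber point mass at $\mathbf{a}\cdot\gamma/\ell_{\mathbf{a}\cdot\gamma}(\cdot)$, and their normalized weak-$\star$ limit $\widehat{\nu}_{\text{Mir}}/b_{g,n}$ disintegrates over $\mathcal{M}_{g,n}$ as $d\mu_{\text{Thu}}^{X}(\lambda)\,d\mu_{\text{wp}}(X)$, which is exactly the identification you ask for. Once you replace your granted assertion by Theorem \ref{theo:horoball_equid}, applied to the fixed test function built from your bump function and $\psi$ (this is the content of Proposition \ref{prop:pull_push_2}), the argument closes, and the constants come out of the limit measure itself rather than out of the consistency check against the $\psi \equiv 1$ case. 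Note also that your further Stone--Weierstrass reduction of $\psi$ to a homogeneous function of finitely many intersection numbers is unnecessary for this scheme.

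Two additional cautions. First, your formulation of the needed input --- asymptotics of the conditional measures $\sigma_{L\mathbf{u}}$ on each level set $\{\vec{\ell}_\gamma = L\mathbf{u}\}$, pointwise in $\mathbf{u}$ and uniform enough to interchange with the $\mathbf{u}$-integral --- is strictly stronger than what \cite{Ara19b} provides and than what is needed; the paper never disintegrates along $\vec{\ell}_\gamma$, but only integrates the measures $\widehat{\nu}_{\gamma,\mathbf{a}}^{h,L}/m_\gamma^{h,L}$ against one compactly supported test function and uses Proposition \ref{prop:total_hor_meas} for the normalization $m_\gamma^{h,L}$. If you insist on the level-set version you must justify the exchange of the $L\to\infty$ limit with the $\mathbf{u}$-integration yourself. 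Second, a neighborhood $U \ni X_0$ with $gU \cap U = \emptyset$ for all $g \neq \mathrm{id}$ does not exist in general: the kernel of the $\text{Mod}_{g,n}$-action on $\mathcal{T}_{g,n}$ acts trivially, and $X_0$ may have a nontrivial stabilizer. Since $\psi(g\cdot\overline{\mathbf{a}\cdot\gamma})$ genuinely depends on the marking (unlike the counts in Theorem \ref{theo:length_spec_simple_red}, where the issue is invisible), this must be handled as in the paper, by averaging over $\text{Stab}(X)$ --- the functions $c'$ and $g'$ in the proof of Theorem \ref{theo:length_proj_spec_simple_red}.
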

$ $

\begin{remark}
	Theorem \ref{theo:length_spec_simple} can be deduced from Theorem \ref{theo:length_proj_spec_simple} by taking pushforwards under the map $(\mathbf{R}_{\geq0})^k \times P\mathcal{ML}_{g,n} \to (\mathbf{R}_{\geq0})^k$ which projects to the first coordinate.
\end{remark}
$ $

Let $X \in \mathcal{T}_{g,n}$, $\gamma := (\gamma_1,\dots,\gamma_k)$ with $1 \leq k \leq 3g-3+n$ be an ordered simple closed multi-curve on $S_{g,n}$, $\mathbf{b}:=(b_1,\dots,b_k) \in (\mathbf{R}_{>0})^k$, $\mathbf{a}:=(a_1,\dots,a_k) \in (\mathbf{Q}_{>0})^k$, and $V \subseteq P\mathcal{ML}_{g,n}$ be a continuity subset of the Thurston measure class, i.e., $V$ is a Borel measurable subset satisfying
\[
\mu_{\text{Thu}}(\{\lambda \in \mathcal{ML}_{g,n} \ | \ \overline{\lambda} \in \partial V \}) = 0.
\]
For every $L > 0$ consider the counting function
\begin{align*}
&c(X,\gamma,\mathbf{b},L,\mathbf{a},V)\\
&:= \#\left\lbrace
\begin{array}{c | l}
\alpha := (\alpha_i)_{i=1}^k \in \text{Mod}_{g,n} \cdot \gamma
& \ \ell_{\alpha_i}(X) \leq b_i L, \ \forall i=1,\dots,k,\\
& \ \overline{\mathbf{a} \cdot \gamma} \in V.\\
\end{array} \right\rbrace.
\end{align*}
This counting function depends on marking of $X \in \mathcal{T}_{g,n}$. The following strengthening of Theorem \ref{theo:simple_comp_count} is a direct consequence of Theorem \ref{theo:length_spec_simple}, Lemma \ref{lem:thu_meas_zero}, and Portmanteau's theorem.\\

\begin{theorem}
	\label{theo:simple_proj_comp_count}
	Let $X \in \mathcal{T}_{g,n}$, $\gamma := (\gamma_1,\dots,\gamma_k)$ with $1 \leq k \leq 3g-3+n$ be an ordered simple closed multi-curve on $S_{g,n}$, $\mathbf{b}:=(b_1,\dots,b_k) \in (\mathbf{R}_{>0})^k$, $\mathbf{a}:=(a_1,\dots,a_k) \in (\mathbf{Q}_{>0})^k$, and $V \subseteq P\mathcal{ML}_{g,n}$ be a continuity subset of the Thurston measure class. Then,
	\[
	\lim_{L \to \infty} \frac{c(X,\gamma,\mathbf{b},L,\mathbf{a},V)}{L^{6g-6+2n}} = \frac{\mu_{\text{Thu}}^X(V)}{b_{g,n}} \cdot \int_{\prod_{i=1}^k [0,b_i]} W_{g,n}(\gamma,\mathbf{x}) \cdot d\mathbf{x}.
	\]
\end{theorem}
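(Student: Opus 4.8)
The plan is to deduce Theorem~\ref{theo:simple_proj_comp_count} from Theorem~\ref{theo:length_proj_spec_simple} (equivalently Theorem~\ref{theo:length_spec_simple} together with the projective refinement) by a Portmanteau-type argument applied to the limiting measure
\[
\lim_{L \to \infty} \frac{\nu_{\gamma,X,\mathbf{a}}^L}{L^{6g-6+2n}} = \frac{1}{b_{g,n}} \cdot W_{g,n}(\gamma,\mathbf{x}) \cdot d\mathbf{x} \otimes \mu_{\text{Thu}}^X.
\]
First I would observe that the counting function $c(X,\gamma,\mathbf{b},L,\mathbf{a},V)$ is exactly the evaluation of the rescaled measure $\nu_{\gamma,X,\mathbf{a}}^L$ on the Borel set $E := \left(\prod_{i=1}^k [0,b_i]\right) \times V \subseteq (\mathbf{R}_{\geq 0})^k \times P\mathcal{ML}_{g,n}$, since the condition $\ell_{\alpha_i}(X) \leq b_i L$ for all $i$ is precisely $\frac{1}{L}\vec{\ell}_\alpha(X) \in \prod_{i=1}^k [0,b_i]$, and the condition $\overline{\mathbf{a}\cdot\alpha} \in V$ is preserved under rescaling in $\mathcal{ML}_{g,n}$ (the projective class is scale-invariant). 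Thus $c(X,\gamma,\mathbf{b},L,\mathbf{a},V) = \nu_{\gamma,X,\mathbf{a}}^L(E)$.

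The second step is to verify that $E$ is a continuity set for the limiting measure $\frac{1}{b_{g,n}} W_{g,n}(\gamma,\mathbf{x})\, d\mathbf{x} \otimes \mu_{\text{Thu}}^X$. Its topological boundary satisfies $\partial E \subseteq \left(\partial\left(\prod_{i=1}^k [0,b_i]\right) \times P\mathcal{ML}_{g,n}\right) \cup \left((\mathbf{R}_{\geq 0})^k \times \partial V\right)$. For the first piece, $\partial\left(\prod_{i=1}^k [0,b_i]\right)$ is a finite union of coordinate hyperplane slabs, which has Lebesgue measure zero in $(\mathbf{R}_{\geq 0})^k$, hence $W_{g,n}(\gamma,\mathbf{x})\,d\mathbf{x}$-measure zero (the density is a polynomial, hence locally integrable); since the product measure of a null set with anything is null, this piece contributes zero. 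For the second piece, the hypothesis that $V$ is a continuity set of the Thurston measure class means $\mu_{\text{Thu}}(\{\lambda : \overline\lambda \in \partial V\}) = 0$, which is designed precisely to give $\mu_{\text{Thu}}^X(\partial V) = 0$; here is where Lemma~\ref{lem:thu_meas_zero} enters, translating the measure-zero condition for $\mu_{\text{Thu}}$ into the corresponding statement for the truncated, $X$-dependent measure $\mu_{\text{Thu}}^X$ on $P\mathcal{ML}_{g,n}$. Combining, $\left(\frac{1}{b_{g,n}} W_{g,n}(\gamma,\mathbf{x})\,d\mathbf{x} \otimes \mu_{\text{Thu}}^X\right)(\partial E) = 0$.

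With $E$ a bounded continuity set and weak-$\star$ convergence of $\nu_{\gamma,X,\mathbf{a}}^L/L^{6g-6+2n}$ in hand, the Portmanteau theorem gives
\[
\lim_{L\to\infty} \frac{\nu_{\gamma,X,\mathbf{a}}^L(E)}{L^{6g-6+2n}} = \frac{1}{b_{g,n}} \int_{\prod_{i=1}^k [0,b_i]} W_{g,n}(\gamma,\mathbf{x})\, d\mathbf{x} \cdot \mu_{\text{Thu}}^X(V),
\]
which is exactly the claimed formula once one rewrites the product-measure evaluation of the indicator of $E$ using Fubini. One minor technical point deserving care is that Portmanteau for continuity sets in its standard form is stated for \emph{finite} measures on a fixed space, while here the limiting measure is only locally finite on $(\mathbf{R}_{\geq 0})^k \times P\mathcal{ML}_{g,n}$; this is harmless because $E$ is bounded in the $(\mathbf{R}_{\geq 0})^k$ factor and $P\mathcal{ML}_{g,n}$ is compact, so one may restrict all measures to a fixed compact set $K \supseteq \overline{E}$ of the form $\left(\prod[0, b_i + 1]\right) \times P\mathcal{ML}_{g,n}$, on which everything is finite, and apply the standard statement there. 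The main obstacle, such as it is, is bookkeeping rather than conceptual: one must be careful that the boundary $\partial V$ is taken in $P\mathcal{ML}_{g,n}$ and that $\mu_{\text{Thu}}^X$ genuinely annihilates it --- this is not automatic from the $\mu_{\text{Thu}}$ statement and requires invoking Lemma~\ref{lem:thu_meas_zero}, which presumably shows that the radial ``cone over $\partial V$ up to $X$-length $1$'' is $\mu_{\text{Thu}}$-null whenever $\{\overline\lambda \in \partial V\}$ is.
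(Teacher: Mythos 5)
Your proposal is correct and is essentially the paper's own route: the paper derives the statement exactly as a direct consequence of the weak-$\star$ convergence of $\nu_{\gamma,X,\mathbf{a}}^L/L^{6g-6+2n}$ (Theorem \ref{theo:length_proj_spec_simple}) and Portmanteau's theorem applied to the continuity set $\left(\prod_{i=1}^k [0,b_i]\right) \times V$, with the same boundary decomposition and the same compactness/local-finiteness bookkeeping you describe. One minor correction: $\mu_{\text{Thu}}^X(\partial V) = 0$ is automatic by monotonicity, since $\{\lambda : \ell_\lambda(X) \leq 1, \ \overline{\lambda} \in \partial V\}$ is contained in the $\mu_{\text{Thu}}$-null set $\{\lambda : \overline{\lambda} \in \partial V\}$, so Lemma \ref{lem:thu_meas_zero} is not needed for that step; its natural role in this type of argument is rather to kill the faces $\{x_i = b_i\}$ of the box (which you handle, equally validly, by Lebesgue-nullity of the faces against the polynomial density $W_{g,n}(\gamma,\mathbf{x})$).
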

$ $

\begin{remark}
	\label{rem:effective_intro_2}
	Just as in the case of Theorem \ref{theo:length_spec_simple_red}, if certain analogues of expanding horoballs on the bundle of unit length measured geodesics laminations over $\mathcal{M}_{g,n}$ equidistributed at a polynomial rate, the methods in our proof would yield an effective version of Theorem \ref{theo:length_spec_simple_red} with a power saving error term; see Remark \ref{rem:effective_def_2}.
\end{remark}
$ $

\textit{Topological factor of asymptotic length (and projective class) spectra of ordered simple closed multi-curves.} Let $\gamma := (\gamma_1,\dots,\gamma_k)$ with $k \geq 1$ be an ordered closed multi-curve on $S_{g,n}$. According to Theorem \ref{theo:length_spec_simple} (and Theorem \ref{theo:length_proj_spec_simple}), if $\gamma$ is simple, the asymptotic length (and projective class) spectrum of $\gamma$ with respect to any $X \in \mathcal{T}_{g,n}$ (and any vector $\mathbf{a}:=(a_1,\dots,a_k)\in (\mathbf{Q}_{>0})^k$ of positive rational weights on the components of $\gamma$) has a factor
\[
W_{g,n}(\gamma,\mathbf{x}) \cdot d\mathbf{x}
\]
which depends only on $\gamma$ and not on $X$ (or $\mathbf{a}$). We provide a purely topological description of this factor.\\

A measured geodesic laminations $\lambda \in \mathcal{ML}_{g,n}$ fills $S_{g,n}$ together with $\gamma$ if the geodesic representatives of the components of $\gamma$ and the topological support of $\lambda$ cut $S_{g,n}$ into polygons with no ideal vertices. Let $\mathcal{ML}_{g,n}(\gamma) \subseteq \mathcal{ML}_{g,n}$ be the open, dense, full measure subset of all measured geodesic laminations that together with $\gamma$ fill $S_{g,n}$. The stabilizer $\text{Stab}(\gamma) \subseteq \text{Mod}_{g,n}$ acts properly discontinuously on $\mathcal{ML}_{g,n}(\gamma)$, see Proposition \ref{prop:pd_Y}. Consider the measure $\mu_{\text{Thu}}^\gamma := \mu_{\text{Thu}}|_{\mathcal{ML}_{g,n}(\gamma)}$ on $\mathcal{ML}_{g,n}(\gamma)$ and let $\widetilde{\mu}_{\text{Thu}}^\gamma$ be its local pushforward to the quotient $\mathcal{ML}_{g,n}(\gamma)/\text{Stab}(\gamma)$. Consider  the map 
\[
I_\gamma \colon \mathcal{ML}_{g,n}(\gamma) \to (\mathbf{R}_{\geq 0})^k
\]
which to every $\lambda \in \mathcal{ML}_{g,n}(\gamma) $ assigns the vector
\[
I_\gamma(\lambda) := (i(\gamma_1,\lambda), \dots, i(\gamma_k,\lambda)) \in  (\mathbf{R}_{\geq 0})^k
\]
and let
\[
\widetilde{I}_\gamma \colon \mathcal{ML}_{g,n}(\gamma)/\text{Stab}(\gamma) \to (\mathbf{R}_{\geq 0})^k
\]
be its induced map on the quotient $\mathcal{ML}_{g,n}(\gamma)/\text{Stab}(\gamma)$.\\
$ $

\begin{theorem}
	\label{theo:top_int_asymp_spec}
	If $\gamma$ is simple,
	\[
	W_{g,n}(\gamma,\mathbf{x}) \cdot d\mathbf{x} = (\widetilde{I}_\gamma)_*(\widetilde{\mu}_{\text{Thu}}^\gamma).
	\]
\end{theorem}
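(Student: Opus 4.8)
The pushforward $(\widetilde{I}_\gamma)_*(\widetilde{\mu}_{\text{Thu}}^\gamma)$ is computed by \emph{unfolding}: for any fundamental domain $\mathcal{F} \subseteq \mathcal{ML}_{g,n}(\gamma)$ of the properly discontinuous action of $\text{Stab}(\gamma)$ and any non-negative continuous compactly supported $f$ on $(\mathbf{R}_{\geq 0})^k$, the definitions give
\[
\int f \ d(\widetilde{I}_\gamma)_*(\widetilde{\mu}_{\text{Thu}}^\gamma) = \int_{\mathcal{F}} f(I_\gamma(\lambda)) \ d\mu_{\text{Thu}}(\lambda).
\]
The substance is to evaluate the right-hand side, and the plan is to work in \emph{cut-and-twist coordinates} adapted to $\gamma$. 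Write $S_1,\dots,S_s$ for the components of the surface obtained by cutting $S_{g,n}$ along $\gamma$, regarded as bordered surfaces with the circles arising from $\gamma$ as boundary and the original punctures retained. A full Thurston-measure subset of $\mathcal{ML}_{g,n}(\gamma)$ is parametrized by the intersection-number vector $\mathbf{x} = I_\gamma(\lambda) \in (\mathbf{R}_{>0})^k$ (all entries positive because $\gamma$ and $\lambda$ fill), a real twisting parameter $\tau_i$ along each $\gamma_i$, and the restriction $\lambda_j$ of the cut lamination to each $S_j$, namely a measured lamination on $S_j$ whose leaves meeting the boundary circle arising from $\gamma_i$ have total transverse weight $x_i$ (when both sides of $\gamma_i$ lie on the same $S_j$, the weight $x_i$ is assigned to both of the corresponding circles). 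These are the Dehn--Thurston coordinates of $\mathcal{ML}_{g,n}$ reorganized around $\gamma$, and passing between them and a train-track chart is piecewise-integral-linear with Jacobian of constant absolute value, so in these coordinates $\mu_{\text{Thu}} = c_\gamma \cdot d\mathbf{x}\, d\tau_1 \cdots d\tau_k \otimes \bigotimes_{j=1}^{s} d\mu^{(j)}_{\text{Thu}}(\lambda_j)$ for an explicit positive power of two $c_\gamma$ and the Thurston measure $\mu^{(j)}_{\text{Thu}}$ on the space of measured laminations on $S_j$ with weighted boundary.

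Next I would quotient by $\text{Stab}(\gamma)$. Up to finite index and a factor recording the finite group of mapping classes that permute the components of $\gamma$ or interchange the two sides of a component, $\text{Stab}(\gamma)$ is generated by the Dehn twists $T_{\gamma_1},\dots,T_{\gamma_k}$ together with the mapping class group of the cut surface $\bigsqcup_j S_j$. The twist $T_{\gamma_i}$ acts on the coordinates above by $\tau_i \mapsto \tau_i + x_i$ and fixes everything else, so on a fundamental domain each $\tau_i$ ranges over $[0, x_i)$ and integrating out the twists yields a factor $x_1 \cdots x_k$; the cut mapping class group acts only on the $\lambda_j$ and preserves the boundary weights, so integrating it out replaces the $\lambda_j$-integral by $\prod_{j=1}^{s} \mathrm{Vol}_j(\mathbf{x}^{(j)})$, where $\mathrm{Vol}_j(\mathbf{w})$ denotes the total $\mu^{(j)}_{\text{Thu}}$-mass of the moduli space of measured laminations on $S_j$ with boundary weights $\mathbf{w}$ and $\mathbf{x}^{(j)}$ collects the entries of $\mathbf{x}$ indexing the boundary circles of $S_j$. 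A Euler-characteristic count shows $\mathrm{Vol}_j$ is homogeneous of degree equal to the dimension of $\mathcal{ML}(S_j)$ minus the number of boundary circles, and these degrees sum to $6g-6+2n-2k$, so that together with the factor $x_1\cdots x_k$ the total $\mathbf{x}$-degree is $6g-6+2n-k$, matching the degree of $W_{g,n}(\gamma,\cdot)$. Collecting everything, $\int f\, d(\widetilde{I}_\gamma)_*(\widetilde{\mu}_{\text{Thu}}^\gamma)$ equals an explicit combinatorial constant times $\int_{(\mathbf{R}_{>0})^k} f(\mathbf{x})\, x_1 \cdots x_k \prod_{j=1}^{s} \mathrm{Vol}_j(\mathbf{x}^{(j)}) \, d\mathbf{x}$.

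To conclude, I would invoke Mirzakhani's identification of Thurston-measure volumes with top-degree Weil--Petersson volume polynomials --- the same mechanism underlying Proposition \ref{prop:mir_freq} and the explicit description of $W_{g,n}(\gamma,\mathbf{x})$ recalled in the remark following it --- to rewrite $\mathrm{Vol}_j(\mathbf{x}^{(j)})$, up to the appropriate constant, as the top-degree homogeneous part of the Weil--Petersson volume polynomial of the moduli space of bordered Riemann surfaces homeomorphic to $S_j$, evaluated with boundary lengths given by the corresponding entries of $\mathbf{x}$ and zero at the original punctures. The resulting density in $\mathbf{x}$ is then exactly $x_1\cdots x_k$ times a constant times the product of these top-degree volume polynomials, which is by definition $W_{g,n}(\gamma,\mathbf{x})$ once the combinatorial constants (the power of two $c_\gamma$ together with the power of two and symmetry factor built into $W_{g,n}$) are checked to match. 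This gives $(\widetilde{I}_\gamma)_*(\widetilde{\mu}_{\text{Thu}}^\gamma) = W_{g,n}(\gamma,\mathbf{x}) \cdot d\mathbf{x}$, modulo verifying that the non-generic loci discarded along the way --- where some $x_i = 0$, where the cut lamination has a closed leaf isotopic into the boundary, or where it fails to be carried by the relevant train track --- are Thurston-null, which holds since each sits inside a finite union of lower-dimensional rational subspaces and since $\mathcal{ML}_{g,n}(\gamma)$ is itself full measure in $\mathcal{ML}_{g,n}$.

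The step I expect to be the main obstacle is the set-up of the cut-and-twist coordinates and the computation of the Thurston measure in them \emph{with the correct constant}: one must handle carefully the boundary-incident arcs of the cut lamination on each $S_j$ so that the space and measure $\mu^{(j)}_{\text{Thu}}$ that appear are precisely those to which Mirzakhani's Weil--Petersson volume identity applies, and one must track the powers of two and the order of the symmetry group of $\gamma$ so that the final constant coincides with Mirzakhani's normalization of $W_{g,n}(\gamma,\mathbf{x})$ and not merely up to a scalar. As an alternative route that sidesteps some of this bookkeeping, one can observe that the unfolding-and-equidistribution argument proving Theorem \ref{theo:length_spec_simple_red} already produces, as an intermediate expression for the limit there, the measure $\tfrac{B(X)}{b_{g,n}} (\widetilde{I}_\gamma)_*(\widetilde{\mu}_{\text{Thu}}^\gamma)$ before it is identified with $\tfrac{B(X)}{b_{g,n}} W_{g,n}(\gamma,\mathbf{x})\, d\mathbf{x}$ via Mirzakhani's volume computations; comparing the two expressions for this limit and cancelling the common positive factor $B(X)/b_{g,n}$ yields the identity of measures directly.
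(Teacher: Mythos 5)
Your main route (unfold over a fundamental domain, disintegrate $\mu_{\text{Thu}}$ in cut-and-twist coordinates, integrate out the twists and the cut mapping class groups, then identify the cut-piece Thurston volumes with top-degree Weil--Petersson polynomials) is a genuinely different strategy from the paper's, and it is exactly at the last step that it has a real gap. The identification you invoke --- that the $\mu_{\text{Thu}}^{(j)}$-volume of laminations on a bordered piece $S_j$ with prescribed boundary weights, taken modulo $\text{Mod}(S_j)$, equals (up to explicit constants) the top-degree part of $V_{g_j,n_j}^{b_j}$ --- is \emph{not} "the same mechanism underlying Proposition \ref{prop:mir_freq}". Mirzakhani's Proposition \ref{prop:mir_freq}, like Proposition \ref{prop:total_hor_meas}, is proved by unfolding Weil--Petersson volumes via the integration formula in Fenchel--Nielsen coordinates; no Thurston volumes of bordered lamination spaces appear anywhere in that mechanism. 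The identity you need is a bordered-surface analogue of the very theorem being proven, of essentially the same depth, and it is not established in this paper; an independent proof of it requires either a Kontsevich-type combinatorial-model argument or precisely the kind of Weil--Petersson-to-Thurston degeneration that the paper carries out. Since you yourself flag that the constants (the power of two $c_\gamma$, the factor $[\text{Stab}(\gamma):\text{Stab}_0(\gamma)]$, $\sigma_{g,n}(\gamma)$, $2^{-\rho_{g,n}(\gamma)}$) are the delicate point, note that this is exactly where a black-box appeal cannot be made to work: the statement is needed with the correct normalization, not up to scalar. Your closing "alternative route" is also not available: the proof of Theorem \ref{theo:length_spec_simple_red} never produces $\frac{B(X)}{b_{g,n}}(\widetilde{I}_\gamma)_*(\widetilde{\mu}_{\text{Thu}}^\gamma)$ as an intermediate expression; its intermediate quantity is $\lim_L m_\gamma^{f,L}/L^{6g-6+2n}$, which is identified with $\int f\,W_{g,n}\,d\mathbf{x}$ by Proposition \ref{prop:total_hor_meas} (a Weil--Petersson computation). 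Equating that limit with the Thurston pushforward is precisely the content of Theorem \ref{theo:top_int_asymp_spec}, so the comparison you propose presupposes the result.

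For contrast, the paper's proof goes the other way around: after reducing by Carath\'eodory and Lemma \ref{lem:thu_meas_zero} to closed boxes, it uses Proposition \ref{prop:total_hor_meas} to rewrite $\int_{B_\mathbf{b}} W_{g,n}(\gamma,\mathbf{x})\,d\mathbf{x}$ as the $L\to\infty$ asymptotics of $\widetilde{\mu}_{\text{wp}}^\gamma\left(\{\ell_{\gamma_i}\leq b_iL\}\right)$ on $\mathcal{T}_{g,n}/\text{Stab}(\gamma)$, and then proves Proposition \ref{prop:topo_dist_red}: the rescaled Weil--Petersson measures $t^{6g-6+2n}\mu_{\text{wp}}^t$ converge to $\mu_{\text{Thu}}$ on the enlarged Yamabe space (Proposition \ref{prop:asymp_wp_meas}, via the measure-preserving shear coordinates of Corollary \ref{cor:shear_mp}), and a no-escape-of-mass argument (Proposition \ref{prop:no_escape_of_mass}, using the relative systole $s_\gamma$, the Bers-type Corollary \ref{cor:bers_Y}, Dehn--Thurston coordinates for compactness, and a Mirzakhani-integration estimate) justifies applying Portmanteau to the non-compact set $\widetilde{B}_\mathbf{b}(\gamma)$. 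Your lamination-side computation would avoid that degeneration and no-escape analysis entirely, which is an attractive feature, but as written the proof is incomplete until you supply an actual proof, with normalizations, of the disintegration of $\mu_{\text{Thu}}$ in your cut-and-twist coordinates and of the bordered-surface identity $\mathrm{Vol}_j = V^{\text{top}}_{g_j,n_j}{}^{b_j}$ that carries all the weight.
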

$ $

Our proof of Theorem \ref{theo:top_int_asymp_spec} uses Thurston's shear coordinates, see \cite[\S 9]{Thu98}, and the measure preserving properties of such coordinates established by Papadopoulos and Penner in the case of punctured surfaces,  see \cite[Corollary 4.2]{PP93}, and by Bonahon and S\"{o}zen in the case of closed surfaces, see \cite[Theorem 1]{BS01}. The characterization of the subset $\mathcal{ML}_{g,n}(\gamma) \subseteq \mathcal{ML}_{g,n}$ provided by Proposition	\ref{prop:fil_char} will also play a crucial in the proof as it will help us to deal with issues of non-compactness.\\

\textit{Asymptotics of length spectra of ordered filling closed multi-curves.} Using techniques introduced by Mirzakhani in \cite{Mir16}, we prove the following theorem, which describes the behavior near infinity of the length spectra of ordered filling closed multi-curves with respect to complete, finite area hyperbolic structures.\\

\begin{theorem}
	\label{theo:length_spec_filling}
	Let $\gamma:=(\gamma_1,\dots,\gamma_k)$ with $k \geq 1$ be an ordered filling closed multi-curve on $S_{g,n}$ and $X \in \mathcal{M}_{g,n}$. Then,
	\[
	\lim_{L \to \infty} \frac{\mu_{\gamma,X}^L}{L^{6g-6+2n}} = \frac{B(X)}{b_{g,n}} \cdot (\widetilde{I}_\gamma)_*\left(\widetilde{\mu}_{\text{Thu}}^\gamma\right)
	\]
	in the weak-$\star$ topology for measures on $(\mathbf{R}_{\geq 0})^k$.
\end{theorem}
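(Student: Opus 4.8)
The plan is to follow Mirzakhani's strategy from \cite{Mir16}, adapting it to keep track of the individual intersection numbers $i(\gamma_j, \cdot)$ rather than just the total length $\ell_{\mathbf a \cdot \alpha}$. As in the simple case, I would first rephrase the statement in the language of test functions: for a non-negative, continuous, compactly supported $f \colon (\mathbf R_{\geq 0})^k \to \mathbf R_{\geq 0}$, set
\[
c(X,\gamma,f,L) := \sum_{\alpha \in \mathrm{Mod}_{g,n}\cdot\gamma} f\!\left(\tfrac{1}{L}\cdot\vec\ell_\alpha(X)\right),
\]
and it suffices to show $c(X,\gamma,f,L)/L^{6g-6+2n}$ converges to $\frac{B(X)}{b_{g,n}}\int f \, d(\widetilde I_\gamma)_*(\widetilde\mu_{\mathrm{Thu}}^\gamma)$. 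The key geometric input is that for a \emph{filling} multi-curve, the hyperbolic length $\ell_{\alpha_j}(X)$ of a long geodesic representative is, to leading order, controlled by intersection numbers: there is the fundamental comparison $\ell_{\alpha_j}(X) \asymp i(\alpha_j, \lambda)$ for $\lambda$ in a compact part of $\mathcal{ML}_{g,n}$, and more precisely Mirzakhani's result that $\frac{1}{L}\vec\ell_{\phi\cdot\gamma}(X)$ and $\frac{1}{L}\vec I_\gamma(\phi^{-1}\cdot\lambda_X)$-type quantities are comparable, where one uses that as $\ell_{\mathbf a\cdot\alpha}(X)\to\infty$ the projective class $\overline{\alpha}$ (viewed in $\mathcal{ML}_{g,n}$ via intersection) accumulates on laminations that fill together with $\gamma$. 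Concretely, I would decompose the $\mathrm{Mod}_{g,n}$-orbit sum as a sum over $\mathrm{Mod}_{g,n}/\mathrm{Stab}(\gamma)$ and invoke the properly discontinuous action of $\mathrm{Stab}(\gamma)$ on $\mathcal{ML}_{g,n}(\gamma)$ from Proposition \ref{prop:pd_Y}.

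Next, I would run the counting via the integration-against-$\mathcal{ML}$ machinery of \cite{Mir16}. The heart of that paper expresses orbit counts of filling curves through integrals of the form $\int_{\mathcal{M}_{g,n}} F_{\gamma,L}(Y)\, d\widehat\mu_{\mathrm{wp}}(Y)$ where $F_{\gamma,L}$ counts lattice points $\alpha \in \mathrm{Mod}_{g,n}\cdot\gamma$ with $\vec\ell_\alpha(Y)$ in a prescribed region, and then uses the ergodicity of the mapping class group action on $\mathcal{ML}_{g,n}$ together with the scaling behavior of the Thurston measure to extract the $L^{6g-6+2n}$ asymptotics. I would carry out the analogous computation with the region $\{\vec\ell_\alpha \leq L\cdot\text{(support of }f)\}$ replaced by a general $f$-weighted count: the Thurston-measure scaling $\mu_{\mathrm{Thu}}(t\cdot A) = t^{6g-6+2n}\mu_{\mathrm{Thu}}(A)$ converts the rescaling-by-$L$ into the global factor, while the pushforward structure $(\widetilde I_\gamma)_*(\widetilde\mu_{\mathrm{Thu}}^\gamma)$ arises precisely because the natural parameter space for the orbit $\mathrm{Mod}_{g,n}\cdot\gamma$ (near infinity, after projectivizing) is $\mathcal{ML}_{g,n}(\gamma)/\mathrm{Stab}(\gamma)$ and the length vector converges, after rescaling, to the intersection-number vector $\widetilde I_\gamma$. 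The constant $B(X)/b_{g,n}$ appears as in Theorem \ref{theo:mir_count}: $B(X)$ from the "size" of the unit ball of laminations at $X$, and $b_{g,n}$ from the normalization of $\widehat\mu_{\mathrm{wp}}$ on $\mathcal{M}_{g,n}$. A consistency check: pushing forward under $\mathbf x \mapsto \mathbf a\cdot\mathbf x$ and integrating over $\{\mathbf a\cdot\mathbf x\le 1\}$ must recover Theorem \ref{theo:mir_count}, which forces $\int_{\{\mathbf a\cdot\mathbf x\le 1\}} d(\widetilde I_\gamma)_*(\widetilde\mu_{\mathrm{Thu}}^\gamma) = r(\gamma,\mathbf a)$.

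The main obstacle I anticipate is the non-compactness of $\mathcal{ML}_{g,n}(\gamma)/\mathrm{Stab}(\gamma)$ and, relatedly, establishing that the limiting measure $(\widetilde I_\gamma)_*(\widetilde\mu_{\mathrm{Thu}}^\gamma)$ is actually locally finite on $(\mathbf R_{\geq 0})^k$ (i.e., that the weighted counts do not blow up), together with the uniformity needed to pass from the integrated statement over $\mathcal{M}_{g,n}$ back to a pointwise-in-$X$ statement. In \cite{Mir16} this is handled by delicate estimates on the Mirzakhani function $B$ near the cusp and on the measure of thin parts; I would need the analogous estimates adapted to the $f$-weighted region, controlling the contribution of laminations $\lambda$ for which some $i(\gamma_j,\lambda)$ is very small (i.e., the region near the coordinate hyperplanes of $(\mathbf R_{\geq 0})^k$). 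The filling hypothesis is essential here precisely because it guarantees $\sum_j i(\gamma_j,\lambda) > 0$ for all $\lambda \neq 0$, so that a compactly supported $f$ genuinely restricts to a compact region of $\mathcal{ML}_{g,n}$; without it (the general multi-curve case) the argument would fail, as the introduction warns. A secondary technical point is checking that $\mathcal{ML}_{g,n}(\gamma)$ is indeed full measure and that $\mu_{\mathrm{Thu}}$ assigns zero mass to the locus where $I_\gamma$ degenerates, so that the weak-$\star$ limit is unambiguous; this should follow from the characterization of $\mathcal{ML}_{g,n}(\gamma)$ alluded to in Proposition \ref{prop:fil_char} together with standard properties of the Thurston measure.
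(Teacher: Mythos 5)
Your overall direction (reduce to a weak-$\star$ statement, invoke the machinery of \cite{Mir16}, use the filling hypothesis for compactness, check consistency with Theorem \ref{theo:mir_count}) matches the paper's, and your identification of the limit measure is correct. But there are two concrete gaps where the proposal asserts rather than proves. First, the paper does \emph{not} rework Mirzakhani's argument for vector-valued or $f$-weighted counts; the key observation is that after reducing weak-$\star$ convergence to closed boxes $\prod_i[0,b_i]$ (standard approximation plus Lemma \ref{lem:thu_meas_zero}), the box count $f(X,\gamma,\mathbf{b},L)$ is exactly a sublevel-set count for the \emph{single scalar} function $\mathcal{F}(Y):=\max_i \ell_{\gamma_i}(Y)/b_i$, which is asymptotically piecewise linear (Theorem \ref{theo:length_APL}), bounding, and proper (via $\ell_{\mathbf{1}\cdot\gamma}\leq k\max_i b_i\cdot\mathcal{F}$ together with Proposition \ref{prop:len_bounding} and Lemma \ref{lem:proper_len}), so that a mild generalization of the scalar counting theorem of \cite{Mir16} (Theorem \ref{theo:mir_closed_count_gen}) applies directly. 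Your plan to ``carry out the analogous computation with a general $f$-weighted count'' inside Mirzakhani's proof is a much heavier undertaking, and your description of that proof as resting on ergodicity of the $\mathrm{Mod}_{g,n}$-action on $\mathcal{ML}_{g,n}$ is a mischaracterization: that is the mechanism of the simple-curve paper \cite{Mir08b}, whereas \cite{Mir16} runs on the asymptotically-piecewise-linear and bounding structure of length functions in Fenchel--Nielsen coordinates; without the max-function reduction it is not clear your adaptation goes through as sketched.

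Second, you never supply the mechanism identifying the leading constant. In the paper this is Proposition \ref{prop:top_int_c}: the limit $r(\mathcal{F})$ of the rescaled Weil--Petersson volumes of $\{\mathcal{F}\leq L\}$ equals $\mu_{\text{Thu}}(\{\lambda \mid i(\gamma_i,\lambda)\leq b_i,\ \forall i\})$, proved via the convergence $t^{6g-6+2n}\mu_{\text{wp}}^t\to\mu_{\text{Thu}}$ on the enlarged Yamabe space (Proposition \ref{prop:asymp_wp_meas}, itself relying on shear coordinates), plus Portmanteau applied to the sublevel set, whose compactness uses precisely that $\gamma$ fills and $F$ is proper. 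Your sentence that ``the pushforward structure arises because the natural parameter space near infinity is $\mathcal{ML}_{g,n}(\gamma)/\mathrm{Stab}(\gamma)$'' gestures at this but gives no argument. Relatedly, the obstacles you flag are largely not the real ones in the filling case: $\mathrm{Stab}(\gamma)$ is finite (so Proposition \ref{prop:pd_Y} and the quotient subtleties are beside the point here, and the paper simply multiplies by $|\mathrm{Stab}(\gamma)|$), and the filling hypothesis makes $\{\lambda \mid i(\gamma_i,\lambda)\leq b_i\ \forall i\}$ compact, so there is no escape-of-mass issue near the coordinate hyperplanes; the no-escape-of-mass analysis in the paper is needed only in the simple case of \S 4.
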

$ $

Theorem \ref{theo:length_spec_filling}, Lemma \ref{lem:thu_meas_zero}, and Portmanteau's theorem directly yield the following strong version of the filling case of Theorem \ref{theo:wolp_conj}.\\

\begin{theorem}
	\label{theo:filling_comp_count}
	Let $X \in \mathcal{M}_{g,n}$, $\gamma:=(\gamma_1,\dots,\gamma_k)$ with $k\geq 1$ be an ordered filling closed multi-curve on $S_{g,n}$, and $\mathbf{b} := (b_1,\dots,b_k) \in (\mathbf{R}_{>0})^k$. Then,
	\[
	\lim_{L \to \infty} \frac{c(X,\gamma,\mathbf{b},L)}{L^{6g-6+2n}} = \frac{B(X)}{b_{g,n}} \cdot \widetilde{\mu}_{\text{Thu}}^\gamma\left(\{\lambda \in \mathcal{ML}_{g,n}(\gamma)/\text{Stab}(\gamma) \ | \ i(\lambda,\gamma_i) \leq b_i\} \right).
	\]
\end{theorem}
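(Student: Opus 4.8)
The plan is to deduce Theorem~\ref{theo:filling_comp_count} from Theorem~\ref{theo:length_spec_filling} by evaluating the rescaled length spectra on an explicit box and invoking the Portmanteau theorem. First I would record the elementary identity: writing $R := \prod_{i=1}^k [0,b_i] \subseteq (\mathbf{R}_{\geq 0})^k$, the condition $\ell_{\alpha_i}(X) \leq b_i L$ for all $i$ is equivalent to $\tfrac{1}{L}\vec{\ell}_\alpha(X) \in R$, so that
\[
\frac{c(X,\gamma,\mathbf{b},L)}{L^{6g-6+2n}} = \left(\frac{\mu_{\gamma,X}^L}{L^{6g-6+2n}}\right)(R).
\]
By Theorem~\ref{theo:length_spec_filling} the measures on the right converge in the weak-$\star$ (vague) topology to $\tfrac{B(X)}{b_{g,n}}\,(\widetilde{I}_\gamma)_*(\widetilde{\mu}_{\text{Thu}}^\gamma)$. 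Since $R$ is compact, the Portmanteau theorem for vague convergence of Radon measures on the locally compact space $(\mathbf{R}_{\geq 0})^k$ gives $\mu_L(R) \to \mu(R)$ as soon as $R$ is a continuity set for the limit, i.e.\ $(\widetilde{I}_\gamma)_*(\widetilde{\mu}_{\text{Thu}}^\gamma)(\partial R) = 0$, where $\partial R$ is the boundary of $R$ relative to $(\mathbf{R}_{\geq 0})^k$. Granting this, one concludes
\[
\lim_{L\to\infty}\frac{c(X,\gamma,\mathbf{b},L)}{L^{6g-6+2n}} = \frac{B(X)}{b_{g,n}}\,\widetilde{\mu}_{\text{Thu}}^\gamma\!\left(\widetilde{I}_\gamma^{-1}(R)\right),
\]
and $\widetilde{I}_\gamma^{-1}(R) = \{\lambda \in \mathcal{ML}_{g,n}(\gamma)/\text{Stab}(\gamma) \ | \ i(\lambda,\gamma_i) \leq b_i, \ \forall i\}$ is precisely the set appearing in the statement.

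The continuity-set condition is the step where Lemma~\ref{lem:thu_meas_zero} is used. Since $\partial R \subseteq \bigcup_{i=1}^k (\{x_i = b_i\} \cap R)$, it suffices to show that for each $i$ the set $\{\lambda \in \mathcal{ML}_{g,n}(\gamma) \ | \ i(\gamma_i,\lambda) = b_i\}$ is Thurston-null; the same statement for the local pushforward $\widetilde{\mu}_{\text{Thu}}^\gamma$ then follows at once (using that $\text{Stab}(\gamma)$ acts properly discontinuously on $\mathcal{ML}_{g,n}(\gamma)$, Proposition~\ref{prop:pd_Y}). This is exactly the content of Lemma~\ref{lem:thu_meas_zero}: $i(\gamma_i,\cdot)$ is continuous and homogeneous of degree one on $\mathcal{ML}_{g,n}$ while $\mu_{\text{Thu}}$ scales by $t^{6g-6+2n}$ under $\lambda \mapsto t\lambda$, so the pushforward of $\mu_{\text{Thu}}$ under $i(\gamma_i,\cdot)$ is absolutely continuous with respect to Lebesgue measure on $(0,\infty)$; in particular it has no atoms away from the origin, hence each level set $\{i(\gamma_i,\cdot)=b_i\}$ with $b_i>0$ has Thurston measure zero.

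Finally I would note that the resulting limit is a positive real number, so that this also yields the filling case of Theorem~\ref{theo:wolp_conj}: it is finite because $R$ is compact and $(\widetilde{I}_\gamma)_*(\widetilde{\mu}_{\text{Thu}}^\gamma)$ is locally finite, and it is strictly positive because $\widetilde{I}_\gamma^{-1}(R)$ contains a non-empty open subset of $\mathcal{ML}_{g,n}(\gamma)/\text{Stab}(\gamma)$ (take a small scaling of any lamination that fills $S_{g,n}$ together with $\gamma$), $\widetilde{\mu}_{\text{Thu}}^\gamma$ has full support there, $B(X)>0$, and $b_{g,n}<\infty$. The genuinely hard input is Theorem~\ref{theo:length_spec_filling} itself, which relies on Mirzakhani's techniques from \cite{Mir16}; at the level of this deduction the only real subtlety is matching vague convergence on the non-compact space $(\mathbf{R}_{\geq 0})^k$ with the boundary-measure-zero condition, and both are handled once Lemma~\ref{lem:thu_meas_zero} is available.
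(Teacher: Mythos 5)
Your proposal is correct and follows exactly the paper's route: the paper deduces Theorem \ref{theo:filling_comp_count} directly from Theorem \ref{theo:length_spec_filling}, Lemma \ref{lem:thu_meas_zero}, and Portmanteau's theorem, which is precisely the deduction you spell out (evaluating the rescaled spectra on the box $\prod_{i=1}^k[0,b_i]$ and checking the boundary is null via homogeneity of $i(\gamma_i,\cdot)$ and the scaling of $\mu_{\text{Thu}}$). Your added remarks on finiteness and positivity of the limit are consistent with the paper's Theorem \ref{theo:wolp_conj}.
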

$ $

\begin{remark}
	As highlighted by Mirzakhani in \cite{Mir16}, applying the methods in the proof of Theorem \ref{theo:filling_comp_count} to get an effective version of the same theorem with a power saving error term seems rather hard.
\end{remark}
$ $

\textit{Organization of the paper.} In \S 2 we present the background material necessary to understand the proofs of the main results. In \S 3 we present the proof of Theorem \ref{theo:length_spec_simple_red} and discuss how refining the ideas
in this proof leads to a proof of Theorem \ref{theo:length_proj_spec_simple}. In \S4 we prove Theorem \ref{theo:top_int_asymp_spec}. In \S 5 we briefly review the techniques introduced by Mirzakhani in \cite{Mir16} and use them to prove Theorem \ref{theo:length_spec_filling}.\\

\textit{Acknowledgments.} The author is very grateful to Alex Wright and Steven Kerckhoff for their invaluable advice, patience, and encouragement.\\

\section{Background material}

$ $

\begin{sloppypar}
\textit{The Thurston measure.} Train track coordinates induce a $(6g-6+2n)$-dimensional piecewise integral linear (PIL) structure on the space $\mathcal{ML}_{g,n}$ of measured geodesic laminations on $S_{g,n}$; see \cite[\S 3.1]{PH92} for details. By work of Masur, see \cite[Theorem 2]{Mas85}, there exists a unique (up to scaling) non-zero, locally finite, $\text{Mod}_{g,n}$-invariant, Lebesgue class measure on $\mathcal{ML}_{g,n}$. Several definitions of such a measure can be found in the literature. We will consider the measure coming from the symplectic structure of $\mathcal{ML}_{g,n}$.\\
\end{sloppypar}

More precisely, consider the $\text{Mod}_{g,n}$-invariant symplectic form $\omega_{\text{Thu}}$ on the PIL manifold $\mathcal{ML}_{g,n}$ induced by train track coordinates; see \cite[\S 3.2]{PH92} for an explicit definition. The top exterior power $v_\text{Thu} := \frac{1}{(3g-3+n)!} \bigwedge_{i=1}^{3g-3+n} \omega_{\text{Thu}}$ induces a non-zero, locally finite, $\text{Mod}_{g,n}$-invariant, Lebesgue class measure $\mu_{\text{Thu}}$ on $\mathcal{ML}_{g,n}$. We refer to this measure as the \textit{Thurston measure} of $\mathcal{ML}_{g,n}$.\\

This measure satisfies the following scaling property:
\begin{equation}
\label{eq:thu_meas_scale}
\mu_{\text{Thu}}(t \cdot A) = t^{6g-6+2n} \cdot \mu_{\text{Thu}}(A)
\end{equation}
for every Borel measurable subset $A \subseteq \mathcal{ML}_{g,n}$ and every $t > 0$. In particular, the following lemma applies; see \cite[Page 24]{EU18} for a proof.\\

\begin{lemma}
	\label{lem:thu_meas_zero}
	Let $\Omega$ be a topological space endowed with a continuous $(\mathbf{R}_{>0})$-action and $\mu$ be a measure on $\Omega$ such that the following scaling property holds for some $k > 0$:
	\[
	\mu(t \cdot A) = t^{k} \cdot \mu(A)
	\]
	for every Borel measurable subset $A \subseteq \Omega$ and every $t > 0$. Let $f \colon \Omega \to \mathbf{R}_{\geq 0}$ be a non-negative, homogeneous, continuous function. Then, for every $c > 0$,
	\[
	\mu(f^{-1}(\{c\})) = 0.
	\]
\end{lemma}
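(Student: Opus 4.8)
\emph{Proof proposal.} The plan is to exploit the interaction between the homogeneity of $f$ and the exact scaling of $\mu$: together these pin down the $\mu$-measure of every level set $f^{-1}(\{c\})$ in terms of a single one, after which a countable-additivity argument rules out a positive value.

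I would first record the elementary identity $t\cdot f^{-1}(\{c\}) = f^{-1}(\{tc\})$, valid for all $t,c>0$; this follows immediately from the fact that $\mathbf{R}_{>0}$ acts by a group action and $f$ is homogeneous (of degree one; for a general positive degree one replaces $tc$ by a suitable power, with no change to what follows). Applying the scaling hypothesis yields
\[
\mu\bigl(f^{-1}(\{c\})\bigr) \;=\; (c/c_0)^{k}\,\mu\bigl(f^{-1}(\{c_0\})\bigr) \qquad \text{for all } c,c_0>0,
\]
so in particular the quantities $\mu(f^{-1}(\{c\}))$ are either all zero or all strictly positive. It therefore suffices to derive a contradiction from the assumption that $a:=\mu(f^{-1}(\{c_0\}))>0$ for some (equivalently every) $c_0>0$.

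Granting this assumption, I pick any sequence of distinct reals $q_1,q_2,\dots$ in the bounded interval $(1,2)$. The sets $f^{-1}(\{q_i c_0\})$ are pairwise disjoint Borel subsets of $\Omega$ --- Borel because $f$ is continuous --- all contained in the ``slab'' $f^{-1}\bigl((c_0,2c_0)\bigr)$, and each has $\mu$-measure $q_i^{\,k}a\geq a>0$. Countable additivity then forces
\[
\mu\Bigl(f^{-1}\bigl((c_0,2c_0)\bigr)\Bigr) \;\geq\; \sum_{i\geq 1}\mu\bigl(f^{-1}(\{q_i c_0\})\bigr) \;\geq\; \sum_{i\geq 1} a \;=\; +\infty.
\]
Thus the lemma follows as soon as one knows that \emph{some} bounded slab $f^{-1}([0,r])$ has finite $\mu$-measure. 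This is automatic in every application made of the lemma: for $\mu=\mu_{\text{Thu}}$ and $f$ the length function $\lambda\mapsto\ell_\lambda(X)$, the scaling property \eqref{eq:thu_meas_scale} gives $\mu_{\text{Thu}}(f^{-1}([0,r]))=r^{6g-6+2n}B(X)<\infty$ since the Mirzakhani function is finite, and likewise for the Lebesgue-class pushforward measures and the coordinate or intersection-number functions occurring in the statements involving continuity subsets of $P\mathcal{ML}_{g,n}$.

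The one point that genuinely requires care --- and the only place where a hypothesis on $(\Omega,\mu)$ is used beyond formal bookkeeping --- is the finiteness just invoked: the bare statement ``$\mu$ a measure'' is not enough (a weighted counting measure $\sum_{c} c^{k}\,\delta_c$ on $(0,\infty)$ satisfies the scaling identity yet charges every level set of the identity map), so one must use that $\mu$ is $\sigma$-finite, as is the case for every measure appearing in this paper. When $f$ is not proper, so that slabs may have infinite measure, I would instead argue by disintegration: the action gives a homeomorphism $(0,\infty)\times f^{-1}(\{c_0\})\to f^{-1}((0,\infty))$, $(t,\omega)\mapsto t\cdot\omega$; pulling $\mu$ back and disintegrating the resulting $\sigma$-finite measure over the projection to $f^{-1}(\{c_0\})$, the conditional measures on $(0,\infty)$ inherit the exact scaling of exponent $k$, hence (taking $\log$ and invoking uniqueness of $\sigma$-finite translation-invariant measures on $\mathbf{R}$) are proportional to $x^{k-1}\,dx$ and in particular atomless, so that $\mu(f^{-1}(\{c_0\}))=0$ after integrating back. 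Either route completes the proof, and the disintegration step (needing only mild standard-Borel regularity of the spaces, satisfied throughout) is the only mildly technical ingredient.
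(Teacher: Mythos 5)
Your core argument is essentially the standard one, and essentially what the reference the paper defers to for this lemma (\cite[Page 24]{EU18}) does: homogeneity gives $t\cdot f^{-1}(\{c\})=f^{-1}(\{tc\})$, so level sets at nearby values are pairwise disjoint Borel sets of comparable measure, and positivity of one of them is incompatible with countable additivity. Your side observation is also correct and worth making: read literally, with ``$\mu$ a measure'' and no finiteness hypothesis, the statement fails (your weighted counting measure $\sum_{c>0}c^{k}\delta_{c}$ on $(0,\infty)$ is a genuine counterexample), and what saves every use in the paper is that all the measures involved are locally finite on second-countable spaces, hence $\sigma$-finite.

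Two corrections, however. First, your claim that finiteness of a slab $f^{-1}([0,r])$ is ``automatic in every application'' is not accurate: the lemma is also invoked for non-proper $f$, e.g.\ $f(\lambda)=i(\lambda,\gamma_i)$ for a single component $\gamma_i$ in the reductions to closed boxes in \S 4 and \S 5, and for the relative systole $s_\gamma$ in condition (1) of Proposition \ref{prop:K_props}; in those cases the sublevel sets have infinite measure, so your first argument does not apply and you are forced onto your fallback. Second, that disintegration fallback is heavier than necessary and, as sketched, has its own wrinkle: the pushforward of $\mu$ to the base $f^{-1}(\{c_0\})$ need not be $\sigma$-finite, so the disintegration has to be set up with respect to an equivalent finite measure, and one also needs standard-Borel hypotheses absent from the lemma. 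The clean general argument under $\sigma$-finiteness is one line and subsumes both of your cases: if $\mu(f^{-1}(\{c_0\}))>0$, then $\{f^{-1}(\{t c_0\})\}_{t\in(1,2)}$ is an uncountable family of pairwise disjoint Borel sets, each of measure $t^{k}\mu(f^{-1}(\{c_0\}))>0$, whereas a $\sigma$-finite measure admits at most countably many pairwise disjoint sets of positive measure. With that substitution your proof is complete and covers every application in the paper.
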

$ $

\textit{Dehn-Thurston coordinates.} Let $\mathcal{P}:=(\gamma_1,\dots,\gamma_{3g-3+n})$ be a pair of pants decomposition of $S_{g,n}$. The following theorem, originally due to Dehn in the case of integral multi-curves and later extended by Thurston to the case of general measured geodesic laminations, gives an explicit parametrization of $\mathcal{ML}_{g,n}$ in terms of intersection numbers $m_i \in \mathbf{R}_{\geq 0}$ and twisting numbers $t_i \in \mathbf{R}$ with respect to the components of $\mathcal{P}$; see \S 1.2 in \cite{PH92} and \S 8.3.9 in \cite{Mar16} for details.\\

\begin{theorem}
	\label{theo:dehn_thurston_coordinates}
	The intersection and twisting numbers $(m_i,t_i)_{i=1}^{3g-3+n}$ with respect to the components of $\mathcal{P}$ give a parametrization of $\mathcal{ML}_{g,n}$ by the set
	\[
	\Theta := \left\lbrace(m_i,t_i) \in (\mathbf{R}_{\geq 0} \times \mathbf{R})^{3g-3+n} \ | \ m_i = 0 \Rightarrow t_i \geq 0, \ \forall i=1,\dots, 3g-3+n \right\rbrace.
	\]
\end{theorem}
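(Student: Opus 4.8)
\textit{Proof idea.} The plan is to construct the parametrization explicitly by cutting $S_{g,n}$ along $\mathcal{P}$ into $2g-2+n$ pairs of pants, describing a measured geodesic lamination piece by piece, and then recording how the pieces are reglued. I would first fix an auxiliary complete hyperbolic metric on $S_{g,n}$ in which every $\gamma_i$ is a simple closed geodesic, so that every $\lambda \in \mathcal{ML}_{g,n}$ is realized by an honest geodesic lamination meeting each $\gamma_i$ in a closed subset of transverse measure $m_i := i(\gamma_i,\lambda) \geq 0$; this metric also provides a canonical ``zero twisting'' reference, namely the orthogeodesic seams between the boundary components of each hyperbolic pair of pants. (Alternatively one fixes a purely combinatorial convention via a system of dual arcs; the argument is the same.)

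The key local input is a pair-of-pants lemma: for any triple $(x_1,x_2,x_3)\in(\mathbf{R}_{\geq 0})^3$ there is, up to isotopy, a unique measured geodesic lamination on a pair of pants $P$ with no closed leaves and with transverse measures $x_1,x_2,x_3$ on the three boundary components, and every compactly supported measured lamination on $P$ without closed leaves arises this way. I would prove this by classifying essential disjoint arc systems in $P$ — the three seams joining distinct boundary pairs and the three arcs returning to a single boundary component, subject to the constraints that at most one self-returning type can occur and that its presence excludes the seam joining the other two boundary components — and then solving the resulting linear system for the arc weights in terms of $(x_1,x_2,x_3)$. No triangle inequalities are needed: a deficit such as $x_1 > x_2 + x_3$ is absorbed by self-returning arcs at $\partial_1$. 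Passing to geodesic realizations promotes this to genuine laminations with real weights.

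Next I would assemble the global picture. Cutting $\lambda$ along $\mathcal{P}$ produces in each pair of pants the standard lamination attached to its boundary measures, together with possibly closed leaves parallel to some $\gamma_i$; near a curve $\gamma_i$ with $m_i > 0$ the $m_i$ strands arriving from the two sides are matched by a shear whose total signed amount relative to the chosen reference is the twisting number $t_i \in \mathbf{R}$, whereas if $m_i = 0$ there are no strands and the only remaining freedom is the weight $t_i \geq 0$ of a leaf parallel to $\gamma_i$; this explains the defining constraint of $\Theta$. This defines the map $\mathcal{ML}_{g,n}\to\Theta$. Injectivity follows from uniqueness in the pair-of-pants lemma together with the fact that a lamination is recovered up to isotopy from its pieces and their gluing data; surjectivity follows by running the construction backwards from an arbitrary point of $\Theta$, noting that the two pairs of pants adjacent to $\gamma_i$ both see the value $m_i$, so the pieces are compatible. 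Continuity of $\lambda\mapsto(m_i,t_i)$ comes from continuity of intersection numbers on $\mathcal{ML}_{g,n}$ together with an expression of each $t_i$ through the intersection numbers of $\lambda$ with $\gamma_i$ and with an auxiliary curve dual to $\gamma_i$, and continuity of the inverse from the continuous dependence of the standard models and shears on their parameters; one concludes that the map is a PIL homeomorphism onto $\Theta$.

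The main obstacle I expect is the careful treatment of the twisting coordinate: fixing a sign convention and a reference that make $t_i$ a well-defined, isotopy-invariant real number rather than merely a coset modulo Dehn twists, verifying that it extends continuously from integral multi-curves to all of $\mathcal{ML}_{g,n}$, and correctly handling the degenerate regime $m_i = 0$, where the twist degenerates to the weight of a parallel leaf and the inequality $t_i \geq 0$ appears. The pair-of-pants classification, while elementary, also needs to be stated with some care, in particular the mutual exclusions among the self-returning arc types and the bookkeeping showing that every non-negative boundary triple is realized.
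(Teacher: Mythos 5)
The paper does not actually prove this statement; it records it as a classical result of Dehn and Thurston and points to the standard references (Penner--Harer, \S 1.2, and Martelli, \S 8.3.9), and your sketch is precisely the argument given there: classify measured laminations on each pair of pants by the three boundary transverse measures via the six essential arc types with their mutual exclusions, then glue with a real twisting parameter at each pants curve, the weight of a leaf parallel to $\gamma_i$ playing the role of $t_i \geq 0$ when $m_i = 0$. Your outline is correct, including the two delicate points you yourself flag (the arc-type bookkeeping in the pair of pants and the normalization that makes $t_i$ a genuine isotopy-invariant real number rather than a quantity defined only modulo Dehn twists), so what remains is exactly the careful but routine verification you describe.
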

$ $

We refer to any parametrization as in Theorem \ref{theo:dehn_thurston_coordinates} as a set of \textit{Dehn-Thurston coordinates} of $\mathcal{ML}_{g,n}$ adapted to $\mathcal{P}$ and to the set $\Theta$ as the \textit{parameter space} of such parametrization. The Thurston measure $\mu_{\text{Thu}}$ on $\mathcal{ML}_{g,n}$ is precisely the Lebesgue measure on $\Theta$.\\

\textit{The Mirzakhani measure.} Over $\mathcal{T}_{g,n}$ consider the bundle $P^1 \mathcal{T}_{g,n}$ of unit length measured geodesic laminations. More precisely,
\[
P^1 \mathcal{T}_{g,n} := \{(X,\lambda) \in \mathcal{T}_{g,n} \times \mathcal{ML}_{g,n} \ | \ \ell_{\lambda}(X) = 1\}.
\]
For every marked hyperbolic structure $X \in \mathcal{T}_{g,n}$, consider the measure $\mu_{\text{Thu}}^X$ on the fiber  $P^1_X\mathcal{T}_{g,n}$ of the bundle $P^1\mathcal{T}_{g,n}$ above $X$, which to every Borel measurable subset $A \subseteq P^1_X\mathcal{T}_{g,n}$ assigns the value
\[
\mu_{\text{Thu}}^X(A) := \mu_{\text{Thu}}([0,1] \cdot A).
\]
On the bundle $P^1\mathcal{T}_{g,n}$ one obtains a measure $\nu_{\text{Mir}}$, called the \textit{Mirzakhani measure} of $P^1\mathcal{T}_{g,n}$, by considering the disintegration formula
\[
d\nu_{\text{Mir}}(X,\lambda) := d\mu_{\text{Thu}}^X(\lambda) \  d\mu_{\text{wp}}(X).
\]
$ $

The mapping class group $\text{Mod}_{g,n}$ acts diagonally on $P^1 \mathcal{T}_{g,n}$ in a properly discontinuous way preserving the Mirzakhani measure $\nu_{\text{Mir}}$. The quotient $P^1 \mathcal{M}_{g,n} := P^1\mathcal{T}_{g,n}/\text{Mod}_{g,n}$ is the bundle of unit length measured geodesic laminations over the moduli space $\mathcal{M}_{g,n}$. Locally pushing forward the measure $\nu_{\text{Mir}}$ on $P^1\mathcal{T}_{g,n}$ under the quotient map $P^1\mathcal{T}_{g,n} \to P^1\mathcal{M}_{g,n}$ yields a measure $\widehat{\nu}_{\text{Mir}}$ on $P^1\mathcal{M}_{g,n}$, called the \textit{Mirzakhani measure} of $P^1\mathcal{M}_{g,n}$. The pushforward of $\widehat{\nu}_{\text{Mir}}$ under the bundle map $\pi \colon P^1\mathcal{M}_{g,n} \to \mathcal{M}_{g,n}$ is given by
\[
d\pi_*(\widehat{\nu}_{\text{Mir}})(X) = B(X) \ d\widehat{\mu}_\text{wp}(X),
\]
where $B \colon \mathcal{M}_{g,n} \to \mathbf{R}_{>0}$ is the Mirzakhani function defined in (\ref{eq:mir_fn}).
The total mass of $P^1 \mathcal{M}_{g,n}$ with respect to $\widehat{\nu}_{\text{Mir}}$ is precisely given by
\[
\widehat{\nu}_{\text{Mir}}(P^1 \mathcal{M}_{g,n}) = \int_{\mathcal{M}_{g,n}} B(X) \ d\widehat{\mu}_{\text{wp}}(X) = b_{g,n}.
\]
In particular, it is finite by (\ref{eq:b_gn}).\\

\textit{Horoball segment measures.} Let $\gamma := (\gamma_1,\dots,\gamma_k)$ with $1 \leq k \leq 3g-3+n$ be an ordered simple closed multi-curve on $S_{g,n}$ and $f \colon (\mathbf{R}_{\geq 0})^k \to \mathbf{R}_{\geq0}$ be a bounded, compactly supported, Borel measurable function with non-negative values and which is not almost everywhere zero with respect to the Lebesgue measure class. For every $L > 0$ consider the \textit{horoball segment} $B_\gamma^{f,L} \subseteq \mathcal{T}_{g,n}$ given by 
\[
B_\gamma^{f,L} := \{X \in \mathcal{T}_{g,n} \ | \ \vec{\ell}_{\gamma}(X) \in L \cdot \text{supp}(f) \}.
\]
Every such horoball segment supports a \textit{horoball segment measure} $\mu_\gamma^{f,L}$ defined as 
\begin{equation}
\label{eq:horoball_meas_def}
d \mu_\gamma^{f,L}(X) := f\left(\textstyle \frac{1}{L} \cdot \vec{\ell}_{\gamma}(X) \right) \ d\mu_{\text{wp}}(X).
\end{equation}
This measure is $\text{Stab}(\gamma)$-invariant. To get a locally finite horoball segment measure on $\mathcal{M}_{g,n}$ we need to get rid of the redundancies that arise when taking pushforwards. For this purpose we consider the intermediate cover
\[
\mathcal{T}_{g,n} \to \mathcal{T}_{g,n}/\text{Stab}(\gamma) \to \mathcal{M}_{g,n}.
\]
Let $\widetilde{\mu}_\gamma^{f,L}$ be the local pushforward of $\mu_\gamma^{f,L}$ to $\mathcal{T}_{g,n}/\text{Stab}(\gamma)$ and $\widehat{\mu}_{\gamma}^{f,L}$ be the pushforward of $\widetilde{\mu}_\gamma^{f,L}$ to $\mathcal{M}_{g,n}$. \\

Let $\mathbf{a}:=(a_1,\dots,a_k) \in (\mathbf{Q}_{>0})^k$ be a vector of positive rational weights on the components of $\gamma$. As $\gamma$ is simple, $\mathbf{a} \cdot \gamma$ as defined in (\ref{eq:weighted_curve}) belongs to $\mathcal{ML}_{g,n}(\mathbf{Q})$. The horoball segment measures $\mu_\gamma^{f,L}$ on $\mathcal{T}_{g,n}$ also give rise to \textit{horoball segment measures} $\nu_{\gamma,\mathbf{a}}^{f,L}$ on the bundle $P^1 \mathcal{T}_{g,n}$ by considering the disintegration formula
\[
d \nu_{\gamma,\mathbf{a}}^{f,L}(X,\lambda) := d \delta_{\mathbf{a} \cdot \gamma/ \ell_{\mathbf{a} \cdot \gamma}(X)}(\lambda) \ d\mu_\gamma^{f,L}(X),
\]
where $\delta$ denotes point masses. This measure is $\text{Stab}(\gamma)$-invariant as well.  In analogy with the case above, to get locally finite horoball segment measures on $P^1\mathcal{M}_{g,n}$ we consider the intermediate cover
\[
P^1\mathcal{T}_{g,n} \to P^1\mathcal{T}_{g,n}/\text{Stab}(\gamma) \to P^1\mathcal{M}_{g,n}.
\]
Let $\widetilde{\nu}_{\gamma,\mathbf{a}}^{f,L}$ be the local pushforward of $\nu_{\gamma,\mathbf{a}}^{f,L}$ to $P^1\mathcal{T}_{g,n}/\text{Stab}(\gamma)$ and  $\widehat{\nu}_{\gamma,\mathbf{a}}^{f,L}$ the pushforward of $\widetilde{\nu}_{\gamma,\mathbf{a}}^{f,L}$ to $P^1\mathcal{M}_{g,n}$.\\

One can check, see Proposition \ref{prop:total_hor_meas} below, that the measures $\widehat{\mu}_\gamma^{f,L}$ and $\widehat{\nu}_{\gamma,\mathbf{a}}^{f,L}$ are finite. We denote by $m_\gamma^{f,L}$ the total mass of the measures $\widehat{\mu}_\gamma^{f,L}$ and $\widehat{\nu}_{\gamma,\mathbf{a}}^{f,L}$, i.e.,
\[
m_\gamma^{f,L} := \widehat{\mu}_\gamma^{f,L}(\mathcal{M}_{g,n}) = \widehat{\nu}_{\gamma,\mathbf{a}}^{f,L}(P^1\mathcal{M}_{g,n}) <+\infty.
\]
$ $

The main tool used in the proof of Theorem \ref{theo:length_proj_spec_simple} is the following result, which shows that horoball segment measures on $P^1\mathcal{M}_{g,n}$ equidistribute with respect to $\widehat{\nu}_{\text{Mir}}$. This result is an analogue of the classical equidistribution theorem for expanding horoballs on homogeneous spaces, see for instance \cite{KM96}. This result is proved in \cite{Ara19b}, expanding on ideas introduced by Mirzakhani in \cite{Mir07b}.\\

\begin{theorem}
	\label{theo:horoball_equid}
	In the weak-$\star$ topology for measures on $P^1\mathcal{M}_{g,n}$,
	\[
	\lim_{L \to \infty} \frac{\widehat{\nu}_{\gamma,\mathbf{a}}^{f,L}}{m_\gamma^{f,L}} = \frac{\widehat{\nu}_{\text{Mir}}}{b_{g,n}}.
	\]
\end{theorem}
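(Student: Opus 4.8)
The plan is to establish the weak-$\star$ convergence by testing against continuous compactly supported functions on $P^1\mathcal{M}_{g,n}$, rewriting the integral of such a test function against $\widehat{\nu}_{\gamma,\mathbf{a}}^{f,L}$ as an integral over the intermediate cover $\mathcal{T}_{g,n}/\text{Stab}(\gamma)$ by unfolding, decomposing the Weil--Petersson measure along a pants decomposition containing $\gamma$ via Wolpert's formula, and then rescaling the lengths of the components of $\gamma$ by $L$ so that the limiting measure can be read off. The recognition of the limit uses Mirzakhani's ergodic-theoretic input from \cite{Mir07b}, and the principal difficulty is a non-divergence estimate that rules out escape of mass into the cusp of $\mathcal{M}_{g,n}$, uniformly in $L$.

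Concretely, fix $\phi \in C_c(P^1\mathcal{M}_{g,n})$ and pull it back to a $\text{Mod}_{g,n}$-invariant, hence $\text{Stab}(\gamma)$-invariant, function on $P^1\mathcal{T}_{g,n}$, which therefore descends to $P^1\mathcal{T}_{g,n}/\text{Stab}(\gamma)$. Since $\widehat{\nu}_{\gamma,\mathbf{a}}^{f,L}$ is the successive pushforward of $\nu_{\gamma,\mathbf{a}}^{f,L}$ through the intermediate cover, and $d\nu_{\gamma,\mathbf{a}}^{f,L}(X,\lambda) = d\delta_{\mathbf{a}\cdot\gamma/\ell_{\mathbf{a}\cdot\gamma}(X)}(\lambda)\, d\mu_\gamma^{f,L}(X)$, the quantity $\int \phi\, d\widehat{\nu}_{\gamma,\mathbf{a}}^{f,L}$ equals an integral over $\mathcal{T}_{g,n}/\text{Stab}(\gamma)$ of $\phi$ evaluated at the projective class of $\mathbf{a}\cdot\gamma$, weighted by $f(\tfrac{1}{L}\vec{\ell}_\gamma(X))$, against $\widetilde{\mu}_{\text{wp}}^\gamma$. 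Completing $\gamma$ to a pants decomposition and using Wolpert's formula together with Mirzakhani's description of the Weil--Petersson volume of a glued surface, I would factor $\widetilde{\mu}_{\text{wp}}^\gamma$ as the product of the Lebesgue measure in the lengths $\vec{\ell}_\gamma$, the twist measure around $\gamma$ (which, modulo the Dehn twists inside $\text{Stab}(\gamma)$, lives on a torus of volume $\prod_i \ell_{\gamma_i}$), and the Weil--Petersson measures on the moduli spaces of the pieces obtained by cutting $S_{g,n}$ along $\gamma$, whose boundary lengths are prescribed by $\vec{\ell}_\gamma$.

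The main step is the substitution $\vec{\ell}_\gamma = L \cdot \vec{u}$ followed by the normalization of the twists to range over $[0,1]^k$. This turns $f(\tfrac{1}{L}\vec{\ell}_\gamma(X))$ into $f(\vec{u})$, produces a factor $L^{2k}$ times a polynomial in $\vec{u}$ from the length and twist directions, and leaves an integral of $\phi$ over the now-dilated twist tori against the Weil--Petersson measures of the complementary pieces, whose boundary lengths grow like $L\vec{u}$; the Weil--Petersson volume polynomials of these pieces supply the remaining factor $L^{6g-6+2n-2k}$ and the correct leading-order constants. In the limit, the Weil--Petersson data of the pieces assembled with the twist direction along $\gamma$ is exactly the $P^1$-fiber datum, while the average of $\phi$ over the dilated configurations is governed by the ergodicity of the earthquake/twisting dynamics on $P^1\mathcal{M}_{g,n}$ with respect to $\widehat{\nu}_{\text{Mir}}$ proved by Mirzakhani in \cite{Mir07b}: together with the asymptotics of Weil--Petersson volume polynomials this forces every weak-$\star$ subsequential limit of $\widehat{\nu}_{\gamma,\mathbf{a}}^{f,L}/m_\gamma^{f,L}$ to be $\widehat{\nu}_{\text{Mir}}/b_{g,n}$, the normalization by $m_\gamma^{f,L}$ absorbing all constants as well as the $f$-dependence, since $f$ enters both sides only through the same integral.

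The step I expect to be the main obstacle --- and the reason the theorem relies on the machinery of \cite{Ara19b} and \cite{Mir07b} rather than on a soft argument --- is the control of escape of mass. Because $P^1\mathcal{M}_{g,n}$ is non-compact, to pass from the rescaled integral identity to weak-$\star$ convergence of the normalized probability measures one must show that for every $\varepsilon > 0$ there is a compact $K \subseteq P^1\mathcal{M}_{g,n}$ with $\widehat{\nu}_{\gamma,\mathbf{a}}^{f,L}(P^1\mathcal{M}_{g,n} \setminus K) < \varepsilon\, m_\gamma^{f,L}$ for all large $L$. This is a non-divergence estimate for the horoball segments: after unfolding, one bounds uniformly in $L$ the contribution of surfaces carrying a short curve, estimating the measure of the thin parts against the length/twist/Weil--Petersson decomposition above, and this is precisely where the properness and integrability of the Mirzakhani function $B$ (equivalently, the finiteness of $\widehat{\nu}_{\text{Mir}}$) are used. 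Once tightness is available, the ergodic-averaging limit together with dominated convergence (and, if needed, an approximation of $f$ by finitely many indicators of boxes in Dehn--Thurston coordinates) upgrades to the asserted weak-$\star$ convergence.
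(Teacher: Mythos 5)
First, note that the paper does not prove this statement at all: Theorem \ref{theo:horoball_equid} is imported as a black box from \cite{Ara19b}, which in turn builds on Mirzakhani's argument in \cite{Mir07b}. So your sketch is being measured against those sources rather than against anything in this paper. Your overall skeleton does match theirs: unfold over $\mathcal{T}_{g,n}/\text{Stab}(\gamma)$, integrate out lengths and twists of a pants decomposition containing $\gamma$ via Wolpert's formula and Mirzakhani's integration formula (this is exactly how the mass asymptotics of Proposition \ref{prop:total_hor_meas} are obtained), and prove a no-escape-of-mass estimate for the thin part uniformly in $L$.

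The genuine gap is in the step that identifies the limit. You assert that "the ergodicity of the earthquake/twisting dynamics on $P^1\mathcal{M}_{g,n}$ with respect to $\widehat{\nu}_{\text{Mir}}$ proved by Mirzakhani in \cite{Mir07b}" forces every subsequential limit to be $\widehat{\nu}_{\text{Mir}}/b_{g,n}$. This does not work as stated, for two reasons. First, ergodicity of $\widehat{\nu}_{\text{Mir}}$ under the earthquake flow (which is a theorem from a different paper of Mirzakhani, not \cite{Mir07b}) only says that $\widehat{\nu}_{\text{Mir}}$ has no nontrivial invariant decomposition; it does not say that every earthquake-invariant (sub)probability measure equals $\widehat{\nu}_{\text{Mir}}$. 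Indeed each $\widehat{\nu}_{\gamma,\mathbf{a}}^{f,L}$ is itself invariant under twisting along $\gamma$, and there are plenty of other invariant measures (e.g.\ supported on sets of rational laminations), so invariance plus ergodicity of the target cannot pin down the limit. Second, your phrase "the Weil--Petersson data of the pieces assembled with the twist direction along $\gamma$ is exactly the $P^1$-fiber datum" assumes the conclusion: the fiber of $\nu_{\gamma,\mathbf{a}}^{f,L}$ over $X$ is the point mass at $\overline{\mathbf{a}\cdot\gamma}$, and the whole content of the theorem is that these point masses spread out to the Thurston cone measure $\mu_{\text{Thu}}^X$ in the limit. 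The actual mechanism in \cite{Mir07b} and \cite{Ara19b} is different: after the unfolding and rescaling, one shows that subsequential limits induce $\text{Mod}_{g,n}$-invariant, locally finite measures in the Lebesgue class on $\mathcal{ML}_{g,n}$ (via the passage from Fenchel--Nielsen to Dehn--Thurston type coordinates), and then invokes Masur's theorem on the ergodicity/uniqueness of the Thurston measure for the $\text{Mod}_{g,n}$-action on $\mathcal{ML}_{g,n}$, together with the total mass asymptotics, to identify the limit and the constant. Without that input (or a substitute for it), your argument does not close.
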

$ $

Taking pushforwards under the bundle map $\pi \colon P^1\mathcal{M}_{g,n} \to \mathcal{M}_{g,n}$ in the statement of Theorem \ref{theo:horoball_equid}, we deduce the following corollary, which shows that horoball segment measures on $\mathcal{M}_{g,n}$ equidistribute with respect to $B(X) \cdot d\widehat{\mu}_{\text{wp}}(X)$. This corollary is the main tool used in the proof of Theorem \ref{theo:length_spec_simple}.\\

\begin{corollary}
	\label{cor:horoball_equid}
	In the weak-$\star$ topology for measures on $\mathcal{M}_{g,n}$,
	\[
	\lim_{L \to \infty} \frac{\widehat{\mu}_\gamma^{f,L}}{m_\gamma^{f,L}} = \frac{B(X) \cdot d\widehat{\mu}_{\text{wp}}(X)}{b_{g,n}}.
	\]
\end{corollary}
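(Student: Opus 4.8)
The plan is to obtain Corollary \ref{cor:horoball_equid} from Theorem \ref{theo:horoball_equid} simply by applying the pushforward operator $\pi_*$ associated to the bundle map $\pi \colon P^1\mathcal{M}_{g,n} \to \mathcal{M}_{g,n}$ to both sides of the limit asserted there. Two things need to be verified for this to work: that $\pi_*$ is continuous for the weak-$\star$ topologies in play, and that $\pi_*$ sends $\widehat{\nu}_{\gamma,\mathbf{a}}^{f,L}$ to $\widehat{\mu}_\gamma^{f,L}$ and $\widehat{\nu}_{\text{Mir}}$ to $B(X)\, d\widehat{\mu}_{\text{wp}}(X)$. The second identity for $\widehat{\nu}_{\text{Mir}}$ has already been recorded in \S 2, and, also as recorded in \S 2, the total masses $m_\gamma^{f,L}$ of $\widehat{\nu}_{\gamma,\mathbf{a}}^{f,L}$ and $\widehat{\mu}_\gamma^{f,L}$ coincide, so the normalizing constants on the two sides match automatically.

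First I would check that $\pi_*$ is weak-$\star$ continuous. For $\psi \in C_c(\mathcal{M}_{g,n})$ and a locally finite measure $\rho$ on $P^1\mathcal{M}_{g,n}$ one has $\int \psi \, d(\pi_*\rho) = \int (\psi \circ \pi)\, d\rho$, so it suffices to know that $\psi \circ \pi \in C_c(P^1\mathcal{M}_{g,n})$, i.e. that $\pi$ is proper. This is where the only real content lies: the fiber of $P^1\mathcal{T}_{g,n} \to \mathcal{T}_{g,n}$ over $X$ is $\{\lambda \in \mathcal{ML}_{g,n} \ | \ \ell_\lambda(X) = 1\}$, which the radial projection $\mathcal{ML}_{g,n} \setminus \{0\} \to P\mathcal{ML}_{g,n}$ identifies homeomorphically with the compact space $P\mathcal{ML}_{g,n}$, and the fiber of $\pi$ over a point of $\mathcal{M}_{g,n}$ is a quotient of this compact fiber; hence $\pi$ has compact fibers and is proper. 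Applying $\pi_*$ to the convergence in Theorem \ref{theo:horoball_equid} then gives
\[
\lim_{L \to \infty} \frac{\pi_*\widehat{\nu}_{\gamma,\mathbf{a}}^{f,L}}{m_\gamma^{f,L}} = \frac{\pi_*\widehat{\nu}_{\text{Mir}}}{b_{g,n}}
\]
in the weak-$\star$ topology for measures on $\mathcal{M}_{g,n}$.

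It remains to identify these two pushforwards. The disintegration $d\nu_{\gamma,\mathbf{a}}^{f,L}(X,\lambda) = d\delta_{\mathbf{a}\cdot\gamma/\ell_{\mathbf{a}\cdot\gamma}(X)}(\lambda)\, d\mu_\gamma^{f,L}(X)$ concentrates each fiber measure at a single point, and the composite of the bundle projection $P^1\mathcal{T}_{g,n} \to \mathcal{T}_{g,n}$ with the section $X \mapsto (X, \mathbf{a}\cdot\gamma/\ell_{\mathbf{a}\cdot\gamma}(X))$ is the identity of $\mathcal{T}_{g,n}$; hence the pushforward of $\nu_{\gamma,\mathbf{a}}^{f,L}$ under the bundle map on Teichmüller space is exactly $\mu_\gamma^{f,L}$. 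Since $\pi$ is equivariant for the $\text{Stab}(\gamma)$- and $\text{Mod}_{g,n}$-actions and intertwines the intermediate covers $P^1\mathcal{T}_{g,n} \to P^1\mathcal{T}_{g,n}/\text{Stab}(\gamma) \to P^1\mathcal{M}_{g,n}$ with $\mathcal{T}_{g,n} \to \mathcal{T}_{g,n}/\text{Stab}(\gamma) \to \mathcal{M}_{g,n}$, taking local pushforwards commutes with $\pi_*$, so $\pi_*\widehat{\nu}_{\gamma,\mathbf{a}}^{f,L} = \widehat{\mu}_\gamma^{f,L}$. Substituting this together with $\pi_*\widehat{\nu}_{\text{Mir}}(X) = B(X)\, d\widehat{\mu}_{\text{wp}}(X)$ into the displayed limit yields the corollary. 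The main obstacle is thus essentially the only non-formal step, namely verifying properness of $\pi$ via compactness of $P\mathcal{ML}_{g,n}$; the rest is bookkeeping with disintegrations and local pushforwards. If one preferred to avoid invoking properness, one could instead test the convergence in Theorem \ref{theo:horoball_equid} directly against functions of the form $\psi \circ \pi$ and argue with a partition of unity adapted to the compact fibers, but the proper-pushforward route is the cleanest.
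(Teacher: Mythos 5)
Your proposal is correct and follows essentially the same route as the paper, which deduces the corollary precisely by pushing forward both sides of Theorem \ref{theo:horoball_equid} under the bundle map $\pi$ and using the identities $\pi_*\widehat{\nu}_{\gamma,\mathbf{a}}^{f,L} = \widehat{\mu}_\gamma^{f,L}$ and $\pi_*\widehat{\nu}_{\text{Mir}} = B(X)\,d\widehat{\mu}_{\text{wp}}(X)$ recorded in \S 2; you merely make the routine verifications (properness of $\pi$, compatibility of local pushforwards) explicit. One small point: compactness of fibers alone does not imply properness in general, so you should appeal to the local triviality of $P^1\mathcal{M}_{g,n} \to \mathcal{M}_{g,n}$ (locally a product with the compact fiber $P\mathcal{ML}_{g,n}$, up to finite group quotients) to conclude that preimages of compact sets are compact.
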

$ $

\textit{Teichmüller and moduli spaces of hyperbolic surfaces with geodesic boundary.} Let $g',n',b' \in \mathbf{Z}_{\geq 0}$ be a triple of non-negative integers satisfying $2 - 2g' - n' - b' < 0$. Consider a fixed connected, oriented surface $S_{g',n'}^{b'}$ of genus $g'$ with $n'$ punctures and $b'$ labeled boundary components $\beta_1,\dots,\beta_{b'}$. Let $\mathbf{L}:= (L_i)_{i=1}^{b'} \in (\mathbf{R}_{>0})^{b'}$ be a vector of positive real numbers. \\

We denote by $\mathcal{T}_{g',n'}^{b'}(\mathbf{L})$ the \textit{Teichmüller space} of  marked, oriented, complete, finite area hyperbolic structures on $S_{g',n'}^{b'}$ with labeled geodesic boundary components whose lengths are given by $\mathbf{L}$. The \textit{mapping class group} of $S_{g',n'}^{b'}$, denoted $\text{Mod}_{g',n'}^{b'}$, is the group of isotopy classes of orientation preserving diffeomorphisms of $S_{g',n'}^{b'}$ that set-wise fix each boundary component. The quotient $\mathcal{M}_{g',n'}^{b'}(\mathbf{L}) := \mathcal{T}_{g',n'}^{b'}(\mathbf{L}) / \text{Mod}_{g',n'}^{b'}$ is the \textit{moduli space} of oriented, complete, finite area hyperbolic structures on $S_{g',n'}^{b'}$ with labeled geodesic boundary components whose lengths are given by $\mathbf{L}$. \\

Consider the \textit{total Weil-Petersson volume} of the moduli space $\mathcal{M}_{g',n'}^{b'}(\mathbf{L})$, 
\[
V_{g',n'}^{b'}(\mathbf{L}) := \text{Vol}_{\text{wp}}(\mathcal{M}_{g',n'}^{b'}(\mathbf{L})).
\]
The following remarkable theorem due to Mirzakhani, see \cite[Theorem 6.1]{Mir07a} and \cite[Theorem 1.1]{Mir07c}, shows that $V_{g',n'}^{b'}(\mathbf{L})$ behaves like a polynomial on the $\mathbf{L}$ variables.\\

\begin{theorem}
	\label{theo:vol_pol}
	The total Weil-Petersson volume
	\[
	V_{g',n'}^{b'}(L_1,\dots,L_{b'})
	\]
	is a polynomial of degree $3g'-3+n'+b'$ on the variables $L_1^2,\dots,L_{b'}^2$. Moreover, if we denote
	\[
	V_{g',n'}^{b'}(L_1,\dots,L_{b'}) = \sum_{\substack{\alpha \in (\mathbf{Z}_{\geq 0})^{b'}, \\ |\alpha| \leq 3g'-3+n'+b'} } c_\alpha \cdot L_1^{2\alpha_1} \cdots L_{b'}^{2\alpha_{b'}},
	\]
	where $|\alpha| := \alpha_1 + \cdots + \alpha_{b'}$ for every $\alpha \in (\mathbf{Z}_{\geq 0})^{b'}$, then $c_\alpha \in \mathbf{Q}_{>0} \cdot \pi^{6g'-6+2n' +2b' - 2|\alpha|}$. In particular, the leading coefficients of $V_{g',n'}^{b'}(L_1,\dots,L_{b'})$ belong to $\mathbf{Q}_{> 0}$.
\end{theorem}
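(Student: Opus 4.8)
\emph{Proof proposal.} My plan is to prove the theorem by induction on the complexity $d := 3g'-3+n'+b'$, which equals the complex dimension of $\mathcal{M}_{g',n'}^{b'}(\mathbf{L})$, using Mirzakhani's topological recursion for Weil-Petersson volumes. The only genuine base case is $d = 0$: the moduli space $\mathcal{M}_{0,0}^3(\mathbf{L})$ is a point, so $V_{0,0}^3 \equiv 1$, which trivially has the asserted shape, degree, positivity of coefficients, and powers of $\pi$. (Low-complexity cases such as $V_{1,0}^1(L) = \tfrac{1}{24}(L^2 + 4\pi^2)$ and $V_{1,1}$ are then produced by the recursion applied with a pair of pants glued to itself, rather than being treated separately.)

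For the inductive step I would start from Mirzakhani's generalization \cite{Mir07a} of the McShane identity on a bordered hyperbolic surface $X$: fixing a boundary component $\beta_1$ of length $L_1$ (if $b' \ge 1$; if $b' = 0$ and $n' \ge 1$ one uses instead the cusp version), this identity writes $L_1$ (resp.\ a universal constant) as an absolutely convergent sum, indexed by the embedded pairs of pants $P \subset X$ having $\beta_1$ (resp.\ the chosen cusp) as one boundary, of an explicit elementary function of $L_1$ and of the lengths of the other two boundary curves of $P$. Integrating this identity against $\mu_{\text{wp}}$ over $\mathcal{M}_{g',n'}^{b'}(\mathbf{L})$ and applying Mirzakhani's integration (unfolding) formula \cite{Mir07a} turns the left-hand side into $L_1 \cdot V_{g',n'}^{b'}(\mathbf{L})$ and the right-hand side into a finite sum, over topological types of $P$, of integrals of the schematic form
\[
\int_0^\infty\!\!\!\int_0^\infty x\, y\, \mathcal{K}(L_1, x, y)\, \bigl( \text{a product of Weil-Petersson volumes of smaller complexity in } x^2, y^2, (L_j^2)_{j \ge 2} \bigr)\, dx\, dy,
\]
together with one-variable integrals coming from pairs of pants that contain a second boundary $\beta_j$ or a cusp; by the inductive hypothesis, every volume appearing inside these integrals is a polynomial of the asserted form.

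The technical core is then the elementary evaluation of the kernel integrals $\int_0^\infty x^{2k+1} \mathcal{K}(L_1, x, y)\, dx$ and their two-variable analogues: one shows these are even polynomials in $L_1$ whose degree exceeds $2k$ by exactly $2$, and in which the coefficient of $L_1^{2j}$ lies in $\mathbf{Q}_{>0}\cdot\pi^{\deg - 2j}$ — the positivity and the exact powers of $\pi$ both coming from the identities $\zeta(2i) \in \mathbf{Q}_{>0}\cdot\pi^{2i}$ together with the sign pattern $\zeta(2i)(2^{2i+1} - 4) > 0$. Substituting the inductive hypothesis into the recursion, carrying out these integrals term by term, and bookkeeping the degrees shows that $L_1 \cdot V_{g',n'}^{b'}(\mathbf{L})$ is $L_1$ times a polynomial in $L_1^2, \dots, L_{b'}^2$ of total degree $\le d$; dividing by $L_1$ and tracking the powers of $\pi$ and the signs through the finitely many, all positively weighted, contributions yields the full statement, including $c_\alpha \in \mathbf{Q}_{>0}\cdot\pi^{6g'-6+2n'+2b'-2|\alpha|}$, while the fact that the top-degree coefficient is a sum of positive numbers forces the degree to be exactly $d$.

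The step I expect to be the main obstacle — or at any rate the most delicate — is the positivity of all the coefficients $c_\alpha$: polynomiality, rationality, the degree bound, and the precise powers of $\pi$ follow rather formally from the shape of the recursion, whereas positivity requires that no cancellation ever occur, which is exactly what the sign computation for the kernel integrals guarantees, provided the recursion is set up with the correct positive combinatorial weights (including the orbifold factors of $\tfrac12$ for unordered pairs of pants and for self-gluings, and the extra automorphisms of $\mathcal{M}_{1,1}$ and $\mathcal{M}_{2,0}$). An alternative route, which makes polynomiality, rationality and the powers of $\pi$ especially transparent and uniformly covers all cases including closed surfaces, is to invoke Mirzakhani's identification \cite{Mir07c} of $V_{g',n'}^{b'}(\mathbf{L})$ with the mixed intersection number $\int_{\overline{\mathcal{M}}_{g',\, n'+b'}} \exp\!\bigl(2\pi^2 \kappa_1 + \tfrac12 \sum_{i=1}^{b'} L_i^2\, \psi_i\bigr)$; from this, polynomiality in the $L_i^2$ of degree $d$ with coefficients in $\mathbf{Q}\cdot\pi^{\text{even}}$ is immediate, and positivity reduces to positivity of intersection numbers of the nef classes $\psi_i$ and $\kappa_1$.
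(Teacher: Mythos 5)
The paper does not actually prove this statement---it is quoted directly from Mirzakhani (\cite[Theorem 6.1]{Mir07a} and \cite[Theorem 1.1]{Mir07c})---and your outline (the McShane--Mirzakhani identity, the unfolding/integration formula, induction on $3g'-3+n'+b'$ with the explicit kernel integrals whose coefficients are positive multiples of $\zeta(2i)(2^{2i+1}-4)$, and alternatively the identification $V_{g',n'}^{b'}(\mathbf{L})=\int_{\overline{\mathcal{M}}_{g',n'+b'}}\exp\bigl(2\pi^2\kappa_1+\tfrac12\sum_i L_i^2\psi_i\bigr)$) is an accurate reconstruction of exactly the arguments in those two cited papers, so there is nothing to compare beyond the citation. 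The one caveat is that the recursion route requires a boundary component or cusp, so the closed case $b'=n'=0$ must be handled by the intersection-number route (as you note), which is harmless here since the surfaces arising in this paper from cutting along a multi-curve always satisfy $b_j\geq 1$.
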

$ $

\begin{remark}
	If the surface $S_{g',n'}^{b'}$ is a pair of pants, i.e., if $g' = 0$ and $n'+b' = 3$, then, for any $\mathbf{L} := (L_i)_{i=1}^{b'} \in (\mathbf{R}_{> 0})^{b'}$, the moduli space $\mathcal{M}_{g',n'}^{b'}(\mathbf{L})$ has exactly one point. We will adopt the convention 
	\[
	V_{g',n'}^{b'}(\mathbf{L}) := 1.
	\]
\end{remark}
$ $

\textit{The polynomials $W_{g,n}(\gamma,\mathbf{x})$.} Given a simple closed curve $\alpha$ on $S_{g,n}$, let
\[
\text{Stab}_0(\alpha) \subseteq \text{Mod}_{g,n}
\]
be the subgroup of all mapping classes of $S_{g,n}$ that fix $\alpha$ (up to isotopy) together with its orientations (although $\alpha$ is unoriented, it admits two possible orientations which are being required to be fixed). More generally, given an ordered simple closed multi-curve $\gamma := (\gamma_1,\dots,\gamma_k)$ on $S_{g,n}$ with $1 \leq k \leq 3g-3+n$, let
\[
\text{Stab}_0(\gamma) := \bigcap_{i=1}^k \text{Stab}_0(\gamma_i) \subseteq \text{Mod}_{g,n}
\]
be the subgroup of all mapping classes of $S_{g,n}$ that fix each component of $\gamma$ (up to isotopy) together with their respective orientations.\\

For the rest of this discussion fix an ordered simple closed multi-curve $\gamma := (\gamma_1,\dots,\gamma_k)$ on $S_{g,n}$ with $1 \leq k \leq 3g-3+n$.  Let $S_{g,n}(\gamma)$ be the (potentially disconnected) oriented topological surface with boundary obtained by cutting $S_{g,n}$ along the components of $\gamma$. Let $c \in \mathbf{Z}_{>0}$ be the number of components of $S_{g,n}(\gamma)$ and $\{\Sigma_j\}_{j=1}^c$ be the components of $S_{g,n}(\gamma)$. For every$j \in \{1,\dots,c\}$ let $g_j,n_j,b_j \in \mathbf{Z}_{\geq 0}$ be the triple of non-negative integers satisfying $2 - 2g_j - n_j - b_j < 0$ such that $\Sigma_j$ is homeomorphic to $S_{g_j,n_j}^{b_j}$. Given a vector $\mathbf{x} := (x_i)_{i=1}^k \in (\mathbf{R}_{>0})^k$, for every $j \in \{1,\dots,c\}$ let $\mathbf{x}_j \in (\mathbf{R}_{>0})^{b_j}$ be the subvector of $\mathbf{x}$ whose entries correspond to the boundary components of $\Sigma_j$.\\

Let $\rho_{g,n}(\gamma)$ be the number of components of $\gamma$ that bound (on any of its sides) a component of $S_{g,n}(\gamma)$ which is a torus with one boundary component. Let $\sigma_{g,n}(\gamma) \in \mathbf{Q}_{>0}$ be the rational number
\[
\sigma_{g,n}(\gamma) := \frac{\prod_{j=1}^c |K_{g_j,n_j}^{b_j}|}{|\text{Stab}_0(\gamma)\cap K_{g,n}|},
\]
where $K_{g_j,n_j}^{b_j} \triangleleft \text{Mod}_{g_j,n_j}^{b_j}$ is the kernel of the mapping class group action on $\mathcal{T}_{g_j,n_j}^{b_j}$ and $K_{g,n} \triangleleft \text{Mod}_{g,n}$ is the kernel of the mapping class group action on $\mathcal{T}_{g,n}$. For example, if $g =2$, $n=0$, and $\gamma$ is a separating simple closed curve on $S_{2,0}$, then $\sigma_{2,0}(\gamma) = 4/2 = 2$.\\

For vectors $\mathbf{x} := (x_i)_{i=1}^k \in (\mathbf{R}_{>0})^k$ consider the polynomial
\[
V_{g,n}(\gamma,\mathbf{x}) := \frac{1}{[\text{Stab}(\gamma):\text{Stab}_0(\gamma)]} \cdot \sigma_{g,n}(\gamma) \cdot 2^{-\rho_{g,n}{(\gamma)}} \cdot \prod_{j=1}^c V_{g_j,n_j}^{b_j}(\mathbf{x}_j) \cdot x_1 \cdots x_k.
\]
By Theorem \ref{theo:vol_pol}, $V_{g,n}(\gamma,\mathbf{x})$ is a polynomial of degree $6g-6+2n-k$, with non-negative coefficients, and rational leading coefficients. Denote by 
\begin{equation}
\label{eq:W_gn}
W_{g,n}(\gamma,\mathbf{x}) := V_{g,n}^{\text{top}}(\gamma,\mathbf{x}) 
\end{equation}
the polynomial obtained by adding up all the leading (maximal degree) monomials of $V_{g,n}(\gamma,\mathbf{x})$. The polynomial $W_{g,n}(\gamma,\mathbf{x})$ only depends on $g$, $n$, and the $\text{Mod}_{g,n}$-orbit of $\gamma$. \\
\begin{example}
	Table \ref{table:genus_2} contains the polynomials $W_{2,0}(\gamma,x_1,\dots,x_k)$ for all possible $\text{Mod}_{2,0}$-orbits of ordered simple closed multi-curves $\gamma:=(\gamma_1,\dots,\gamma_k)$ on $S_{2,0}$. These polynomial were computed using $(\ref{eq:W_gn})$ and the tables in \cite[\S B]{Do13}. 
	
	\begin{table}
		\centering
		
		\begin{tabular}{ | c | c | }
			\hline
			$\gamma := (\gamma_1,\dots,\gamma_k)$ & $W_{2,0}(\gamma,x_1,\dots,x_k)$ \\ \hline 
			\ & \\[2pt]
			
			\quad \includegraphics[width=0.3\textwidth]{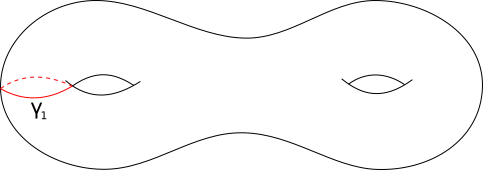} \quad \ & \begin{tabular}{c} $\frac{1}{96} x_1^5$ \\ \\ \\ \end{tabular}\\[9pt] 
			\hline
			\ & \\[2pt]
			
			\quad \includegraphics[width=0.3\textwidth]{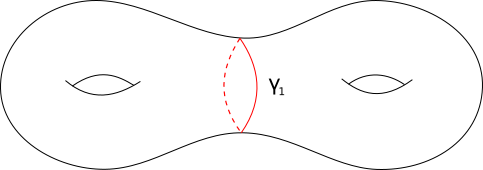}  \quad \ & \begin{tabular}{c} $\frac{1}{4608} x_1^5$ \\ \\ \\ \end{tabular} \\[9pt] 
			\hline
			\ &  \\[2pt]
			
			\quad \includegraphics[width=0.3\textwidth]{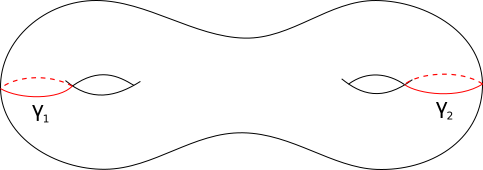}  \quad \ & \begin{tabular}{c} $\frac{1}{4} x_1^3x_2 + \frac{1}{4} x_1x_2^3$ \\ \\ \\ \end{tabular} \\[9pt] 
			\hline
			\ &  \\[2pt]
			
			\quad \includegraphics[width=0.3\textwidth]{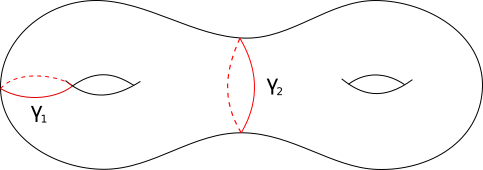}  \quad \ & \begin{tabular}{c} $\frac{1}{96} x_1 x_2^3$ \\ \\ \\ \end{tabular} \\[9pt] 
			\hline
			\ &  \\[2pt]
			
			\quad \includegraphics[width=0.3\textwidth]{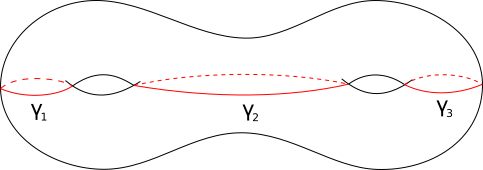}  \quad \ & \begin{tabular}{c} $\frac{1}{2} x_1 x_2 x_3$ \\ \\ \\ \end{tabular}\\[9pt] 
			\hline
			\ &  \\[2pt]
			
			\quad \includegraphics[width=0.3\textwidth]{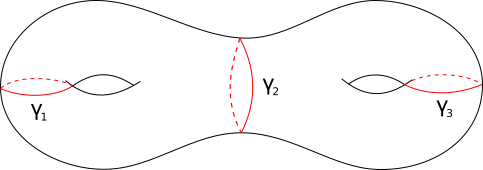}  \quad \ & \begin{tabular}{c} $\frac{1}{4} x_1 x_2 x_3$ \\ \\ \\ \end{tabular}\\[9pt] 
			\hline
		\end{tabular}
		
		\vspace{+0.7cm}
		
		\caption{Polynomials $W_{2,0}(\gamma,x_1,\dots,x_k)$ for all possible $\text{Mod}_{2,0}$-orbits of ordered simple closed multi-curves $\gamma := (\gamma_1,\dots,\gamma_k)$ on $S_{2,0}$.}
		\label{table:genus_2}
	\end{table}
	
\end{example}
$ $

\begin{example}
	For every pair of pants decomposition $\mathcal{P}:= (\gamma_1,\dots,\gamma_{3g-3+n})$ of $S_{g,n}$  there exists $k \in \mathbf{Z}_{\geq 0}$ such that
	\[
	W_{g,n}(\mathcal{P},x_1,\dots,x_{3g-3+n}) = 2^{-k} \cdot x_1 \cdots x_{3g-3+n}.
	\]
\end{example}
$ $

\textit{Total mass of horoball segment measures.} Let $\gamma := (\gamma_1,\dots,\gamma_k)$ with $1 \leq k \leq 3g-3+n$ be an ordered simple closed multi-curve on $S_{g,n}$ and $f \colon (\mathbf{R}_{\geq 0})^k \to \mathbf{R}_{\geq0}$ be a bounded, compactly supported, Borel measurable function with non-negative values and which is not almost everywhere zero with respect to the Lebesgue measure class. As mentioned above, the horoball segment measures $\widehat{\mu}_\gamma^{f,L}$ on $\mathcal{M}_{g,n}$ and $\widehat{\nu}_{\gamma,\mathbf{a}}^{f,L}$ on $P^1\mathcal{M}_{g,n}$ are finite. One can actually compute explicit formulas for their total mass $m_\gamma^{f,L}$ in terms of the polynomial $V_{g,n}(\gamma,\mathbf{x})$ and use them describe the asymptotics of $m_\gamma^{f,L}$ as $L \to \infty$ in terms of the polynomial $W_{g,n}(\gamma,\mathbf{x})$. See \cite[Proposition 3.1]{Ara19b} for a proof of the following result.\\

\begin{proposition}
	\label{prop:total_hor_meas}
	For every $L > 0$,
	\[
	m_\gamma^{f,L} = \int_{\mathbf{R}^k} f(\mathbf{x}) \cdot V_{g,n}(\gamma,L \cdot \mathbf{x}) \cdot L^k \  d \mathbf{x},
	\]
	where $d\mathbf{x} := dx_1 \cdots dx_k$. In particular,
	\[
	\lim_{L \to \infty} \frac{m_\gamma^{f,L}}{L^{6g-6+2n}} = \int_{\mathbf{R}^k} f(\mathbf{x}) \cdot W_{g,n}(\gamma,\mathbf{x}) \ d \mathbf{x}.
	\]
\end{proposition}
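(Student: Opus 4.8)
The plan is to unwind the definitions of the horoball segment measures, reducing $m_\gamma^{f,L}$ to an integral of $f(\tfrac1L\vec\ell_\gamma)$ against the local Weil--Petersson measure on $\mathcal{T}_{g,n}/\text{Stab}(\gamma)$, then to apply Mirzakhani's integration formula and finish with an elementary change of variables and the dominated convergence theorem. Concretely, I would first observe that the (non-local) pushforwards $\widetilde\mu_\gamma^{f,L}\mapsto\widehat\mu_\gamma^{f,L}$ and $\widetilde\nu_{\gamma,\mathbf{a}}^{f,L}\mapsto\widehat\nu_{\gamma,\mathbf{a}}^{f,L}$ preserve total mass, and that the disintegration defining $\nu_{\gamma,\mathbf{a}}^{f,L}$ from $\mu_\gamma^{f,L}$ has unit-mass fibers, so that $m_\gamma^{f,L}=\widetilde\mu_\gamma^{f,L}\big(\mathcal{T}_{g,n}/\text{Stab}(\gamma)\big)$. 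Since the density $X\mapsto f(\tfrac1L\vec\ell_\gamma(X))$ in (\ref{eq:horoball_meas_def}) depends only on the lengths of the components of $\gamma$, it is $\text{Stab}(\gamma)$-invariant and descends to $\mathcal{T}_{g,n}/\text{Stab}(\gamma)$, whence $\widetilde\mu_\gamma^{f,L}=f(\tfrac1L\vec\ell_\gamma(Y))\,d\widetilde\mu_{\text{wp}}^\gamma(Y)$ and
\[
m_\gamma^{f,L}=\int_{\mathcal{T}_{g,n}/\text{Stab}(\gamma)} f\!\left(\tfrac1L\vec\ell_\gamma(Y)\right) d\widetilde\mu_{\text{wp}}^\gamma(Y).
\]

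The key input is Mirzakhani's integration formula (\cite{Mir07a}; this is precisely the computation underlying \cite[Proposition 5.1]{Mir08b}): cutting $S_{g,n}$ along $\gamma$ into the pieces $\{\Sigma_j\}_{j=1}^c$, using Fenchel--Nielsen coordinates adapted to $\gamma$ together with Wolpert's identity $\omega_{\text{wp}}=\omega_{\text{wp}}^{\mathrm{cut}}+\sum_{i=1}^k dx_i\wedge d\tau_i$, and carefully accounting for the twist parameters (which, after quotienting by $\text{Stab}(\gamma)$, range over circles of circumference $x_i$, with further identifications when a component bounds a one-holed torus or when its orientation can be reversed), one obtains that the pushforward of $\widetilde\mu_{\text{wp}}^\gamma$ under $\vec\ell_\gamma\colon\mathcal{T}_{g,n}/\text{Stab}(\gamma)\to(\mathbf{R}_{>0})^k$ is exactly $V_{g,n}(\gamma,\mathbf{x})\,d\mathbf{x}$ --- the constants $[\text{Stab}(\gamma):\text{Stab}_0(\gamma)]^{-1}$, $\sigma_{g,n}(\gamma)$, $2^{-\rho_{g,n}(\gamma)}$, the factor $x_1\cdots x_k$, and the volumes $\prod_j V_{g_j,n_j}^{b_j}(\mathbf{x}_j)$ appearing in the definition of $V_{g,n}(\gamma,\mathbf{x})$ are exactly what this bookkeeping produces. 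Feeding the integrand $f(\tfrac1L\,\cdot\,)$ into this pushforward identity and changing variables via $\mathbf{x}=L\mathbf{y}$, $d\mathbf{x}=L^k\,d\mathbf{y}$, gives
\[
m_\gamma^{f,L}=\int_{\mathbf{R}^k} f\!\left(\tfrac1L\mathbf{x}\right) V_{g,n}(\gamma,\mathbf{x})\,d\mathbf{x}=\int_{\mathbf{R}^k} f(\mathbf{y})\,V_{g,n}(\gamma,L\mathbf{y})\,L^k\,d\mathbf{y},
\]
which is the first assertion.

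For the asymptotic statement I would divide by $L^{6g-6+2n}$ and write $m_\gamma^{f,L}/L^{6g-6+2n}=\int_{\mathbf{R}^k} f(\mathbf{y})\,V_{g,n}(\gamma,L\mathbf{y})\,L^{-(6g-6+2n-k)}\,d\mathbf{y}$. Since $V_{g,n}(\gamma,\mathbf{x})$ is a polynomial of degree $6g-6+2n-k$ with non-negative coefficients and top-degree part $W_{g,n}(\gamma,\mathbf{x})$, the integrand converges pointwise to $f(\mathbf{y})\,W_{g,n}(\gamma,\mathbf{y})$ as $L\to\infty$, and for $L\geq 1$ it is dominated on the compact set $\text{supp}(f)$ by $\big(\sup f\big)$ times a fixed polynomial evaluated at the coordinatewise supremum of $\text{supp}(f)$; the dominated convergence theorem then yields $\lim_{L\to\infty} m_\gamma^{f,L}/L^{6g-6+2n}=\int_{\mathbf{R}^k} f(\mathbf{y})\,W_{g,n}(\gamma,\mathbf{y})\,d\mathbf{y}$. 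The only substantial step is the integration-formula input --- setting up the finite cover $\mathcal{T}_{g,n}/\text{Stab}(\gamma)\to\mathcal{M}_{g,n}$, expressing the Weil--Petersson volume form in the adapted coordinates, and tracking the finite-index corrections --- which is Mirzakhani's argument and which I would invoke rather than reprove; everything else is a change of variables and an application of dominated convergence.
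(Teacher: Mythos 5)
Your proposal is correct and follows essentially the same route as the proof the paper relies on (it cites \cite{Ara19b}, Proposition 3.1, rather than reproving it): identify $m_\gamma^{f,L}$ with $\int_{\mathcal{T}_{g,n}/\text{Stab}(\gamma)} f\left(\frac{1}{L}\vec{\ell}_\gamma(Y)\right) d\widetilde{\mu}_{\text{wp}}^\gamma(Y)$, apply Mirzakhani's integration formula to push $\widetilde{\mu}_{\text{wp}}^\gamma$ forward under $\vec{\ell}_\gamma$ to $V_{g,n}(\gamma,\mathbf{x})\,d\mathbf{x}$, and conclude by the change of variables $\mathbf{x} = L\mathbf{y}$ together with the degree and non-negativity properties of $V_{g,n}$ (so that its top-degree part $W_{g,n}$ controls the limit). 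Your mass-preservation observations and the dominated-convergence step are accurate, so there is nothing to correct.
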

$ $

\textit{The symmetric Thurston metric.} Consider the \textit{asymmetric Thurston metric} $d_\text{Thu}'$ on $\mathcal{T}_{g,n}$ which to every pair $X,Y \in \mathcal{T}_{g,n}$ assigns the distance
\[
d_\text{Thu}'(X,Y) := \sup_{\lambda \in \mathcal{ML}_{g,n}} \log \left(\frac{\ell_\lambda(Y)}{\ell_\lambda(X)} \right).
\]
As this metric is asymmetric, it is convenient to consider the \textit{symmetric Thurston metric} $d_\text{Thu}$ on $\mathcal{T}_{g,n}$ which to every pair $X,Y \in \mathcal{T}_{g,n}$ assigns the distance
\[
d_\text{Thu}(X,Y) := \max\{d_\text{Thu}'(X,Y), d_\text{Thu}'(Y,X)\}.
\]
A pair $X,Y \in \mathcal{T}_{g,n}$ satisfies $d_\text{Thu}(X,Y) \leq \epsilon$ for some $\epsilon > 0$ precisely when
\begin{equation}
\label{eq:thurston_met}
e^{-\epsilon} \ell_\lambda(X) \leq \ell_\lambda(Y) \leq e^\epsilon  \ell_{\lambda}(X), \ \forall \lambda \in \mathcal{ML}_{g,n}.
\end{equation}
The metric $d_\text{Thu}$ induces the usual topology on $\mathcal{T}_{g,n}$. We denote by $U_X(\epsilon) \subseteq \mathcal{T}_{g,n}$ the closed ball of radius $\epsilon > 0$ centered at $X \in \mathcal{T}_{g,n}$ with respect to $d_\text{Thu}$. For more details on the theory of the asymmetric and symmetric Thurston metrics, see \cite{Thu98} and \cite{Pap15}. \\

\textit{The Yamabe space.} Let $\mathcal{Y}_{g,n}$ be the \textit{Yamabe space} of all complete, finite area, constant negative curvature metrics on $S_{g,n}$ up to isotopy. One can identify
\[
\mathcal{Y}_{g,n} = \left(\mathbf{R}_{>0}\right) \times \mathcal{T}_{g,n},
\]
where $(t,X) \in \left(\mathbf{R}_{>0}\right) \times \mathcal{T}_{g,n}$ corresponds to the scaling $t\cdot X \in \mathcal{Y}_{g,n}$ of the hyperbolic metric $X \in \mathcal{T}_{g,n}$ which scales lengths by $t > 0$. Let $\overline{\mathcal{Y}_{g,n}}$ be the \textit{enlarged Yamabe space} obtained by adjoining a copy of $\mathcal{ML}_{g,n}$ to $\overline{\mathcal{Y}_{g,n}}$,
\[
\overline{\mathcal{Y}_{g,n}} := \mathcal{Y}_{g,n} \sqcup \mathcal{ML}_{g,n}.
\]
$ $

Consider the pairing $i \colon \overline{\mathcal{Y}_{g,n}}  \times \mathcal{ML}_{g,n} \to \mathbf{R}_{\geq 0}$ which to every $(\alpha,\mu) \in \overline{\mathcal{Y}_{g,n}}  \times \mathcal{ML}_{g,n}$ assigns the value
\[
i(\alpha,\mu) := \left\lbrace
\begin{array}{ccl}
t \cdot \ell_\mu(X) & \text{if} & \alpha:=(t,X) \in \mathcal{Y}_{g,n}, \\
i(\lambda,\mu) & \text{if} & \alpha:=\lambda \in \mathcal{ML}_{g,n}. \\ 
\end{array} \right. 
\]
This pairing is homogenous with respect to the natural $\mathbf{R}_{>0}$ actions on each coordinate. On $\overline{\mathcal{Y}_{g,n}}$ consider the weakest topology making this pairing continuous. With this topology $\mathcal{T}_{g,n} = \{1\} \times \mathcal{T}_{g,n} \subseteq \mathcal{Y}_{g,n}$ and $\mathcal{ML}_{g,n} \subseteq \mathcal{Y}_{g,n}$ are embedded. By work of Thurston, see for instance \cite[Theorem 8.7]{FLP12}, $\overline{\mathcal{Y}_{g,n}}$ is projectively compact, that is, $P\overline{\mathcal{Y}_{g,n}} := \overline{\mathcal{Y}_{g,n}}/\mathbf{R}_{>0}$ is compact. The natural $\text{Mod}_{g,n}$ action on $\overline{\mathcal{Y}_{g,n}}$ is continuous.\\

\textit{Properly discontinuous stabilizer actions.} Consider the subset 
\[
\overline{\mathcal{Y}_{g,n}}(\gamma) := \mathcal{Y}_{g,n} \sqcup \mathcal{ML}_{g,n}(\gamma) \subseteq \overline{\mathcal{Y}_{g,n}}.
\]
If $n=0$, the following result is a direct consequence of \cite[Proposition 4.1]{EM18}; the same arguments can be adapted to obtain a proof in the case $n > 0$.\\

\begin{proposition}
	\label{prop:pd_Y}
	The group $\text{Stab}(\gamma)$ acts properly discontinuously on  $\overline{\mathcal{Y}_{g,n}}(\gamma)$.
\end{proposition}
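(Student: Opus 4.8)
The plan is to reduce Proposition \ref{prop:pd_Y} to the two regimes that must be handled separately: the interior $\mathcal{Y}_{g,n}$ and the boundary piece $\mathcal{ML}_{g,n}(\gamma)$, and then to control the transition between them using projective compactness of $\overline{\mathcal{Y}_{g,n}}$. Concretely, I would fix a compact set $K \subseteq \overline{\mathcal{Y}_{g,n}}(\gamma)$ and show that $\{\phi \in \text{Stab}(\gamma) \ | \ \phi \cdot K \cap K \neq \emptyset\}$ is finite. Since $\overline{\mathcal{Y}_{g,n}}(\gamma)$ is not itself compact, the first step is to cover $K$ by finitely many pieces on which the action can be analyzed by elementary means, so it suffices to prove the statement when $K$ is a small neighborhood of a single point $\alpha \in \overline{\mathcal{Y}_{g,n}}(\gamma)$.

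For a point $\alpha := (t,X) \in \mathcal{Y}_{g,n}$ this is classical: $\text{Mod}_{g,n}$ acts properly discontinuously on $\mathcal{T}_{g,n}$, and the $\mathbf{R}_{>0}$-scaling factor is preserved by every mapping class, so the $\text{Stab}(\gamma)$-action on $\mathcal{Y}_{g,n} = \mathbf{R}_{>0} \times \mathcal{T}_{g,n}$ is already properly discontinuous. The delicate case is $\alpha := \lambda \in \mathcal{ML}_{g,n}(\gamma)$, where a neighborhood of $\lambda$ in $\overline{\mathcal{Y}_{g,n}}(\gamma)$ contains interior points $(t,X)$ with $t$ large and $X$ degenerating. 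Here I would argue as follows. Suppose for contradiction that there is a sequence of distinct $\phi_m \in \text{Stab}(\gamma)$ and points $\alpha_m, \beta_m$ in a fixed neighborhood $U$ of $\lambda$ with $\phi_m \cdot \alpha_m = \beta_m$. After passing to a subsequence, $\alpha_m \to \alpha_\infty$ and $\beta_m \to \beta_\infty$ in $\overline{\mathcal{Y}_{g,n}}(\gamma)$, with both limits in a small neighborhood of $\lambda$; in particular their projective classes lie near $\overline{\lambda}$. Projectivizing and using that $P\overline{\mathcal{Y}_{g,n}}$ is compact while $\text{Mod}_{g,n}$ acts on it continuously, the projective classes $[\alpha_m]$ and $[\beta_m] = [\phi_m \cdot \alpha_m]$ converge, and any limiting behavior of the $\phi_m$ in the space of self-maps must send $\overline{\lambda}$ near $\overline{\lambda}$. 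The key input is that $\lambda$ fills $S_{g,n}$ together with $\gamma$: this rigidity (Proposition \ref{prop:fil_char}, referenced in the paper, together with the structure of $\text{Stab}(\gamma)$) forces that only finitely many mapping classes in $\text{Stab}(\gamma)$ can move $\overline{\lambda}$ within a small enough projective neighborhood of itself, because the subgroup of $\text{Stab}(\gamma)$ fixing $\overline{\lambda}$ is finite when $\gamma \cup \text{supp}(\lambda)$ fills. This contradicts the $\phi_m$ being distinct.

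The cited reference \cite[Proposition 4.1]{EM18} handles exactly this mechanism when $n = 0$, where the relevant statement is that the action of the stabilizer of a filling lamination on the corresponding boundary stratum of the enlarged Yamabe (or, equivalently, enlarged Teichmüller) space is properly discontinuous; the only adaptation needed for $n > 0$ is to account for punctures, which affects the dimension count and the precise description of $S_{g,n}(\gamma)$ but not the structure of the argument, since all the tools (projective compactness, continuity of the $\text{Mod}_{g,n}$-action on $\overline{\mathcal{Y}_{g,n}}$, and finiteness of stabilizers of filling laminations) hold verbatim in the punctured case. I expect the main obstacle to be the bookkeeping at points of $\overline{\mathcal{Y}_{g,n}}(\gamma)$ near the boundary: one must verify that a neighborhood basis of $\lambda$ genuinely stays inside $\overline{\mathcal{Y}_{g,n}}(\gamma)$ — i.e. that nearby laminations still fill together with $\gamma$, which is where openness of $\mathcal{ML}_{g,n}(\gamma)$ in $\mathcal{ML}_{g,n}$ is used — and that the $\mathbf{R}_{>0}$-scaling and the mapping class action interact cleanly on this neighborhood, so that projectivizing does not lose the needed information.
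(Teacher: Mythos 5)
Your proposal ultimately defers to the same source the paper does: the paper gives no argument of its own, but simply cites \cite[Proposition 4.1]{EM18} for $n=0$ and asserts that the same arguments adapt to $n>0$. To that extent you are aligned with the paper. However, the mechanism you sketch around the citation has a genuine gap at its core. You deduce the local statement at a boundary point $\lambda \in \mathcal{ML}_{g,n}(\gamma)$ from the claim that the subgroup of $\text{Stab}(\gamma)$ fixing $\overline{\lambda}$ is finite. Even granting that finiteness, this is a non sequitur: finiteness of a point stabilizer does not imply that only finitely many group elements carry a small neighborhood of the point back across itself (an irrational rotation of the circle has trivial stabilizers and is nowhere properly discontinuous). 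Proper discontinuity is a statement about non-accumulation of orbits, and that is precisely what has to be proved; it cannot be extracted from rigidity of the single point $\overline{\lambda}$. Likewise, "any limiting behavior of the $\phi_m$ in the space of self-maps" has no precise meaning here: a sequence of distinct mapping classes does not subconverge to a self-map of $P\overline{\mathcal{Y}_{g,n}}$, so compactness of $P\overline{\mathcal{Y}_{g,n}}$ alone gives you nothing about the $\phi_m$. The actual content of an argument in the spirit of \cite{EM18} is quantitative: on a suitable neighborhood of $\lambda$ one has a uniform lower bound $s_\gamma \geq \delta$ on the systole relative to $\gamma$ (Propositions \ref{prop:fil_char} and \ref{prop:syst_map}), and then Bers-type and Dehn--Thurston bounds (as in Corollary \ref{cor:bers_Y} and the proof of Proposition \ref{prop:K_compact}) confine any $\phi \in \text{Stab}(\gamma)$ returning that neighborhood to itself to finitely many possibilities. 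None of this appears in your sketch.

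There is a second, structural gap: reducing to "a small neighborhood of a single point" does not suffice for the compact-set definition of proper discontinuity. For a compact $K \subseteq \overline{\mathcal{Y}_{g,n}}(\gamma)$ one must also exclude infinitely many elements carrying the part of $K$ near one point to the part of $K$ near a \emph{different} point (interior points limiting onto a boundary lamination, or two distinct laminations in $\mathcal{ML}_{g,n}(\gamma)$), and this mixed regime is exactly where bordified spaces like $\overline{\mathcal{Y}_{g,n}}(\gamma)$ are dangerous; the correct pointwise criterion is the two-point one (neighborhoods $U_\alpha, U_\beta$ with $\{\phi \ | \ \phi U_\alpha \cap U_\beta \neq \emptyset\}$ finite). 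Related to this, your extraction of subsequential limits $\alpha_\infty, \beta_\infty$ inside $\overline{\mathcal{Y}_{g,n}}(\gamma)$ is not automatic: a neighborhood of $\lambda$ need not have compact closure in $\overline{\mathcal{Y}_{g,n}}(\gamma)$ (closures can pick up non-filling laminations), so the neighborhood must be chosen using a closed condition such as $s_\gamma \geq \delta$, again as in Proposition \ref{prop:K_compact}. Finally, a small but telling slip: interior points $(t,X)$ close to $\lambda$ have $t \to 0$ (with $X$ degenerating so that $t\,\ell_\bullet(X)$ converges), not $t$ large.
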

$ $

Proposition \ref{prop:pd_Y} implies in particular that $\text{Stab}(\gamma)$ acts properly discontinuously on $\mathcal{ML}_{g,n}(\gamma)$. It follows that, as was mentioned in \S 1, $\widetilde{\mu}_{\text{Thu}}^\gamma$, the local pushforward of the measure $\mu_{\text{Thu}}^\gamma := \mu_{\text{Thu}}|_{\mathcal{ML}_{g,n}(\gamma)}$ on $\mathcal{ML}_{g,n}(\gamma)$ to the quotient $\mathcal{ML}_{g,n}(\gamma)/ \allowbreak\text{Stab}(\gamma)$, is well defined. \\

\textit{Thurston's shear coordinates.} Let $\mu$ be a maximal geodesic lamination on $S_{g,n}$. It is not required for $\mu$ to support an invariant transverse measure. Consider the open, dense, full measure subset $\mathcal{ML}_{g,n}(\mu) \subseteq \mathcal{ML}_{g,n}$ of all measured geodesic laminations that together with $\mu$ fill $S_{g,n}$. More precisely, $\lambda \in \mathcal{ML}_{g,n}(\mu)$ if and only if $\mu$ and the topological support of $\lambda$ cut $S_{g,n}$ into polygons with no ideal vertices and with at most one puncture in their interior. Let $\text{Stab}(\mu) \subseteq \text{Mod}_{g,n}$ be the subgroup of all mapping classes of $S_{g,n}$ that stabilize $\mu$. In \cite{Thu98}, Thurston introduced a $\text{Stab}(\mu)$-equivariant global parametrization of $\mathcal{T}_{g,n}$,
\[
F_\mu \colon \mathcal{T}_{g,n} \to \mathcal{ML}_{g,n}(\mu),
\]
called the \textit{shear coordinates} of $\mathcal{T}_{g,n}$ with respect to $\mu$. Roughly speaking, this map sends $X \in \mathcal{T}_{g,n}$ to the \textit{transverse horocyclic foliation} $F_\mu(X)$ of $\mu$ on $X$. The $F_\mu(X)$-measure of a subarc of $\mu$ is given by the hyperbolic length of such arc on $X$. In particular, given any $X \in \mathcal{T}_{g,n}$ and any $\lambda \in \mathcal{ML}_{g,n}$,
\begin{equation}
\label{eq:shear_ineq}
i(F_\mu(X),\lambda) \leq \ell_{\lambda}(X).
\end{equation}
Moreover, if one of the components of $\mu$ is a simple closed curve $\gamma$ then 
\begin{equation}
\label{eq:shear_eq}
i(F_\mu(X),\gamma) = \ell_{\gamma}(X).
\end{equation}
$ $

By work of Papadopoulos and Penner, see \cite[Corollary 4.2]{PP93}, and of Bonahon and Sözen, see \cite[Theorem 1]{BS01}, if $n > 0$ and $\mu$ is an ideal geodesic triangulation of $S_{g,n}$, or if $n = 0$ and $\mu$ is a maximal geodesic lamination of $S_{g,n}$, the shear coordinates
\[
F_\mu \colon \mathcal{T}_{g,n} \to \mathcal{ML}_{g,n}(\mu)
\]
pull back the the restriction of Thurston symplectic form $\omega_{\text{Thu}}$ on $\mathcal{ML}_{g,n}(\mu)$ to the Weil-Petersson symplectic form $\omega_{\text{wp}}$ on $\mathcal{T}_{g,n}$. As a direct consequence of these results we obtain the following corollary.\\

\begin{corollary}
	\label{cor:shear_mp}
	Suppose that $n > 0$ and $\mu$ is a finite ideal geodesic triangulation of $S_{g,n}$, or that $n = 0$ and $\mu$ is a maximal geodesic lamination on $S_{g,n}$. Then the shear coordinates
	\[
	F_\mu \colon \mathcal{T}_{g,n} \to \mathcal{ML}_{g,n}(\mu)
	\]
	pull back the restriction of the Thurston measure $\mu_{\text{Thu}}$ on $\mathcal{ML}_{g,n}(\mu)$ to the Weil-Petersson measure $\mu_{\text{wp}}$ on $\mathcal{T}_{g,n}$.
\end{corollary}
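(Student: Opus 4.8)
The plan is to deduce the statement about measures directly from the already-cited statement about symplectic forms, via the elementary fact that a diffeomorphism intertwining two symplectic forms also intertwines their top exterior powers, and hence the associated volume measures.

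First I would recall the setup. The shear coordinate map $F_\mu \colon \mathcal{T}_{g,n} \to \mathcal{ML}_{g,n}(\mu)$ is a $\text{Stab}(\mu)$-equivariant diffeomorphism onto the open, dense, full measure subset $\mathcal{ML}_{g,n}(\mu) \subseteq \mathcal{ML}_{g,n}$; in particular it carries the smooth measure theory of one space to that of the other, and since $\dim \mathcal{T}_{g,n} = 6g-6+2n = 2(3g-3+n)$, the symplectic top powers appearing below are genuine volume forms. Under the hypotheses of the corollary --- $n > 0$ with $\mu$ a finite ideal geodesic triangulation, or $n = 0$ with $\mu$ a maximal geodesic lamination --- the results of Papadopoulos and Penner \cite[Corollary 4.2]{PP93} and of Bonahon and S\"ozen \cite[Theorem 1]{BS01} assert precisely that $F_\mu^*\bigl(\omega_{\text{Thu}}|_{\mathcal{ML}_{g,n}(\mu)}\bigr) = \omega_{\text{wp}}$ as $2$-forms on $\mathcal{T}_{g,n}$.

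Next I would take the $(3g-3+n)$-th exterior power of both sides and divide by $(3g-3+n)!$. Since pullback commutes with wedge products,
\[
F_\mu^*\!\left( v_{\text{Thu}}\big|_{\mathcal{ML}_{g,n}(\mu)} \right) = \frac{1}{(3g-3+n)!} \bigwedge_{i=1}^{3g-3+n} F_\mu^* \omega_{\text{Thu}} = \frac{1}{(3g-3+n)!} \bigwedge_{i=1}^{3g-3+n} \omega_{\text{wp}},
\]
and the right-hand side is, with the standard normalization, the Weil--Petersson volume form on $\mathcal{T}_{g,n}$. Passing from volume forms to the associated positive Borel measures --- legitimate because $F_\mu$ is a diffeomorphism, so one takes absolute values of densities chart by chart --- yields $F_\mu^*\bigl(\mu_{\text{Thu}}|_{\mathcal{ML}_{g,n}(\mu)}\bigr) = \mu_{\text{wp}}$, which is exactly the claim.

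The only point needing care, and the closest thing here to an obstacle, is the consistent bookkeeping of normalizations across the two sides: one must use that $\mu_{\text{Thu}}$ was defined in \S 2 as the measure of $v_{\text{Thu}} = \frac{1}{(3g-3+n)!}\bigwedge \omega_{\text{Thu}}$ and is Lebesgue in the affine charts provided by train-track (equivalently, shear) coordinates, and that $\mu_{\text{wp}}$ is correspondingly the measure of $\frac{1}{(3g-3+n)!}\,\omega_{\text{wp}}^{3g-3+n}$. Both are standard conventions; once they are pinned down, the corollary is precisely the routine computation above showing that a symplectomorphism is volume-preserving, and no further difficulty arises.
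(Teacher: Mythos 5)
Your argument is exactly the paper's: the corollary is stated there as a direct consequence of the Papadopoulos--Penner and Bonahon--S\"ozen symplectic-form pullback results, with the passage from $\omega$-pullback to measure-pullback via the top exterior power $v_{\text{Thu}} = \frac{1}{(3g-3+n)!}\bigwedge \omega_{\text{Thu}}$ left implicit. You have simply spelled out that routine step, including the normalization bookkeeping, which is correct.
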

$ $

By work of Papadopoulos, see \cite[Proposition 3.1]{Pap88} and \cite[Lemma 4.9]{Pap91}, the behavior of shear coordinates along sequence in $\mathcal{T}_{g,n}$ approaching the Thurston boundary $P\mathcal{ML}_{g,n}$ is well understood.\\

\begin{lemma}
	\label{lem:asymp_shear}
	Suppose that $n > 0$ and $\mu$ is a finite ideal geodesic triangulation of $S_{g,n}$, or that $n = 0$ and $\mu$ is a maximal geodesic lamination on $S_{g,n}$. Let $(X_n)_{n \in \mathbf{N}}$ be a sequence of points in $\mathcal{T}_{g,n}$ converging to a projective measured geodesic lamination on the Thurston boundary $P\mathcal{ML}_{g,n}$. Then, for every simple closed curve $\alpha$ on $S_{g,n}$ there exists a constant $C>0$ such that for every $n \in \mathbf{N}$,
	\[
	i(F_\mu(X_n),\alpha) \leq \ell_{\alpha}(X_n) \leq i(F_\mu(X_n),\alpha) + C.
	\]
\end{lemma}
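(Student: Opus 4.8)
The plan is to establish the two inequalities separately; the left one is immediate and the right one is the substance. Indeed $i(F_\mu(X_n),\alpha)\le\ell_\alpha(X_n)$ is precisely (\ref{eq:shear_ineq}) applied to $\alpha$ regarded as a measured geodesic lamination, so no work is needed there. For the right inequality I would produce a constant $C$, depending only on $\alpha$ and $\mu$, with $\ell_\alpha(X_n)\le i(F_\mu(X_n),\alpha)+C$ for all $n$; the convergence hypothesis will intervene only in one sub-case, identified below.

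The idea for the upper bound is a region-by-region comparison between hyperbolic length and the transverse measure of the horocyclic foliation $F_\mu(X_n)$. If $\alpha$ is a closed leaf of $\mu$, then $i(F_\mu(X_n),\alpha)=\ell_\alpha(X_n)$ by (\ref{eq:shear_eq}) and there is nothing to prove; otherwise the $X_n$-geodesic representative $\alpha_n$ of $\alpha$ meets $\mu$ transversely and is cut by $\mu$ into sub-arcs, each contained in a complementary region of $\mu$ --- an ideal triangle in both cases of the statement --- and each joining two distinct sides of its triangle (a geodesic arc cannot join a side of an ideal triangle to itself without bounding a bigon with it), hence ``cutting off'' a single ideal vertex. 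On every ideal triangle the horocyclic foliation is a product foliation away from one non-foliated central region, and all these central regions are isometric to a fixed model, in particular of universally bounded diameter $D_0$. The elementary local estimate to check is: on each sub-arc $c$, the hyperbolic length of $c$ exceeds its $F_\mu(X_n)$-transverse measure by at most $D_0$, the excess being moreover exponentially small in the ``depth'' of $c$ in its corner when $c$ misses the central region. Summing over the sub-arcs, the total excess $\ell_\alpha(X_n)-i(F_\mu(X_n),\alpha)$ is bounded by $D_0$ times the number of sub-arcs of $\alpha_n$ that meet a central region, plus the (a priori infinite) series of the remaining corner excesses.

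If $n>0$, so that $\mu$ is a finite ideal triangulation --- a finite disjoint union of bi-infinite geodesic edges --- then $\alpha$ meets $\mu$ in a fixed finite number of points, the total number of sub-arcs is a topological constant, the residual series is a finite sum, and the bound follows at once, with no use of the Thurston boundary. The hypothesis does its work exactly in the complementary case: $n=0$, $\mu$ a maximal geodesic lamination possessing leaves --- spiralling onto a closed leaf, or onto a minimal sublamination --- which $\alpha$ crosses; then $\alpha_n\cap\mu$ is infinite, there are infinitely many sub-arcs, and one must show that the number of ``substantial'' sub-arcs (those reaching a central region) and the tail of corner excesses stay bounded as $n\to\infty$. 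The only way either quantity could blow up is for $\alpha_n$ to shadow leaves or sublaminations of $\mu$ over lengths tending to infinity, and convergence of $X_n$ to a point $\overline{\lambda}\in P\mathcal{ML}_{g,n}$ is exactly what precludes this. Quantitatively I would appeal to Papadopoulos's analysis: the shear-coordinate map $F_\mu$ extends continuously to the Thurston boundary, so that the projective classes $\overline{F_\mu(X_n)}$ converge, and near the boundary the intersection numbers of $F_\mu(X_n)$ with $\alpha$ and with $\mu$ are comparable up to a controlled discrepancy --- this is the content of \cite[Proposition 3.1]{Pap88} and \cite[Lemma 4.9]{Pap91} --- and these estimates deliver the uniform additive constant.

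The main obstacle is thus this last point: the uniform control, along sequences converging in the Thurston compactification, of the number of ``substantial'' crossings of $\alpha_n$ with $\mu$ and of the residual corner series in the laminar case; the rest reduces to the two-sided comparison of hyperbolic length with horocyclic transverse measure inside a single ideal triangle, which is routine. I would also note that, since every sequence leaving all compact subsets of $\mathcal{T}_{g,n}$ subconverges in the Thurston compactification while $\ell_\alpha(\cdot)-i(F_\mu(\cdot),\alpha)$ is continuous and hence bounded on compacta, Lemma \ref{lem:asymp_shear} is equivalent to the statement that $\ell_\alpha(\cdot)-i(F_\mu(\cdot),\alpha)$ is a bounded function on $\mathcal{T}_{g,n}$, with bound depending only on $\alpha$ and $\mu$.
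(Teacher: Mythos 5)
You should know at the outset that the paper does not prove Lemma \ref{lem:asymp_shear} at all: it is quoted from Papadopoulos, via \cite[Proposition 3.1]{Pap88} and \cite[Lemma 4.9]{Pap91}. So the only question is whether your sketch amounts to an independent proof. The easy parts are fine: the left inequality is exactly (\ref{eq:shear_ineq}), the case where $\alpha$ is a leaf of $\mu$ is (\ref{eq:shear_eq}), and when $n>0$ the number of complementary sub-arcs of the geodesic representative equals the metric-independent geometric intersection number of $\alpha$ with the finitely many ideal edges, so a uniform per-triangle estimate suffices and no boundary hypothesis is needed. One point you gloss even there: bounding each sub-arc's length by the transverse measure \emph{it} picks up plus a constant controls $\ell_\alpha(X_n)$ by the total measure picked up by the geodesic, not by $i(F_\mu(X_n),\alpha)$, which is an infimum over all representatives; you also need the geodesic to be almost efficient with respect to $F_\mu(X_n)$ (bounded total backtracking across the horocyclic leaves). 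That is part of the same local analysis, but it must be stated and proved.

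The genuine gap is the case $n=0$, which you yourself identify as the main obstacle and then resolve by ``appealing to Papadopoulos's analysis'' --- i.e.\ by citing precisely the results from which the paper quotes the lemma; as a self-contained argument this is circular, and the mechanism you propose for using the hypothesis is not correct. Convergence of $X_n$ to a point of $P\mathcal{ML}_{g,n}$ does \emph{not} preclude the geodesics from shadowing leaves of $\mu$ over lengths tending to infinity: if $\beta$ is a closed leaf of $\mu$ crossed by $\alpha$ and $X_n=T_\beta^n X_0$, then $X_n\to[\beta]$ in the Thurston compactification while the geodesic representative of $\alpha$ wraps roughly $n$ times around $\beta$, crossing the spiralling spikes of the adjacent triangles an unbounded number of times; moreover, since $\ell_\alpha(X_n)\to\infty$, you cannot bound the number of ``substantial'' crossings by length considerations. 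The lemma is still true in these situations, but for a finer reason than the one you give: the corner excesses decay along the spiral and their sum is geometric/telescoping (for instance, for an arc crossing the collar of a short closed leaf of $\mu$ the excesses telescope to a bound independent of $\ell_\beta$), and this uniform summability --- not the absence of shadowing --- is the actual content of the Papadopoulos estimates. So the key quantitative step is exactly what is deferred. Your closing remark, that the lemma is equivalent to boundedness of $X\mapsto\ell_\alpha(X)-i(F_\mu(X),\alpha)$ on all of $\mathcal{T}_{g,n}$, is correct and is a good way to see what must be proved, but it also shows that the ``convergence does the work'' framing is not where the substance lies.
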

$ $

\textit{Filling pairs of measured geodesic laminations.} A pair of measured geodesic laminations $\lambda,\mu \in \mathcal{ML}_{g,n}$ is said to \textit{fill} $S_{g,n}$ if the topological supports of $\lambda$ and $\mu$ cut $S_{g,n}$ into polygons with no ideal vertices and with at most one puncture in their interior. This condition can be characterized in terms of the intersection pairing of $\mathcal{ML}_{g,n}$ in the following way; see \cite[\S 1.2, \S4.3]{Mir08a} for more details.\\

\begin{proposition}
	\label{prop:ml_fil}
	A pair $\lambda,\mu \in \mathcal{ML}_{g,n}$ fills $S_{g,n}$ if and only if
	\[
	i(\lambda,\eta) + i(\mu,\eta) > 0, \ \forall \eta \in \mathcal{ML}_{g,n}.
	\]
\end{proposition}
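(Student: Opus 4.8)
The plan is to prove both implications by unwinding the definition of "filling" in terms of complementary regions of the union of the topological supports. Write $|\lambda|$ and $|\mu|$ for the topological supports (as geodesic laminations) of $\lambda$ and $\mu$ respectively, and let $\Gamma := |\lambda| \cup |\mu|$, which is itself a geodesic lamination on $S_{g,n}$. By definition $\lambda,\mu$ fill $S_{g,n}$ precisely when every component of $S_{g,n} \setminus \Gamma$ is a polygon with no ideal vertex and at most one puncture in its interior, i.e., $\Gamma$ is a \emph{filling} lamination in the usual sense.

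First I would prove the contrapositive of the forward implication: suppose $\lambda,\mu$ do \emph{not} fill $S_{g,n}$. Then $S_{g,n} \setminus \Gamma$ has a component $R$ that is either not simply connected, or is an ideal polygon (has an ideal vertex), or is a polygon containing at least two punctures, or more precisely: the metric completion of some complementary region carries an essential simple closed curve or essential simple arc disjoint from $\Gamma$. Either way one can produce a (possibly simple closed curve, possibly a lamination supported on the core of a non-peripheral annulus inside $\overline{R}$) measured geodesic lamination $\eta \in \mathcal{ML}_{g,n}$ whose support lies in $\overline{R}$ and is disjoint from $\Gamma$. Since the support of $\eta$ is disjoint from both $|\lambda|$ and $|\mu|$, geometric intersection number vanishes: $i(\lambda,\eta) = i(\mu,\eta) = 0$, so the sum is $0$ and the inequality fails. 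The one point requiring a little care is checking that a non-filling $\Gamma$ genuinely has such an $\eta$ in its complement — this uses the classification of complementary regions of a geodesic lamination (finitely many, each a finite-sided ideal polygon or a crowned surface) together with the hypothesis $2 - 2g - n < 0$ to guarantee that a non-filling $\Gamma$ leaves room for an essential curve or non-peripheral arc; I would cite \cite[\S 1.2, \S 4.3]{Mir08a} for this.

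Conversely, suppose $\lambda,\mu$ fill $S_{g,n}$, and let $\eta \in \mathcal{ML}_{g,n}$ be arbitrary; I must show $i(\lambda,\eta) + i(\mu,\eta) > 0$. If this sum were $0$, then $i(\lambda,\eta) = 0$ and $i(\mu,\eta) = 0$. Realizing everything by geodesic representatives, $i(\lambda,\eta)=0$ forces the support $|\eta|$ to be disjoint from $|\lambda|$ (two geodesic laminations with zero intersection number are disjoint, as distinct geodesics cross transversally), and similarly $|\eta|$ is disjoint from $|\mu|$; hence $|\eta|$ lies entirely inside one complementary region $R$ of $\Gamma$. But $R$ is a polygon with at most one puncture and no ideal vertices, so its completion supports no essential simple closed geodesic and no bi-infinite geodesic — contradicting the fact that $|\eta| \neq \emptyset$ is a nonempty geodesic lamination. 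Therefore $i(\lambda,\eta) + i(\mu,\eta) > 0$.

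The main obstacle is the forward direction, specifically making the passage from "some complementary region of $\Gamma$ is not a once-punctured-at-most polygon" to "there is a nonzero $\eta \in \mathcal{ML}_{g,n}$ disjoint from $\Gamma$" fully rigorous: one has to rule out, e.g., the case where the offending region is an annular neighborhood of a puncture (which is harmless and already allowed), and handle crowned-surface complementary regions. Both are standard consequences of the structure theory of geodesic laminations, so in the write-up I would keep this step short and defer to \cite[\S 1.2, \S 4.3]{Mir08a}, where this characterization is spelled out; the remaining bookkeeping (geodesic realization, $i(\cdot,\cdot)=0 \iff$ disjoint supports) is routine.
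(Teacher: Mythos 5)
The paper does not actually prove Proposition \ref{prop:ml_fil}; it quotes it as background and refers to \cite[\S 1.2, \S 4.3]{Mir08a}, so the only question is whether your argument is complete, and it is not. In the converse direction the step ``$i(\lambda,\eta)=0$ forces $|\eta|$ to be disjoint from $|\lambda|$'' is false: vanishing intersection number only rules out \emph{transverse} crossings, and the two supports may share leaves (for instance $\eta=\lambda$ always has $i(\lambda,\eta)=0$). Your argument therefore only treats the case where $|\eta|$ lies in a complementary region of $\Gamma=|\lambda|\cup|\mu|$, and silently skips the case where a minimal component $m$ of $|\eta|$ coincides with a minimal component of $|\lambda|$ (or of $|\mu|$) that is not crossed by the other lamination. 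That is exactly where the content of the proposition sits: one has to show that such an uncrossed component forces a forbidden complementary region of $\Gamma$ --- a region with a smooth closed geodesic boundary circle if $m$ is a closed leaf, or a region with an ideal vertex (a surviving spike) if $m$ is minimal irrational --- contradicting the filling hypothesis.

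The forward direction has the matching gap, and there your plan would genuinely fail rather than merely need ``a little care.'' If the pair fails to fill only because an ideal vertex survives --- e.g.\ some complementary region is an ideal polygon, possibly once punctured --- then that region contains no essential closed curve, and an ``essential simple arc'' is not an element of $\mathcal{ML}_{g,n}$, so there is \emph{no} measured lamination $\eta$ supported in $\overline{R}$ and disjoint from $\Gamma$. The correct witness in this situation is a sublamination of $\lambda$ or $\mu$ itself: the two rays bounding the spike cannot both lie on closed leaves, so at least one lies in a minimal irrational component $m$ of, say, $|\lambda|$; since the spike follows a half-leaf dense in $m$, any leaf of $|\lambda|\cup|\mu|$ crossing $m$ transversally would cut the spike, so nothing crosses $m$, and $\eta:=\lambda|_{m}$ satisfies $i(\lambda,\eta)=i(\mu,\eta)=0$. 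So both implications require allowing the witness $\eta$ to share support with $\lambda$ or $\mu$, and the equivalence ``no ideal vertices $\Leftrightarrow$ every minimal component of each lamination is crossed by the other'' is the missing idea; as written, your proof establishes neither direction in the spike/shared-leaf cases.
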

$ $

\textit{Bers's Theorem.} The following version of Bers's theorem can be proved using arguments similar to those in the proof of \cite[Theorem 12.8]{FM11}. \\

\begin{theorem}
	\label{theo:bers}
	Let $1 \leq k \leq 3g-3+n$ and $\mathbf{b} := (b_1,\dots,b_k) \in (\mathbf{R}_{>0})^k$. There exists a constant $C \geq \max_{i=1,\dots,k}  b_i$ such that for any $X \in \mathcal{T}_{g,n}$ and any ordered simple closed multi-curve $\gamma := (\gamma_1,\dots,\gamma_k)$ on $S_{g,n}$ satisfying
	\[
	\ell_{\gamma_i}(X) \leq b_i, \ \forall i =1,\dots,k,
	\]
	there exists a completion $\mathcal{P} := (\gamma_1,\dots,\gamma_{3g-3+n})$ of $\gamma$ to a pair of pants decomposition of $S_{g,n}$ satisfying
	\[
	\ell_{\gamma_i}(X) \leq C,  \ \forall i =1,\dots,3g-3+n.
	\]
\end{theorem}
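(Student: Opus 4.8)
The plan is to follow the classical strategy for producing Bers-type bounds, adapted so that the prescribed short multi-curve $\gamma$ is used as the first stage of the construction. First I would cut $X$ along the geodesic representatives of $\gamma_1,\dots,\gamma_k$, obtaining a (possibly disconnected) complete, finite area hyperbolic surface $X(\gamma)$ with geodesic boundary whose boundary components have lengths $\ell_{\gamma_1}(X),\dots,\ell_{\gamma_k}(X)$, all bounded above by $\max_i b_i$. The underlying topological surface $S_{g,n}(\gamma)$ has finitely many components, each of some topological type $S_{g_j,n_j}^{b_j}$ with $2-2g_j-n_j-b_j<0$; crucially, as $\gamma$ ranges over all ordered simple closed multi-curves with $k$ components on $S_{g,n}$, only finitely many such types occur. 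Completing $\gamma$ to a pair of pants decomposition of $S_{g,n}$ is the same as choosing a pair of pants decomposition of each component of $S_{g,n}(\gamma)$, and the length on $X$ of each curve so obtained equals its length as a geodesic on the relevant component of $X(\gamma)$. So the theorem reduces to a uniform Bers bound for hyperbolic surfaces with geodesic boundary.

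The second step is to establish that bordered Bers bound. Fix a topological type $S_{g',n'}^{b'}$ and $\beta>0$. I claim there is a constant $C(g',n',b',\beta)$ such that any complete, finite area hyperbolic structure on $S_{g',n'}^{b'}$ with geodesic boundary of lengths $\le\beta$ admits a pair of pants decomposition all of whose curves have length $\le C(g',n',b',\beta)$. One proves this by induction on the complexity $3g'-3+n'+b'$, mimicking the argument of \cite[Theorem 12.8]{FM11}: if the surface is a pair of pants there is nothing to do; otherwise pick a shortest non-peripheral simple closed geodesic $\alpha$. The collar lemma, together with the Gauss--Bonnet value of the area, which equals $2\pi(2g'-2+n'+b')$ and is independent of the boundary lengths, and the upper bound $\beta$ on the boundary lengths, gives an upper bound on $\ell(\alpha)$ depending only on $g',n',b'$ and $\beta$. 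Cutting along $\alpha$ strictly decreases the complexity of each resulting piece while keeping all boundary lengths $\le\max\{\beta,\ell(\alpha)\}$, so the inductive hypothesis applies and one defines $C(g',n',b',\beta)$ accordingly.

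Finally, set $C:=\max\bigl\{\max_i b_i,\ \max_j C(g_j,n_j,b_j,\max_i b_i)\bigr\}$, where the second maximum is taken over the finitely many topological types of components that can arise. Given $X$ and $\gamma$ as in the statement with $\ell_{\gamma_i}(X)\le b_i$, apply the bordered Bers bound to each component of $X(\gamma)$, whose boundary lengths are $\le\max_i b_i$, and let $\mathcal{P}$ be the union of $\gamma$ with the resulting pants curves; every curve of $\mathcal{P}$ then has length $\le C$ on $X$, and $C\ge\max_i b_i$ by construction. The main obstacle is the uniformity of $C(g',n',b',\beta)$ as the boundary lengths range over all of $(0,\beta]$, in particular the regime where some boundary component is very short: this is exactly where one must use that a short geodesic boundary component only enlarges the available collar region, so the collar-lemma estimate on $\ell(\alpha)$ does not degenerate. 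Alternatively, one can see this uniformity by noting that the Bers function is continuous and bounded on the moduli space of bordered hyperbolic surfaces of the given type with boundary lengths in $[0,\beta]$ (allowing boundary length zero, i.e.\ cusps), using Mumford-type compactness of its thick part.
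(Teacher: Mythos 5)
Your proposal is correct and follows essentially the same route the paper intends: the paper offers no proof beyond pointing to the argument of \cite[Theorem 12.8]{FM11}, and your plan—cut $X$ along the geodesic representatives of $\gamma$, note that only finitely many bordered topological types arise, and run the inductive shortest-nonperipheral-curve/cut argument to get a uniform Bers bound for bordered surfaces with boundary lengths $\leq \max_i b_i$—is exactly that adaptation. The only soft spot is the one-line assertion that the collar estimate ``does not degenerate'' as a boundary length tends to $0$ (growing a collar from a very short boundary component does degenerate, since its width blows up); the standard remedy is to treat very short boundary components like cusps via the thick--thin decomposition, which is precisely what your alternative remark about compactness of the thick part of the bordered moduli space accomplishes, so the argument closes as stated.
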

$ $

\section{Counting simple closed hyperbolic multi-geodesics}

$ $

\textit{Setting.} For the rest of this section, let $\gamma := (\gamma_1,\dots,\gamma_k)$ with $1 \leq k \leq 3g-3+n$ be an ordered simple closed multi-curve on $S_{g,n}$ and $X \in \mathcal{T}_{g,n}$  be a marked, oriented, complete, finite area hyperbolic structure on $S_{g,n}$.\\

\textit{Proof of Theorem \ref{theo:length_spec_simple_red}}. Let $f \colon (\mathbf{R}_{\geq0})^k \to \mathbf{R}_{\geq 0}$ be a non-negative, continuous, compactly supported function. As explained in \S 1, to prove Theorem \ref{theo:length_spec_simple_red} we proceed in two steps. First, considering $X$ as an element of $\mathcal{M}_{g,n}$, we \textit{spread out} and average the counting functions $c(X,\gamma,f,L)$ over points $Y\in \mathcal{M}_{g,n}$ near $X$. Second, we \textit{unfold} these averages over a suitable intermediate cover, reducing the proof of Theorem \ref{theo:length_spec_simple_red} to an application of Corollary \ref{cor:horoball_equid}.\\

\textit{Spreading out and averaging.} Given $\mathbf{x} := (x_i)_{i=1}^k \in (\mathbf{R}_{\geq0})^k$ and $\epsilon > 0$, let $N_\epsilon(\mathbf{x}) \subseteq (\mathbf{R}_{\geq0})^k$ be the subset 
\[
N_\epsilon(\mathbf{x}) := \{\mathbf{y} := (y_i)_{i=1}^k \in (\mathbf{R}_{\geq0})^k \ | \  e^{-\epsilon}  x_i \leq y_i \leq e^\epsilon x_i, \ \forall i =1,\dots, k\}.
\]
For every $\epsilon > 0$ consider the functions $f_\epsilon^{\max}, f_\epsilon^{\min}\colon  (\mathbf{R}_{\geq0})^k \to \mathbf{R}_{\geq0}$ which to every $\mathbf{x} \in  (\mathbf{R}_{\geq 0})^k$ assign the value
\[
f_\epsilon^{\max}(\mathbf{x}) := \max_{\mathbf{y} \in N_\epsilon(\mathbf{x})} f(\mathbf{y}), \quad f_\epsilon^{\min}(\mathbf{x}) := \min_{\mathbf{y} \in N_\epsilon(\mathbf{x})} f(\mathbf{y}).
\]
As $f \colon (\mathbf{R}_{\geq0})^k \to \mathbf{R}_{\geq 0}$ is continuous and compactly supported,
\[
\lim_{\epsilon \to 0} f_\epsilon^{\max}(\mathbf{x}) = f(\mathbf{x}), \quad \lim_{\epsilon \to 0} f_\epsilon^{\min}(\mathbf{x}) = f(\mathbf{x})
\]
uniformly over all $\mathbf{x} \in (\mathbf{R}_{\geq0})^k$.\\

Let $\epsilon > 0$ be arbitrary. Recall that $U_X(\epsilon) \subseteq \mathcal{T}_{g,n}$ denotes the closed ball of radius $\epsilon$ centered at $X$ with respect to the symmetric Thurston metric $d_\text{Thu}$. Let $\pi \colon \mathcal{T}_{g,n} \to \mathcal{M}_{g,n}$ be the quotient map. As highlighted in (\ref{eq:thurston_met}), $Y \in \mathcal{T}_{g,n}$ satisfies $d_{\text{Thu}}(X,Y) < \epsilon$ if and only if
\[
e^{-\epsilon} \ell_\lambda(X) \leq \ell_\lambda(Y) \leq e^\epsilon  \ell_{\lambda}(X), \ \forall \lambda \in \mathcal{ML}_{g,n}.
\]
In particular, for every $L > 0$, if $Y \in \mathcal{M}_{g,n}$ satisfies $Y \in \pi(U_X(\epsilon))$ then
\begin{equation}
\label{eq:count_comparison}
c(Y,\gamma,f_\epsilon^{\min},L) \leq c(X,\gamma,f,L) \leq c(Y,\gamma,f_\epsilon^{\max},L).
\end{equation}
$ $

Recall that $\widehat{\mu}_\text{wp}$ denotes the local pushforward of the Weil-Petersson measure $\mu_\text{wp}$ on $\mathcal{T}_{g,n}$ to the quotient $\mathcal{M}_{g,n}:= \mathcal{T}_{g,n}/\text{Mod}_{g,n}$. For every $\epsilon > 0$ let $\eta_\epsilon \colon \mathcal{M}_{g,n} \to \mathbf{R}_{\geq 0}$ be a continuous, compactly supported function satisfying
\begin{enumerate}
	\item $\text{supp}(\eta_\epsilon) \subseteq \pi(U_X(\epsilon))$,
	\item $ \displaystyle \int_{\mathcal{M}_{g,n}} \eta_\epsilon(Y) \ d\widehat{\mu}_{\text{wp}}(Y) = 1.$
\end{enumerate}
Multiplying (\ref{eq:count_comparison}) by $\eta_\epsilon(Y)$ and integrating over $\mathcal{M}_{g,n}$ with respect to $d\widehat{\mu}_{\text{wp}}(Y)$ one deduces
\begin{equation}
\label{eq:count_low}
\int_{\mathcal{M}_{g,n}} \eta_\epsilon(Y) \cdot c(Y,\gamma,f_\epsilon^{\min},L) \ d\widehat{\mu}_{\text{wp}}(Y) \leq c(X,\gamma,f,L),
\end{equation}
\begin{equation}
\label{eq:count_up}
c(X,\gamma,f,L) \leq \int_{\mathcal{M}_{g,n}} \eta_\epsilon(Y) \cdot c(Y,\gamma,f_\epsilon^{\max},L) \ d\widehat{\mu}_{\text{wp}}(Y).
\end{equation}
$ $
This concludes the spreading out and averaging step.\\

\textit{Unfolding averages.} Consider the  intermediate cover
\[
\mathcal{T}_{g,n} \to \mathcal{T}_{g,n} / \text{Stab}(\gamma) \to \mathcal{M}_{g,n}.
\]
Unfolding the integrals in (\ref{eq:count_low}) and (\ref{eq:count_up}) over $\mathcal{T}_{g,n} / \text{Stab}(\gamma)$ and pushing them back down to $\mathcal{M}_{g,n}$ in a suitable way will reduce the proof of Theorem \ref{theo:length_spec_simple_red} to an applicaton of Corollary \ref{cor:horoball_equid}. The following proposition describes this principle; see \S2 for the definition of the measures $\widehat{\mu}_{\gamma}^{h,L}$.\\

\begin{proposition}
	\label{prop:pull_push}
	Let $h \colon (\mathbf{R}_{\geq0})^k \to \mathbf{R}_{\geq 0}$ be a non-negative, continuous, compactly supported function. Then, for every $\epsilon > 0$ and every $L > 0$,
	\[
	\int_{\mathcal{M}_{g,n}} \eta_\epsilon(Y) \cdot c(Y,\gamma,h,L) \ d\widehat{\mu}_{\text{wp}}(Y) = \int_{\mathcal{M}_{g,n}} \eta_\epsilon(Y) \ d\widehat{\mu}_\gamma^{h,L}(Y).
	\]
\end{proposition}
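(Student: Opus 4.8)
The plan is to establish the identity by the standard \emph{unfolding} argument over the tower $\mathcal{T}_{g,n} \to \mathcal{T}_{g,n}/\text{Stab}(\gamma) \to \mathcal{M}_{g,n}$. Since $h$ and $\eta_\epsilon$ are non-negative and $\mu_{\text{wp}}$ is a positive measure, Tonelli's theorem will justify every interchange of sum and integral, so the argument is just a sequence of changes of variables. First I would lift everything to $\mathcal{T}_{g,n}$: write $\eta_\epsilon$ and $c(\cdot,\gamma,h,L)$ also for their pullbacks along $\pi\colon \mathcal{T}_{g,n} \to \mathcal{M}_{g,n}$ (both are $\text{Mod}_{g,n}$-invariant, the second because $\mu_{\gamma,X}$ depends only on the point of $\mathcal{M}_{g,n}$), and use that $\widehat\mu_{\text{wp}}$ is the local pushforward of $\mu_{\text{wp}}$ to write the left-hand side as $\int_{\mathcal{F}} \eta_\epsilon(X)\, c(X,\gamma,h,L)\, d\mu_{\text{wp}}(X)$ for a Borel fundamental domain $\mathcal{F} \subseteq \mathcal{T}_{g,n}$ of the $\text{Mod}_{g,n}$-action.

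Next I would expand the counting function along cosets. By orbit--stabilizer, $\phi\,\text{Stab}(\gamma) \mapsto \phi\cdot\gamma$ identifies $\text{Mod}_{g,n}/\text{Stab}(\gamma)$ with $\text{Mod}_{g,n}\cdot\gamma$, and with the standard conventions $\ell_{\phi\cdot\gamma_i}(X) = \ell_{\gamma_i}(\phi^{-1}X)$, so $c(X,\gamma,h,L) = \sum_{\phi} h\big(\tfrac{1}{L}\vec\ell_\gamma(\phi^{-1}X)\big)$, the sum over a set of representatives $\phi$ of the left cosets. Changing variables $X \mapsto \phi X$ in the $\phi$-th summand and using $\text{Mod}_{g,n}$-invariance of $\eta_\epsilon$ and $\mu_{\text{wp}}$ turns the integral over $\mathcal{F}$ into $\sum_\phi \int_{\phi^{-1}\mathcal{F}} \eta_\epsilon(X)\, h\big(\tfrac{1}{L}\vec\ell_\gamma(X)\big)\, d\mu_{\text{wp}}(X)$. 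The key bookkeeping observation is that, as $\phi\,\text{Stab}(\gamma)$ ranges over the left cosets, the elements $\phi^{-1}$ range over a full set of right-coset representatives of $\text{Stab}(\gamma)$, so $\bigsqcup_\phi \phi^{-1}\mathcal{F}$ is, up to a $\mu_{\text{wp}}$-null set, a fundamental domain $\mathcal{D}$ for the $\text{Stab}(\gamma)$-action on $\mathcal{T}_{g,n}$. Since $\eta_\epsilon$ is $\text{Stab}(\gamma)$-invariant, since $X \mapsto h(\tfrac{1}{L}\vec\ell_\gamma(X))$ is $\text{Stab}(\gamma)$-invariant (because $\ell_{\gamma_i}(\psi X) = \ell_{\psi^{-1}\gamma_i}(X) = \ell_{\gamma_i}(X)$ for $\psi \in \text{Stab}(\gamma)$), and since $\mu_{\text{wp}}$ is $\text{Stab}(\gamma)$-invariant, the integral over $\mathcal{D}$ is exactly $\int_{\mathcal{T}_{g,n}/\text{Stab}(\gamma)} \eta_\epsilon(Y)\, d\widetilde\mu_\gamma^{h,L}(Y)$ by the definitions of $\widetilde\mu_{\text{wp}}^\gamma$ and $\widetilde\mu_\gamma^{h,L}$ recalled in \S2. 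Finally, pushing forward along $\mathcal{T}_{g,n}/\text{Stab}(\gamma) \to \mathcal{M}_{g,n}$ and using that $\eta_\epsilon$ is pulled back from $\mathcal{M}_{g,n}$ yields $\int_{\mathcal{M}_{g,n}} \eta_\epsilon(Y)\, d\widehat\mu_\gamma^{h,L}(Y)$, the right-hand side.

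The only genuine obstacle — and the step I would spell out most carefully — is that $\text{Mod}_{g,n}$ and $\text{Stab}(\gamma)$ do not act freely on $\mathcal{T}_{g,n}$, so "fundamental domain" and "local pushforward" must be interpreted in the orbifold sense and one must verify that the point-stabilizer weights implicit in the local pushforwards match on the two sides. This is automatic once one notes that the coset enumeration of $\text{Mod}_{g,n}\cdot\gamma$ and the sheet count of the orbifold cover $\mathcal{T}_{g,n}/\text{Stab}(\gamma) \to \mathcal{M}_{g,n}$ are controlled by the same index data, so the weights cancel; equivalently one first passes to the finite kernel quotient (or to a torsion-free finite-index subgroup of $\text{Mod}_{g,n}$), where all the actions are genuinely free and the computation above runs verbatim, and then descends. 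Apart from this, the proof is a routine unfolding requiring no analytic input, since every quantity involved is non-negative.
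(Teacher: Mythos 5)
Your argument is correct and follows essentially the same route as the paper's proof: both expand $c(Y,\gamma,h,L)$ as a sum over cosets via the identification $\text{Mod}_{g,n}/\text{Stab}(\gamma) \cong \text{Mod}_{g,n}\cdot\gamma$, unfold the averaged integral to the intermediate cover $\mathcal{T}_{g,n}/\text{Stab}(\gamma)$, recognize the resulting density as $d\widetilde{\mu}_\gamma^{h,L} = h\bigl(\tfrac{1}{L}\vec{\ell}_\gamma\bigr)\,d\widetilde{\mu}_{\text{wp}}^\gamma$, and push forward to $\mathcal{M}_{g,n}$. The only difference is presentational: you execute the unfolding with explicit fundamental domains and flag the orbifold-weight/finite-stabilizer bookkeeping, which the paper subsumes silently in its local-pushforward formalism.
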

$ $

\begin{proof}
	Let $\epsilon > 0$ and $L > 0$ be arbitrary. For every $Y \in \mathcal{M}_{g,n}$ one can rewrite the counting function $c(Y,\gamma,h,L)$ as follows:
	\begin{align*}
	c(Y,\gamma,h,L)  &= \sum_{\alpha \in \text{Mod}_{g,n} \cdot \gamma} h\left( \textstyle\frac{1}{L} \cdot \vec{\ell}_\alpha(Y)\right)\\
	&= \sum_{[\phi] \in \text{Mod}_{g,n}/\text{Stab}(\gamma)} h\left( \textstyle\frac{1}{L} \cdot \vec{\ell}_{\phi \cdot \gamma}(Y)\right) \\
	&= \sum_{[\phi] \in \text{Mod}_{g,n}/\text{Stab}(\gamma)} h\left( \textstyle\frac{1}{L} \cdot \vec{\ell}_\gamma(\phi^{-1} \cdot Y)\right) \\
	&= \sum_{[\phi] \in \text{Stab}(\gamma) \backslash \text{Mod}_{g,n}} h\left( \textstyle\frac{1}{L} \cdot \vec{\ell}_\gamma(\phi \cdot Y)\right). \\
	\end{align*}
	Let us record this as
	\begin{equation}
	\label{eq:count_massage}
	c(X,\gamma,h,L)  = \sum_{[\phi] \in \text{Stab}(\gamma) \backslash \text{Mod}_{g,n}} h\left( \textstyle\frac{1}{L} \cdot \vec{\ell}_\alpha(\phi \cdot X)\right).
	\end{equation}
	Let $p_\gamma \colon \mathcal{T}_{g,n}/\text{Stab}(\gamma) \to \mathcal{M}_{g,n}$ be the quotient map and $\widetilde{\eta}_\epsilon^\gamma \colon \mathcal{T}_{g,n}/\text{Stab}(\gamma) \to \mathbf{R}_{\geq 0}$ be the lift of $\eta_\epsilon$ given by $\widetilde{\eta}_\epsilon^\gamma := \eta_\epsilon \circ p_\gamma$. Recall that $\widetilde{\mu}_\text{wp}^\gamma$ denotes the local pushforward of the Weil-Petersson measure $\mu_\text{wp}$ on $\mathcal{T}_{g,n}$ to the quotient $\mathcal{T}_{g,n}/\text{Stab}(\gamma)$. It follows from (\ref{eq:count_massage}) that
	\[
	\int_{\mathcal{M}_{g,n}} \eta_\epsilon(Y) \cdot c(Y,\gamma,h,L) \ d \widehat{\mu}_\text{wp}(Y) = \int_{\mathcal{T}_{g,n}/\text{Stab}(\gamma)} \widetilde{\eta}_\epsilon^\gamma(Y) \cdot h\left(\textstyle\frac{1}{L} \cdot \vec{\ell}_\gamma(Y)\right) \
	d \widetilde{\mu}_{\text{wp}}^\gamma(Y).
	\]
	$ $
	
	By definition, see (\ref{eq:horoball_meas_def}), the measure $\mu_\gamma^{h,L}$ on $\mathcal{T}_{g,n}$ is given by
	\[
	d \mu_\gamma^{h,L}(Y) := h\left(\textstyle\frac{1}{L} \cdot \vec{\ell}_\gamma(Y)\right) \
	d \mu_{\text{wp}}(Y).
	\]
	Taking local pushforwards to $\mathcal{T}_{g,n}/\text{Stab}(\gamma)$ we deduce
	\[
	d \widetilde{\mu}_\gamma^{h,L}(Y) = h\left(\textstyle\frac{1}{L} \cdot \vec{\ell}_\gamma(Y)\right) \
	d \widetilde{\mu}_{\text{wp}}^\gamma(Y).
	\]
	It follows that 
	\[
	\int_{\mathcal{T}_{g,n}/\text{Stab}(\gamma)} \widetilde{\eta}_\epsilon^\gamma(Y) \cdot h\left(\textstyle\frac{1}{L} \cdot \vec{\ell}_\gamma(Y)\right) \
	d \widetilde{\mu}_{\text{wp}}^\gamma(Y) = \int_{\mathcal{T}_{g,n}/\text{Stab}(\gamma)} \widetilde{\eta}_\epsilon^\gamma(Y)
	\ d \widetilde{\mu}_\gamma^{h,L}(Y).
	\]
	As $\widehat{\mu}_\gamma^{h,L}$ is the pushforward of $\widetilde{\mu}_\gamma^{h,L}$ to $\mathcal{M}_{g,n}$,
	\[
	\int_{\mathcal{T}_{g,n}/\text{Stab}(\gamma)} \widetilde{\eta}_\epsilon^\gamma(Y)
	\ d \widetilde{\mu}_\gamma^{h,L}(Y,\alpha) = \int_{\mathcal{M}_{g,n}} \eta_\epsilon(Y)
	\ d \widehat{\mu}_\gamma^{h,L}(Y).
	\]
	$ $
	
	Putting everything together we deduce
	\[
	\int_{\mathcal{M}_{g,n}} \eta_\epsilon(Y) \cdot c(Y,\gamma,h,L) \ d\widehat{\mu}_{\text{wp}}(Y) = \int_{\mathcal{M}_{g,n}} \eta_\epsilon(Y) \ 	d\widehat{\mu}_\gamma^{h,L}(Y).
	\]
	This finishes the proof.
\end{proof}
$ $

\textit{Application of Corollary \ref{cor:horoball_equid}.} We are now ready to prove Theorem \ref{theo:length_spec_simple_red}. Corollary \ref{cor:horoball_equid} and Proposition \ref{prop:total_hor_meas} will play a fundamental role in the proof.\\

\begin{proof}[Proof of Theorem \ref{theo:length_spec_simple_red}]
	By Proposition \ref{prop:total_hor_meas}, given any non-negative, continuous, compactly supported function $h \colon (\mathbf{R}_{\geq 0})^k \to \mathbf{R}_{\geq 0}$, 
	\begin{equation}
	\label{eq:r(gamma,h)}
	r(\gamma,h) := \lim_{L \to \infty} \frac{m_\gamma^{h,L}}{L^{6g-6+2n}} = \int_{\mathbf{R}^k} h(\mathbf{x}) \cdot W_{g,n}(\gamma,\mathbf{x}) \cdot d \mathbf{x}.
	\end{equation}
	Proving Theorem \ref{theo:length_spec_simple_red} is then equivalent to showing that
	\begin{equation}
		\label{eq:count_lb}
		r(\gamma, f) \cdot \frac{B(X)}{b_{g,n}} \leq \liminf_{L \to \infty} \frac{c(X,\gamma,f,L)}{L^{6g-6+2n}},
	\end{equation}
	\begin{equation}
		\label{eq:count_ub}
		\limsup_{L \to \infty} \frac{c(X,\gamma,f,L)}{L^{6g-6+2n}} \leq r(\gamma, f) \cdot \frac{B(X)}{b_{g,n}}.
	\end{equation}
	$ $
	
	We first verify (\ref{eq:count_lb}). Let $\epsilon > 0$ and $L > 0$ be arbitrary. Consider $h := f_\epsilon^{\min}$. By (\ref{eq:count_low}) and Proposition \ref{prop:pull_push},
	\[
	\int_{\mathcal{M}_{g,n}} \eta_\epsilon(Y) \ d\widehat{\mu}_{\gamma}^{h,L}(Y) \leq c(X,\gamma,f,L).
	\]
	Dividing this inequality by $m_\gamma^{h,L} > 0$ we get
	\[
	\int_{\mathcal{M}_{g,n}} \eta_\epsilon(Y) \ \frac{d\widehat{\mu}_{\gamma}^{h,L}(Y)}{m_\gamma^{h,L}} \leq \frac{c(X,\gamma,f,L)}{m_\gamma^{h,L}}.
	\]
	Taking $\liminf_{L \to \infty}$ on both sides of this inequalty and using  Corollary \ref{cor:horoball_equid} we deduce
	\[
	\int_{\mathcal{M}_{g,n}} \eta_\epsilon(Y) \ \frac{B(Y) \cdot d\widehat{\mu}_{\text{wp}}(Y)}{b_{g,n}} \leq \liminf_{L \to \infty} \frac{c(X,\gamma,f,L)}{m_\gamma^{h,L}}.
	\]
	As 
	\[
	r(\gamma,f_\epsilon^{\min}) = r(\gamma,h) = \lim_{L \to \infty} \frac{m_\gamma^{h,L}}{L^{6g-6+2n}},
	\]
	it follows that
	\begin{equation}
	\label{eq:coun_ineq_1}
	r(\gamma,f_\epsilon^{\min}) \cdot \int_{\mathcal{M}_{g,n}} \eta_\epsilon(Y) \ \frac{B(Y) \cdot d\widehat{\mu}_{\text{wp}}(Y)}{b_{g,n}} \leq \liminf_{L \to \infty} \frac{c(X,\gamma,f,L)}{L^{6g-6+2n}}.
	\end{equation}
	Recall that $f_\epsilon^{\min} \to f$ uniformly on $(\mathbf{R}_{\geq 0})^k$ as $\epsilon \to 0$. In particular,
	\begin{align*}
	\lim_{\epsilon \to 0}  r(\gamma,f_\epsilon^{\min}) &= \lim_{\epsilon \to 0}  \int_{\mathbf{R}^k} f_\epsilon^{\min}(\mathbf{x}) \cdot W_{g,n}(\gamma,\mathbf{x}) \ \cdot d \mathbf{x}\\
	&= \int_{\mathbf{R}^k} f(\mathbf{x}) \cdot W_{g,n}(\gamma,\mathbf{x}) \cdot d\mathbf{x}\\
	&= r(f,\gamma).
	\end{align*}
	Using the properties of the functions $\eta_\epsilon \colon \mathcal{M}_{g,n} \to \mathbf{R}_{\geq0}$ one can check that
	\[
	\lim_{\epsilon \to 0}  \int_{\mathcal{M}_{g,n}} \eta_\epsilon(Y) \ \frac{B(Y) \cdot d\widehat{\mu}_{\text{wp}}(Y)}{b_{g,n}} = \frac{B(X)}{b_{g,n}}.
	\]
	Taking $\epsilon \to 0$ in (\ref{eq:coun_ineq_1}) we deduce
	\[
	r(\gamma, f) \cdot \frac{B(X)}{b_{g,n}} \leq \liminf_{L \to \infty} \frac{c(X,\gamma,f,L)}{L^{6g-6+2n}}.
	\]
	This finishes the proof of (\ref{eq:count_lb}).\\
	
	Analogous arguments using $f_\epsilon^{\max}$ instead of $f_\epsilon^\text{min}$ and (\ref{eq:count_up}) instead of (\ref{eq:count_low}) yield a proof of (\ref{eq:count_ub}). This finishes the proof of Theorem \ref{theo:length_spec_simple_red}.
\end{proof}
$ $

\begin{remark}
	\label{rem:effective_def}
	Let $\|\cdot\|_{\mathcal{C}^1}$ denote the $\mathcal{C}^1$ norm for real valued, smooth, compactly supported functions on $\mathcal{M}_{g,n}$. Carefully following the steps of the proof of Theorem \ref{theo:length_spec_simple_red}, one can check that the same methods would yield an effective version of the theorem with a power saving error term under the following polynomial equidistribution condition: There exist constants $C > 0$, $\kappa > 0$, and $\epsilon_0 > 0$ such that for every smooth, compactly supported function $\eta \colon \mathcal{M}_{g,n} \to \mathbf{R}_{\geq 0}$ and every $L > 0$,
	\[
	\bigg\vert \int_{\mathcal{M}_{g,n}} \eta(Y) \ \frac{d\widehat{\mu}_\gamma^{h,L}(Y)}{m_\gamma^{h,L}}  - \int_{\mathcal{M}_{g,n}} \eta(Y) \ \frac{B(Y) \cdot d\widehat{\mu}_{\text{wp}}(Y)}{b_{g,n}}\bigg\vert \leq C \cdot \|\eta\|_{\mathcal{C}^1} \cdot L^{-\kappa},
	\]
	where $h$ ranges over all the functions $f_\epsilon^{\min}, f_\epsilon^{\max}$ with $0 < \epsilon < \epsilon_0$.
\end{remark}
$ $

\textit{Proof of Theorem \ref{theo:length_proj_spec_simple}.} We now briefly explain how to adapt the arguments in the proof of Theorem \ref{theo:length_spec_simple} to obtain a proof of Theorem \ref{theo:length_proj_spec_simple}.\\

Let $f \colon (\mathbf{R}_{\geq0})^k \to \mathbf{R}_{\geq 0}$ and $g \colon P\mathcal{ML}_{g,n} \to \mathbf{R}_{\geq 0}$ be non-negative, continuous, compactly supported functions. For every $Y \in \mathcal{T}_{g,n}$ and every $L > 0$ consider the counting function
\begin{align}
\label{eq:count_fn_proj}
c(Y,\gamma,f,L,\mathbf{a},g) &:= \int_{\mathbf{R}^k\times P\mathcal{ML}_{g,n}} f(\mathbf{x}) \cdot g\left(\overline{\lambda}\right)  \ d\nu_{\gamma,Y,\mathbf{a}}^L\left(\mathbf{x},\overline{\lambda}\right) \\
&= \sum_{\alpha \in \text{Mod}_{g,n} \cdot \gamma} f\left( \textstyle\frac{1}{L} \cdot \vec{\ell}_\alpha(Y)\right) \cdot g\left(\overline{\mathbf{a}\cdot \alpha}\right). \nonumber
\end{align}
These counting functions depend on the marking of $Y \in \mathcal{T}_{g,n}$. Using the Stone-Weierstrass theorem, one can check that Theorem \ref{theo:length_proj_spec_simple} is equivalent to the following analogue of Theorem \ref{theo:length_spec_simple_red}.\\

\begin{theorem}
	\label{theo:length_proj_spec_simple_red}
	Let $f \colon (\mathbf{R}_{\geq0})^k \to \mathbf{R}_{\geq 0}$ and $g \colon P\mathcal{ML}_{g,n} \to \mathbf{R}_{\geq 0}$ be non-negative, continuous, compactly supported functions. Then,
	\[
	\lim_{L \to \infty} \frac{c(X,\gamma,f,L,\mathbf{a},g)}{L^{6g-6+2n}} = \frac{1}{b_{g,n}} \cdot \int_{\mathbf{R}^k} f(\mathbf{x}) \cdot W_{g,n}(\gamma, \mathbf{x}) \ d\mathbf{x} \cdot \int_{P^1\mathcal{ML}_{g,n}} g\left(\overline{\lambda}\right) \ d\mu_{\text{Thu}}^X\left(\overline{\lambda}\right)
	\]
\end{theorem}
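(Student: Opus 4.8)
The plan is to follow the three-step scheme used to prove Theorem \ref{theo:length_spec_simple_red}---spread out and average, unfold, apply equidistribution---while carrying along the projective class weight $g(\overline{\mathbf{a}\cdot\alpha})$; this forces us onto the bundle $P^1\mathcal{M}_{g,n}$ and makes us invoke Theorem \ref{theo:horoball_equid} in place of Corollary \ref{cor:horoball_equid}. The key preliminary observation is that the weight $g(\overline{\mathbf{a}\cdot\alpha})$ attached to each $\alpha\in\text{Mod}_{g,n}\cdot\gamma$ is purely topological: it does not depend on the hyperbolic structure. Hence if $Y\in U_X(\epsilon)$ then $\vec\ell_\alpha(Y)\in N_\epsilon(\vec\ell_\alpha(X))$ by (\ref{eq:thurston_met}), so multiplying the inequalities $f_\epsilon^{\min}(\textstyle\frac1L\vec\ell_\alpha(Y))\le f(\textstyle\frac1L\vec\ell_\alpha(X))\le f_\epsilon^{\max}(\textstyle\frac1L\vec\ell_\alpha(Y))$ by $g(\overline{\mathbf{a}\cdot\alpha})\ge0$ and summing over $\alpha$ yields
\[
c(Y,\gamma,f_\epsilon^{\min},L,\mathbf{a},g)\le c(X,\gamma,f,L,\mathbf{a},g)\le c(Y,\gamma,f_\epsilon^{\max},L,\mathbf{a},g),\qquad \forall\, Y\in U_X(\epsilon).
\]
In contrast with Theorem \ref{theo:length_spec_simple_red}, the counting functions $c(\cdot,\gamma,h,L,\mathbf{a},g)$ depend genuinely on the marking, so the averaging must now be carried out over $\mathcal{T}_{g,n}$ itself: for each $\epsilon>0$ I would fix a nonnegative continuous $\eta_\epsilon\colon\mathcal{T}_{g,n}\to\mathbf{R}_{\ge0}$ supported in $U_X(\epsilon)$ with $\int_{\mathcal{T}_{g,n}}\eta_\epsilon\,d\mu_{\text{wp}}=1$, multiply the displayed inequalities by $\eta_\epsilon(Y)$, and integrate against $d\mu_{\text{wp}}(Y)$.

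The heart of the argument is the analogue of Proposition \ref{prop:pull_push}. Given a nonnegative continuous compactly supported $h\colon(\mathbf{R}_{\ge0})^k\to\mathbf{R}_{\ge0}$, I would introduce the function
\[
\Psi_{\epsilon,g}\colon P^1\mathcal{T}_{g,n}\to\mathbf{R}_{\ge0},\qquad \Psi_{\epsilon,g}(Y,\lambda):=\sum_{\phi\in\text{Mod}_{g,n}}\eta_\epsilon(\phi\cdot Y)\cdot g\bigl(\overline{\phi\cdot\lambda}\bigr).
\]
Properness of the $\text{Mod}_{g,n}$-action on $\mathcal{T}_{g,n}$ makes this a locally finite sum of continuous functions, hence continuous; it is $\text{Mod}_{g,n}$-invariant by construction; and since $P\mathcal{ML}_{g,n}$ is compact while the $Y$-support is controlled by $\text{supp}(\eta_\epsilon)$, its descent $\widehat\Psi_{\epsilon,g}$ to $P^1\mathcal{M}_{g,n}$ is continuous and compactly supported. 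Starting from the unfolded expression $c(Y,\gamma,h,L,\mathbf{a},g)=\sum_{[\phi]\in\text{Mod}_{g,n}/\text{Stab}(\gamma)}h(\textstyle\frac1L\vec\ell_\gamma(\phi^{-1}\cdot Y))\cdot g(\overline{\phi\cdot(\mathbf{a}\cdot\gamma)})$, changing variables $Y\mapsto\phi\cdot Y$ in each term, and using both that $\text{Stab}(\gamma)$ fixes $\overline{\mathbf{a}\cdot\gamma}$ and that on the support of $\nu_{\gamma,\mathbf{a}}^{h,L}$ inside $P^1\mathcal{T}_{g,n}$ the lamination coordinate projectivizes to $\overline{\mathbf{a}\cdot\gamma}$, I expect to obtain the identity
\[
\int_{\mathcal{T}_{g,n}}\eta_\epsilon(Y)\,c(Y,\gamma,h,L,\mathbf{a},g)\,d\mu_{\text{wp}}(Y)=\int_{P^1\mathcal{M}_{g,n}}\widehat\Psi_{\epsilon,g}\,d\widehat\nu_{\gamma,\mathbf{a}}^{h,L}.
\]

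Dividing by $m_\gamma^{h,L}$ and letting $L\to\infty$, Theorem \ref{theo:horoball_equid} gives convergence of the right-hand side to $\frac1{b_{g,n}}\int_{P^1\mathcal{M}_{g,n}}\widehat\Psi_{\epsilon,g}\,d\widehat\nu_{\text{Mir}}$; unfolding the $\text{Mod}_{g,n}$-sum against the disintegration $d\nu_{\text{Mir}}(Y,\lambda)=d\mu_{\text{Thu}}^Y(\lambda)\,d\mu_{\text{wp}}(Y)$ identifies this with $\frac1{b_{g,n}}\int_{\mathcal{T}_{g,n}}\eta_\epsilon(Y)\bigl(\int_{P\mathcal{ML}_{g,n}}g\,d\mu_{\text{Thu}}^Y\bigr)d\mu_{\text{wp}}(Y)$. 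Combining this with Proposition \ref{prop:total_hor_meas}, which gives $m_\gamma^{h,L}/L^{6g-6+2n}\to\int_{\mathbf{R}^k}h(\mathbf{x})W_{g,n}(\gamma,\mathbf{x})\,d\mathbf{x}$, sandwiching via $h=f_\epsilon^{\min},f_\epsilon^{\max}$, and finally letting $\epsilon\to0$ (using $f_\epsilon^{\min},f_\epsilon^{\max}\to f$ uniformly, $\int\eta_\epsilon\,d\mu_{\text{wp}}=1$ with $\text{supp}(\eta_\epsilon)$ shrinking to $X$, and the continuity and local boundedness near $X$ of $Y\mapsto\int_{P\mathcal{ML}_{g,n}}g\,d\mu_{\text{Thu}}^Y$---which follow as in Mirzakhani's proof of continuity of $B$, since $\mu_{\text{Thu}}$ assigns zero mass to $\{\lambda\mid\ell_\lambda(Y)=1\}$ by Lemma \ref{lem:thu_meas_zero}) would yield Theorem \ref{theo:length_proj_spec_simple_red}.

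The step I expect to be the main obstacle is the bookkeeping in the unfolding identity. Because $g$ is not $\text{Mod}_{g,n}$-invariant, the averaging must be performed over $\mathcal{T}_{g,n}$ rather than $\mathcal{M}_{g,n}$, and the manipulations along the tower $P^1\mathcal{T}_{g,n}\to P^1\mathcal{T}_{g,n}/\text{Stab}(\gamma)\to P^1\mathcal{M}_{g,n}$ must be organized in a fixed order: the $g$-factor is $\text{Stab}(\gamma)$-invariant (as $\text{Stab}(\gamma)$ fixes $\overline{\mathbf{a}\cdot\gamma}$) but the spreading function $\eta_\epsilon$ is not, so one cannot freely push down to moduli space before unfolding, and the fundamental-domain computations for $\text{Stab}(\gamma)$ and for $\text{Mod}_{g,n}$ must be sequenced carefully. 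A secondary point is checking that $\widehat\Psi_{\epsilon,g}$ is genuinely compactly supported on $P^1\mathcal{M}_{g,n}$---which relies on compactness of $P\mathcal{ML}_{g,n}$ together with properness of the $\text{Mod}_{g,n}$-action---and that the weak-$\star$ statement of Theorem \ref{theo:horoball_equid} does apply to the merely Borel functions $f_\epsilon^{\min},f_\epsilon^{\max}$, which is the case for $\epsilon$ small enough since these are bounded, compactly supported, and not almost everywhere zero.
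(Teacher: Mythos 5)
Your proposal is correct, and it follows the paper's overall template (sandwich with $f_\epsilon^{\min},f_\epsilon^{\max}$, average, unfold, apply Theorem \ref{theo:horoball_equid} and Proposition \ref{prop:total_hor_meas}, then let $\epsilon\to0$), but it implements the crucial unfolding step differently from the paper. The paper deals with the marking-dependence of the weight $g$ by localizing at $X$: it fixes a $\text{Stab}(X)$-invariant neighborhood $W_X\subseteq\mathcal{T}_{g,n}$ whose $\text{Mod}_{g,n}$-translates are disjoint from it, defines $\text{Stab}(X)$-averaged counting functions $c'([Y],\gamma,h,L,\mathbf{a},g)$ on $\pi(W_X)\subseteq\mathcal{M}_{g,n}$ together with a cutoff function $g'$ on $P^1\mathcal{M}_{g,n}$ built from $\mathbbm{1}_{\pi(W_X)}$, and proves the unfolding identity of Proposition \ref{prop:pull_push_2} for the pair $(\eta_\epsilon,g')$, keeping the averaging on $\mathcal{M}_{g,n}$ exactly as in Theorem \ref{theo:length_spec_simple_red}. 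You instead keep the averaging upstairs on $\mathcal{T}_{g,n}$ and transfer all the group-theoretic bookkeeping into the single $\text{Mod}_{g,n}$-invariant function $\Psi_{\epsilon,g}(Y,\lambda)=\sum_{\phi}\eta_\epsilon(\phi\cdot Y)\,g(\overline{\phi\cdot\lambda})$, whose descent $\widehat\Psi_{\epsilon,g}$ is a legitimate continuous, compactly supported test function on $P^1\mathcal{M}_{g,n}$; your two unfolding computations (against $\widehat\nu_{\gamma,\mathbf{a}}^{h,L}$ and against $\widehat\nu_{\text{Mir}}$) are the correct analogues of Proposition \ref{prop:pull_push_2} and of the final identification of the limit, and the $\epsilon\to0$ step works as you describe since $Y\mapsto\int g\,d\mu_{\text{Thu}}^Y$ is continuous. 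What your route buys is that you never need $W_X$, $\text{Stab}(X)$, or the discontinuous cutoff $g'$, and the test function stays continuous by construction; what the paper's route buys is that the second step is verbatim the one from Theorem \ref{theo:length_spec_simple_red}, with only the fiber weight modified. The one point you should make explicit when writing this up is the stabilizer/kernel bookkeeping you already flag: for the sporadic $(g,n)$ where the kernel $K_{g,n}$ of the action on $\mathcal{T}_{g,n}$ is nontrivial (note $K_{g,n}\subseteq\text{Stab}(\gamma)$, since such elements fix all simple closed curves), each of your two unfolding identities may acquire the same factor $|K_{g,n}|$ depending on the convention used for local pushforwards; since the factor is identical in both identities and the equidistribution statement is normalized by the total masses $m_\gamma^{h,L}$ and $b_{g,n}$ taken with the same convention, it cancels and the asserted limit is unaffected, but the identities as literally displayed should be stated consistently with the chosen convention.
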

$ $

We now explain how to adapt the techniques used in the proof Theorem \ref{theo:length_spec_simple_red} to prove Theorem \ref{theo:length_proj_spec_simple_red} . For the rest of this discussion we fix a pair of non-negative, continuous, compactly supported functions $f \colon (\mathbf{R}_{\geq0})^k \to \mathbf{R}_{\geq 0}$ and $g \colon P\mathcal{ML}_{g,n} \to \mathbf{R}_{\geq 0}$, and consider the identifications
\begin{align*}
P^1\mathcal{T}_{g,n} &= \mathcal{T}_{g,n} \times P\mathcal{ML}_{g,n}, \\
P^1\mathcal{M}_{g,n} &= (\mathcal{T}_{g,n} \times P\mathcal{ML}_{g,n}) / \text{Mod}_{g,n}.
\end{align*}
It will be important to make a clear distinction between points $Y \in \mathcal{T}_{g,n}$ and their images $[Y] := \pi(Y) \in \mathcal{M}_{g,n}$ under the quotient map $\pi \colon \mathcal{M}_{g,n} := \mathcal{T}_{g,n} / \text{Mod}_{g,n}$, as well as between points $(Y,\overline{\lambda}) \in P^1\mathcal{ML}_{g,n}$ and their images $[Y,\overline{\lambda}] \in P^1\mathcal{M}_{g,n}$ under the quotient map $\Pi \colon P^1 \mathcal{T}_{g,n} \to P^1 \mathcal{M}_{g,n}$.\\

To deal with the fact that the counting functions defined in (\ref{eq:count_fn_proj}) depend on the marking of $Y \in \mathcal{T}_{g,n}$, we introduce a local averaging procedure to obtain well define counting functions on $\mathcal{M}_{g,n}$. Using the proper discontinuity of the $\text{Mod}_{g,n}$-action on $\mathcal{T}_{g,n}$ one can find a neighborhood $W_X \subseteq \mathcal{T}_{g,n}$ of $X$ such that
\begin{enumerate}
	\item $W_X$ is $\text{Stab}(X)$-invariant,
	\item $\phi \cdot W_X \cap W_X = \emptyset$ for all $\phi \in \text{Mod}_{g,n}\setminus\text{Stab}(X)$.
\end{enumerate}
For every non-negative, continuous, compactly supported function $h \colon (\mathbf{R}_{\geq0})^k \to \mathbf{R}_{\geq 0}$, every $[Y] \in \pi(W_X)$, and every $L > 0$, consider the counting function
\[
c'\left([Y],\gamma,h,L,\mathbf{a},g\right) := \frac{1}{|\text{Stab}(X)|} \cdot \sum_{\phi \in \text{Stab{(X)}}} c\left(\phi \cdot Y,\gamma,h,L,\mathbf{a},g\right).
\]
Notice that
\[
c'\left([X],\gamma,h,L,\mathbf{a},g\right) = c\left(X,\gamma,h,L,\mathbf{a},g\right).
\]
$ $

Let $\epsilon_0:=\epsilon_0(X) > 0$ be small enough so that $U_X(\epsilon) \subseteq W_X$ for every $0 < \epsilon < \epsilon_0$. Given any $0<\epsilon < \epsilon_0$, any $Y \in U_X(\epsilon)$, and any $L > 0$, (\ref{eq:thurston_met}) ensures the following analogue of (\ref{eq:count_comparison}) holds:
\begin{equation}
\label{eq:count_comparison_2}
c'\left([Y],\gamma,f_\epsilon^{\min},L,\mathbf{a},g\right) \leq c\left(X,\gamma,f,L,\mathbf{a},g\right) \leq c'\left([Y],\gamma,f_\epsilon^{\max},L,\mathbf{a},g\right).
\end{equation}
Consider the functions $\eta_\epsilon \colon \mathcal{M}_{g,n} \to \mathbf{R}_{\geq 0}$ introduced in the proof of Theorem \ref{theo:length_spec_simple_red}. For every $0 < \epsilon < \epsilon_0$, multiplying (\ref{eq:count_comparison_2}) by $\eta_\epsilon([Y])$ and integrating over $\mathcal{M}_{g,n}$ with respect to $d\widehat{\mu}_{\text{wp}}([Y])$ yields the following analogues of (\ref{eq:count_low}) and (\ref{eq:count_up}):
\begin{equation}
\label{eq:count_low_2}
\int_{\mathcal{M}_{g,n}} \eta_\epsilon\left([Y]\right) \cdot c'\left([Y],\gamma,f_\epsilon^{\min},L,\mathbf{a},g\right) \ d\widehat{\mu}_{\text{wp}}\left([Y]\right) \leq c\left(X,\gamma,f,L,\mathbf{a},g\right),
\end{equation}
\begin{equation}
\label{eq:count_up_2}
c(X,\gamma,f,L,\mathbf{a},g) \leq \int_{\mathcal{M}_{g,n}} \eta_\epsilon\left([Y]\right) \cdot c\left([Y],\gamma,f_\epsilon^{\max},L,\mathbf{a},g\right) \ d\widehat{\mu}_{\text{wp}}\left([Y]\right).
\end{equation}
$ $

Let $p \colon P^1\mathcal{T}_{g,n} = \mathcal{T}_{g,n} \times P\mathcal{ML}_{g,n} \to P\mathcal{ML}_{g,n} $ be the map that projects to the second coordinate. Consider the function $g' \colon P^1 \mathcal{M}_{g,n} \to \mathbf{R}_{\geq 0}$ which to every $[Y,\overline{\lambda}] \in  P^1 \mathcal{M}_{g,n} $ assigns the value
\[
g' \left(\left[Y,\overline{\lambda}\right]\right) := \mathbbm{1}_{\pi(W_X)}\left([Y]\right) \cdot \frac{1}{|\text{Stab}(X)|} \cdot \sum_{\phi \in \text{Stab}(X)} g\left(\phi \cdot p \left(\Pi|_{W_X \times P\mathcal{ML}_{g,n}}^{-1}\left(\left[Y,\overline{\lambda}\right]\right)\right)\right),
\]
where $\Pi|_{W_X \times P\mathcal{ML}_{g,n}}^{-1}([Y,\overline{\lambda}]) \in W_X \times P\mathcal{ML}_{g,n}$ denotes any of the finitely many preimages of $[Y,\overline{\lambda}]$ under the restriction $\Pi|_{W_X \times P\mathcal{ML}_{g,n}}$. The following analogue of Proposition \ref{prop:pull_push} can be proved using a similar, albeit more complicated, unfolding argument.\\

\begin{proposition}
	\label{prop:pull_push_2}
	Let $h \colon (\mathbf{R}_{\geq0})^k \to \mathbf{R}_{\geq 0}$ be a non-negative, continuous, compactly supported function. Then, for every $0 < \epsilon < \epsilon_0$ and every $L > 0$,
	\begin{align*}
	&\int_{\mathcal{M}_{g,n}} \eta_\epsilon\left([Y]\right) \cdot c'\left([Y],\gamma,h,L,\mathbf{a},g\right) \ d\widehat{\mu}_{\text{wp}}\left([Y]\right) \\
	&= \int_{P^1\mathcal{M}_{g,n}} \eta_\epsilon\left([Y]\right) \cdot g'\left(\left[Y,\overline{\lambda}\right]\right) \ d\widehat{\nu}_{\gamma,\mathbf{a}}^{h,L}\left(\left[Y,\overline{\lambda}\right]\right).
	\end{align*}
\end{proposition}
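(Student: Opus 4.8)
\textit{Strategy.} The plan is to follow the template of the proof of Proposition~\ref{prop:pull_push}: reduce the left-hand side to an integral over a small ball around $X$ in $\mathcal{T}_{g,n}$, expand the counting function into a sum indexed by $\text{Stab}(\gamma)\backslash\text{Mod}_{g,n}$, unfold this sum over the intermediate cover $P^1\mathcal{T}_{g,n}/\text{Stab}(\gamma)$, and push forward to $P^1\mathcal{M}_{g,n}$. Two features are absent from Proposition~\ref{prop:pull_push}: the local averaging over $\text{Stab}(X)$, forced on us because $c(\,\cdot\,,\gamma,h,L,\mathbf{a},g)$ genuinely depends on the marking of its first argument, and the auxiliary function $g'$, needed because the weight $g(\overline{\mathbf{a}\cdot\alpha})$ is attached to the orbit element $\alpha$ and not to its image in $\mathcal{M}_{g,n}$. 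Fix $0<\epsilon<\epsilon_0$ and $L>0$. Since $\eta_\epsilon$ is supported in $\pi(U_X(\epsilon))\subseteq\pi(W_X)$ and, by property~(2) of $W_X$, the map $\pi|_{W_X}\colon W_X\to\pi(W_X)$ is the quotient by the finite group $\text{Stab}(X)$, the defining property of the local pushforward $\widehat{\mu}_{\text{wp}}$ and the $\text{Stab}(X)$-invariance of $\mu_{\text{wp}}$ give
\[
\int_{\mathcal{M}_{g,n}}\eta_\epsilon([Y]) \cdot c'([Y],\gamma,h,L,\mathbf{a},g) \ d\widehat{\mu}_{\text{wp}}([Y]) = \frac{1}{|\text{Stab}(X)|}\int_{W_X}\eta_\epsilon(\pi(Y)) \cdot c'(\pi(Y),\gamma,h,L,\mathbf{a},g) \ d\mu_{\text{wp}}(Y).
\]
Inserting the definition of $c'$ and using that $\mu_{\text{wp}}$, the set $W_X$, and $\eta_\epsilon\circ\pi$ are $\text{Stab}(X)$-invariant, the substitutions $Y\mapsto\phi\cdot Y$ with $\phi\in\text{Stab}(X)$ show that every summand of the average contributes the same integral, so the average collapses and the right-hand side equals $\tfrac{1}{|\text{Stab}(X)|}\int_{W_X}\eta_\epsilon(\pi(Y))\cdot c(Y,\gamma,h,L,\mathbf{a},g)\,d\mu_{\text{wp}}(Y)$.

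\textit{Unfolding and the appearance of $g'$.} Writing $\alpha=\phi^{-1}\cdot\gamma$ for $[\phi]\in\text{Stab}(\gamma)\backslash\text{Mod}_{g,n}$, using $\vec{\ell}_{\phi^{-1}\cdot\gamma}(Y)=\vec{\ell}_\gamma(\phi\cdot Y)$ and $\overline{\mathbf{a}\cdot(\phi^{-1}\cdot\gamma)}=\phi^{-1}\cdot\overline{\mathbf{a}\cdot\gamma}$, the latter being well defined on $\text{Stab}(\gamma)\backslash\text{Mod}_{g,n}$ since $\text{Stab}(\gamma)$ stabilizes the measured lamination $\mathbf{a}\cdot\gamma$, one obtains as in \eqref{eq:count_massage}
\[
c(Y,\gamma,h,L,\mathbf{a},g) = \sum_{[\phi]\in\text{Stab}(\gamma)\backslash\text{Mod}_{g,n}} h\Bigl(\textstyle\frac{1}{L}\cdot\vec{\ell}_\gamma(\phi\cdot Y)\Bigr)\cdot g\bigl(\phi^{-1}\cdot\overline{\mathbf{a}\cdot\gamma}\bigr).
\]
I would then interchange this sum with the integral over $W_X$ and, in each term, substitute $Z=\phi\cdot Y$; by $\text{Mod}_{g,n}$-invariance of $\mu_{\text{wp}}$ and of $\eta_\epsilon\circ\pi$ the sum-and-integral becomes a sum of integrals over the translates $\phi\cdot W_X$. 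Any two such translates either coincide or are disjoint, and by property~(2) they coincide exactly when the relevant $\phi$'s differ by an element of $\text{Stab}(X)$; using in addition that $U_X(\epsilon)$ is $\text{Stab}(X)$-invariant and that $\eta_\epsilon\circ\pi$ vanishes on $\phi\cdot W_X$ outside $\phi\cdot U_X(\epsilon)$, one checks that, with $Z=\phi\cdot Y$ held fixed while $Y$ ranges over $W_X$, the weight $g(\phi^{-1}\cdot\overline{\mathbf{a}\cdot\gamma})$ runs exactly over a $\text{Stab}(X)$-orbit. Averaging over $\text{Stab}(X)$ — which is what the factor $1/|\text{Stab}(X)|$ from above and the sum defining $g'$ together accomplish — makes the resulting weight a well-defined, $\text{Stab}(\gamma)$-invariant function of $(Z,\overline{\mathbf{a}\cdot\gamma})\in P^1\mathcal{T}_{g,n}$ which descends through $P^1\mathcal{T}_{g,n}/\text{Stab}(\gamma)$ to $P^1\mathcal{M}_{g,n}$, where it equals $g'$; that $g'$ is independent of the chosen preimage in $W_X\times P\mathcal{ML}_{g,n}$ is a short consequence of properties~(1)--(2) of $W_X$. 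This step is where essentially all the work lies: unlike in Proposition~\ref{prop:pull_push}, the lamination weight is not a function of the moduli-space point alone, so the computation has to be carried out inside the chart $W_X$, and one must verify that the finite overlaps among the translates $\phi\cdot W_X$ combine with the two $\text{Stab}(X)$-averagings to yield $\widehat{\nu}_{\gamma,\mathbf{a}}^{h,L}$ with no spurious multiplicity.

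\textit{Recognizing $\widehat{\nu}_{\gamma,\mathbf{a}}^{h,L}$.} At this point the left-hand side has been rewritten as the integral over $\mathcal{T}_{g,n}/\text{Stab}(\gamma)$ of $\widetilde{\eta}_\epsilon^\gamma(Y)\cdot g'\bigl([Y,\overline{\mathbf{a}\cdot\gamma}]\bigr)\cdot h(\tfrac1L\vec{\ell}_\gamma(Y))$ against $\widetilde{\mu}_{\text{wp}}^\gamma$. By the definitions in \S2, $d\widetilde{\mu}_\gamma^{h,L}(Y)=h(\tfrac1L\vec{\ell}_\gamma(Y))\,d\widetilde{\mu}_{\text{wp}}^\gamma(Y)$ and the lift $\widetilde{\nu}_{\gamma,\mathbf{a}}^{h,L}$ of $\widetilde{\mu}_\gamma^{h,L}$ to $P^1\mathcal{T}_{g,n}/\text{Stab}(\gamma)=(\mathcal{T}_{g,n}\times P\mathcal{ML}_{g,n})/\text{Stab}(\gamma)$ is supported on the section $Y\mapsto(Y,\overline{\mathbf{a}\cdot\gamma})$. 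Since $\widetilde{\eta}_\epsilon^\gamma$ descends from $\mathcal{M}_{g,n}$ and $g'$ from $P^1\mathcal{M}_{g,n}$, pushing forward from $P^1\mathcal{T}_{g,n}/\text{Stab}(\gamma)$ to $P^1\mathcal{M}_{g,n}$ — where $\widehat{\nu}_{\gamma,\mathbf{a}}^{h,L}$ is defined — turns the integral into
\[
\int_{P^1\mathcal{M}_{g,n}}\eta_\epsilon([Y]) \cdot g'([Y,\overline{\lambda}]) \ d\widehat{\nu}_{\gamma,\mathbf{a}}^{h,L}([Y,\overline{\lambda}]),
\]
which is the asserted right-hand side.
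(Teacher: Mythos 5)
Your proposal is correct and follows exactly the route the paper intends: the paper gives no details for Proposition \ref{prop:pull_push_2} beyond saying it follows by a ``similar, albeit more complicated, unfolding argument'' to Proposition \ref{prop:pull_push}, and your sketch (reduce to the chart $W_X$ using properties (1)--(2), collapse the $\text{Stab}(X)$-average in $c'$, unfold over $\text{Stab}(\gamma)\backslash\text{Mod}_{g,n}$, and match the resulting lamination weights with the $\text{Stab}(X)$-average defining $g'$ before descending through $P^1\mathcal{T}_{g,n}/\text{Stab}(\gamma)$) is precisely that argument. The one place where your wording is loose — that for fixed $Z$ the weights ``run exactly over a $\text{Stab}(X)$-orbit'' — is most cleanly justified globally rather than pointwise, by noting that for each $\phi\in\text{Stab}(X)$ the substitution $Y\mapsto\phi^{-1}Y$ identifies the $\phi$-twisted term at a coset $[\psi]$ with the untwisted term at $[\psi\phi^{-1}]$, and right multiplication by $\phi^{-1}$ permutes $\text{Stab}(\gamma)\backslash\text{Mod}_{g,n}$; this closes the bookkeeping (the weights are constant on $\text{Stab}(\gamma)$-cosets since $\text{Stab}(\gamma)$ fixes $\mathbf{a}\cdot\gamma$) and yields the identity with no spurious multiplicity, as you claim.
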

$ $

Theorem \ref{theo:length_proj_spec_simple_red} can now be proved by mimicking the proof of Theorem \ref{theo:length_spec_simple_red} presented above: the inequalities (\ref{eq:count_low_2}) and (\ref{eq:count_up_2}) are used in place of the inequalities (\ref{eq:count_low}) and (\ref{eq:count_up}), Proposition \ref{prop:pull_push_2} is used in place of Proposition \ref{prop:pull_push}, and Theorem \ref{theo:horoball_equid} is used in place of Corollary \ref{cor:horoball_equid}.\\

\begin{remark}
	\label{rem:effective_def_2}
	A polynomial equidistribution condition analogous to the one introduced in Remark \ref{rem:effective_def} but for horoball segment measures on $P^1\mathcal{M}_{g,n}$ would yield an effective version of Theorem \ref{theo:length_proj_spec_simple_red} with a  power saving error term.
\end{remark}
$ $

\section{Topological factor of asymptotic length spectrum}

$ $

\textit{Setting.} For the rest of this section, let $\gamma := (\gamma_1,\dots,\gamma_k)$ with $1 \leq k \leq 3g-3+n$ be a fixed ordered simple closed multi-curve on $S_{g,n}$.\\

\textit{Proof of Theorem \ref{theo:top_int_asymp_spec}.}  By Carathéodory's extension theorem, to prove Theorem \ref{theo:top_int_asymp_spec}, it is enough to show that the measures $W_{g,n}(\gamma,\mathbf{x}) \cdot d\mathbf{x}$ and $(\widetilde{I}_{\gamma})_* (\widetilde{\mu}_{\text{Thu}}^\gamma)$ coincide on a semi-ring of subsets that generates the Borel $\sigma$-algebra of $(\mathbf{R}_{\geq 0})^k$. Consider the generating semi-ring of boxes
\[
B_{\mathbf{a},\mathbf{b}} := \prod_{i=1}^k [a_i,b_i)
\]
with $\mathbf{a} := (a_i)_{i=1}^k, \mathbf{b} := (b_i)_{i=1}^k \in (\mathbf{R}_{\geq 0})^k$ arbitrary. By the inclusion-exclusion principle and Lemma \ref{lem:thu_meas_zero}, it is enough to consider closed boxes 
\[
B_{\mathbf{b}} := \prod_{i=1}^k [0,b_i]
\]
with $\mathbf{b} := (b_i)_{i=1}^k \in (\mathbf{R}_{>0})^k$ arbitrary. \\

By Proposition \ref{prop:total_hor_meas}, 
\[
\int_{B_{\mathbf{b}}} W_{g,n}(\gamma,\mathbf{x}) \cdot d\mathbf{x} = \lim_{L \to \infty} \frac{m_\gamma^{f_\mathbf{b},L}}{L^{6g-6+2n}},
\]
where  $f_\mathbf{b} \colon (\mathbf{R}_{\geq 0})^k \to \mathbf{R}_{\geq 0}$ is the function which to every $\mathbf{x} := (x_i)_{i=1}^k \in (\mathbf{R}_{\geq 0})^k$ assigns the value
\[
f_\mathbf{b}(\mathbf{x}) := \prod_{i=1}^k \mathbbm{1}_{[0,b_i]}(x_i).
\]
By definition,
\[
m_\gamma^{f_\mathbf{b},L} := \widehat{\mu}_{\gamma}^{f_\mathbf{b},L}(\mathcal{M}_{g,n}).
\]
As $\widehat{\mu}_{\gamma}^{f_\mathbf{b},L}$ is the pushforward to $\mathcal{M}_{g,n}$ of the measure $\widetilde{\mu}_{\gamma}^{f_{\mathbf{b}},L}$ on $\mathcal{T}_{g,n}/ \text{Stab}(\gamma)$,
\[
\widehat{\mu}_{\gamma}^{f_\mathbf{b},L}(\mathcal{M}_{g,n}) =  \widetilde{\mu}_{\gamma}^{f_\mathbf{b},L}(\mathcal{T}_{g,n}/ \text{Stab}(\gamma)).
\]
Notice that
\[
d\widetilde{\mu}_{\gamma}^{f_\mathbf{b},L}(X) = f_\mathbf{b}\left(\textstyle\frac{1}{L} \cdot \vec{\ell}_\gamma(X)\right) d\widetilde{\mu}_{\text{wp}}^\gamma(X),
\]
where $\widetilde{\mu}_{\text{wp}}^\gamma$ is the local pushforward of the Weil-Petersson measure $\mu_{\text{wp}}$ on $\mathcal{T}_{g,n}$ to the quotient $\mathcal{T}_{g,n}/\text{Stab}(\gamma)$. It particular,
\[
m_\gamma^{f_\mathbf{b},L} = \widetilde{\mu}_{\text{wp}}^\gamma\left(\{ X \in \mathcal{T}_{g,n}/\text{Stab}(\gamma) \ | \ \ell_{\gamma_i}(X) \leq b_i L, \ \forall i= 1,\dots,k\} \right).
\]
$ $

It follows that, to prove Theorem \ref{theo:top_int_asymp_spec}, it is enough to prove the following result.\\

\begin{proposition}
	\label{prop:topo_dist_red}
	For any $\mathbf{b} := (b_1,\dots,b_k) \in (\mathbf{R}_{> 0})^k$,
	\begin{align*}
	&\lim_{L \to \infty} \frac{\widetilde{\mu}_{\text{wp}}^\gamma \left(\{X \in \mathcal{T}_{g,n}/\text{Stab}(\gamma) \ | \ \ell_{\gamma_i}(X) \leq b_i L, \ \forall i= 1,\dots,k\} \right)}{L^{6g-6+2n}} \\
	&= \widetilde{\mu}_{\text{Thu}}^\gamma \left( \{\lambda \in \mathcal{ML}_{g,n}/\text{Stab}(\gamma) \ | \ i(\lambda,\gamma_i) \leq b_i, \ \forall i =1,\dots,k \}\right).
	\end{align*}
\end{proposition}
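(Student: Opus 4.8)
\textit{Proof strategy.} The plan is to unfold both sides of the asserted identity over fundamental domains for $\text{Stab}(\gamma)$ and then to identify the two using Thurston's shear coordinates to convert hyperbolic lengths into intersection numbers. Set $d := 6g-6+2n$. First observe that the right-hand side is, for trivial reasons, a rescaled quantity: $\mathcal{ML}_{g,n}(\gamma)$ is a $\text{Stab}(\gamma)$-invariant cone, the functions $i(\cdot,\gamma_i)$ descend to $\mathcal{ML}_{g,n}(\gamma)/\text{Stab}(\gamma)$ and are homogeneous of degree one, and $\widetilde{\mu}_{\text{Thu}}^\gamma$ scales with degree $d$ by \eqref{eq:thu_meas_scale}; hence, using the proper discontinuity of the $\text{Stab}(\gamma)$-action on $\mathcal{ML}_{g,n}(\gamma)$ (Proposition \ref{prop:pd_Y}) to pick a scale-invariant measurable fundamental domain $\mathcal{D}_\infty\subseteq\mathcal{ML}_{g,n}(\gamma)$, the right-hand side equals $L^{-d}\,\mu_{\text{Thu}}(\mathcal{D}_\infty\cap\{\lambda:\,i(\lambda,\gamma_i)\le b_iL,\ \forall i\})$ for every $L>0$. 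Likewise, the numerator on the left equals $\mu_{\text{wp}}(\mathcal{D}\cap\{X:\,\ell_{\gamma_i}(X)\le b_iL,\ \forall i\})$ for a measurable fundamental domain $\mathcal{D}\subseteq\mathcal{T}_{g,n}$ for $\text{Stab}(\gamma)$, and this is finite for each $L$ precisely because passing to $\mathcal{D}$ bounds the directions (twisting around $\gamma$, and moduli of the pieces of $S_{g,n}(\gamma)$) along which $\{\ell_{\gamma_i}\le b_iL\}$ is non-compact in $\mathcal{T}_{g,n}$. So it suffices to prove, for a suitably matched pair $\mathcal{D},\mathcal{D}_\infty$,
\[
\mu_{\text{wp}}(\mathcal{D}\cap\{\ell_{\gamma_i}\le b_iL\,,\ \forall i\})\;=\;L^{d}\,\mu_{\text{Thu}}(\mathcal{D}_\infty\cap\{i(\cdot,\gamma_i)\le b_i\,,\ \forall i\})\;+\;o(L^{d}).
\]

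To compare the two sides, fix a maximal geodesic lamination $\mu$ on $S_{g,n}$ of the kind to which Corollary \ref{cor:shear_mp} and Lemma \ref{lem:asymp_shear} apply — a finite ideal geodesic triangulation if $n>0$, an arbitrary maximal geodesic lamination if $n=0$, chosen in the latter case so that the components of $\gamma$ are among the closed leaves of $\mu$ — and let $F_\mu\colon\mathcal{T}_{g,n}\to\mathcal{ML}_{g,n}(\mu)$ be the associated shear coordinate homeomorphism, which by Corollary \ref{cor:shear_mp} pushes $\mu_{\text{wp}}$ forward to $\mu_{\text{Thu}}|_{\mathcal{ML}_{g,n}(\mu)}$. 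By \eqref{eq:shear_ineq} one has $i(F_\mu(X),\gamma_i)\le\ell_{\gamma_i}(X)$ for every $X$, and by Lemma \ref{lem:asymp_shear} together with a short properness argument (were $\ell_{\gamma_i}(X)-i(F_\mu(X),\gamma_i)$ unbounded, a subsequence would converge in Thurston's compactification either inside $\mathcal{T}_{g,n}$, where the difference is locally bounded, or to a point of $P\mathcal{ML}_{g,n}$, contradicting the lemma), there is a constant $C\ge 0$, depending only on $\gamma$ and $\mu$, with $\ell_{\gamma_i}(X)\le i(F_\mu(X),\gamma_i)+C$ for all $X$ and all $i$; when $n=0$ and $\gamma\subseteq\mu$ one in fact has $C=0$ by \eqref{eq:shear_eq}. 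Consequently $F_\mu(\{X:\ell_{\gamma_i}(X)\le b_iL\})$ is squeezed between $\{\lambda\in\mathcal{ML}_{g,n}(\mu):i(\lambda,\gamma_i)\le b_i(L-C')\}$ and $\{\lambda\in\mathcal{ML}_{g,n}(\mu):i(\lambda,\gamma_i)\le b_iL\}$ for a constant $C'$; applying the measure-preservation of $F_\mu$ and intersecting with $F_\mu(\mathcal{D})$, the left-hand mass above is squeezed between $\mu_{\text{Thu}}(F_\mu(\mathcal{D})\cap\{i(\cdot,\gamma_i)\le b_i(L-C')\})$ and $\mu_{\text{Thu}}(F_\mu(\mathcal{D})\cap\{i(\cdot,\gamma_i)\le b_iL\})$. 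Since $(L-C')^d/L^d\to 1$, it then remains only to show that $\mu_{\text{Thu}}(F_\mu(\mathcal{D})\cap\{i(\cdot,\gamma_i)\le b_iL\})$ is asymptotic to $\mu_{\text{Thu}}(\mathcal{D}_\infty\cap\{i(\cdot,\gamma_i)\le b_iL\})=L^d\mu_{\text{Thu}}(\mathcal{D}_\infty\cap\{i(\cdot,\gamma_i)\le b_i\})$.

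This last comparison is the crux of the argument, and it is here that Proposition \ref{prop:fil_char} is used. The subtlety is that $F_\mu$ intertwines only the small $\text{Stab}(\mu)$-action, not the large $\text{Stab}(\gamma)$-action, so $F_\mu(\mathcal{D})$ is not a fundamental domain for $\text{Stab}(\gamma)$ on $\mathcal{ML}_{g,n}(\mu)$: it tiles $\mathcal{ML}_{g,n}(\mu)$ only under a family of distorted, non-affine homeomorphisms. I would choose $\mu$, $\mathcal{D}$, and $\mathcal{D}_\infty$ compatibly with the decomposition of $S_{g,n}$ along $\gamma$, using the characterization of $\mathcal{ML}_{g,n}(\gamma)$ furnished by Proposition \ref{prop:fil_char} — that $\lambda\in\mathcal{ML}_{g,n}(\gamma)$ precisely when the restriction of $\lambda$ to each component of $S_{g,n}(\gamma)$ fills that component — to build product-type fundamental domains separating the twisting and intersection data along $\gamma$ from the data internal to the cut pieces, and then to show that the locus where $F_\mu(\mathcal{D})$ differs from $\mathcal{D}_\infty$ is confined to $\mathcal{ML}_{g,n}(\mu)\setminus\mathcal{ML}_{g,n}(\gamma)$, whose intersection with $\{i(\cdot,\gamma_i)\le b_iL\}$ carries only $o(L^d)$ of Thurston mass. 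Granting this, the two squeezes above close up and yield the proposition. I expect the main obstacle to be exactly this point: because the relevant fundamental domains are non-compact and the shear parametrization does not respect the $\text{Stab}(\gamma)$-action, one must rule out that any Thurston mass of the leading order $L^d$ leaks out to the cusp through the mismatch between $F_\mu(\mathcal{D})$ and a genuine fundamental domain in $\mathcal{ML}_{g,n}(\gamma)$; everything else (Corollary \ref{cor:shear_mp}, the shear estimates, and the scaling property \eqref{eq:thu_meas_scale}) is a fairly mechanical assembly.
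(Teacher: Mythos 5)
Your first half is sound and in fact parallels what the paper does: converting Weil--Petersson mass to Thurston mass via shear coordinates (Corollary \ref{cor:shear_mp}), using (\ref{eq:shear_ineq}), (\ref{eq:shear_eq}) and Lemma \ref{lem:asymp_shear} with a compactness/contradiction argument to get a uniform additive constant, and then squeezing using the scaling property (\ref{eq:thu_meas_scale}); this is essentially the content of the paper's Proposition \ref{prop:asymp_wp_meas}. The genuine gap is exactly the step you flag and then assume: the comparison between $F_\mu(\mathcal{D})$ and a true fundamental domain $\mathcal{D}_\infty$ for $\text{Stab}(\gamma)$ acting on $\mathcal{ML}_{g,n}(\gamma)$. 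Since $F_\mu$ is only $\text{Stab}(\mu)$-equivariant and $\text{Stab}(\gamma)$ is much larger (it contains the mapping class groups of the pieces of $S_{g,n}(\gamma)$), there is no choice of $\mu$, $\mathcal{D}$, $\mathcal{D}_\infty$ for which the mismatch is ``confined to $\mathcal{ML}_{g,n}(\mu)\setminus\mathcal{ML}_{g,n}(\gamma)$'': that set is Thurston-null, whereas the symmetric difference you must control is an open region accumulating on the non-filling locus $\{s_\gamma=0\}$, and its mass inside the box $\{i(\cdot,\gamma_i)\le b_iL\}$ is a priori of order $\delta\cdot L^{6g-6+2n}$ for neighborhoods of size $\delta$, not automatically $o(L^{6g-6+2n})$. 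Proving that no mass of leading order leaks there is not a residual technicality; it is the main content of the proposition, and your proposal gives no mechanism for it.

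Concretely, the paper supplies exactly the quantitative input your ``granting this'' step needs, and it is substantial: the relative systole $s_\gamma$ and its characterization of $\mathcal{ML}_{g,n}(\gamma)$ (Propositions \ref{prop:fil_char}, \ref{prop:syst_map}), compactness of the truncated sets $\{s_\gamma\ge\delta,\ i(\cdot,\gamma_i)\le b_i\}$ via a Bers-type theorem and Dehn--Thurston coordinates (Corollary \ref{cor:bers_Y}, Proposition \ref{prop:K_compact}), and, crucially, the Weil--Petersson measure estimate on the thin part via Mirzakhani's integration formula (Proposition \ref{prop:meas_est_3}), which bounds the mass where some curve in $\mathcal{S}_{g,n}^\gamma$ has length $<\delta L$ by $C\delta L^{6g-6+2n}$. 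The paper also sidesteps fundamental domains entirely by working with local pushforwards on the quotient $\overline{\mathcal{Y}_{g,n}}(\gamma)/\text{Stab}(\gamma)$ and applying Portmanteau's theorem to the compact continuity sets $\widetilde{\mathcal{K}}^\delta_{\mathbf{b}}(\gamma)$, which avoids having to match $F_\mu(\mathcal{D})$ against $\mathcal{D}_\infty$ at all. To complete your route you would have to reproduce estimates equivalent to Propositions \ref{prop:no_escape_of_mass}--\ref{prop:meas_est_3}; as written, the central step of your argument is asserted rather than proved, and its literal formulation is not attainable.
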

$ $

Some of the arguments in our proof of Proposition \ref{prop:topo_dist_red} are closely related to ideas in the proofs of \cite[Theorem 5.17]{Mir04} and \cite[Theorem 3.3]{RS19}. The Yamabe space $\mathcal{Y}_{g,n}$, the enlarged Yamabe space $\overline{\mathcal{Y}_{g,n}}$, and Thurston's shear coordinates will play a crucial role in our proof; we refer the reader to \S 2 for definitions. \\

\textit{Shear coordinates of the enlarged Yamabe space.} Let $\mu$ be a maximal geodesic lamination on $S_{g,n}$ and $F_\mu \colon \mathcal{T}_{g,n} \to \mathcal{ML}_{g,n}(\mu)$ be the shear coordinates of $\mathcal{T}_{g,n}$ with respect to $\mu$. Consider the map $\Phi_\mu \colon \mathcal{Y}_{g,n} \to (0,\infty) \times \mathcal{ML}_{g,n}(\mu)$ given by
\[
\Phi_\mu(t,X) := (t, t \cdot F_\mu(X))
\]
for every $X \in \mathcal{T}_{g,n}$ and every $t > 0$. Using Lemma \ref{lem:asymp_shear} one can check that this map extends to a homeomorphism 
\[
\overline{\Phi_\mu} \colon \overline{\mathcal{Y}_{g,n}} \to \left((0,\infty) \times \mathcal{ML}_{g,n}(\mu)\right) \sqcup \left(\{0\} \times \mathcal{ML}_{g,n}\right),
\]
where the topology on the target comes from its natural embedding in $[0,\infty) \times \mathcal{ML}_{g,n}$, such that
\begin{equation}
\label{eq:id_ext}
\overline{\Phi}_\mu(\lambda) = (0,\lambda)
\end{equation}
for every $\lambda \in \mathcal{ML}_{g,n}$. We refer to this map as the \textit{shear coordinates} of $\overline{\mathcal{Y}_{g,n}}$ with respect to $\mu$. \\

\textit{The asymptotics of the Weil-Petersson measure.} Given $t > 0$, let $\mu_{\text{wp}}^t$ be the pushforward to $\{t\} \times \mathcal{T}_{g,n} \subseteq \overline{\mathcal{Y}_{g,n}}$ of the Weil-Petersson measure $\mu_{\text{wp}}$ on $\mathcal{T}_{g,n}$ with respect to the map 
\[
\mathcal{T}_{g,n} \to \{t\} \times \mathcal{T}_{g,n}, \quad X \mapsto (t,X).
\]
We will also denote by $\mu_\text{wp}^t$ the extension by zero of this measure to $\overline{\mathcal{Y}_{g,n}}$ and by $\mu_\text{Thu}$ the extension by zero of the measure $\mu_\text{Thu}$ on $\mathcal{ML}_{g,n} \subseteq \overline{\mathcal{Y}_{g,n}}$ to $\overline{\mathcal{Y}_{g,n}}$. The following proposition describes the asymptotic behavior of the measures $\mu_{\text{wp}}^t$ on $\overline{\mathcal{Y}_{g,n}}$ as $t \to 0$.\\

\begin{proposition}
	\label{prop:asymp_wp_meas}
	In the weak-$\star$ topology for measures on $\overline{\mathcal{Y}_{g,n}}$,
	\[
	\lim_{t \to 0} \ t^{6g-6+2n} \cdot \mu_\text{wp}^t = \mu_{\text{Thu}}.
	\]
\end{proposition}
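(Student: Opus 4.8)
The plan is to transport the entire problem to the enlarged Yamabe space via the shear coordinates $\overline{\Phi}_\mu$, where the Weil--Petersson measure becomes, by Corollary \ref{cor:shear_mp}, essentially a rescaled copy of the Thurston measure, and where the degeneration $t \to 0$ becomes the elementary scaling $\lambda \mapsto t \cdot \lambda$ on $\mathcal{ML}_{g,n}$. Fix a maximal geodesic lamination $\mu$ on $S_{g,n}$ (an ideal triangulation if $n > 0$) so that Corollary \ref{cor:shear_mp} applies. First I would record that, under the homeomorphism $\overline{\Phi}_\mu$, the slice $\{t\} \times \mathcal{T}_{g,n}$ is carried to $\{t\} \times \mathcal{ML}_{g,n}(\mu)$ by $X \mapsto (t, t \cdot F_\mu(X))$, and that the pushforward of $\mu_{\text{wp}}$ under $F_\mu$ is exactly $\mu_{\text{Thu}}|_{\mathcal{ML}_{g,n}(\mu)}$; combined with the scaling property (\ref{eq:thu_meas_scale}) of $\mu_{\text{Thu}}$, the pushforward of $t^{6g-6+2n}\cdot \mu_{\text{wp}}^t$ under $\overline{\Phi}_\mu$ is the measure $\delta_t \otimes \big(\mu_{\text{Thu}}|_{t \cdot \mathcal{ML}_{g,n}(\mu)}\big)$ on $[0,\infty) \times \mathcal{ML}_{g,n}$, i.e. the image of $\mu_{\text{Thu}}$ restricted to the full-measure open subset $\mathcal{ML}_{g,n}(\mu)$ under the scaling map $\lambda \mapsto (t, t\lambda)$. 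Since $\overline{\Phi}_\mu$ is a homeomorphism, it suffices to prove the weak-$\star$ convergence of these pushed-forward measures on the target space, and there the target limit is $\delta_0 \otimes \mu_{\text{Thu}}$, matching (\ref{eq:id_ext}).

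The remaining task is then a concrete weak-$\star$ statement: for every $\varphi \in C_c\big(([0,\infty)\times\mathcal{ML}_{g,n}) \text{ side}\big)$ — equivalently, pulling back, for every $\psi \in C_c(\overline{\mathcal{Y}_{g,n}})$ — one has $\int \psi \, d(t^{6g-6+2n}\mu_{\text{wp}}^t) \to \int \psi\, d\mu_{\text{Thu}}$ as $t \to 0$. Writing everything in the target coordinates, this becomes
\[
\int_{\mathcal{ML}_{g,n}(\mu)} \varphi\big(t, t\lambda\big) \, d\mu_{\text{Thu}}(\lambda) \;\xrightarrow[t\to0]{}\; \int_{\mathcal{ML}_{g,n}} \varphi(0,\lambda)\, d\mu_{\text{Thu}}(\lambda).
\]
After the change of variables $\eta := t\lambda$ on the full-measure subset $t\cdot\mathcal{ML}_{g,n}(\mu)$, whose Jacobian is accounted for by the factor $t^{6g-6+2n}$ via (\ref{eq:thu_meas_scale}), the left side is $\int_{t\cdot\mathcal{ML}_{g,n}(\mu)} \varphi(t,\eta)\, d\mu_{\text{Thu}}(\eta)$; since $\varphi(t,\eta) \to \varphi(0,\eta)$ uniformly in $\eta$ (as $\varphi$ is uniformly continuous with compact support), and $\mathcal{ML}_{g,n}(\mu)$ has full $\mu_{\text{Thu}}$-measure so $t\cdot\mathcal{ML}_{g,n}(\mu)$ still has full measure, dominated convergence against the finite measure $\mu_{\text{Thu}}|_{K}$ (where $K$ is a compact neighborhood of the support of $\varphi(0,\cdot)$) closes the argument.

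The main obstacle, and the place requiring genuine care rather than bookkeeping, is verifying that $\overline{\Phi}_\mu$ actually extends to a homeomorphism with the stated behavior on the boundary — in particular that a sequence $(t_j, X_j)$ with $t_j \to 0$ converges in $\overline{\mathcal{Y}_{g,n}}$ to $\lambda \in \mathcal{ML}_{g,n}$ precisely when $t_j \cdot F_\mu(X_j) \to \lambda$ in $\mathcal{ML}_{g,n}$. This is exactly where Lemma \ref{lem:asymp_shear} enters: the pairing defining the topology on $\overline{\mathcal{Y}_{g,n}}$ is $i(\,\cdot\,,\alpha)$ tested against simple closed curves $\alpha$, and the bound $i(F_\mu(X),\alpha) \leq \ell_\alpha(X) \leq i(F_\mu(X),\alpha) + C$ shows that $t_j\ell_{\alpha}(X_j)$ and $i(t_jF_\mu(X_j),\alpha) = t_j\,i(F_\mu(X_j),\alpha)$ differ by at most $t_j C \to 0$, so the two notions of convergence agree; the uniformity of $C$ over the relevant curves (with the sequence approaching the Thurston boundary, which is the only case needing the asymptotic statement — when $t_j X_j$ stays in a compact part of $\mathcal{Y}_{g,n}$ there is nothing to prove) must be handled exactly as in the cited work of Papadopoulos. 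Once this homeomorphism and its boundary identification (\ref{eq:id_ext}) are in hand, the measure-theoretic limit above is routine, and the factor $t^{6g-6+2n} = t^{\dim_{\mathbf{R}}\mathcal{ML}_{g,n}}$ is forced by the scaling (\ref{eq:thu_meas_scale}).
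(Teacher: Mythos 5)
Your proposal is correct and takes essentially the same route as the paper: push everything forward under the shear coordinates $\overline{\Phi_\mu}$ of $\overline{\mathcal{Y}_{g,n}}$, use Corollary \ref{cor:shear_mp} together with the scaling property (\ref{eq:thu_meas_scale}) to identify the pushforward of $t^{6g-6+2n}\cdot\mu_{\text{wp}}^t$ as $\delta_t\otimes\mu_{\text{Thu}}|_{\mathcal{ML}_{g,n}(\mu)}$, and conclude via the full measure of $\mathcal{ML}_{g,n}(\mu)$ and the boundary identification (\ref{eq:id_ext}); you merely spell out the weak-$\star$ limit and the extension of $\overline{\Phi_\mu}$ (via Lemma \ref{lem:asymp_shear}) that the paper states as background. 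The one blemish is notational: your displayed intermediate limit omits the prefactor $t^{6g-6+2n}$ on the left-hand side, although your subsequent sentence about the Jacobian cancelling that factor shows you track it correctly.
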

$ $

\begin{proof}
	Let $\mu$ be a maximal geodesic lamination on $S_{g,n}$ satisfying the assumptions in the statement of Corollary \ref{cor:shear_mp} and 
	\[
	\overline{\Phi_\mu} \colon \overline{\mathcal{Y}_{g,n}} \to \left((0,\infty) \times \mathcal{ML}_{g,n}(\mu)\right) \sqcup \left(\{0\} \times \mathcal{ML}_{g,n}\right)
	\]
	be the shear coordinates of $\overline{\mathcal{Y}_{g,n}}$ with respect to $\mu$. For every $t \geq 0$ consider the measure $\mu_{\text{Thu}}^t$ on 
	\[
	\left((0,\infty) \times \mathcal{ML}_{g,n}(\mu)\right) \sqcup \left(\{0\} \times \mathcal{ML}_{g,n}\right)
	\]
	given by
	\[
	\mu_{\text{Thu}}^t := \delta_t \otimes \mu_{\text{Thu}}|_{\mathcal{ML}_{g,n}(\mu)}.
	\]
	Notice that
	\[
	\lim_{t \to 0} \ \mu_{\text{Thu}}^t = \mu_{\text{Thu}}^0
	\]
	in the weak-$\star$ topology. Using Corollary \ref{cor:shear_mp} and the scaling property of the Thurston measure one can check that, for every $t > 0$,
	\[
	\left(\overline{\Phi_\mu}\right)_*  \mu_\text{wp}^t = t^{-(6g-6+2n)} \cdot \mu_{\text{Thu}}^t.
	\]
	As the subset $\mathcal{ML}_{g,n}(\mu) \subseteq \mathcal{ML}_{g,n}$ has full measure,
	\[
	\label{eq:full_meas_ext}
	\mu_{\text{Thu}}^0 = \delta_0 \otimes \mu_{\text{Thu}}.
	\]
	This together with (\ref{eq:id_ext}) imply
	\[
	\left(\overline{\Phi_\mu}\right)_* \mu_{\text{Thu}} = \mu_\text{Thu}^0.
	\]
	Putting everything together we deduce
	\[
	\lim_{t \to 0} \ t^{6g-6+2n} \cdot \mu_\text{wp}^t = \mu_{\text{Thu}}.
	\]
	This finishes the proof.
\end{proof}
$ $

\textit{Proof of Proposition \ref{prop:topo_dist_red}.} By Proposition \ref{prop:pd_Y}, the subgroup $\text{Stab}(\gamma) \subseteq \text{Mod}_{g,n}$ acts properly discontinuously on 
\[
\overline{\mathcal{Y}_{g,n}}(\gamma) := \mathcal{Y}_{g,n} \sqcup \mathcal{ML}_{g,n}(\gamma).
\]
Let $\widetilde{\mu}_{\text{wp}}^{\gamma,t}$ and $\widetilde{\mu}_{\text{Thu}}^\gamma$ be the local pushforwards of the measures $\mu_\text{wp}^t$ and $\mu_{\text{Thu}}|_{\mathcal{ML}_{g,n}(\gamma)}$ on $\overline{\mathcal{Y}_{g,n}}(\gamma)$ to the quotient $\overline{\mathcal{Y}_{g,n}}(\gamma)/\text{Stab}(\gamma)$. Directly from Proposition \ref{prop:asymp_wp_meas} we obtain the following corollary.\\

\begin{corollary}
	\label{cor:asymp_wp_meas}
	In the weak-$\star$ topology for measures on $\overline{\mathcal{Y}_{g,n}}(\gamma)/\text{Stab}(\gamma)$,
	\[
	\lim_{t \to 0} \ t^{6g-6+2n} \cdot \widetilde{\mu}_\text{wp}^{\gamma,t} = \widetilde{\mu}_{\text{Thu}}^\gamma.
	\]
\end{corollary}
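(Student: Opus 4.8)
\textbf{Proof proposal for Corollary \ref{cor:asymp_wp_meas}.}

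The plan is to deduce the corollary from Proposition \ref{prop:asymp_wp_meas} by the standard principle that weak-$\star$ convergence of measures on a space with a properly discontinuous group action descends to weak-$\star$ convergence of the local pushforwards on the quotient. First I would recall the setup: by Proposition \ref{prop:pd_Y}, $\text{Stab}(\gamma)$ acts properly discontinuously on $\overline{\mathcal{Y}_{g,n}}(\gamma) = \mathcal{Y}_{g,n} \sqcup \mathcal{ML}_{g,n}(\gamma)$, which is an open $\text{Stab}(\gamma)$-invariant subset of $\overline{\mathcal{Y}_{g,n}}$. The measures $\mu_{\text{wp}}^t$ and $\mu_{\text{Thu}}|_{\mathcal{ML}_{g,n}(\gamma)}$ are $\text{Stab}(\gamma)$-invariant (Weil-Petersson invariance under the mapping class group, Thurston invariance likewise, and the scaling maps $X \mapsto (t, t\cdot X)$ are equivariant), so their local pushforwards $\widetilde{\mu}_{\text{wp}}^{\gamma,t}$ and $\widetilde{\mu}_{\text{Thu}}^\gamma$ to $\overline{\mathcal{Y}_{g,n}}(\gamma)/\text{Stab}(\gamma)$ are well defined.

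Next I would unwind the definition of weak-$\star$ convergence on the quotient. Let $\varphi \colon \overline{\mathcal{Y}_{g,n}}(\gamma)/\text{Stab}(\gamma) \to \mathbf{R}_{\geq 0}$ be an arbitrary continuous, compactly supported test function, and let $q \colon \overline{\mathcal{Y}_{g,n}}(\gamma) \to \overline{\mathcal{Y}_{g,n}}(\gamma)/\text{Stab}(\gamma)$ be the quotient map. Because the action is properly discontinuous and $\text{supp}(\varphi)$ is compact, one can choose a continuous, compactly supported function $\psi \colon \overline{\mathcal{Y}_{g,n}}(\gamma) \to \mathbf{R}_{\geq 0}$ which is a \emph{section} of $\varphi$ in the sense that $\sum_{\phi \in \text{Stab}(\gamma)} \psi(\phi \cdot z) = \varphi(q(z))$ for all $z$; this is a standard partition-of-unity construction (cover $\text{supp}(\varphi)$ by finitely many evenly covered open sets, take a subordinate partition of unity upstairs, and push forward). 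By the very definition of the local pushforward, one then has
\[
\int \varphi \, d\widetilde{\mu}_{\text{wp}}^{\gamma,t} = \int \psi \, d\mu_{\text{wp}}^t, \qquad \int \varphi \, d\widetilde{\mu}_{\text{Thu}}^\gamma = \int \psi \, d\big(\mu_{\text{Thu}}|_{\mathcal{ML}_{g,n}(\gamma)}\big).
\]

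Finally I would extend $\psi$ by zero to all of $\overline{\mathcal{Y}_{g,n}}$; since $\overline{\mathcal{Y}_{g,n}}(\gamma)$ is open in $\overline{\mathcal{Y}_{g,n}}$ and $\text{supp}(\psi)$ is a compact subset of it, this extension is continuous and compactly supported on $\overline{\mathcal{Y}_{g,n}}$. Moreover, since $\mathcal{ML}_{g,n}(\gamma)$ has full $\mu_{\text{Thu}}$-measure in $\mathcal{ML}_{g,n}$, we have $\int \psi \, d(\mu_{\text{Thu}}|_{\mathcal{ML}_{g,n}(\gamma)}) = \int \psi \, d\mu_{\text{Thu}}$. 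Applying Proposition \ref{prop:asymp_wp_meas} to the test function $\psi$ gives
\[
\lim_{t \to 0} \ t^{6g-6+2n} \int \psi \, d\mu_{\text{wp}}^t = \int \psi \, d\mu_{\text{Thu}},
\]
and combining this with the two displayed identities above yields $\lim_{t\to 0} t^{6g-6+2n}\int \varphi \, d\widetilde{\mu}_{\text{wp}}^{\gamma,t} = \int \varphi \, d\widetilde{\mu}_{\text{Thu}}^\gamma$ for every admissible $\varphi$, which is exactly the claimed weak-$\star$ convergence. The only mild technical point — and the step I would be most careful about — is verifying that the section $\psi$ can be taken compactly supported \emph{inside} $\overline{\mathcal{Y}_{g,n}}(\gamma)$ and that extension by zero preserves continuity across the boundary with $\overline{\mathcal{Y}_{g,n}} \setminus \overline{\mathcal{Y}_{g,n}}(\gamma) = \mathcal{ML}_{g,n} \setminus \mathcal{ML}_{g,n}(\gamma)$; this follows because $\mathcal{ML}_{g,n}(\gamma)$ is open in $\mathcal{ML}_{g,n}$ and hence $\overline{\mathcal{Y}_{g,n}}(\gamma)$ is open in $\overline{\mathcal{Y}_{g,n}}$, so the compact support of $\psi$ stays a positive distance from the missing locus. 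No genuine obstacle is expected; this is a bookkeeping argument given Proposition \ref{prop:asymp_wp_meas}.
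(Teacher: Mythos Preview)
Your proposal is correct and matches the paper's approach: the paper simply states that the corollary follows ``directly from Proposition \ref{prop:asymp_wp_meas}'' without writing out any details, and your argument is precisely the standard local-pushforward/partition-of-unity justification that makes that phrase rigorous. There is nothing to add.
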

$ $

Consider the subsets
\begin{align*}
\mathcal{Y}_{g,n}^1&:= (0,1] \cdot \mathcal{T}_{g,n} \subseteq \mathcal{Y}_{g,n},\\
\overline{\mathcal{Y}_{g,n}^1} &:= \mathcal{Y}_{g,n}^1 \sqcup \mathcal{ML}_{g,n} \subseteq \overline{\mathcal{Y}_{g,n}},\\
\overline{\mathcal{Y}_{g,n}^1}(\gamma) &:= \mathcal{Y}_{g,n}^1 \sqcup \mathcal{ML}_{g,n}(\gamma) \subseteq \overline{\mathcal{Y}_{g,n}}(\gamma).
\end{align*}
Notice that $\text{Stab}(\gamma)$ preserves $\overline{\mathcal{Y}_{g,n}^1}(\gamma)  \subseteq \overline{\mathcal{Y}_{g,n}}(\gamma)$. Consider the embedded quotient
\[
\overline{\mathcal{Y}_{g,n}^1}(\gamma) /\text{Stab}(\gamma) \subseteq \overline{\mathcal{Y}_{g,n}}(\gamma) /\text{Stab}(\gamma).
\]
Given $\mathbf{b} := (b_1,\dots,b_k) \in (\mathbf{R}_{>0})^k$ let $ \widetilde{B}_\mathbf{b}(\gamma) \subseteq \overline{\mathcal{Y}_{g,n}}(\gamma)/\text{Stab}(\gamma)$ be the subset 
\[
\widetilde{B}_\mathbf{b}(\gamma) := \{\alpha \in \overline{\mathcal{Y}_{g,n}^1}(\gamma)/\text{Stab}(\gamma) \ | \ i(\alpha,\gamma_i) < b_i, \ \forall i=1,\dots,k\}.
\]
One would like to use Corollary \ref{cor:asymp_wp_meas} together with Portmanteau's theorem to deduce
\begin{equation}
\label{eq:conv_wish}
\lim_{t \to 0} \ t^{6g-6+2n} \cdot \widetilde{\mu}_\text{wp}^{\gamma,t} \left( \widetilde{B}_\mathbf{b}(\gamma) \right)= \widetilde{\mu}_{\text{Thu}}^\gamma \left( \widetilde{B}_\mathbf{b}(\gamma) \right).
\end{equation}
Notice that for every $0 < t \leq 1$,
\[
\widetilde{\mu}_\text{wp}^{\gamma,t} \left( \widetilde{B}_\mathbf{b}(\gamma) \right) = \widetilde{\mu}_{\text{wp}}^\gamma\left(\{X \in \mathcal{T}_{g,n}/\text{Stab}(\gamma) \ | \ \ell_{\alpha_i}(X) < b_i/t, \ \forall i= 1,\dots,k\} \right),
\]
and that
\[
\widetilde{\mu}_{\text{Thu}}^\gamma \left( \widetilde{B}_\mathbf{b}(\gamma) \right) = \widetilde{\mu}_{\text{Thu}}^\gamma \left( \{\lambda \in \mathcal{ML}_{g,n}/\text{Stab}(\gamma) \ | \ i(\lambda,\gamma_i) < b_i, \ \forall i =1,\dots,k \}\right).
\]
Letting $t = 1/L$ with $0 < L \leq 1$ and taking $L \searrow 0$ would prove Proposition \ref{prop:topo_dist_red}. But the hypothesis of Portmanteau's theorem are not verified by the subset $\widetilde{B}_\mathbf{b}(\gamma) \subseteq \overline{\mathcal{Y}}_{g,n}(\gamma)/\text{Stab}(\gamma)$ as it does not have compact closure. Such non-compactness comes from the fact that $\mathcal{ML}_{g,n}(\gamma) \subseteq \mathcal{ML}_{g,n}$ is open. To overcome this difficulty we will prove the following no escape of mass result.\\

\begin{proposition}
	\label{prop:no_escape_of_mass}
	Let $\mathbf{b}:=(b_1,\dots,b_k) \in (\mathbf{R}_{>0})^k$. For every $\epsilon > 0$ there exists a compact subset $\widetilde{K}_\mathbf{b}^\epsilon(\gamma)\subseteq \widetilde{B}_\mathbf{b}(\gamma)$ with the following properties:
	\begin{enumerate}
		\item $\widetilde{\mu}_\text{Thu}^\gamma \left(\partial \widetilde{K}_\mathbf{b}^\epsilon(\gamma)\right) = 0$,
		\item $\widetilde{\mu}_\text{Thu} ^\gamma \left(\widetilde{B}_\mathbf{b}(\gamma) \backslash \widetilde{K}_\mathbf{b}^\epsilon(\gamma)\right) < \epsilon$,
		\item $t^{6g-6+2n} \cdot \widetilde{\mu}_\text{wp}^{\gamma,t}\left(\widetilde{B}_\mathbf{b}(\gamma) \backslash \widetilde{K}_\mathbf{b}^\epsilon(\gamma)\right) < \epsilon$ for all small enough $t > 0$.
	\end{enumerate}
\end{proposition}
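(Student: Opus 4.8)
\textit{Strategy.} Fix $\mathbf{b} := (b_1,\dots,b_k)$ and $\epsilon > 0$. Although $\widetilde{B}_\mathbf{b}(\gamma)$ need not be relatively compact in $\overline{\mathcal{Y}_{g,n}}(\gamma)/\text{Stab}(\gamma)$, it is relatively compact in the larger quotient $\overline{\mathcal{Y}_{g,n}}/\text{Stab}(\gamma)$. Indeed, Bers's theorem --- in the form of Theorem \ref{theo:bers}, tracking the dependence on the lengths and applied also to the components of $S_{g,n}(\gamma)$ --- shows that every $\alpha \in \widetilde{B}_\mathbf{b}(\gamma)$ lies in one of finitely many $\text{Stab}(\gamma)$-orbits of regions $\{\alpha \ | \ i(\alpha,\gamma') \leq C \text{ for every } \gamma' \in \mathcal{P}\}$, where $\mathcal{P}$ is a completion of $\gamma$ to a pair of pants decomposition of $S_{g,n}$; modulo the Dehn twists along the curves of $\mathcal{P}$, all of which lie in $\text{Stab}(\gamma)$, such a region is bounded in the Fenchel--Nielsen and Dehn--Thurston coordinates adapted to $\mathcal{P}$ (Theorem \ref{theo:dehn_thurston_coordinates}), hence relatively compact. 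Thus the non-compactness of $\widetilde{B}_\mathbf{b}(\gamma)$ in $\overline{\mathcal{Y}_{g,n}}(\gamma)/\text{Stab}(\gamma)$ sits exactly where its closure meets $\mathcal{ML}_{g,n} \setminus \mathcal{ML}_{g,n}(\gamma)$, and $\widetilde{K}_\mathbf{b}^\epsilon(\gamma)$ will be produced by excising a neighborhood of this locus together with thin shells along the walls $\{i(\alpha,\gamma_i) = 0\}$ and $\{i(\alpha,\gamma_i) = b_i\}$, the excised mass being estimated separately for $\widetilde{\mu}_{\text{Thu}}^\gamma$ and for $t^{6g-6+2n} \cdot \widetilde{\mu}_{\text{wp}}^{\gamma,t}$, uniformly as $t \to 0$.

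\textit{Construction.} By Proposition \ref{prop:fil_char}, a lamination $\lambda$ lies in $\mathcal{ML}_{g,n}(\gamma)$ exactly when $i(\lambda,\eta) > 0$ for every nonzero $\eta$ with $i(\gamma,\eta) = 0$; in particular $i(\lambda,\gamma_i) > 0$ for all $i$ and $\inf_\delta i(\lambda,\delta) > 0$, where $\delta$ runs over the simple closed curves disjoint from and not isotopic into $\gamma$ (finitely many up to $\text{Stab}(\gamma)$, so the infimum descends to the quotient). For a small $\epsilon' > 0$ set
\[
\widetilde{K}_\mathbf{b}^{\epsilon'}(\gamma) := \{\alpha \in \overline{\widetilde{B}_\mathbf{b}(\gamma)} \ | \ \epsilon' \leq i(\alpha,\gamma_i) \leq b_i - \epsilon' \ \forall i, \ \inf_\delta i(\alpha,\delta) \geq \epsilon'\},
\]
the closure taken in $\overline{\mathcal{Y}_{g,n}}/\text{Stab}(\gamma)$. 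The displayed inequalities keep this set inside $\widetilde{B}_\mathbf{b}(\gamma)$, so it is a closed, hence compact, subset of $\overline{\widetilde{B}_\mathbf{b}(\gamma)}$. Portmanteau's theorem applied to Corollary \ref{cor:asymp_wp_meas}, combined with Proposition \ref{prop:total_hor_meas}, gives
\[
\widetilde{\mu}_{\text{Thu}}^\gamma(\widetilde{B}_\mathbf{b}(\gamma)) \leq \lim_{t \to 0} \ t^{6g-6+2n} \cdot \widetilde{\mu}_{\text{wp}}^{\gamma,t}(\widetilde{B}_\mathbf{b}(\gamma)) = \int_{\prod_{i=1}^k [0,b_i]} W_{g,n}(\gamma,\mathbf{x}) \ d\mathbf{x} < +\infty,
\]
so $\widetilde{\mu}_{\text{Thu}}^\gamma$ is finite on $\widetilde{B}_\mathbf{b}(\gamma)$. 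As $i(\cdot,\gamma_i)$ and $\inf_\delta i(\cdot,\delta)$ are homogeneous, Lemma \ref{lem:thu_meas_zero} shows all but countably many of their level sets are $\widetilde{\mu}_{\text{Thu}}^\gamma$-null; choosing $\epsilon'$ outside this countable set gives property (1). As $\epsilon' \to 0$, the set $\widetilde{B}_\mathbf{b}(\gamma) \setminus \widetilde{K}_\mathbf{b}^{\epsilon'}(\gamma)$ decreases to a subset of $\bigl(\bigcup_i \{i(\cdot,\gamma_i) \in \{0,b_i\}\} \cup \{\inf_\delta i(\cdot,\delta) = 0\}\bigr) \cap \widetilde{B}_\mathbf{b}(\gamma)$, which by the characterization above and Lemma \ref{lem:thu_meas_zero} is $\widetilde{\mu}_{\text{Thu}}^\gamma$-null; finiteness of $\widetilde{\mu}_{\text{Thu}}^\gamma$ and continuity from above then give property (2) for $\epsilon'$ small.

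\textit{The uniform Weil--Petersson estimate.} Property (3) is the crux. The contributions of the shells $\{i(\alpha,\gamma_i) < \epsilon'\}$ and $\{i(\alpha,\gamma_i) > b_i - \epsilon'\}$ are handled directly through Proposition \ref{prop:total_hor_meas}, using that $W_{g,n}(\gamma,\mathbf{x})$ is bounded on $\prod[0,b_i]$ and divisible by $x_1 \cdots x_k$. For the remaining piece, writing $L := 1/t$, one must show
\[
L^{-(6g-6+2n)} \cdot \widetilde{\mu}_{\text{wp}}^\gamma\bigl(\{X \ | \ \ell_{\gamma_i}(X) < b_i L \ \forall i, \ \exists \, \delta : \ell_\delta(X) < \epsilon' L\}\bigr) \leq \psi(\epsilon'), \quad \psi(\epsilon') \xrightarrow{\epsilon' \to 0} 0,
\]
uniformly in $L$. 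Here the plan is to fiber this horoball segment over the product of the moduli spaces of the components of $S_{g,n}(\gamma)$ with boundary lengths $\mathbf{L} := (\ell_{\gamma_1}(X),\dots,\ell_{\gamma_k}(X))$, exactly as in Mirzakhani's integration formula, and, for each of the finitely many topological types of $\delta$, to bound the Weil--Petersson volume of $\{\ell_\delta < \epsilon' L\}$ by an expression of the shape $\int_0^{\epsilon' L} x \cdot V\bigl(S_{g,n}(\gamma) \setminus \delta; \mathbf{L}, x, x\bigr) \ dx$; Theorem \ref{theo:vol_pol} then shows that, integrated against $\ell_{\gamma_1} \cdots \ell_{\gamma_k} \, d\mathbf{L}$ over $\prod (0, b_i L)$, this is $O\bigl((\epsilon')^2 L^{6g-6+2n}\bigr)$ with constants independent of $L$, the sum over the $\text{Stab}(\gamma)$-orbits of the $\delta$'s converging by the same argument Mirzakhani uses for the integrability of the function $B$. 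I expect this uniform-in-$L$ bound --- a quantitative statement that expanding horoball segments concentrate where no simple closed curve disjoint from $\gamma$ is short --- to be the main obstacle; granting it, taking $\epsilon'$ small and then $t$ small completes the proof.
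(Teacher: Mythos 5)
The gap is in your compactness step. You obtain compactness of $\widetilde{K}_\mathbf{b}^{\epsilon'}(\gamma)$ by declaring it a closed subset of $\overline{\widetilde{B}_\mathbf{b}(\gamma)}$ and asserting that $\widetilde{B}_\mathbf{b}(\gamma)$ is relatively compact in the larger quotient $\overline{\mathcal{Y}_{g,n}}/\text{Stab}(\gamma)$. That assertion is false: the enlarged Yamabe space only adjoins \emph{rescaled} limits (measured laminations), not degenerations at a fixed scale. Concretely, fix the scale $t=1$ and let $X_m \in \mathcal{T}_{g,n}$ pinch a simple closed curve $\beta$ with $i(\beta,\gamma_i)=0$ for all $i$ while keeping $\ell_{\gamma_i}(X_m)\leq b_i/2$; then $(1,X_m)\in \widetilde{B}_\mathbf{b}(\gamma)$ for every $m$, but no sequence $\phi_m\cdot(1,X_m)$ with $\phi_m\in\text{Stab}(\gamma)$ converges in $\overline{\mathcal{Y}_{g,n}}$: convergence to a point of $\mathcal{Y}_{g,n}$ is impossible because the systole of $X_m$ tends to $0$, and convergence to a point of $\mathcal{ML}_{g,n}$ would force $\ell_\alpha(\phi_m\cdot X_m)$ to converge for \emph{every} curve $\alpha$, which the collar lemma forbids once a curve is pinched. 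Similarly, $w\cdot\beta$ lies in $\overline{\widetilde{B}_\mathbf{b}(\gamma)}$ for every $w>0$ (choose scales $t_m\to 0$ suitably), and these points leave every compact subset of $\mathcal{ML}_{g,n}/\text{Stab}(\gamma)$. So the non-compactness of $\widetilde{B}_\mathbf{b}(\gamma)$ is \emph{not} located only where its closure meets $\mathcal{ML}_{g,n}\setminus\mathcal{ML}_{g,n}(\gamma)$; there is genuine escape to infinity, and correspondingly the Bers step of your Strategy breaks down: the region $\{\alpha \mid i(\alpha,\gamma')\leq C \ \forall \gamma'\in\mathcal{P}\}$ modulo the twists along $\mathcal{P}$ is not relatively compact, because an upper bound on intersection numbers does not prevent $i(\alpha,\gamma')\to 0$ (pinching at fixed scale in Fenchel--Nielsen terms; $m_j=0$ with $t_j$ arbitrary in the Dehn--Thurston picture of Theorem \ref{theo:dehn_thurston_coordinates}).

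The irony is that your truncating set is essentially the right one (it is the paper's relative-systole truncation, since your conditions $\epsilon'\leq i(\alpha,\gamma_i)$ and $\inf_\delta i(\alpha,\delta)\geq\epsilon'$ together say $s_\gamma(\alpha)\geq\epsilon'$), but the compactness must be extracted from that lower bound rather than from any compactness of $\overline{\widetilde{B}_\mathbf{b}(\gamma)}$. This is what the paper's Proposition \ref{prop:K_compact} does: every completion curve of a Bers pants decomposition (Corollary \ref{cor:bers_Y}) is disjoint from $\gamma$, so the systole condition gives two-sided bounds $\epsilon'\leq i(\alpha,\gamma_j)\leq C$ on \emph{all} pants curves; passing to shear coordinates (to treat the Yamabe and lamination parts simultaneously) and then Dehn--Thurston coordinates, the resulting region is the orbit, under the twists along $\mathcal{P}$, of the compact set $\{\epsilon'\leq m_j\leq C,\ 0\leq t_j\leq m_j\}$ --- and here the lower bound $m_j\geq\epsilon'$ is exactly what lets one twist into this fundamental box. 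Apart from this missing argument, your scheme follows the paper's: null boundaries via homogeneity and Lemma \ref{lem:thu_meas_zero} (you should also verify continuity of the relative systole, as in Proposition \ref{prop:syst_map}), monotone convergence plus finiteness for item (2) --- your finiteness argument via Portmanteau and Proposition \ref{prop:total_hor_meas} is a reasonable substitute for the paper's Lemma \ref{lem:finite_measure_B}, after replacing $\widetilde{B}_\mathbf{b}(\gamma)$ by an open thickening, since it is not open --- and a Mirzakhani-integration-formula estimate for item (3), which is also how the paper argues and which you correctly single out as the substantive estimate.
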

$ $

Let us prove Proposition \ref{prop:topo_dist_red} using Proposition \ref{prop:no_escape_of_mass}. \\

\begin{proof}[Proof of Proposition \ref{prop:topo_dist_red}]
	Following the discussion above, it remains to verify (\ref{eq:conv_wish}). Fix $\mathbf{b}:=(b_1,\dots,b_k) \in (\mathbf{R}_{>0})^k$ and let $\epsilon > 0$ be arbitrary. Consider the compact subset $\widetilde{K}_\mathbf{b}^\epsilon(\gamma)\subseteq \widetilde{B}_\mathbf{b}(\gamma)$ given by Proposition \ref{prop:no_escape_of_mass}. As $\widetilde{K}_\mathbf{b}^\epsilon(\gamma)\subseteq \overline{\mathcal{Y}}_{g,n}(\gamma)/\text{Stab}(\gamma)$ is compact and satisfies $\widetilde{\mu}_\text{Thu}^\gamma(\partial \widetilde{K}_\mathbf{b}^\epsilon(\gamma)) = 0$, Corollary \ref{cor:asymp_wp_meas} together with Portmanteau's theorem imply
	\[
	\lim_{t \to 0} \ t^{6g-6+2n} \cdot \widetilde{\mu}_\text{wp}^{\gamma,t} \left( \widetilde{K}_\mathbf{b}^\epsilon(\gamma)\right)= \widetilde{\mu}_{\text{Thu}}^\gamma \left(\widetilde{K}_\mathbf{b}^\epsilon(\gamma)\right).
	\] 
	Let $t_0 > 0$ be small enough so that 
	\[
	\bigg\vert t^{6g-6+2n} \cdot \widetilde{\mu}_\text{wp}^{\gamma,t} \left( \widetilde{K}_\mathbf{b}^\epsilon(\gamma)\right) - \widetilde{\mu}_{\text{Thu}}^\gamma \left(\widetilde{K}_\mathbf{b}^\epsilon(\gamma)\right) \bigg\vert < \epsilon
	\]
	and 
	\[
	t^{6g-6+2n} \cdot \widetilde{\mu}_\text{wp}^{\gamma,t}\left(\widetilde{B}_\mathbf{b}(\gamma) \backslash \widetilde{K}_\mathbf{b}^\epsilon(\gamma)\right) < \epsilon
	\]
	for every $0 < t < t_0$. As $\widetilde{\mu}_\text{Thu}^\gamma(\widetilde{B}_\mathbf{b}(\gamma) \backslash \widetilde{K}_\mathbf{b}^\epsilon(\gamma)) < \epsilon$, the triangle inequality implies
	\[
	\bigg\vert t^{6g-6+2n} \cdot \widetilde{\mu}_\text{wp}^{\gamma,t} \left( \widetilde{B}_\mathbf{b}(\gamma) \right) - \widetilde{\mu}_{\text{Thu}}^\gamma \left( \widetilde{B}_\mathbf{b}(\gamma) \right)\bigg\vert < 3 \epsilon
	\]
	for every $0 < t < t_0$. As $\epsilon > 0$ is arbitrary, this proves (\ref{eq:conv_wish}) and thus concludes the proof of Proposition \ref{prop:topo_dist_red}.
\end{proof}
$ $

The rest of this section is devoted to proving Proposition \ref{prop:no_escape_of_mass}. To define the compact subsets $\widetilde{K}_\mathbf{b}^\epsilon(\gamma)\subseteq \widetilde{B}_\mathbf{b}(\gamma)$ we approximate the open condition $\lambda \in \mathcal{ML}_{g,n}(\gamma)$ by a sequence of closed conditions. \\

\textit{Filling together with a simple closed multi-curve.} Consider the subset of $\mathcal{ML}_{g,n}$,
\[
\mathcal{ML}_{g,n}^\gamma := \{\lambda \in \mathcal{ML}_{g,n} \ | \ i(\lambda,\gamma_i) = 0, \ \forall i=1,\dots,k\}.
\]
This subset is homogeneous and closed. In particular, it is projectively compact. Let $\mathcal{S}_{g,n}^\gamma\subseteq \mathcal{ML}_{g,n}$ be the subset of all simple closed curves on $S_{g,n}$ that belong to $\mathcal{ML}_{g,n}^\gamma$. This subset if discrete and closed. Notice that every component of $\gamma$ belongs to $\mathcal{S}_{g,n}^\gamma$. Consider the map $s_\gamma \colon \overline{\mathcal{Y}_{g,n}} \to \mathbf{R}_{\geq 0}$ which to every $\alpha \in \overline{\mathcal{Y}_{g,n}}$ assigns the value
\[
s_\gamma(\alpha) := \inf_{\beta \in \mathcal{S}_{g,n}^\gamma} i(\alpha,\beta).
\]
We will refer $s_\gamma(\alpha)$ as \textit{the systole of $\alpha$ relative to $\gamma$}. As complete, finite area hyperbolic surfaces always have a simple closed curve of shortest length, $s_\gamma(\alpha) > 0$ for every $\alpha \in \mathcal{Y}_{g,n}$ and the infimum defining this quantity is realized. The following proposition characterizes the subset $\mathcal{ML}_{g,n}(\gamma) \subseteq \mathcal{ML}_{g,n}$ in terms of this function.\\

\begin{proposition}
	\label{prop:fil_char}
	Given $\lambda \in \mathcal{ML}_{g,n}$,
	\[
	\lambda \in \mathcal{ML}_{g,n}(\gamma)  \ \Leftrightarrow \ s_\gamma(\lambda) > 0.
	\]
	Moreover, if $\lambda \in \mathcal{ML}_{g,n}(\gamma)$ then the infimum defining $s_\gamma(\lambda)$ is attained.
\end{proposition}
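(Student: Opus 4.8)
The plan is to reduce the statement to Proposition~\ref{prop:ml_fil}, which characterizes filling pairs through the intersection pairing, and then to bridge the gap between that characterization and the quantity $s_\gamma(\lambda)$ by a compactness argument on the projectively compact set $\mathcal{ML}_{g,n}^\gamma$.

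The first step is a reformulation of the hypothesis $\lambda \in \mathcal{ML}_{g,n}(\gamma)$. Since the components of $\gamma$ are pairwise disjoint simple closed curves, $\gamma_1 + \cdots + \gamma_k$ defines an element of $\mathcal{ML}_{g,n}$ whose support is the union of the geodesic representatives of the $\gamma_i$, and $i(\gamma_1 + \cdots + \gamma_k, \eta) = \sum_{i=1}^{k} i(\gamma_i, \eta)$ for every $\eta \in \mathcal{ML}_{g,n}$. By definition, $\lambda \in \mathcal{ML}_{g,n}(\gamma)$ means precisely that $\lambda$ and $\gamma_1 + \cdots + \gamma_k$ fill $S_{g,n}$, so by Proposition~\ref{prop:ml_fil} and the non-negativity of the intersection pairing this is equivalent to the condition $(\star)$: there is no nonzero $\eta \in \mathcal{ML}_{g,n}^\gamma$ with $i(\lambda,\eta) = 0$. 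It therefore suffices to prove that $(\star)$ is equivalent to $s_\gamma(\lambda) > 0$ and that the infimum defining $s_\gamma(\lambda)$ is attained in every case.

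For the implication $(\star) \Rightarrow s_\gamma(\lambda) > 0$ I would argue by compactness. Fix $X_0 \in \mathcal{T}_{g,n}$ and set $\Sigma^\gamma := \{\eta \in \mathcal{ML}_{g,n}^\gamma \ | \ \ell_\eta(X_0) = 1\}$; since $\mathcal{ML}_{g,n}^\gamma$ is homogeneous and closed it is projectively compact, so $\Sigma^\gamma$ is compact, and $\delta := \min_{\eta \in \Sigma^\gamma} i(\lambda,\eta)$ exists and is attained. Condition $(\star)$ says exactly that $\delta > 0$. Writing the systole of $X_0$ as $\mathrm{sys}(X_0) := \inf \ell_\beta(X_0)$ over simple closed curves $\beta$, which is positive, homogeneity gives $i(\lambda,\beta) = \ell_\beta(X_0)\, i\!\left(\lambda, \beta/\ell_\beta(X_0)\right) \geq \mathrm{sys}(X_0)\cdot\delta > 0$ for every $\beta \in \mathcal{S}_{g,n}^\gamma$, so $s_\gamma(\lambda) \geq \mathrm{sys}(X_0)\cdot\delta > 0$. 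In this case the infimum is attained as well: a minimizing sequence $\beta_m \in \mathcal{S}_{g,n}^\gamma$ satisfies $\ell_{\beta_m}(X_0) \leq i(\lambda,\beta_m)/\delta$, hence has bounded length on $X_0$, and only finitely many simple closed curves have bounded length on $X_0$.

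It remains to prove the contrapositive of the other implication: if there is a nonzero $\eta \in \mathcal{ML}_{g,n}^\gamma$ with $i(\lambda,\eta) = 0$, then there is $\beta \in \mathcal{S}_{g,n}^\gamma$ with $i(\lambda,\beta) = 0$, which yields $s_\gamma(\lambda) = 0$ with the infimum attained at $\beta$. This is the heart of the argument. The existence of $\eta$ forces, via Proposition~\ref{prop:ml_fil}, that $\lambda$ and $\gamma$ do not fill $S_{g,n}$, so by definition some complementary region of $\mathrm{supp}(\lambda) \cup \gamma_1 \cup \cdots \cup \gamma_k$ is neither a disk nor a once-punctured disk, and any such region contains a simple closed curve essential in $S_{g,n}$ and disjoint from $\mathrm{supp}(\lambda) \cup \gamma_1 \cup \cdots \cup \gamma_k$, i.e.\ an element $\beta \in \mathcal{S}_{g,n}^\gamma$ with $i(\lambda,\beta) = 0$. (Alternatively one may extract $\beta$ from a minimal sublamination $\nu$ of $\mathrm{supp}(\eta)$: $\nu$ crosses neither $\mathrm{supp}(\lambda)$ nor any $\gamma_i$; if $\nu$ is a simple closed curve one takes $\beta := \nu$, and otherwise $\nu$ fills a subsurface that is proper, since $\nu$ misses the nonempty $\gamma_1$ and so does not fill $S_{g,n}$, whose boundary, being confined to a neighborhood of $\nu$, is disjoint from $\mathrm{supp}(\lambda)$ and from $\gamma$.) I expect this extraction step — equivalently, the passage from an intersection-number condition quantified over all of $\mathcal{ML}_{g,n}^\gamma$ to one quantified only over simple closed curves — to be the main obstacle; once it is in place, combining it with the previous paragraph gives $\lambda \in \mathcal{ML}_{g,n}(\gamma) \iff (\star) \iff s_\gamma(\lambda) > 0$, with the infimum defining $s_\gamma(\lambda)$ attained in both cases.
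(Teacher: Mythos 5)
Your first direction and the attainment statement are fine and essentially reproduce the paper's argument: positivity of $i(\lambda,\cdot)$ on $\mathcal{ML}_{g,n}^\gamma$ plus projective compactness gives properness, and discreteness of $\mathcal{S}_{g,n}^\gamma$ then gives both $s_\gamma(\lambda)>0$ and a realizing curve; your explicit cross-section $\Sigma^\gamma$ and the bound $\ell_{\beta_m}(X_0)\leq i(\lambda,\beta_m)/\delta$ are just a spelled-out version of the same properness argument.

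The backward direction, however, has a genuine gap, and it sits exactly where you predicted. Your main route asserts that non-filling yields ``by definition'' a complementary region of $\mathrm{supp}(\lambda)\cup\gamma_1\cup\cdots\cup\gamma_k$ that is neither a disk nor a once-punctured disk. That is not what the definition's negation says: filling requires the complementary polygons to have \emph{no ideal vertices}, so the pair can fail to fill while every complementary region is still a disk or once-punctured disk with spikes. This is not a vacuous worry: the typical failure produced by Proposition~\ref{prop:ml_fil} is a minimal component $\eta'$ of $\eta$ lying inside $\mathrm{supp}(\lambda)$ and disjoint from $\gamma$, and in that situation the obstruction is carried by spikes and crown regions of $\eta'$; showing that this forces a region with essential topology, or equivalently producing an essential non-peripheral $\beta$ with $i(\lambda,\beta)=i(\gamma,\beta)=0$, requires precisely the crown/bigon analysis that your parenthetical alternative also leaves unproved. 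There, the assertion that the boundary of the subsurface filled by $\nu$ is ``confined to a neighborhood of $\nu$'' and hence has zero intersection number with $\lambda$ and $\gamma$ is the whole point: leaves of $\lambda$ and the curves $\gamma_i$ may enter a neighborhood of $\nu$ without crossing $\nu$, so one must argue (e.g.\ with the geodesic boundary of the supporting subsurface and the impossibility of bigons, together with the fact that leaves of a measured lamination cannot be asymptotic to a disjoint minimal lamination) that such excursions cannot cross the boundary geodesic essentially. You flag this as the main obstacle and do not fill it, so as written the proof of $\lambda\notin\mathcal{ML}_{g,n}(\gamma)\Rightarrow s_\gamma(\lambda)=0$ is incomplete. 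Note that the paper avoids this entirely by exploiting that $s_\gamma$ is an infimum: it takes a minimal component $\eta'$ of $\eta$, and for each $\epsilon>0$ closes up a long segment of a half-leaf of $\eta'$ by a short arc disjoint from $\gamma$ whose transverse $\lambda$-measure is at most $\epsilon$, producing $\beta\in\mathcal{S}_{g,n}^\gamma$ with $i(\lambda,\beta)\leq\epsilon$; no exact curve with $i(\lambda,\beta)=0$ is needed (the proposition only claims attainment when $\lambda\in\mathcal{ML}_{g,n}(\gamma)$). Your stronger conclusion, attainment also in the non-filling case, is plausibly true, but proving it amounts to supplying the very subsurface-boundary argument that is missing; if you want the simplest complete proof, replace your extraction step by the paper's approximation.
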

$ $

\begin{proof}
	Let us first assume that $\lambda \notin \mathcal{ML}_{g,n}(\gamma)$. By Proposition \ref{prop:ml_fil}, one can find $\eta \in \mathcal{ML}_{g,n}$ such that $i(\gamma,\eta) = i(\lambda,\eta) = 0$. If one of the components of $\gamma$ is a minimal component of $\eta $ then $s_\gamma(\lambda) = 0$. Assume then that $\eta$ has a minimal component $\eta'$ which is not one of the components of $\gamma$. Given $\epsilon > 0$, as $\eta'$ is minimal and not one of the components of $\gamma$, one can follow any half-leaf of $\eta'$ for long enough so that it comes back near to its starting point in such a way that it can be closed up by adding an arc disjoint from the components of $\gamma$ and whose tranverse measure with respect to $\lambda$ is $\leq \epsilon$. This produces a simple closed curve $\beta \in \mathcal{S}_{g,n}^\gamma$ such that $i(\lambda,\beta) \leq \epsilon$. As $\epsilon > 0$ is arbitrary, this shows that $s_\gamma(\lambda) =0$. \\
	
	We now assume that $\lambda \in \mathcal{ML}_{g,n}(\gamma)$. Consider the restriction 
	\[
	i(\lambda,\cdot)|_{\mathcal{ML}_{g,n}^\gamma} \colon \mathcal{ML}_{g,n}^\gamma \to \mathbf{R}_{\geq 0}.
	\]
	By Proposition \ref{prop:ml_fil}, this function takes only positive values. From this and the projective compactness of $\mathcal{ML}_{g,n}^\gamma$ it follows that this function is proper. As $\mathcal{S}_{g,n}^\gamma \subseteq \mathcal{ML}_{g,n}^\gamma$ is a discrete closed subset, we deduce that $s_\gamma(\lambda) > 0$ and moreover that the infimum defining this quantity is attained. This finishes the proof.
\end{proof}
$ $

One can check that the systole relative to $\gamma$ is continuous as a function on $\overline{\mathcal{Y}_{g,n}}$. We record this and other properties in the following proposition.\\

\begin{proposition}
	\label{prop:syst_map}
	The systole relative to $\gamma$,
	\[
	s_\gamma \colon \overline{\mathcal{Y}}_{g,n} \to \mathbf{R}_{\geq 0},
	\]
	is homogeneous, $\text{Stab}(\gamma)$-equivariant, and continuous. \\
\end{proposition}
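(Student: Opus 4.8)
The plan is to establish the three asserted properties of $s_\gamma \colon \overline{\mathcal{Y}}_{g,n} \to \mathbf{R}_{\geq 0}$ in turn, the first two being formal and the third requiring a compactness argument. First, homogeneity: the intersection pairing $i \colon \overline{\mathcal{Y}}_{g,n} \times \mathcal{ML}_{g,n} \to \mathbf{R}_{\geq 0}$ is homogeneous in its first variable by construction (it scales lengths by $t$ on the $\mathcal{Y}_{g,n}$ part and is the usual intersection number on the $\mathcal{ML}_{g,n}$ part), so for any $\alpha \in \overline{\mathcal{Y}}_{g,n}$, any $s > 0$, and any $\beta \in \mathcal{S}_{g,n}^\gamma$ we have $i(s \cdot \alpha, \beta) = s \cdot i(\alpha, \beta)$; taking the infimum over $\beta \in \mathcal{S}_{g,n}^\gamma$ gives $s_\gamma(s \cdot \alpha) = s \cdot s_\gamma(\alpha)$. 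Second, $\text{Stab}(\gamma)$-equivariance: for $\phi \in \text{Stab}(\gamma)$ the set $\mathcal{S}_{g,n}^\gamma$ is $\phi$-invariant (since $\phi$ fixes each $\gamma_i$ up to isotopy, it preserves the condition $i(\beta, \gamma_i) = 0$ for all $i$ and maps simple closed curves to simple closed curves), and $i(\phi \cdot \alpha, \phi \cdot \beta) = i(\alpha, \beta)$ by $\text{Mod}_{g,n}$-invariance of the pairing; reindexing the infimum by $\beta \mapsto \phi \cdot \beta$ yields $s_\gamma(\phi \cdot \alpha) = s_\gamma(\alpha)$, so $s_\gamma$ descends to a function on $\overline{\mathcal{Y}}_{g,n}/\text{Stab}(\gamma)$.

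The substantive point is continuity. An infimum of continuous functions is only upper semi-continuous in general, so I would argue as follows. Upper semi-continuity of $s_\gamma$ is immediate: each $\alpha \mapsto i(\alpha, \beta)$ is continuous on $\overline{\mathcal{Y}}_{g,n}$ (this is how the topology on $\overline{\mathcal{Y}}_{g,n}$ is defined), hence $s_\gamma = \inf_{\beta \in \mathcal{S}_{g,n}^\gamma} i(\,\cdot\,, \beta)$ is u.s.c. For lower semi-continuity, fix $\alpha_0 \in \overline{\mathcal{Y}}_{g,n}$ and a sequence $\alpha_m \to \alpha_0$; I must show $\liminf_m s_\gamma(\alpha_m) \geq s_\gamma(\alpha_0)$. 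The key is that the infimum defining $s_\gamma$ can be restricted to a \emph{finite} set of $\beta$'s on a neighborhood of $\alpha_0$. Concretely: using projective compactness of $P\overline{\mathcal{Y}}_{g,n}$ and homogeneity, it suffices to treat $\alpha_0$ lying in a fixed compact slice; then because $\mathcal{S}_{g,n}^\gamma \subseteq \mathcal{ML}_{g,n}^\gamma$ is discrete and closed while $\mathcal{ML}_{g,n}^\gamma$ is projectively compact (as noted in the excerpt just before Proposition \ref{prop:fil_char}), the function $i(\,\cdot\,,\cdot\,)$ restricted to (compact slice) $\times \, \mathcal{S}_{g,n}^\gamma$ is proper in the $\mathcal{S}_{g,n}^\gamma$ variable: for any bound $R$, only finitely many $\beta \in \mathcal{S}_{g,n}^\gamma$ satisfy $i(\alpha, \beta) \leq R$ for some $\alpha$ in the slice. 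Applying this with $R := s_\gamma(\alpha_0) + 1$ and a small compact neighborhood $N$ of $\alpha_0$, we get a finite subset $F \subseteq \mathcal{S}_{g,n}^\gamma$ with $s_\gamma(\alpha) = \min_{\beta \in F} i(\alpha, \beta)$ for all $\alpha \in N$; a minimum of finitely many continuous functions is continuous, giving $s_\gamma(\alpha_m) \to s_\gamma(\alpha_0)$.

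The main obstacle is making the properness/finiteness step rigorous across the two strata of $\overline{\mathcal{Y}}_{g,n}$ simultaneously — in particular handling points $\alpha_0 \in \mathcal{Y}_{g,n}$ (where Proposition \ref{prop:fil_char}, or rather its proof via complete finite-area hyperbolic surfaces having a shortest simple closed geodesic, guarantees the infimum is attained and positive) and points $\alpha_0 = \lambda_0 \in \mathcal{ML}_{g,n}(\gamma)$ (where Proposition \ref{prop:fil_char} again gives attainment and positivity) in a uniform way, and checking that the finite set $F$ works on a whole neighborhood rather than just at $\alpha_0$. For this I would lean on the projective compactness of $P\overline{\mathcal{Y}}_{g,n}$ together with the lower bound: on any compact subset $\mathcal{K}$ of $\overline{\mathcal{Y}}_{g,n}(\gamma)$, the quantity $\sup_{\alpha \in \mathcal{K}} s_\gamma(\alpha)$ is finite (by u.s.c.\ and compactness), which bounds the relevant $\beta$'s, and the discreteness of $\mathcal{S}_{g,n}^\gamma$ inside the projectively compact $\mathcal{ML}_{g,n}^\gamma$ then forces finiteness of $F$. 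Once continuity on $\overline{\mathcal{Y}}_{g,n}(\gamma)$ is in hand, continuity on all of $\overline{\mathcal{Y}}_{g,n}$ follows since the only points excluded, namely $\lambda \in \mathcal{ML}_{g,n} \setminus \mathcal{ML}_{g,n}(\gamma)$, are exactly the points where $s_\gamma$ vanishes by Proposition \ref{prop:fil_char}, and u.s.c.\ plus non-negativity already pins down the value and the limiting behavior there.
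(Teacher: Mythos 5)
Your proposal is correct and takes essentially the same route as the paper: homogeneity and $\text{Stab}(\gamma)$-equivariance are checked directly from the definition, continuity at points where $s_\gamma$ vanishes follows from the cheap upper bound given by a single nearly-optimal competitor in $\mathcal{S}_{g,n}^\gamma$ together with non-negativity, and continuity at points where $s_\gamma>0$ is obtained, exactly as in the paper, by combining projective compactness of $\mathcal{ML}_{g,n}^\gamma$ (hence properness of $i(\alpha,\cdot)$ restricted to it) with discreteness and closedness of $\mathcal{S}_{g,n}^\gamma$ to reduce the infimum to finitely many curves on a compact neighborhood. The only cosmetic difference is that you organize the positive case as lower semicontinuity via a finite localization, whereas the paper produces a two-sided multiplicative bound on a neighborhood of $\alpha$; the substance is identical.
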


\begin{proof}
	The homogenity and $\text{Stab}(\gamma)$-equivariance of $s_\gamma$
	can be checked directly from the definition. We now show that $s_\gamma$ is continuous. Consider first $\alpha \in \overline{\mathcal{Y}}_{g,n}$ such that
	$s_\gamma(\alpha) = 0$. Let $\epsilon > 0$ be arbitrary. As $s_\gamma(\alpha) = 0$, we can find $\beta \in \mathcal{S}_{g,n}^\gamma$ such that $i(\alpha,\beta) < \epsilon$. Consider the open neighborhood $U \subseteq \overline{\mathcal{Y}_{g,n}}$ of $\alpha$ given by
	\[
	U := \{\sigma \in \overline{\mathcal{Y}_{g,n}} \ | \ i(\sigma,\beta) < \epsilon \}. 
	\]
	Notice that $s_\gamma(\sigma) < \epsilon$ for every $\sigma \in U$. As $\epsilon> 0$ is arbitrary, this shows that $s_\gamma$ is continuous at every $\alpha \in \overline{\mathcal{Y}}_{g,n}$ such that
	$s_\gamma(\alpha) = 0$.\\
	
	Now consider $\alpha \in \overline{\mathcal{Y}}_{g,n}$ such that
	$s_\gamma(\alpha) > 0$. Let $1 < \epsilon < 2$ be arbitrary. Let $U' \subseteq \overline{\mathcal{Y}_{g,n}}$ be a compact neighborhood of $\alpha$. As $\mathcal{ML}_{g,n}^\gamma$ is projectively compact, one can find a constant $C > 0$ such that
	\[
	\frac{1}{C} \leq \frac{i(\beta,\lambda)}{i(\alpha,\lambda)} \leq C
	\]
	for every $\lambda \in \mathcal{ML}_{g,n}^\gamma$ and every $\beta \in U'$. In particular, if $\lambda \in \mathcal{ML}_{g,n}^\gamma$ is such that
	$i(\alpha,\lambda) > 2Cs_\gamma(\alpha)$, then $i(\beta,\lambda) > 2s_\gamma(\alpha)$ for every $\beta \in U'$. Consider the subset $K \subseteq \mathcal{ML}_{g,n}^\gamma$ given by
	\[
	K := \{\lambda \in \mathcal{ML}_{g,n}^\gamma \ | \ 	i(\alpha,\lambda) \leq 2Cs_\gamma(\alpha) \}.
	\]
	As the restriction 
	\[
	i(\alpha,\cdot)|_{\mathcal{ML}_{g,n}^\gamma} \colon \mathcal{ML}_{g,n}^\gamma \to \mathbf{R}_{>0}
	\]
	is proper (see the proof of Proposition \ref{prop:fil_char}), this set is compact. As $\mathcal{S}_{g,n}^\gamma \subseteq \mathcal{ML}_{g,n}^\gamma$ is a discrete  closed subset, $\mathcal{S}_{g,n}^\gamma \cap K$ is finite. Consider the neighborhood $U \subseteq \overline{\mathcal{Y}_{g,n}}$ of $\alpha$ given by
	\[
	U := \left\lbrace \sigma \in U' \ | \ \textstyle\frac{1}{\epsilon} \cdot i(\alpha,\beta) < i(\sigma,\beta) < \epsilon \cdot i(\alpha,\beta), \ \forall \beta \in \mathcal{S}_{g,n}^\gamma \cap K\right \rbrace.
	\]
	Notice that
	\[
	\textstyle\frac{1}{\epsilon} \cdot s_\gamma(\alpha)\leq s_\gamma(\sigma)  \leq \epsilon \cdot s_\gamma(\alpha)
	\]
	for every $\sigma \in U$. As $1 < \epsilon < 2$ is arbitrary, this shows that $s_\gamma$ is continuous at every $\alpha \in \overline{\mathcal{Y}}_{g,n}$ such that
	$s_\gamma(\alpha) > 0$. This finishes the proof.
\end{proof}
$ $

It follows from Propositions \ref{prop:fil_char} and \ref{prop:syst_map} that the restriction
\[
s_\gamma|_{\overline{\mathcal{Y}_{g,n}}(\gamma)} \colon \overline{\mathcal{Y}_{g,n}}(\gamma) \to \mathbf{R}_{>0}
\]
induces a homogeneous, positive, continuous map on the quotient $\overline{\mathcal{Y}_{g,n}}(\gamma)/\text{Stab}(\gamma)$.\\

\textit{No escape of mass.} We are now ready to introduce a family of compact subsets satisfying the properties described in Proposition \ref{prop:no_escape_of_mass}. For every $\mathbf{b} := (b_1,\dots,b_k) \in (\mathbf{R}_{>0})^k$ and every $\delta > 0$  consider the subset $\widetilde{\mathcal{K}}_\mathbf{b}^\delta(\gamma) \subseteq \widetilde{B}_\mathbf{b}(\gamma)$ given by
\[
\widetilde{\mathcal{K}}_\mathbf{b}^\delta(\gamma) := 
\left\lbrace
\begin{array}{c | l}
\alpha \in \overline{\mathcal{Y}_{g,n}^1}(\gamma) /\text{Stab}(\gamma)
& \  i(\alpha,\gamma_i) \leq b_i, \ \forall i=1,\dots,k,\\
& \ s_\gamma(\alpha) \geq \delta.\\
\end{array} \right\rbrace.
\]
$ $

\begin{proposition}
	\label{prop:K_props}
	Let $\mathbf{b}:=(b_1,\dots,b_k) \in (\mathbf{R}_{>0})^k$. The subsets $\widetilde{\mathcal{K}}_\mathbf{b}^\delta(\gamma) \subseteq \widetilde{B}_\mathbf{b}(\gamma)$ are compact and satisfy the following conditions:
	\begin{enumerate}
		\item $\widetilde{\mu}_\text{Thu}^\gamma\left(\partial \widetilde{\mathcal{K}}_\mathbf{b}^\delta(\gamma)\right) = 0$,
		\item $ \lim_{\delta \to 0 } \  \widetilde{\mu}_\text{Thu}^\gamma\left(\widetilde{B}_\mathbf{b}(\gamma) \backslash \widetilde{\mathcal{K}}_\mathbf{b}^\delta(\gamma)\right) = 0$,
		\item There exists a constant $C> 0$ such that for every $0 < \delta < 1$,
		\[
		\limsup_{t \to 0} \ t^{6g-6+2n} \cdot \widetilde{\mu}_\text{wp}^{\gamma,t}\left(\widetilde{B}_\mathbf{b}(\gamma) \backslash \widetilde{\mathcal{K}}_\mathbf{b}^\delta(\gamma)\right) \leq C \cdot \delta.
		\]
	\end{enumerate}
\end{proposition}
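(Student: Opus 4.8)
The plan is to establish the four assertions — compactness of $\widetilde{\mathcal{K}}_\mathbf{b}^\delta(\gamma)$ together with the measure estimates (1), (2), (3) — in the order: compactness, then (1), then (3), and finally (2) by a bootstrap from the first three. Throughout, the basic objects are the homogeneous continuous functions $\alpha \mapsto i(\alpha,\gamma_i)$ and the relative systole $s_\gamma$ from Proposition \ref{prop:syst_map}; both descend to $\overline{\mathcal{Y}_{g,n}}(\gamma)/\text{Stab}(\gamma)$ and are continuous there, so the set $\widetilde{\mathcal{K}}_\mathbf{b}^\delta(\gamma)$, cut out by the closed conditions $i(\alpha,\gamma_i)\le b_i$ and $s_\gamma(\alpha)\ge\delta$, is automatically closed in the quotient. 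The heart of the matter is therefore to show it is relatively compact and to obtain the quantitative estimate (3).

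For compactness I would argue as follows. The conditions $i(\alpha,\gamma_i)\le b_i$ and $s_\gamma(\alpha)\ge\delta$ bound, respectively, the scale of $\alpha$ from above and from below: normalising a representative so that $s_\gamma=1$, the values $i(\alpha,\gamma_i)$ are bounded by $b_i/\delta$, and conversely on a set where these ratios are bounded, $i(\cdot,\gamma_i)$ is bounded below by a positive constant — once compactness of that set is in hand — by continuity and positivity of $i(\cdot,\gamma_i)$ on $\overline{\mathcal{Y}_{g,n}}(\gamma)$, which uses that a lamination filling together with $\gamma$ crosses every component of $\gamma$ (Proposition \ref{prop:ml_fil}). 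So it suffices to show that $\{\alpha \in \overline{\mathcal{Y}_{g,n}^1}(\gamma): s_\gamma(\alpha)=1,\ i(\alpha,\gamma_i)\le b_i/\delta\ \forall i\}$ is $\text{Stab}(\gamma)$-cocompact. Given an escaping sequence, project to $P\overline{\mathcal{Y}_{g,n}}$, which is compact, and extract a projective limit $[\sigma_\infty]$. If $[\sigma_\infty]$ lies in $P\overline{\mathcal{Y}_{g,n}}(\gamma)$ — equivalently $s_\gamma(\sigma_\infty)>0$ by Proposition \ref{prop:fil_char} — then the scale is pinned ($s_\gamma$ is continuous and bounded below near $\sigma_\infty$, and $i(\sigma_\infty,\gamma_i)>0$) and the sequence converges in $\overline{\mathcal{Y}_{g,n}^1}(\gamma)$, with no escape. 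The remaining case is $[\sigma_\infty]\in P\mathcal{ML}_{g,n}^\gamma$: here the ratio bound forces $i(\sigma_\infty,\gamma_i)=0$ for all $i$, the sequence degenerates toward the laminations disjoint from $\gamma$, and one must use $\text{Stab}(\gamma)$ to pull it back. This is where Proposition \ref{prop:pd_Y} (proper discontinuity of $\text{Stab}(\gamma)$ on $\overline{\mathcal{Y}_{g,n}}(\gamma)$) is essential, together with hyperbolic geometry in the Teichmüller directions: on the relatively-$1$-thick locus with $\gamma$ of bounded length, the collar lemma excludes short curves crossing $\gamma$, and Mumford's compactness criterion applied to the complementary subsurface $S_{g,n}(\gamma)$ with geodesic boundary, together with Bers' theorem (Theorem \ref{theo:bers}) and boundedness of the twist parameters along $\gamma$ modulo Dehn twists, gives $\text{Stab}(\gamma)$-cocompactness. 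I expect this cocompactness in the case of a projective limit inside $P\mathcal{ML}_{g,n}^\gamma$ to be the main obstacle of the argument.

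Assertion (1) is then easy: the boundary of $\{i(\alpha,\gamma_i)\le b_i\ \forall i\}\cap\{s_\gamma(\alpha)\ge\delta\}$ is contained in the union of the level sets $\{i(\alpha,\gamma_i)=b_i\}$ and $\{s_\gamma(\alpha)=\delta\}$, which are level sets at positive values of homogeneous continuous functions on $\mathcal{ML}_{g,n}$, so Lemma \ref{lem:thu_meas_zero} gives that they are $\mu_{\text{Thu}}$-null, hence $\widetilde{\mu}_{\text{Thu}}^\gamma$-null. For (3), I would unfold over $\mathcal{T}_{g,n}/\text{Stab}(\gamma)$. Writing $L=1/t$, the quantity to control is $L^{-(6g-6+2n)}\,\widetilde{\mu}_{\text{wp}}^\gamma(\{X : \ell_{\gamma_i}(X)<b_iL\ \forall i,\ s_\gamma(X)<\delta L\})$. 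Since for a hyperbolic surface the infimum defining $s_\gamma$ is attained by a simple closed curve in $\mathcal{S}_{g,n}^\gamma$, and $\mathcal{S}_{g,n}^\gamma$ has only finitely many $\text{Stab}(\gamma)$-orbits (the components of $\gamma$, plus the finitely many orbits of essential curves in $S_{g,n}(\gamma)$), the locus $\{s_\gamma(X)<\delta L\}$ is covered by finitely many sets $\{\min_{\beta'\in[\beta]}\ell_{\beta'}(X)<\delta L\}$. For a class $[\beta]$ distinct from the $[\gamma_i]$, the multi-curve $\gamma\cup\{\beta\}$ is simple with $k+1$ components; a union bound followed by unfolding over $\text{Stab}(\gamma\cup\{\beta\})$ bounds the corresponding contribution by $\widetilde{\mu}_{\text{wp}}^{\gamma\cup\beta}(\{\ell_{\gamma_i}(Y)<b_iL,\ \ell_\beta(Y)<\delta L\})$, which by Mirzakhani's integration formula (cf.\ Proposition \ref{prop:total_hor_meas}) equals $\int_{\prod(0,b_iL)\times(0,\delta L)}V_{g,n}(\gamma\cup\beta,(\mathbf{x},y))\,d\mathbf{x}\,dy$; since $V_{g,n}(\gamma\cup\beta,\cdot)$ carries the factor $x_1\cdots x_k\,y$, the extra $y$-integration over $(0,\delta L)$ yields a factor $O(\delta^2)$, so the term is $O(\delta^2 L^{6g-6+2n})$ uniformly in $L$. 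The classes $[\gamma_i]$, i.e.\ the part where some $\ell_{\gamma_{i_0}}(X)<\delta L$, are handled identically using the factor $x_{i_0}$ in $V_{g,n}(\gamma,\cdot)$, again $O(\delta^2 L^{6g-6+2n})$. Summing the finitely many contributions gives (3), in fact with $O(\delta^2)$; the case $\delta\ge\min_i b_i$, where $\widetilde{\mathcal{K}}_\mathbf{b}^\delta(\gamma)=\emptyset$, is trivial.

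Finally, (2) follows by bootstrapping. Corollary \ref{cor:asymp_wp_meas} and Portmanteau's theorem, applied on the compact set $\widetilde{\mathcal{K}}_\mathbf{b}^\delta(\gamma)$ whose boundary is null by (1), give $\widetilde{\mu}_{\text{Thu}}^\gamma(\widetilde{\mathcal{K}}_\mathbf{b}^\delta(\gamma))=\lim_{t\to 0}t^{6g-6+2n}\widetilde{\mu}_{\text{wp}}^{\gamma,t}(\widetilde{\mathcal{K}}_\mathbf{b}^\delta(\gamma))$, and bounding the right-hand side by $t^{6g-6+2n}\widetilde{\mu}_{\text{wp}}^\gamma(\{\ell_{\gamma_i}(X)\le b_i/t\})$ and invoking Proposition \ref{prop:total_hor_meas} shows this is at most $\int_{\prod[0,b_i]}W_{g,n}(\gamma,\mathbf{x})\,d\mathbf{x}$, uniformly in $\delta$. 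As the sets $\widetilde{\mathcal{K}}_\mathbf{b}^\delta(\gamma)$ increase, as $\delta\downarrow 0$, to a set containing $\widetilde{B}_\mathbf{b}(\gamma)$ (here using $s_\gamma>0$ everywhere on $\overline{\mathcal{Y}_{g,n}^1}(\gamma)$ by Proposition \ref{prop:fil_char}), this forces $\widetilde{\mu}_{\text{Thu}}^\gamma(\widetilde{B}_\mathbf{b}(\gamma))<+\infty$; and then $\widetilde{B}_\mathbf{b}(\gamma)\setminus\widetilde{\mathcal{K}}_\mathbf{b}^\delta(\gamma)=\{i(\alpha,\gamma_i)<b_i\ \forall i,\ s_\gamma(\alpha)<\delta\}$ decreases, as $\delta\downarrow 0$, to $\{s_\gamma=0\}\cap\overline{\mathcal{Y}_{g,n}^1}(\gamma)=\emptyset$, so continuity of the now-finite measure $\widetilde{\mu}_{\text{Thu}}^\gamma$ from above yields (2).
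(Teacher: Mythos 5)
Your handling of the measure estimates is essentially sound and close to the paper's. Part (1) is exactly the paper's argument: the boundary is contained in positive level sets of the homogeneous continuous functions $i(\cdot,\gamma_i)$ and $s_\gamma$ (Proposition \ref{prop:syst_map}), so Lemma \ref{lem:thu_meas_zero} applies. For (3), your union bound over the finitely many $\text{Stab}(\gamma)$-orbits in $\mathcal{S}_{g,n}^\gamma$, the unfolding over $\text{Stab}(\gamma\cup\beta)$, and the factor $x_1\cdots x_k\,y$ in $V_{g,n}(\gamma\cup\beta,\cdot)$ via Proposition \ref{prop:total_hor_meas} is a correct and more explicit version of what the paper does by invoking Mirzakhani's integration formula (it even yields $O(\delta^2)$ rather than $O(\delta)$). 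For (2), your bootstrap for the finiteness of $\widetilde{\mu}_{\text{Thu}}^\gamma(\widetilde{B}_\mathbf{b}(\gamma))$ --- Portmanteau on the compact sets $\widetilde{\mathcal{K}}_\mathbf{b}^\delta(\gamma)$ together with the uniform bound $\limsup_{t\to0} t^{6g-6+2n}\widetilde{\mu}_{\text{wp}}^{\gamma,t}\bigl(\{i(\cdot,\gamma_i)\le b_i\ \forall i\}\bigr)=\int_{\prod_i[0,b_i]}W_{g,n}(\gamma,\mathbf{x})\,d\mathbf{x}$ --- is a legitimate alternative to the paper's Lemma \ref{lem:finite_measure_B}, and continuity from above then finishes as in the paper; note this makes (2) depend on compactness and (1), consistent with your ordering.

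The genuine gap is compactness, which is the core of the proposition and occupies most of the paper's proof (Proposition \ref{prop:K_compact}). You reduce it to $\text{Stab}(\gamma)$-cocompactness of a normalized slice, and in the only hard case --- sequences whose projective limits are classes of laminations in $\mathcal{ML}_{g,n}^\gamma$ --- you list tools (collar lemma, Mumford's criterion for $S_{g,n}(\gamma)$, Theorem \ref{theo:bers}, ``boundedness of the twist parameters along $\gamma$ modulo Dehn twists'') and explicitly defer the argument as ``the main obstacle.'' That case cannot be waved through, and the tools do not apply as stated: points of $\widetilde{\mathcal{K}}_\mathbf{b}^\delta(\gamma)$ are scalings $(t,X)$ with $t\to 0$ and $\ell_{\gamma_i}(X)\le b_i/t$ unbounded, so the cut surfaces do not stay in any part of moduli space with bounded boundary lengths and Mumford's compactness criterion for $S_{g,n}(\gamma)$ gives nothing directly; moreover the limit objects are measured laminations, so the compactness must be established in $\overline{\mathcal{Y}_{g,n}}(\gamma)$ uniformly in the scale, and ``twists bounded modulo Dehn twists'' is precisely the assertion to be proved, not an input. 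The paper's mechanism is concrete: the relative Bers theorem on the enlarged Yamabe space (Corollary \ref{cor:bers_Y}), valid at every scale and for laminations, reduces the set to finitely many $\text{Stab}(\gamma)$-orbits of regions attached to pants decompositions $\mathcal{P}\supseteq\gamma$ with all intersection numbers between $\delta$ and $C$; transporting by shear coordinates and then using Dehn--Thurston coordinates adapted to $\mathcal{P}$, each such region is the orbit of the compact box $\{0\le t_i\le m_i\}$ under the group generated by the Dehn twists along $\mathcal{P}$. Your proposal needs this (or an equivalent) argument spelled out; as written, the compactness claim, on which your proof of (2) also relies, is unproved.
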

$ $

Proposition \ref{prop:no_escape_of_mass} follows directly from proposition \ref{prop:K_props}. For the rest of this section we fix $\mathbf{b} := (b_1,\dots,b_k) \in (\mathbf{R}_{>0})^k$ and show that the subsets $\widetilde{\mathcal{K}}_\mathbf{b}^\delta(\gamma) \subseteq \widetilde{B}_\mathbf{b}(\gamma)$ with $\delta > 0$ satisfy the conditions described in Proposition \ref{prop:K_props}.\\

\textit{Bers's theorem for $\overline{\mathcal{Y}_{g,n}^1}(\gamma)$.} Complete $\gamma$ to a maximal geodesic lamination $\mu$ of $S_{g,n}$ and consider the shear coordinates $F_\mu \colon \mathcal{T}_{g,n} \to \mathcal{ML}_{g,n}(\mu)$ of $\mathcal{T}_{g,n}$ with respect to $\mu$. Properties (\ref{eq:shear_ineq}) and (\ref{eq:shear_eq}) allow one to deduce the following analogue of Bers's theorem from Theorem \ref{theo:bers}.\\

\begin{corollary}
	\label{cor:bers_Y}
	There exists a constant $C \geq \max_{i=1,\dots,k}  b_i$ such that for any $\alpha \in \overline{\mathcal{Y}_{g,n}^1}(\gamma)$ satisfying
	\[
	i(\alpha,\gamma_i) \leq b_i, \ \forall i =1,\dots,k,
	\]
	there exists a completion $\mathcal{P} := (\gamma_1,\dots,\gamma_{3g-3+n})$ of $\gamma$ to a pair of pants decomposition of $S_{g,n}$ satisfying
	\[
	i(\alpha,\gamma_i) \leq C,  \ \forall i = 1,\dots,3g-3+n.
	\]
\end{corollary}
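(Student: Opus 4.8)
The plan is to reduce to the Bers theorem already established on $\mathcal T_{g,n}$ (Theorem~\ref{theo:bers}) by attaching to each $\alpha \in \overline{\mathcal Y_{g,n}^1}(\gamma)$ an honest marked hyperbolic structure $Y_\alpha \in \mathcal T_{g,n}$ which dominates $\alpha$ and on which $\gamma$ is short. Recall that $F_\mu$ is a homeomorphism $\mathcal T_{g,n} \to \mathcal{ML}_{g,n}(\mu)$ and that $\mathcal{ML}_{g,n}(\mu)$ is a cone which, since $\gamma \subseteq \mu$, contains $\mathcal{ML}_{g,n}(\gamma)$. I define $Y_\alpha$ case by case along the decomposition $\overline{\mathcal Y_{g,n}^1}(\gamma) = \mathcal Y_{g,n}^1 \sqcup \mathcal{ML}_{g,n}(\gamma)$: if $\alpha = \lambda \in \mathcal{ML}_{g,n}(\gamma)$, put $Y_\alpha := F_\mu^{-1}(\lambda)$; if $\alpha = s \cdot X$ with $s \in (0,1]$ and $X \in \mathcal T_{g,n}$, put $Y_\alpha := F_\mu^{-1}(s \cdot F_\mu(X))$, which makes sense precisely because $\mathcal{ML}_{g,n}(\mu)$ is a cone. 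The claim is that $Y_\alpha$ satisfies (a) $\ell_{\gamma_i}(Y_\alpha) \le b_i$ for every $i \in \{1,\dots,k\}$, and (b) $i(\alpha,\delta) \le \ell_\delta(Y_\alpha)$ for every simple closed curve $\delta$ on $S_{g,n}$.

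Granting (a) and (b), the corollary is immediate. Indeed, Theorem~\ref{theo:bers} applied to $Y_\alpha$ produces, using (a), a completion $\mathcal P = (\gamma_1,\dots,\gamma_{3g-3+n})$ of $\gamma$ to a pair of pants decomposition with $\ell_{\gamma_j}(Y_\alpha) \le C$ for all $j$, where $C = C(g,n,\mathbf b) \ge \max_i b_i$ is the Bers constant attached to $\mathbf b$ --- the same constant for every $\alpha$; then (b) gives $i(\alpha,\gamma_j) \le \ell_{\gamma_j}(Y_\alpha) \le C$ for all $j$, which is what we want.

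It remains to verify (a) and (b). Property (a) follows at once from $(\ref{eq:shear_eq})$: since each $\gamma_i$ is a component of $\mu$ we have $\ell_{\gamma_i}(Y_\alpha) = i(F_\mu(Y_\alpha),\gamma_i)$, which equals $i(\lambda,\gamma_i) = i(\alpha,\gamma_i) \le b_i$ in the lamination case and $s \cdot i(F_\mu(X),\gamma_i) = s \cdot \ell_{\gamma_i}(X) = i(\alpha,\gamma_i) \le b_i$ in the Yamabe case. For (b) in the lamination case one simply has $\alpha = F_\mu(Y_\alpha)$, so $i(\alpha,\delta) = i(F_\mu(Y_\alpha),\delta) \le \ell_\delta(Y_\alpha)$ by $(\ref{eq:shear_ineq})$. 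The Yamabe case of (b) is the one genuinely new point: there $F_\mu(X) = \tfrac1s F_\mu(Y_\alpha)$, so $X$ lies on the stretch ray issuing from $Y_\alpha$ along $\mu$; by Thurston's description of stretch rays as geodesics of the asymmetric Thurston metric \cite{Thu98} one has $d_{\text{Thu}}'(Y_\alpha,X) = \log(1/s)$, and by the very definition of $d_{\text{Thu}}'$ this forces $\ell_\delta(X) \le \tfrac1s\,\ell_\delta(Y_\alpha)$ for every simple closed curve $\delta$, i.e. $i(\alpha,\delta) = s\,\ell_\delta(X) \le \ell_\delta(Y_\alpha)$. (If one prefers not to invoke stretch maps, the Yamabe case can instead be handled by applying Theorem~\ref{theo:bers} directly to $X$ with the rescaled length vector $\mathbf b/s$ and using that the Bers constant grows at most linearly in the length bound, so that $s \cdot C(g,n,\mathbf b/s)$ stays bounded as $s \to 0^+$.) Thus the only real obstacle is ensuring a single constant works across both strata --- in particular controlling the degeneration $s \to 0^+$ inside $\mathcal Y_{g,n}^1$; everything else is routine bookkeeping with $(\ref{eq:shear_ineq})$ and $(\ref{eq:shear_eq})$.
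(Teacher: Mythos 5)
Your proposal is correct and follows essentially the route the paper indicates: complete $\gamma$ to a maximal geodesic lamination $\mu$, pull the problem back to $\mathcal{T}_{g,n}$ through the shear coordinates $F_\mu$ by applying Theorem \ref{theo:bers} to the point $Y_\alpha$ with $F_\mu(Y_\alpha)=\lambda$ (resp.\ $s\cdot F_\mu(X)$), and transfer hypothesis and conclusion using (\ref{eq:shear_eq}) and (\ref{eq:shear_ineq}). The one place you go beyond the two properties the paper cites is the $\mathcal{Y}_{g,n}^1$ stratum, where (\ref{eq:shear_ineq}) alone points the wrong way, and your appeal to Thurston's stretch maps (the stretch from $Y_\alpha$ to $X$ along $\mu$ is $(1/s)$-Lipschitz, so $s\,\ell_\delta(X)\leq \ell_\delta(Y_\alpha)$ for all $\delta$) is precisely the extra input that makes the constant uniform in $s\in(0,1]$; your fallback via a Bers constant growing linearly in the bounds also works, but likewise needs more than Theorem \ref{theo:bers} as literally stated. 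Do make sure the completion $\mu$ is chosen so that stretch maps apply (complementary regions ideal triangles, e.g.\ leaves spiraling onto the components of $\gamma$ or running into the cusps), which is also what makes $F_\mu^{-1}(\lambda)$ well defined for $\lambda\in\mathcal{ML}_{g,n}(\gamma)$, a containment you use without proof.
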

$ $

\textit{Compactness.} We now prove that the subsets 
\[
\widetilde{\mathcal{K}}_\mathbf{b}^\delta(\gamma) \subseteq \overline{\mathcal{Y}_{g,n}}(\gamma)/ \allowbreak \text{Stab}(\gamma)
\]
are compact. The following result is an analogue of Mumford's compactness criterion; see for instance \cite[Theorem 12.6]{FM11}.\\

\begin{proposition}
	\label{prop:K_compact}
	For every $\delta > 0$ the set $\widetilde{\mathcal{K}}_\mathbf{b}^\delta(\gamma)$ is compact.\\
\end{proposition}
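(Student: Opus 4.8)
\textit{Proof strategy.} The plan is to show that $\widetilde{\mathcal{K}}_\mathbf{b}^\delta(\gamma)$ is sequentially compact, which is equivalent to compactness since $\overline{\mathcal{Y}_{g,n}}(\gamma)/\text{Stab}(\gamma)$ is second countable and, by the proper discontinuity in Proposition \ref{prop:pd_Y}, Hausdorff. Fix a sequence $\{[\alpha_n]\}_{n \geq 1}$ in $\widetilde{\mathcal{K}}_\mathbf{b}^\delta(\gamma)$ and pick lifts $\alpha_n \in \overline{\mathcal{Y}_{g,n}^1}(\gamma)$ with $i(\alpha_n,\gamma_i) \leq b_i$ for every $i = 1,\dots,k$ and $s_\gamma(\alpha_n) \geq \delta$. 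By Corollary \ref{cor:bers_Y}, each $\alpha_n$ admits a completion $\mathcal{P}_n$ of $\gamma$ to a pair of pants decomposition of $S_{g,n}$ with $i(\alpha_n,\beta) \leq C$ for every component $\beta$ of $\mathcal{P}_n$, where $C \geq \max_i b_i$ is the uniform constant of Corollary \ref{cor:bers_Y}. There are only finitely many $\text{Stab}(\gamma)$-orbits of completions of $\gamma$ to a pair of pants decomposition of $S_{g,n}$, a standard consequence of the classification of pair of pants decompositions of the cut surface $S_{g,n}(\gamma)$ up to its mapping class group; since the pairing $i$ and the relative systole $s_\gamma$ are $\text{Stab}(\gamma)$-equivariant (Proposition \ref{prop:syst_map}), after replacing each $\alpha_n$ by a translate $\phi_n \cdot \alpha_n$ with $\phi_n \in \text{Stab}(\gamma)$ and passing to a subsequence we may assume that $\mathcal{P}_n = \mathcal{P} := (\gamma_1,\dots,\gamma_{3g-3+n})$ is one fixed pair of pants decomposition containing $\gamma$ as its first $k$ components, and that $i(\alpha_n,\gamma_j) \leq C$ for every $j = 1,\dots,3g-3+n$.

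First I would exploit a matching lower bound: every component $\gamma_j$ of $\mathcal{P}$ is either a component of $\gamma$ or disjoint from $\gamma$, so $\gamma_j \in \mathcal{S}_{g,n}^\gamma$, whence $\delta \leq s_\gamma(\alpha_n) \leq i(\alpha_n,\gamma_j) \leq C$ for all $j$. Extend $\mathcal{P}$ to a maximal geodesic lamination $\mu$ and pass to the shear coordinates $\overline{\Phi_\mu}$ of $\overline{\mathcal{Y}_{g,n}}$; composing with Dehn--Thurston coordinates adapted to $\mathcal{P}$ (Theorem \ref{theo:dehn_thurston_coordinates}) realizes $\overline{\mathcal{Y}_{g,n}}$ as a subset of $[0,\infty) \times \Theta$ with coordinates $\big(s,(m_j,t_j)_{j=1}^{3g-3+n}\big)$, where the scale $s$ vanishes exactly along $\mathcal{ML}_{g,n}$. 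Since each $\gamma_j$ is a component of $\mu$, identity (\ref{eq:shear_eq}) identifies $m_j(\alpha_n)$ with $i(\alpha_n,\gamma_j) \in [\delta,C]$, while $s(\alpha_n) \in [0,1]$ because $\alpha_n \in \overline{\mathcal{Y}_{g,n}^1}(\gamma)$. The Dehn twists along the components $\gamma_1,\dots,\gamma_{3g-3+n}$ of $\mathcal{P}$ all lie in $\text{Stab}(\gamma)$ (each such curve is disjoint from, or a component of, $\gamma$), they preserve all the conditions imposed so far, and act on these coordinates by $t_j \mapsto t_j + m_j$ while fixing the remaining coordinates; since $m_j(\alpha_n) \geq \delta > 0$, composing with an appropriate product of powers of these twists lets us additionally arrange $0 \leq t_j(\alpha_n) \leq m_j(\alpha_n) \leq C$ for all $j$. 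Thus all $\alpha_n$ lie in the compact subset of $\overline{\mathcal{Y}_{g,n}}$ cut out by $0 \leq s \leq 1$, $\delta \leq m_j \leq C$, and $0 \leq t_j \leq C$, and after a further subsequence $\alpha_n \to \alpha_\infty$ in $\overline{\mathcal{Y}_{g,n}}$.

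It then remains to check that $\alpha_\infty$ represents a point of $\widetilde{\mathcal{K}}_\mathbf{b}^\delta(\gamma)$. Continuity of the maps $i(\cdot,\gamma_i)$ and $s_\gamma$ (Proposition \ref{prop:syst_map}) yields $i(\alpha_\infty,\gamma_i) \leq b_i$ for $i = 1,\dots,k$ and $s_\gamma(\alpha_\infty) \geq \delta > 0$. If $\alpha_\infty \in \mathcal{Y}_{g,n}$ its scale is $\leq 1$, so $\alpha_\infty \in \mathcal{Y}_{g,n}^1$; if instead $\alpha_\infty \in \mathcal{ML}_{g,n}$, then $s_\gamma(\alpha_\infty) > 0$ forces $\alpha_\infty \in \mathcal{ML}_{g,n}(\gamma)$ by Proposition \ref{prop:fil_char}. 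In either case $\alpha_\infty \in \overline{\mathcal{Y}_{g,n}^1}(\gamma)$, its class lies in $\widetilde{\mathcal{K}}_\mathbf{b}^\delta(\gamma)$, and $[\alpha_n] \to [\alpha_\infty]$ in the quotient; this establishes the required sequential compactness.

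The step I expect to demand the most care is the coordinate analysis of the second paragraph: one must verify that the Dehn twists along the $\gamma_j$ genuinely act by the affine rule $t_j \mapsto t_j + m_j$ on the shear-coordinate extension $\overline{\Phi_\mu}$ of the Dehn--Thurston coordinates, uniformly over $\overline{\mathcal{Y}_{g,n}}$ and in particular across the degeneration where the scale $s$ tends to $0$ and the Yamabe part limits onto $\mathcal{ML}_{g,n}$, and that the resulting box is compact inside $\overline{\mathcal{Y}_{g,n}}$ itself rather than merely in its projectivization; here the homeomorphism $\overline{\Phi_\mu}$ and the asymptotics of shear coordinates in Lemma \ref{lem:asymp_shear} carry the weight. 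A smaller point still to be pinned down is the finiteness of the set of $\text{Stab}(\gamma)$-orbits of completions of $\gamma$, which follows, via the cutting homomorphism, from the standard classification of pair of pants decompositions of $S_{g,n}(\gamma)$ up to its mapping class group.
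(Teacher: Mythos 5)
Your argument is, in substance, the paper's own proof: Corollary \ref{cor:bers_Y}, the finiteness of $\text{Stab}(\gamma)$-orbits of completions of $\gamma$ to pants decompositions, the lower bound $i(\alpha,\gamma_j)\geq s_\gamma(\alpha)\geq\delta$ for every pants curve $\gamma_j\in\mathcal{S}_{g,n}^\gamma$, the shear chart $\overline{\Phi_\mu}$ composed with Dehn--Thurston coordinates, and normalization of the twisting parameters by Dehn twists along $\mathcal{P}$. The paper merely packages this as writing the preimage of $\widetilde{\mathcal{K}}_\mathbf{b}^\delta(\gamma)$ as a finite union of $\text{Stab}(\gamma)$-orbits of compact subsets (so that the quotient image is a continuous image of a compact set), rather than extracting convergent subsequences; that difference is cosmetic.

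However, the step you flag but do not close is the one place where a real observation is needed, and the tool you point to (Lemma \ref{lem:asymp_shear}) is not what closes it. The image of $\overline{\Phi_\mu}$ is $\left((0,\infty)\times\mathcal{ML}_{g,n}(\mu)\right)\sqcup\left(\{0\}\times\mathcal{ML}_{g,n}\right)$, which is \emph{not} closed in $[0,\infty)\times\mathcal{ML}_{g,n}$, so a closed coordinate box need not have compact preimage in $\overline{\mathcal{Y}_{g,n}}$: a priori a sequence in your box could converge in $[0,\infty)\times\mathcal{ML}_{g,n}$ to a point $(s,\lambda)$ with $s>0$ and $\lambda\notin\mathcal{ML}_{g,n}(\mu)$, which has no preimage. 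The paper's fix is that on the box this cannot happen because the constraint $\lambda\in\mathcal{ML}_{g,n}(\mu)$ is redundant there: if $i(\lambda,\gamma_j)\geq\delta>0$ for every component $\gamma_j$ of the pants decomposition $\mathcal{P}$, then $\lambda$ fills together with $\mathcal{P}$ (any $\eta$ with $i(\mathcal{P},\eta)=0$ is supported on the pants curves, so $i(\lambda,\eta)>0$; compare Proposition \ref{prop:ml_fil}), and $\mathcal{ML}_{g,n}(\mathcal{P})\subseteq\mathcal{ML}_{g,n}(\mu)$ since $\mathcal{P}\subseteq\mu$. Hence $[0,1]\times\{\delta\leq m_j\leq C,\ 0\leq t_j\leq C\}$ is entirely contained in the image of the homeomorphism $\overline{\Phi_\mu}$, is compact there, and its preimage is compact in $\overline{\mathcal{Y}_{g,n}}$. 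Your other flagged point is easier than you fear and is handled exactly as in the paper: the Dehn twists along the components of $\mathcal{P}$ lie in $\text{Stab}(\mu)$ and $\overline{\Phi_\mu}$ is $\text{Stab}(\mu)$-equivariant (acting trivially on the scale factor), so their action in your coordinates is the standard Dehn--Thurston twist action $t_j\mapsto t_j\pm m_j$ uniformly in the scale, and the normalization $0\leq t_j\leq m_j$ is legitimate precisely because $m_j\geq\delta>0$. With these two insertions your sequential argument is complete and coincides with the paper's.
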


\begin{proof}
	Fix $\delta > 0$. Notice that the subset $\mathcal{K}_\mathbf{b}^\delta(\gamma) \subseteq \overline{\mathcal{Y}_{g,n}}(\gamma)$ given by
	\[
	\mathcal{K}_\mathbf{b}^\delta(\gamma) := 
	\left\lbrace
	\begin{array}{c | l}
	\alpha \in \overline{\mathcal{Y}_{g,n}^1}(\gamma)
	& \  i(\alpha,\gamma_i) \leq b_i, \ \forall i=1,\dots,k,\\
	& \ s_\gamma(\alpha) \geq \delta.\\
	\end{array} \right\rbrace
	\]
	is mapped onto the subset $\widetilde{\mathcal{K}}_\mathbf{b}^\delta(\gamma) \subseteq \overline{\mathcal{Y}_{g,n}}(\gamma)/\text{Stab}(\gamma)$ by the quotient map 
	\[
	\overline{\mathcal{Y}_{g,n}}(\gamma) \to \overline{\mathcal{Y}_{g,n}}(\gamma)/\text{Stab}(\gamma).
	\]
	To prove $\widetilde{\mathcal{K}}_\mathbf{b}^\delta(\gamma) \subseteq \overline{\mathcal{Y}_{g,n}}(\gamma)/\text{Stab}(\gamma)$ is compact, it is enough to show that $\mathcal{K}_\mathbf{b}^\delta(\gamma) \subseteq \overline{\mathcal{Y}_{g,n}}(\gamma)$ can written as a finite union of $\text{Stab}(\gamma)$-orbits of compact subsets of $\overline{\mathcal{Y}_{g,n}}(\gamma)$. \\
	
	Let $C > 0$ be as in Corollary \ref{cor:bers_Y}. Notice that up to the action of $\text{Stab}(\gamma)$ there are finitely many pair of pants decompositions $\mathcal{P}$ of $S_{g,n}$ containing the components of $\gamma$. It follows from Corollary \ref{cor:bers_Y} that $\mathcal{K}_\mathbf{b}^\delta(\gamma) \subseteq \overline{\mathcal{Y}_{g,n}}(\gamma)$ can be written as the union of finitely many $\text{Stab}(\gamma)$-orbits of  subsets  $\mathcal{C}_\mathbf{b}^\delta(\mathcal{P}) \subseteq \overline{\mathcal{Y}_{g,n}}(\gamma)$ of the form
	\[
	\mathcal{C}_\mathbf{b}^\delta(\mathcal{P}) := 
	\left\lbrace
	\begin{array}{c | l}
	\alpha \in \overline{\mathcal{Y}_{g,n}^1}(\gamma)
	& \ i(\alpha,\gamma_i) \leq b_i, \ \forall i=1,\dots,k,\\
	& \ i(\alpha,\gamma_i) \leq C, \ \forall i=k+1,\dots,3g-3+n,\\
	& \ s_\gamma(\alpha) \geq \delta.
	\end{array} \right\rbrace,
	\]
	where $\mathcal{P} := (\gamma_1,\dots,\gamma_{3g-3+n})$ is a pair of pants decomposition of $S_{g,n}$ containing the components of $\gamma$. We now show that each one of the $\text{Stab}(\mathcal{P})$-invariant subsets $\mathcal{C}_\mathbf{b}^\delta(\mathcal{P}) \subseteq \overline{\mathcal{Y}_{g,n}}(\gamma)$ can be written as the $\text{Stab}(\mathcal{P})$-orbit of a compact subset of $\overline{\mathcal{Y}_{g,n}}(\gamma)$. As $\text{Stab}(\mathcal{P}) \subseteq \text{Stab}(\gamma)$, this finishes the proof. \\
	
	Fix a pair of pants decomposition $\mathcal{P} := (\gamma_1,\dots,\gamma_{3g-3+n})$ of $S_{g,n}$ containing the components of $\gamma$. By Proposition \ref{prop:fil_char}, $\mathcal{C}_\mathbf{b}^\delta(\mathcal{P}) \subseteq \overline{\mathcal{Y}_{g,n}}$ can be rewritten as
	\[
	\mathcal{C}_\mathbf{b}^\delta(\mathcal{P}) = 
	\left\lbrace
	\begin{array}{c | l}
	\alpha \in \overline{\mathcal{Y}_{g,n}^1}
	& \ i(\alpha,\gamma_i) \leq b_i, \ \forall i=1,\dots,k,\\
	& \ i(\alpha,\gamma_i) \leq C, \ \forall i=k+1,\dots,3g-3+n,\\
	& \ s_\gamma(\alpha) \geq \delta.
	\end{array} \right\rbrace.
	\]
	It follows that $\mathcal{C}_\mathbf{b}^\delta(\mathcal{P})$ is a closed (see Proposition \ref{prop:syst_map}) subset of the $\text{Stab}(\mathcal{P})$-invariant subset $\mathcal{D}_\mathbf{b}^\delta(\mathcal{P}) \subseteq \overline{\mathcal{Y}_{g,n}}$ given by
	\[
	\mathcal{D}_\mathbf{b}^\delta(\mathcal{P}) := 
	\left\lbrace
	\begin{array}{c | l}
	\alpha \in \overline{\mathcal{Y}_{g,n}^1}
	& \ \delta \leq i(\alpha,\gamma_i) \leq b_i, \ \forall i=1,\dots,k,\\
	& \ \delta \leq i(\alpha,\gamma_i) \leq C, \ \forall i=k+1,\dots,3g-3+n.\\
	\end{array} \right\rbrace.
	\]
	If we show that $\mathcal{D}_\mathbf{b}^\delta(\mathcal{P}) \subseteq \overline{\mathcal{Y}_{g,n}}$ is the $\text{Stab}(\mathcal{P})$-orbit of a compact subset $\mathcal{E}_\mathbf{b}^\delta(\mathcal{P}) \subseteq \overline{\mathcal{Y}_{g,n}}$, then $\mathcal{C}_\mathbf{b}^\delta(\mathcal{P}) \subseteq \overline{\mathcal{Y}_{g,n}}$ will be the $\text{Stab}(\mathcal{P})$-orbit of the compact subset $\mathcal{C}_\mathbf{b}^\delta(\mathcal{P}) \cap \mathcal{E}_\mathbf{b}^\delta(\mathcal{P})$, thus finishing the proof. \\
	
	Complete $\mathcal{P}$ to a maximal geodesic lamination $\mu$ of $S_{g,n}$ and consider the shear coordinates of $\overline{\mathcal{Y}_{g,n}}$ with respect to $\mu$,
	\[
	\overline{\Phi_\mu} \colon \overline{\mathcal{Y}_{g,n}} \to \left((0,\infty) \times \mathcal{ML}_{g,n}(\mu)\right) \sqcup \left(\{0\} \times \mathcal{ML}_{g,n}\right)
	\]
	By (\ref{eq:shear_eq} and (\ref{eq:id_ext}),
	\[
	i(\overline{\Phi_\mu}(\alpha),\gamma_i) = i(\alpha,\gamma_i)
	\]
	for every $\alpha \in \overline{\mathcal{Y}_{g,n}}$ and every $i=1,\dots,3g-3+n$. It follows that 
	\[
	\overline{\Phi_\mu}(\mathcal{D}_\mathbf{b}^\delta(\mathcal{P})) = [0,1] \times D_\mathbf{b}^\delta(\mathcal{P}),
	\]
	where $D_\mathbf{b}^\delta(\mathcal{P})  \subseteq \mathcal{ML}_{g,n}(\mu)$ is the subset given by
	\[
	D_\mathbf{b}^\delta(\mathcal{P}) := 
	\left\lbrace
	\begin{array}{c | l}
	\lambda \in \mathcal{ML}_{g,n}(\mu)
	& \ \delta \leq i(\alpha,\gamma_i) \leq b_i, \ \forall i=1,\dots,k,\\
	& \ \delta \leq i(\alpha,\gamma_i) \leq C, \ \forall i=k+1,\dots,3g-3+n.\\
	\end{array} \right\rbrace.
	\]
	Notice that, as $\mathcal{ML}_{g,n}(\mathcal{P}) \subseteq \mathcal{ML}_{g,n}(\mathcal{\mu})$ and as $\mathcal{P}$ is a pair of pants decomposition of $S_{g,n}$, $D_\mathbf{b}^\delta(\mathcal{P})  \subseteq \mathcal{ML}_{g,n}$ can be rewritten as
	\[
	D_\mathbf{b}^\delta(\mathcal{P}) := 
	\left\lbrace
	\begin{array}{c | l}
	\lambda \in \mathcal{ML}_{g,n}
	& \ \delta \leq i(\alpha,\gamma_i) \leq b_i, \ \forall i=1,\dots,k,\\
	& \ \delta \leq i(\alpha,\gamma_i) \leq C, \ \forall i=k+1,\dots,3g-3+n.\\
	\end{array} \right\rbrace.
	\]
	As $\overline{\Phi}_\mu$ is $\text{Stab}(\mu)$-equivariant and as the Dehn twists along the components of $\mathcal{P}$ belong to $\text{Stab}(\mu)$, it is enough for our  purposes to show that $D_\mathbf{b}^\delta(\mathcal{P})  \subseteq \mathcal{ML}_{g,n}$ can be written as the orbit of a compact subset of $\mathcal{ML}_{g,n}$ under the action of the group generated by the Dehn twists along the components of $\mathcal{P}$.\\
	
	Let $(m_i,t_i)_{i=1}^{3g-3+n}$ be a set of Dehn-Thurston coordinates of $\mathcal{ML}_{g,n}$ adapted to $\mathcal{P}$ and denote by $\Theta \subseteq (\mathbf{R}_{\geq 0} \times \mathbf{R})^{3g-3+n}$ its parameter space. Notice that $D_\mathbf{b}^\delta(\mathcal{P})  \subseteq \mathcal{ML}_{g,n}$ can be described in such coordinates as
	\[
	D_\mathbf{b}^\delta(\mathcal{P}) = 
	\left\lbrace
	\begin{array}{c | l}
	(m_i,t_i)_{i=1}^{3g-3+n} \in \Theta 
	& \ \delta \leq m_i \leq b_i , \ \forall i=1,\dots,k,\\
	& \ \delta \leq m_i \leq C, \ \forall i=k+1,\dots,3g-3+n.\\
	\end{array} \right\rbrace.
	\]
	Consider the compact subset $E_\mathbf{b}^\delta(\mathcal{P}) \subseteq \mathcal{ML}_{g,n}$ described in coordinates as
	\[
	E_\mathbf{b}^\delta(\mathcal{P}) := 
	\left\lbrace
	\begin{array}{c | l}
	(m_i,t_i)_{i=1}^{3g-3+n} \in \Theta 
	& \ \delta \leq m_i \leq b_i, \ \forall i=1,\dots,k,\\
	& \ \delta \leq m_i \leq C, \ \forall i=k+1,\dots,3g-3+n,\\
	& \ 0 \leq t_i \leq m_i, \ \forall i=1,\dots,3g-3+n.
	\end{array} \right\rbrace.
	\]
	Notice that $D_\mathbf{b}^\delta(\mathcal{P})$ is the orbit of $E_\mathbf{b}^\delta(\mathcal{P})$ under the action of the group generated by the Dehn twists along the components of $\mathcal{P}$. This finishes the proof.
\end{proof}
$ $

\textit{Measure estimates.} We now show that the subsets $\widetilde{\mathcal{K}}_\mathbf{b}^\delta(\gamma) \subseteq \overline{\mathcal{Y}_{g,n}}(\gamma)/\text{Stab}(\gamma)$ satisfy the measure estimates described by conditions (1), (2), and (3) in Proposition \ref{prop:K_props}. Condition (1) is a direct consequence of Lemma \ref{lem:thu_meas_zero} and Proposition \ref{prop:syst_map}. Notice that, as a consequence of Proposition \ref{prop:fil_char}, $\widetilde{\mathcal{K}}_\mathbf{b}^\delta(\gamma) \nearrow \widetilde{B}_\mathbf{b}(\gamma)$ as $\delta \searrow 0$. Condition (2) then follows from the continuity of the measure $\widetilde{\mu}_\text{Thu}^\gamma$ on $\overline{\mathcal{Y}_{g,n}}(\gamma)$  and the following result, which can be proved using arguments similar to the ones in the proof of Proposition \ref{prop:K_compact}.\\

\begin{lemma}
	\label{lem:finite_measure_B}
	The subset $\widetilde{B}_\mathbf{b}(\gamma) \subseteq \overline{\mathcal{Y}_{g,n}}(\gamma)/\text{Stab}(\gamma) $ has finite $\widetilde{\mu}_{\text{Thu}}^\gamma$ measure.
\end{lemma}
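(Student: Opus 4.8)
The plan is to run the argument of Proposition \ref{prop:K_compact} while bookkeeping measures instead of compactness. Since $\widetilde{\mu}_{\text{Thu}}^\gamma$ is the local pushforward of $\mu_{\text{Thu}}|_{\mathcal{ML}_{g,n}(\gamma)}$ (extended by zero over $\mathcal{Y}_{g,n}^1$, which carries no mass), it suffices to bound the Thurston measure, computed in the quotient $\mathcal{ML}_{g,n}(\gamma)/\text{Stab}(\gamma)$, of the image of
\[
B := \{\lambda \in \mathcal{ML}_{g,n}(\gamma) \ | \ i(\lambda,\gamma_i) < b_i, \ \forall i = 1,\dots,k\}.
\]
Write $p \colon \mathcal{ML}_{g,n}(\gamma) \to \mathcal{ML}_{g,n}(\gamma)/\text{Stab}(\gamma)$ for the quotient map; the goal is $\widetilde{\mu}_{\text{Thu}}^\gamma(p(B)) < \infty$.

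First I would apply Bers's theorem in the form of Corollary \ref{cor:bers_Y}: there is a constant $C \geq \max_{i} b_i$ such that every $\lambda \in B$ admits a completion of $\gamma$ to a pair of pants decomposition $\mathcal{Q}$ of $S_{g,n}$ with $i(\lambda,\delta) \leq C$ for every component $\delta$ of $\mathcal{Q}$. As in the proof of Proposition \ref{prop:K_compact}, there are only finitely many $\text{Stab}(\gamma)$-orbits of pair of pants decompositions containing the components of $\gamma$; fix representatives $\mathcal{P}^{(1)},\dots,\mathcal{P}^{(N)}$ with $\mathcal{P}^{(j)} = (\gamma_1,\dots,\gamma_k,\gamma_{k+1}^{(j)},\dots,\gamma_{3g-3+n}^{(j)})$ and set
\[
A_j := \{\lambda \in \mathcal{ML}_{g,n}(\gamma) \ | \ i(\lambda,\gamma_i) \leq b_i \ \forall i \leq k, \ i(\lambda,\gamma_i^{(j)}) \leq C \ \forall i > k\}.
\]
Using that $\text{Stab}(\gamma)$ fixes each $\gamma_i$, the same manipulation as in Proposition \ref{prop:K_compact} gives $B \subseteq \bigcup_{j=1}^N \text{Stab}(\gamma) \cdot A_j$, hence $p(B) \subseteq \bigcup_{j=1}^N p(A_j)$, and it remains to bound $\widetilde{\mu}_{\text{Thu}}^\gamma(p(A_j))$ for each $j$.

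For the measure estimate, observe that $A_j$ is invariant under the subgroup $T_j \leq \text{Stab}(\gamma)$ generated by the Dehn twists along the components of $\mathcal{P}^{(j)}$, and that every $\text{Stab}(\gamma)$-orbit that meets $A_j$ meets it in a nonempty $T_j$-invariant set. Consequently a measurable section of $p$ over $p(A_j)$ may be chosen inside $A_j \cap \mathcal{F}_j$, where $\mathcal{F}_j$ is a fundamental domain for the action of $T_j$ on $\mathcal{ML}_{g,n}(\gamma)$; by $\text{Stab}(\gamma)$-invariance of $\mu_{\text{Thu}}$ this yields $\widetilde{\mu}_{\text{Thu}}^\gamma(p(A_j)) \leq \mu_{\text{Thu}}(A_j \cap \mathcal{F}_j)$. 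Passing to Dehn-Thurston coordinates $(m_i,t_i)_{i=1}^{3g-3+n}$ adapted to $\mathcal{P}^{(j)}$, in which $\mu_{\text{Thu}}$ is Lebesgue measure on the parameter space $\Theta$, one has $m_i = i(\lambda,\gamma_i^{(j)})$, so $A_j \subseteq \{m_i \leq C, \ \forall i\}$, and $\mathcal{F}_j$ may be taken to be cut out by $0 \leq t_i < m_i$ along the coordinates where $m_i > 0$. Since the locus $\{m_i = 0 \text{ for some } i\}$ is a Lebesgue-null hyperplane, discarding it gives
\[
\widetilde{\mu}_{\text{Thu}}^\gamma(p(A_j)) \leq \int_{(0,C]^{3g-3+n}} m_1 \cdots m_{3g-3+n} \ dm_1 \cdots dm_{3g-3+n} = \left(\frac{C^2}{2}\right)^{3g-3+n} < \infty,
\]
and summing over $j$ completes the proof.

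The step I expect to be the main obstacle is exactly the non-compactness of $\widetilde{B}_\mathbf{b}(\gamma)$ highlighted before the statement: in the adapted Dehn-Thurston coordinates the twist parameter $t_i$ is unbounded along the hyperplane $m_i = 0$, so the fundamental domain $\mathcal{F}_j$ is genuinely non-compact and one cannot simply invoke boundedness. The resolution, which has to be stated with some care, is that this escaping locus carries no Thurston measure, so that on the full-measure subset $\{m_i > 0 \ \forall i\}$ one has $0 \leq t_i < m_i \leq C$ and everything is confined to a region of finite volume. A secondary point requiring care is the local pushforward bookkeeping, namely the claim that a section of $p$ over $p(A_j)$ can be selected measurably inside the $T_j$-fundamental domain \emph{within} $A_j$, rather than inside an a priori much larger fundamental domain for all of $\text{Stab}(\gamma)$.
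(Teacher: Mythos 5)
Your proposal is correct and is essentially the paper's intended argument: the paper offers no separate proof of Lemma \ref{lem:finite_measure_B}, stating only that it follows by arguments similar to those of Proposition \ref{prop:K_compact}, and your write-up is precisely the measure-theoretic bookkeeping of that proof (Bers completion via Corollary \ref{cor:bers_Y}, finitely many $\text{Stab}(\gamma)$-orbits of pants decompositions containing $\gamma$, and a twist fundamental domain $\{0 \leq t_i < m_i\}$ in Dehn--Thurston coordinates, discarding the null locus where some $m_i = 0$). The only point worth stating a bit more carefully is the passage to the quotient, which is cleanest as $p(A_j \cap \mathcal{F}_j) = p(A_j)$ together with the general inequality $\widetilde{\mu}_{\text{Thu}}^\gamma(p(E)) \leq \mu_{\text{Thu}}(E)$ for the local pushforward, rather than invoking a measurable section.
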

$ $

It remains to show that condition (3) of Proposition \ref{prop:K_props} holds.\\

\begin{proposition}
	\label{prop:meas_est_3}
	There exists $C> 0$ such that for every $0 < \delta < 1$, 
	\[
	\limsup_{t \to 0} \ t^{6g-6+2n} \cdot \widetilde{\mu}_\text{wp}^{\gamma,t}\left(\widetilde{B}_\mathbf{b}(\gamma) \backslash \widetilde{\mathcal{K}}_\mathbf{b}^\delta(\gamma)\right) \leq C \cdot \delta.
	\]
\end{proposition}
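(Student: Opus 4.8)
The plan is to prove the slightly stronger statement that there is a constant $C>0$ depending only on $g$, $n$, and $\mathbf{b}$ (and \emph{not} on $\delta$) such that
\[
\limsup_{L\to\infty}\ \frac{1}{L^{6g-6+2n}}\cdot\widetilde{\mu}_{\text{wp}}^\gamma\left(\left\{X\in\mathcal{T}_{g,n}/\text{Stab}(\gamma)\ \middle|\ \ell_{\gamma_i}(X)<b_iL\ \forall i,\ s_\gamma(X)<\delta L\right\}\right)\leq C\delta
\]
for every $0<\delta<1$. This implies the proposition: the measure $\mu_{\text{wp}}^t$ is supported on $\{t\}\times\mathcal{T}_{g,n}\cong\mathcal{T}_{g,n}$, and a point $(t,X)\in\overline{\mathcal{Y}_{g,n}^1}(\gamma)$ lies in $\widetilde{B}_\mathbf{b}(\gamma)\setminus\widetilde{\mathcal{K}}_\mathbf{b}^\delta(\gamma)$ only when $t\,\ell_{\gamma_i}(X)<b_i$ for all $i$ and $t\,s_\gamma(X)<\delta$; hence $\widetilde{\mu}_{\text{wp}}^{\gamma,t}(\widetilde{B}_\mathbf{b}(\gamma)\setminus\widetilde{\mathcal{K}}_\mathbf{b}^\delta(\gamma))$ is bounded above by the measure in the display with $L=1/t$, and $t\to 0$ corresponds to $L\to\infty$.

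Next I would split the event $\{s_\gamma(X)<\delta L\}$ according to the curve realizing the relative systole. By definition of the infimum there is a simple closed curve $\beta\in\mathcal{S}_{g,n}^\gamma$ with $\ell_\beta(X)<\delta L$, and either (a) $\beta$ is isotopic to one of the components $\gamma_{i_0}$ of $\gamma$, so $\ell_{\gamma_{i_0}}(X)<\delta L$, or (b) $\beta$ is disjoint from $\gamma$ and not isotopic to any component of $\gamma$, equivalently $\beta$ is non-peripheral in one of the components $\Sigma_1,\dots,\Sigma_c$ of $S_{g,n}$ cut along $\gamma$. The group $\text{Stab}(\gamma)$ acts on the set of such curves $\beta$ with only finitely many orbits: up to a finite-index subgroup it preserves each $\Sigma_j$ and maps onto the mapping class group of the bordered surface $\Sigma_j$, and by the change of coordinates principle there are finitely many topological types of non-peripheral simple closed curves in $\Sigma_j$. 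Fix orbit representatives $\beta^{(1)},\dots,\beta^{(M)}$. Note that case (b) forces $k<3g-3+n$, so $\gamma\cup\{\beta^{(q)}\}$ is a simple closed multi-curve with $k+1\leq 3g-3+n$ components.

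For the estimates I would use the volume formula established in the discussion preceding Proposition~\ref{prop:topo_dist_red}: for any simple closed multi-curve $\gamma'$ with at most $3g-3+n$ components and any measurable $B$, taking $f$ to be the indicator of $\tfrac1L B$ in Proposition~\ref{prop:total_hor_meas} and rescaling gives $\widetilde{\mu}_{\text{wp}}^{\gamma'}(\{X\mid\vec{\ell}_{\gamma'}(X)\in B\})=\int_B V_{g,n}(\gamma',\mathbf{y})\,d\mathbf{y}$. In case (a), summing over $i_0$,
\[
\widetilde{\mu}_{\text{wp}}^\gamma\bigl(\{\ell_{\gamma_i}(X)<b_iL\ \forall i,\ s_\gamma(X)<\delta L,\ \text{(a)}\}\bigr)\leq\sum_{i_0=1}^k\int_{[0,\delta L]\times\prod_{i\neq i_0}[0,b_iL]}V_{g,n}(\gamma,\mathbf{y})\,d\mathbf{y};
\]
since $V_{g,n}(\gamma,\cdot)$ has degree $6g-6+2n-k$ and is divisible by $y_1\cdots y_k$, each top-degree monomial integrated over such a box contributes exactly $L^{6g-6+2n}$ times a constant multiple of $\delta^{a_{i_0}+1}\leq\delta^2$, and lower-degree monomials contribute $o(L^{6g-6+2n})$. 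In case (b), for a representative $\beta=\beta^{(q)}$ one has $\text{Stab}(\gamma\cup\beta)=\text{Stab}(\gamma)\cap\text{Stab}(\beta)$; bounding the indicator of an existential by the corresponding sum and unfolding from $\mathcal{T}_{g,n}/\text{Stab}(\gamma)$ to $\mathcal{T}_{g,n}/\text{Stab}(\gamma\cup\beta)$ (as in the proof of Proposition~\ref{prop:pull_push}) yields
\[
\widetilde{\mu}_{\text{wp}}^\gamma\bigl(\{\ell_{\gamma_i}(X)<b_iL\ \forall i,\ \exists\phi\in\text{Stab}(\gamma):\ell_{\phi\cdot\beta}(X)<\delta L\}\bigr)\leq\int_{\prod_{i=1}^k[0,b_iL]\times[0,\delta L]}V_{g,n}(\gamma\cup\beta,\mathbf{y})\,d\mathbf{y},
\]
and again divisibility of $V_{g,n}(\gamma\cup\beta,\cdot)$ by all $k+1$ boundary variables forces a factor $\delta^2$ in the top-degree contribution. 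Summing the $k$ terms from case (a), the $M$ terms from case (b), and noting that case (b) is covered by the corresponding unions over $\phi$ and over $q$, one obtains the claimed bound with $C=\sum_{i_0}C_a^{(i_0)}+\sum_q C_b^{(q)}$, the constants depending only on the coefficients of the relevant volume polynomials and on $\mathbf{b}$.

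The step requiring the most care is the reduction in case (b): one must verify that $\text{Stab}(\gamma)$ has only finitely many orbits on the relevant curves $\beta$, and that the unfolding identity together with the volume formula of Proposition~\ref{prop:total_hor_meas} applies verbatim to each multi-curve $\gamma\cup\{\beta^{(q)}\}$ (which is legitimate precisely because $k+1\leq 3g-3+n$ there). Both are standard once the orbit count is in place, and the genuine input — that $V_{g,n}(\gamma',\mathbf{x})$ is divisible by every boundary-length variable, so that restricting one length to an interval of size $\delta L$ gains a full factor $\delta^2$ rather than merely $\delta$ — is already recorded in \S2.
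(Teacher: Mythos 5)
Your proposal is correct and follows essentially the same route as the paper: the paper's own proof makes the identical decomposition of the bad set into the two events (some $\gamma_i$ of length less than $\delta/t$, or some $\beta \in \mathcal{S}_{g,n}^\gamma$ not a component of $\gamma$ of length less than $\delta/t$) and then bounds the resulting Weil--Petersson measure by $\delta \cdot P(1/t^2)$ using Mirzakhani's integration formulas, deferring the details to an argument analogous to Proposition 3.9 of the author's paper Ara20a. Your unfolding over the finitely many $\text{Stab}(\gamma)$-orbits of relative-systole curves, combined with the volume formula of Proposition \ref{prop:total_hor_meas} and the divisibility of $V_{g,n}(\gamma',\mathbf{x})$ by all boundary-length variables, is exactly the content of that outsourced estimate, so you have simply written out the step the paper cites.
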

$ $

\begin{proof}
	Let $0 < \delta < 1$ be arbitrary. Notice that $\alpha \in \overline{\mathcal{Y}}_{g,n}(\gamma)/\text{Stab}(\gamma)$ belongs to $\widetilde{B}_\mathbf{b}(\gamma) \backslash \widetilde{\mathcal{K}}_\mathbf{b}^\delta(\gamma)$ if and only if
	\[
	i(\alpha,\gamma_i) \leq b_i, \ \forall i =1,\dots,k,
	\]
	and at least one of the following conditions holds:
	\begin{enumerate}
		\item $i(\alpha,\gamma_i) < \delta$ for some $i=1,\dots,k$,
		\item $i(\alpha,\beta) < \delta$ for some $\beta \in \mathcal{S}_{g,n}^\gamma$ which is not a component of $\gamma$.
	\end{enumerate}
	In particular, for every $t > 0$,
	\[
	\widetilde{\mu}_\text{wp}^{\gamma,t} \left(\widetilde{B}_\mathbf{b}(\gamma) \backslash \widetilde{\mathcal{K}}_\mathbf{b}^\delta(\gamma)\right)
	\]
	is equal to the  $\widetilde{\mu}_\text{wp}^\gamma$ measure of the set of all $X \in \mathcal{T}_{g,n}/\text{Stab}(\gamma)$ such that
	\[
	\ell_{\gamma_i}(X) \leq b_i/t, \ \forall i =1,\dots,k,
	\]
	and at least one of the following conditions holds:
	\begin{enumerate}
		\item $\ell_{\gamma_i}(X)< \delta/t$ for some $i=1,\dots,k$,
		\item $\ell_{\beta}(X) < \delta/t$ for some $\beta \in \mathcal{S}_{g,n}^\gamma$ which is not a component of $\gamma$.
	\end{enumerate}
	$ $
	
	This quantity can be estimated using Mirzakhani's integration formulas in \cite{Mir07a}. More specifically, following arguments similar to those in the proof of \cite[Proposition 3.9]{Ara20a}, one can show that, for sufficiently small $t > 0$,
	\[
	\widetilde{\mu}_\text{wp}^{\gamma,t} \left(\widetilde{B}_\mathbf{b}(\gamma) \backslash \widetilde{\mathcal{K}}_\mathbf{b}^\delta(\gamma)\right) \leq \delta \cdot P(1/t^2),
	\]
	where $P$ is a polynomial of degree $3g-3+n$ depending only on $g$, $n$, $\gamma$, and $\mathbf{b}$. It follows that
	\[	
	\limsup_{t \to 0} \ t^{6g-6+2n} \cdot \widetilde{\mu}_\text{wp}^{\gamma,t}\left(\widetilde{B}_\mathbf{b}(\gamma) \backslash \widetilde{\mathcal{K}}_\mathbf{b}^\delta(\gamma)\right) \leq C \cdot \delta
	\]
	for some constant $C > 0$ depending only on $g$, $n$, $\gamma$, and $\mathbf{b}$. This finishes the proof.
\end{proof}
$ $

\section{Counting filling closed hyperbolic multi-geodesics}

$ $

\textit{Setting.} For the rest of this section, let $\gamma := (\gamma_1,\dots,\gamma_k)$ with $k \geq 1$ be an ordered filling closed multi-curve on $S_{g,n}$ and $X \in \mathcal{T}_{g,n}$ be a marked, oriented, complete, finite area hyperbolic structure on $S_{g,n}$.\\

\textit{Proof of Theorem \ref{theo:length_spec_filling}.} As $\gamma$ is filling, its stabilizer $\text{Stab}(\gamma) \subseteq \text{Mod}_{g,n}$ is finite. Consider the family of rescaled counting measures $\{\overline{\mu}_{\gamma,X}^L\}_{L>0}$ on $(\mathbf{R}_{\geq 0})^k$ given by 
\[
\overline{\mu}_{\gamma,X}^L := \sum_{\phi \in \text{Mod}_{g,n}} \delta_{\frac{1}{L} \cdot \vec{\ell}_{\phi \cdot \gamma}(X)}.
\]
Notice that, for every $L > 0$,
\begin{align*}
\overline{\mu}_{\gamma,X}^L &= | \text{Stab}(\gamma) | \cdot \mu_{\gamma,X}^L,\\
(I_\gamma)_*(\mu_{\text{Thu}}^\gamma) &= | \text{Stab}(\gamma) | \cdot (\widetilde{I}_\gamma)_*(\widetilde{\mu}_{\text{Thu}}^\gamma).
\end{align*}
It follows that, to prove Theorem \ref{theo:length_spec_filling}, it is equivalent to show
\begin{equation}
\label{eq:length_spec_filling_reduced}
\lim_{L \to \infty} \frac{\overline{\mu}_{\gamma,X}^L}{L^{6g-6+2n}} = \frac{B(X)}{b_{g,n}} \cdot (I_\gamma)_*(\mu_{\text{Thu}}^\gamma)
\end{equation}
in the weak-$\star$ topology for measures on $(\mathbf{R}_{\geq 0})^k$.\\

By standard approximation arguments, to prove (\ref{eq:length_spec_filling_reduced}), it is equivalent to show
\[
\lim_{L \to \infty} \frac{\mu_{\gamma,X}^L(A)}{L^{6g-6+2n}} = \frac{B(X)}{b_{g,n}} \cdot (I_\gamma)_*(\mu_{\text{Thu}}^\gamma)(A)
\]
for boxes
$
A := \prod_{i=1}^k [0,b_i) \subseteq (\mathbf{R}_{\geq 0})^k
$
with $\mathbf{b}:= (b_1,\dots,b_k) \in (\mathbf{R}_{>0})^k$ arbitrary. By Lemma \ref{lem:thu_meas_zero}, we can instead consider closed boxes
$
A := \prod_{i=1}^k [0,b_i] \subseteq (\mathbf{R}_{\geq 0})^k
$
with $\mathbf{b}:= (b_1,\dots,b_k) \in (\mathbf{R}_{>0})^k$ arbitrary.\\

For every $\mathbf{b}:= (b_1,\dots,b_k) \in (\mathbf{R}_{>0})^k$  and every $L > 0$ consider the counting function 
\[
f(X,\gamma,\mathbf{b},L) := \# \{\phi \in \text{Mod}_{g,n} \ | \ \ell_{\phi.\gamma_i}(X) \leq b_iL, \ \forall i =1,\dots,k \}.
\]
Notice that 
\[
f(X,\gamma,\mathbf{b},L) = \overline{\mu}_{\gamma,X}^L\left(\prod_{i=1}^k [0,b_i] \right).
\]
The proof of (\ref{eq:length_spec_filling_reduced}), and thus of Theorem \ref{theo:length_spec_filling}, reduces to the following result.\\

\begin{theorem}
	\label{theo:fil_count_comp}
	For every $\mathbf{b} := (b_1,\dots,b_k) \in (\mathbf{R}_{>0})^k$,
	\[
	\lim_{L \to \infty} \frac{f(X,\gamma,\mathbf{b},L)}{L^{6g-6+2n}} = \frac{B(X)}{b_{g,n}} \cdot \mu_{\text{Thu}}(\{\lambda \in \mathcal{ML}_{g,n} \ | \ i(\gamma_i,\lambda) \leq b_i, \ \forall i=1,\dots,k\}).
	\]
\end{theorem}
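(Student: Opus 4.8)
The plan is to recast the count $f(X,\gamma,\mathbf{b},L)$ as the mass assigned to a single fixed region by the rescaled $\text{Mod}_{g,n}$-orbit of $X$, and then to invoke Mirzakhani's orbit-equidistribution technique from \cite{Mir16}; the filling hypothesis is exactly what will make the relevant region well behaved, so the delicate systole / no-escape-of-mass analysis of \S4 will not be needed here. Concretely, using $\ell_{\phi\cdot\gamma_i}(X) = \ell_{\gamma_i}(\phi^{-1}\cdot X) = i(\phi^{-1}\cdot X,\gamma_i)$ and reindexing $\psi = \phi^{-1}$, I would first rewrite
\[
f(X,\gamma,\mathbf{b},L) = \Big(\sum_{\psi\in\text{Mod}_{g,n}}\delta_{\frac1L\cdot(\psi\cdot X)}\Big)\big(\widetilde\Omega_{\mathbf{b}}\big), \qquad \widetilde\Omega_{\mathbf{b}} := \{\alpha\in\overline{\mathcal{Y}_{g,n}}\ |\ i(\alpha,\gamma_i)\le b_i,\ \forall i\},
\]
where $\tfrac1L\cdot(\psi\cdot X)$ denotes the point of $\mathcal{Y}_{g,n}\subseteq\overline{\mathcal{Y}_{g,n}}$ obtained by scaling the hyperbolic metric $\psi\cdot X$ by $1/L$.

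Next I would record the two topological consequences of $\gamma$ being filling that make $\widetilde\Omega_{\mathbf{b}}$ tractable. By Proposition \ref{prop:ml_fil}, $\sum_{i=1}^k i(\gamma_i,\lambda) > 0$ for every $\lambda\in\mathcal{ML}_{g,n}\setminus\{0\}$, so by homogeneity and the projective compactness of $P\mathcal{ML}_{g,n}$ the sublevel sets of $\lambda\mapsto\sum_i i(\gamma_i,\lambda)$ are compact; in particular $\Omega_{\mathbf{b}} := \widetilde\Omega_{\mathbf{b}}\cap\mathcal{ML}_{g,n} = \{\lambda\in\mathcal{ML}_{g,n}\ |\ i(\gamma_i,\lambda)\le b_i,\ \forall i\}$ is compact, and a routine argument combining this bound with the properness on $\mathcal{T}_{g,n}$ of the total length function $\sum_{i=1}^k\ell_{\gamma_i}$ (a consequence of filling) and Bers's theorem (Theorem \ref{theo:bers}) shows that $\overline{\widetilde\Omega_{\mathbf{b}}}$ is compact in $\overline{\mathcal{Y}_{g,n}}$. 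This is precisely the compactness feature whose failure forced the systole estimates of \S4; relatedly, $\gamma$ filling forces $\mathcal{ML}_{g,n}(\gamma) = \mathcal{ML}_{g,n}$ and $\text{Stab}(\gamma)$ finite, so the framework of \S2--\S4 specializes here with every relevant space being the whole space. Finally, each $i(\gamma_i,\cdot)$ is a homogeneous continuous function on $\mathcal{ML}_{g,n}$, so Lemma \ref{lem:thu_meas_zero} gives $\mu_{\text{Thu}}(\{\lambda : i(\gamma_i,\lambda) = b_i\}) = 0$ for each $i$; since $\partial\widetilde\Omega_{\mathbf{b}}\cap\mathcal{ML}_{g,n}\subseteq\bigcup_i\{i(\gamma_i,\cdot) = b_i\}$, the boundary of $\widetilde\Omega_{\mathbf{b}}$ meets $\mathcal{ML}_{g,n}$ in a $\mu_{\text{Thu}}$-null set.

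The core input, supplied by \cite{Mir16}, is the equidistribution of rescaled mapping class group orbits: in the weak-$\star$ topology for Radon measures on $\overline{\mathcal{Y}_{g,n}}$,
\[
\lim_{L\to\infty}\ \frac{1}{L^{6g-6+2n}}\sum_{\psi\in\text{Mod}_{g,n}}\delta_{\frac1L\cdot(\psi\cdot X)} = \frac{B(X)}{b_{g,n}}\cdot\mu_{\text{Thu}},
\]
the limit being supported on $\mathcal{ML}_{g,n}\subseteq\overline{\mathcal{Y}_{g,n}}$. I expect to establish this in two stages, both reviewed in this section. The \emph{integrated} version --- convergence of $L^{-(6g-6+2n)}\int_{\mathcal{M}_{g,n}}\eta(Y)\cdot\#\{\alpha\in\text{Mod}_{g,n}\cdot\gamma\ |\ \ell_{\alpha_i}(Y)\le b_iL\ \forall i\}\,d\widehat{\mu}_{\text{wp}}(Y)$ for a fixed test function $\eta$ on $\mathcal{M}_{g,n}$ --- follows by unfolding over the intermediate cover $\mathcal{T}_{g,n}/\text{Stab}(\gamma)$ exactly as in Proposition \ref{prop:pull_push}, rescaling $Y\mapsto\tfrac1L Y$, and invoking the convergence $t^{6g-6+2n}\mu_{\text{wp}}^t\to\mu_{\text{Thu}}$ of Proposition \ref{prop:asymp_wp_meas} together with the polynomiality of Weil--Petersson volumes (Theorem \ref{theo:vol_pol}) and Mirzakhani's computation of $r(\gamma,\cdot)$ for filling multi-curves. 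Upgrading this averaged statement to a pointwise statement at the fixed hyperbolic structure $X$, with the dependence on $X$ collapsing onto the scalar $B(X)$, is the genuinely new ingredient of \cite{Mir16}; it rests on the ergodic properties there of the $\text{Mod}_{g,n}$-action on $\mathcal{ML}_{g,n}\times\mathcal{ML}_{g,n}$ with respect to $\mu_{\text{Thu}}\times\mu_{\text{Thu}}$, together with control of the non-compact directions of $\mathcal{M}_{g,n}$ via the thick-thin decomposition.

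Granting this, the theorem follows by applying the displayed equidistribution to the set $\widetilde\Omega_{\mathbf{b}}$: it is relatively compact, and since the limit measure lives on $\mathcal{ML}_{g,n}$ while $\partial\widetilde\Omega_{\mathbf{b}}$ is $\mu_{\text{Thu}}$-null there, $\widetilde\Omega_{\mathbf{b}}$ is a continuity set for the limit, so Portmanteau's theorem gives
\[
\lim_{L\to\infty}\frac{f(X,\gamma,\mathbf{b},L)}{L^{6g-6+2n}} = \frac{B(X)}{b_{g,n}}\cdot\mu_{\text{Thu}}(\widetilde\Omega_{\mathbf{b}}) = \frac{B(X)}{b_{g,n}}\cdot\mu_{\text{Thu}}(\Omega_{\mathbf{b}}),
\]
which is the assertion of Theorem \ref{theo:fil_count_comp}. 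The hard part will be the pointwise equidistribution of rescaled orbits: the integrated version is a fairly direct consequence of Mirzakhani's volume polynomials and Proposition \ref{prop:asymp_wp_meas}, but descending to a single hyperbolic structure requires the full force of the ergodic arguments of \cite{Mir16}, which --- as the remark after Theorem \ref{theo:filling_comp_count} notes --- do not admit a quantitative, power-saving refinement; by contrast, the two inputs to Portmanteau's theorem above are elementary consequences of the filling condition and Lemma \ref{lem:thu_meas_zero}, which is the precise sense in which the filling case sidesteps the work of \S4.
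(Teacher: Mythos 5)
Your endgame coincides with the paper's: the compactness of $\widetilde\Omega_{\mathbf{b}}$ in $\overline{\mathcal{Y}_{g,n}}$ forced by the filling condition, the $\mu_{\text{Thu}}$-nullity of its boundary via Lemma \ref{lem:thu_meas_zero}, and the application of Proposition \ref{prop:asymp_wp_meas} plus Portmanteau are exactly the content of Proposition \ref{prop:top_int_c} (there phrased as the computation of $r(\mathcal{F})$). Where you diverge is in the main counting input, and this is where there is a genuine gap. You invoke, as "supplied by \cite{Mir16}", the full weak-$\star$ equidistribution of the rescaled orbit measures $L^{-(6g-6+2n)}\sum_{\psi}\delta_{\frac1L\cdot(\psi\cdot X)}$ towards $\frac{B(X)}{b_{g,n}}\mu_{\text{Thu}}$ on $\overline{\mathcal{Y}_{g,n}}$. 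That statement is not in \cite{Mir16}: Mirzakhani proves the counting asymptotic for the total hyperbolic length of a filling multi-curve (and, in Theorem 1.2 there, a first result about length vectors), not a measure-level equidistribution of rescaled orbit points. The statement you want is strictly stronger than anything the present paper uses, and your two-stage sketch leaves precisely the hard stage — upgrading the averaged/unfolded statement to the fixed hyperbolic structure $X$, where all of Mirzakhani's ergodic work on the $\mathrm{Mod}_{g,n}$-action on $\mathcal{ML}_{g,n}\times\mathcal{ML}_{g,n}$ sits — as a gesture rather than an argument. (Your route can in fact be completed; it is essentially the geodesic-currents approach of Rafi--Souto \cite{RS19}, but that is a separate theorem, not a citation to \cite{Mir16}.)

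The paper avoids needing any such orbit equidistribution by a reduction you are missing: set $\mathcal{F}(Y):=\max_i \ell_{\gamma_i}(Y)/b_i$, so that $f(X,\gamma,\mathbf{b},L)=f(X,\mathcal{F},L)$. One checks that $\mathcal{F}$ is asymptotically piecewise linear (Theorem \ref{theo:length_APL}), bounding and proper (via the bound $\ell_{\mathbf{1}\cdot\gamma}\le k\max_i b_i\cdot\mathcal{F}$ together with Proposition \ref{prop:len_bounding} and Lemma \ref{lem:proper_len}), and then applies Theorem \ref{theo:mir_closed_count_gen} — a mild generalization of \cite[Theorem 1.1]{Mir16} to such scalar functions — to get $f(X,\mathcal{F},L)\sim \frac{B(X)r(\mathcal{F})}{b_{g,n}}L^{6g-6+2n}$; the constant $r(\mathcal{F})$ is then identified with $\mu_{\text{Thu}}(\Omega_{\mathbf{b}})$ by exactly the Yamabe-space Portmanteau argument you describe. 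So the individual-component constraints are absorbed into a single proper, asymptotically piecewise linear, bounding function, and only the scalar counting theorem is needed. If you want to keep your measure-theoretic formulation, you must either prove the orbit-equidistribution statement (redoing Mirzakhani's argument at the level of measures, à la Rafi--Souto) or replace it by the max-function reduction above; as written, the crux of your proof rests on a citation that does not deliver it.
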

$ $

\textit{Generalizing Theorem \ref{theo:mir_count}.} As highlighted in \cite[\S 1.2]{Mir16}, Theorem \ref{theo:mir_count} for filling closed multi-curves holds for more general notions of length than total hyperbolic length. To prove Theorem \ref{theo:fil_count_comp} we make use of one such generalization, which we now describe.\\

Let $m \in \mathbf{N}$ be arbitrary. Every linear function $\mathcal{L} \colon \mathbf{R}^m \to \mathbf{R}$ cuts out a \textit{positive open half-space} and a \textit{positive closed half-space} in $\mathbf{R}^m$ corresponding to the sets
\begin{align*}
H_{>0}(\mathcal{L}) &:= \{x \in \mathbf{R}^m \ | \ \mathcal{L}(x) > 0\},\\
H_{\geq 0}(\mathcal{L}) &:= \{x \in \mathbf{R}^m \ | \ \mathcal{L}(x) \geq 0\}.
\end{align*}
A \textit{convex polytope} $P \subseteq \mathbf{R}^m$ is an intersection of finitely many positive open/closed half-spaces of $\mathbf{R}^m$. The boundary $\partial P \subseteq \mathbf{R}^m$ of a convex polytope $P \subseteq \mathbf{R}^m$ is its topological boundary when considered as a subset of $\mathbf{R}^m$.\\

Let $P \subseteq \mathbf{R}^m$ be a convex polytope. We say that a function $\mathcal{F} \colon P \to \mathbf{R}$ is \textit{asymptotically linear} if there exists a linear function $\mathcal{L} \colon P \to \mathbf{R}$ and a constant $c \in \mathbf{R}$ such that
\[
\lim_{x \in P \colon d(x,\partial P) \to \infty} \mathcal{F}(x) - \mathcal{L}(x) = c,
\]
where the distance $d$ corresponds to the standard Euclidean metric on $\mathbf{R}^m$. We say that a function $\mathcal{F} \colon P \to \mathbf{R}$ is \textit{asymptotically piecewise linear} if $P$ can be partitioned into finitely many convex polytopes on which $\mathcal{F}$ restricts to asymptotically linear functions.\\

The most important example for us of an asymptotically piecewise linear function is the hyperbolic length of a closed curve on $S_{g,n}$ as a function on Teichmüller space $\mathcal{T}_{g,n}$. See \cite[Theorem 4.1]{Mir16} for a proof of the following result.\\

\begin{theorem}
	\label{theo:length_APL}
	Let $\gamma$ be a closed curve on $S_{g,n}$. The hyperbolic length function 
	\[
	\ell_{\gamma} \colon \mathcal{T}_{g,n} \to \mathbf{R}_{>0}
	\]
	is asymptotically piecewise linear with respect to any set on Fenchel-Nielsen coordiantes $(\ell_i,\tau_i)_{i=1}^{3g-3+n}$. More concretely, after identifying 
	\[
	\mathcal{T}_{g,n} = \left( \mathbf{R}_{>0} \times \mathbf{R} \right)^{3g-3+n}
	\]
	using the Fenchel-Nielsen coordinates $(\ell_i,\tau_i)_{i=1}^{3g-3+n}$, the length function 
	\[
	\ell_{\gamma} \colon \mathcal{T}_{g,n} =  \left( \mathbf{R}_{>0} \times \mathbf{R} \right)^{3g-3+n} \to \mathbf{R}_{>0}
	\]
	is asymptotically piecewise linear.
\end{theorem}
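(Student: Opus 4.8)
The plan is to fix a pair of pants decomposition $\mathcal{P} = (\gamma_1,\dots,\gamma_{3g-3+n})$ of $S_{g,n}$, identify $\mathcal{T}_{g,n}$ with $(\mathbf{R}_{>0}\times\mathbf{R})^{3g-3+n}$ via the associated Fenchel--Nielsen coordinates $(\ell_i,\tau_i)$, and analyze the geodesic representative of $\gamma$ on $X$ in the regime where all cuff lengths $\ell_i$ are large; this is exactly the regime $d(x,\partial P)\to\infty$, since $\partial P = \{\text{some }\ell_i=0\}$ and hence $d(x,\partial P) = \min_i \ell_i$. We may assume $\gamma$ is primitive, and if $\gamma$ is isotopic to some $\gamma_i$ then $\ell_\gamma = \ell_i$ is already linear, so assume $\gamma$ is not a component of $\mathcal{P}$. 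After isotoping $\gamma$ into minimal position with respect to $\mathcal{P}$, it decomposes into a cyclic word of finitely many arcs $a_1,\dots,a_N$, each contained in one pair of pants of $\mathcal{P}$ and joining two (possibly equal) boundary cuffs, together with fixed wrapping data around the $\gamma_i$; this combinatorial type is a topological datum independent of $X$.

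First I would record the relevant hyperbolic trigonometry. In a pair of pants with cuff lengths $x,y,z$, the common orthogeodesic between the cuffs of lengths $x$ and $y$ has length $d$ with $\cosh d = (\cosh(z/2) + \cosh(x/2)\cosh(y/2))/(\sinh(x/2)\sinh(y/2))$, so as $x,y,z\to\infty$ one has $\cosh d = 1 + 2e^{(z-x-y)/2}(1+o(1))$; hence $d = o(1)$ on $\{z \le x+y\}$ and $d = (z-x-y)/2 + \mathrm{const} + o(1)$ on $\{z \ge x+y\}$, the limit being taken as the distance to the boundary of each respective polytope tends to infinity. Thus $d$ is asymptotically piecewise linear in the cuff lengths, the only wall being $\{z=x+y\}$. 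Similarly, the feet of these orthogeodesics on a fixed cuff $\gamma_i$ are pinned down up to $O(1)$, so the portion of (the straightening of) $\gamma$ running parallel to the collar of $\gamma_i$ between consecutive crossings contributes a length of the form $w_i\ell_i + n_i\tau_i + O(1)$ with $w_i\in[0,1)$ and $n_i\in\mathbf{Z}$ fixed combinatorial quantities, which is linear in $(\ell_i,\tau_i)$.

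Next I would assemble these local contributions. Since all cuffs are long, the thin--thick decomposition forces the geodesic representative of $\gamma$ to shadow the concatenation of the orthogeodesic arcs and collar--parallel arcs above, with total error bounded by a constant depending only on $N$, hence only on $\gamma$ and $\mathcal{P}$. Therefore
\[
\ell_\gamma(X) = \sum_{j} d_{a_j}(X) + \sum_{i} \big(w_i\ell_i + n_i\tau_i\big) + O(1),
\]
where each $d_{a_j}(X)$ is one of the common--perpendicular lengths above, hence asymptotically piecewise linear. A finite sum of asymptotically piecewise linear functions is asymptotically piecewise linear: intersect the finitely many polyhedral pieces of the summands to get a common polyhedral partition, and on each piece a sum of asymptotically linear functions is asymptotically linear. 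This gives the theorem. Alternatively, and with less geometric bookkeeping, one can write the holonomy $\rho(\gamma)\in\mathrm{PSL}(2,\mathbf{R})$ as an alternating product of ``elbow'' matrices (functions of cuff lengths) and ``twist--translate'' matrices (functions of $(\ell_i,\tau_i)$) whose entries are, up to bounded multiplicative factors, exponentials of linear functions of the coordinates; then $\ell_\gamma = 2\operatorname{arccosh}\!\big(\tfrac12|\operatorname{tr}\rho(\gamma)|\big)$ and $\operatorname{arccosh}(t) = \log(2t) + o(1)$ reduce the claim to showing that $\log|\operatorname{tr}\rho(\gamma)|$ is asymptotically piecewise linear, which holds because $\operatorname{tr}\rho(\gamma)$ is a finite sum of terms of the shape $(\text{bounded factor})\cdot\exp(\text{linear function of }(\ell_i,\tau_i))$, one exponent dominating on each region of a finite polyhedral decomposition.

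The main obstacle is the uniform control of error terms and of cancellation. In the geometric approach one must verify that the geodesic representative stays within a uniformly bounded distance of the arc concatenation as the coordinates diverge --- including the degenerate arcs with $d_{a_j}\to 0$ --- and that the wrapping around each cuff contributes exactly a linear term; in the matrix approach one must rule out cancellation of the dominant exponential in $\operatorname{tr}\rho(\gamma)$ on each region (this is where one uses that, after a suitable normalization, the relevant matrices may be taken with positive entries, so the leading term of the trace has a definite sign) and check that on the lower--dimensional walls where two exponents tie the function $\log$ of a two--term sum is again linear, which holds precisely because the exponents agree there.
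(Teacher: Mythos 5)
First, note that the paper does not prove Theorem \ref{theo:length_APL}: it is quoted verbatim from \cite[Theorem 4.1]{Mir16}, so the only ``paper proof'' to compare with is that citation, and your strategy (cut the geodesic along the pants curves, estimate elbows by right-angled hexagon trigonometry, estimate collar crossings by twist-plus-wrap terms, or equivalently expand $\operatorname{tr}\rho(\gamma)$ as a sum of exponentials of linear functions of the Fenchel--Nielsen coordinates) is the expected one, broadly of the same nature as Mirzakhani's. The difficulty is that, as written, your argument does not establish the statement in the sense the paper uses it. ``Asymptotically linear'' requires that $\mathcal{F}-\mathcal{L}$ \emph{converge to a constant} as $d(x,\partial P)\to\infty$ in each piece; an additive $O(1)$ error is strictly weaker and cannot be upgraded by further finite polyhedral subdivision. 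Your geometric argument ends with $\ell_\gamma = \sum_j d_{a_j} + \sum_i (w_i\ell_i + n_i\tau_i) + O(1)$ followed by ``this gives the theorem,'' which it does not. Moreover, the $O(1)$ itself is not justified: the assertion that the thin--thick decomposition forces the geodesic representative to shadow the concatenation with uniformly bounded error is exactly the hard estimate, and it is delicate precisely because in your regime many sides of the piecewise-geodesic representative degenerate (the elbows $d_{a_j}\to 0$ on $\{z\le x+y\}$ and the collar widths tend to $0$ as the cuffs lengthen), so a right-angled piecewise geodesic with many short sides is not automatically length-close to its geodesic tightening. You also only record the orthogeodesic formula between two distinct cuffs; arcs entering and leaving a pair of pants through the same cuff need their own (analogous) treatment.

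The trace/holonomy variant is the one that can actually meet the definition, since deep in a piece the subdominant exponentials decay and the error converges; but there the two points you yourself call ``the main obstacle'' are the real content and are asserted rather than proved: (i) that every matrix entry, including the degenerating ones (e.g.\ $\sinh(d_{a_j}/2)$ with $d_{a_j}\to 0$), is of the form (positive factor converging to a constant) times $e^{\text{linear}}$, and (ii) that the coefficient of the dominant exponential in $\operatorname{tr}\rho(\gamma)$ does not cancel on any full-dimensional piece --- in particular when two exponents coincide identically as linear functions, not merely on walls. The parenthetical appeal to ``matrices with positive entries after a suitable normalization'' is not obviously available once twists of both signs and the degenerate elbows enter the product, and no argument is supplied. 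So the proposal identifies the right route (essentially that of \cite{Mir16}) and correct local trigonometry, but it has a genuine gap both in the precision of the error term required by the paper's definition of asymptotic piecewise linearity and in the shadowing/non-cancellation estimates that constitute the heart of the proof.
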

$ $

Let $\mathcal{P} := (\gamma_1,\dots,\gamma_{3g-3+n})$ be a pair of pants decomposition of $S_{g,n}$ and $(\ell_i,\tau_i)_{i=1}^{3g-3+n}$ be a set of Fenchel-Nielsen coordinates of $\mathcal{T}_{g,n}$ adapted to $\mathcal{P}$. After identifying 
\[
\mathcal{T}_{g,n} = \left( \mathbf{R}_{>0} \times \mathbf{R} \right)^{3g-3+n}
\]
using the Fenchel-Nielsen coordinates $(\ell_i,\tau_i)_{i=1}^{3g-3+n}$, we can partition $\mathcal{T}_{g,n}$ into a countable union of convex polytopes of the form
\[
\mathcal{C}_{\mathcal{P}}^\mathbf{m} := \{ Y \in \mathcal{T}_{g,n} \ | \ m_i \cdot \ell_i(Y) \leq \tau_i(Y) < m_{i+1} \cdot \ell_{i+1}(Y) \}
\]
with $\mathbf{m} := (m_1,\dots,m_{3g-3+n}) \in \mathbf{Z}^{3g-3+n}$ is arbitrary. We say that a function $\mathcal{F} \colon \mathcal{T}_{g,n} \to \mathbf{R}_{>0}$ is \textit{bounding} with respect to the Fenchel-Nielsen coordinates $(\ell_i,\tau_i)_{i=1}^{3g-3+n}$ if for every $Y \in \mathcal{T}_{g,n}$ there exists a constant $C > 0$ such that for every $\mathbf{m}:= (m_1,\dots,m_{3g-3+n}) \in \mathbf{Z}^{3g-3+n}$ and every $Z \in\text{Mod}_{g,n} \cdot Y \cap \mathcal{C}_\mathcal{P}^\mathbf{m} \cap \mathcal{F}^{-1}([0,L])$,
\[
\ell_i(Z) \leq C \cdot \frac{L}{\max\{|m_i|,|m_{i}+1|\}}.
\]
$ $

The most important example for us of a bounding function is the total hyperbolic length of a filling closed multi-curve on $S_{g,n}$. See \cite[\S 9.4]{Mir16} for a proof of the following result.\\

\begin{proposition}
	\label{prop:len_bounding}
	Let $\gamma := (\gamma_1,\dots,\gamma_k)$ with $k \geq 1$ be an ordered filling closed multi-curve on $S_{g,n}$ and $\mathbf{a} := (a_1,\dots,a_k) \in (\mathbf{R}_{>0})^k$ be a vector of positive weights on the components of $\gamma$. The total hyperbolic length function
	\[
	\ell_{\mathbf{a} \cdot \gamma} \colon \mathcal{T}_{g,n} \to \mathbf{R}_{>0}
	\]
	is bounding with respect to any set of Fenchel-Nielsen coordinates on $ \mathcal{T}_{g,n}$.
\end{proposition}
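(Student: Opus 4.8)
\emph{Proof plan for Proposition \textup{\ref{prop:len_bounding}}.} Fix the pair of pants decomposition $\mathcal{P} = (\gamma_1,\dots,\gamma_{3g-3+n})$, Fenchel--Nielsen coordinates $(\ell_i,\tau_i)$ adapted to it, and abbreviate $M_i := \max\{|m_i|,|m_i+1|\}$. Since the weights $a_j>0$ are fixed, a function is bounding if and only if any positive rescaling of it is (with a different constant $C$), so it suffices to treat $\ell_\gamma := \sum_{j=1}^k \ell_{\gamma_j}$ in place of $\ell_{\mathbf{a}\cdot\gamma}$; at the end one reinstates the weights, replacing $L$ by $L/\min_j a_j$. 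The only role of the filling hypothesis is the elementary remark that $i(\gamma,\gamma_i)\geq 1$ for every $i$: otherwise $\gamma_i$ would be isotopic into a complementary component of $\gamma$, but those are disks and once-punctured disks, and a pants curve cannot lie in such a region. For each $i$ I fix a component $\gamma_{j(i)}$ of $\gamma$ with $k_i := i(\gamma_{j(i)},\gamma_i)\geq 1$, so that $\ell_\gamma(Z)\geq \ell_{\gamma_{j(i)}}(Z)$ for all $Z$. In fact the argument will not use $Y$ at all: it will give the estimate for every $Z\in\mathcal{T}_{g,n}$ with $Z\in\mathcal{C}_{\mathcal{P}}^{\mathbf{m}}$ and $\ell_\gamma(Z)\leq L$, which is stronger than what is asked.

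The plan is to split according to the size of $M_i$. When $M_i$ is bounded I would use a soft compactness argument. The function $Z\mapsto \ell_{\gamma_i}(Z)/\ell_\gamma(Z)$ on $\mathcal{T}_{g,n}$ extends continuously to the Thurston compactification $\overline{\mathcal{T}_{g,n}} = \mathcal{T}_{g,n}\cup P\mathcal{ML}_{g,n}$, with boundary value $i(\gamma_i,\lambda)/i(\gamma,\lambda)$ at $[\lambda]$; this is well defined and finite because $\gamma$ filling forces $i(\gamma,\lambda)>0$ for every $\lambda\in\mathcal{ML}_{g,n}\setminus\{0\}$. As $\overline{\mathcal{T}_{g,n}}$ is compact there is a constant $C_0 = C_0(\gamma,\mathcal{P})$ with $\ell_{\gamma_i}(Z)\leq C_0\,\ell_\gamma(Z)$ for all $Z\in\mathcal{T}_{g,n}$, hence $\ell_i(Z)\leq C_0 L$; since $M_i$ is bounded in this regime this is the desired bound up to the factor $M_i$.

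When $M_i$ is large the polytope condition $Z\in\mathcal{C}_{\mathcal{P}}^{\mathbf{m}}$ forces a large twist, $|\tau_i(Z)|\geq (M_i-1)\,\ell_i(Z)$, and I would turn this into a lower bound for $\ell_{\gamma_{j(i)}}(Z)$. The geodesic representative of $\gamma_{j(i)}$ crosses $\gamma_i$ exactly $k_i$ times, and near each crossing it must spiral around $\gamma_i$ roughly $|m_i|$ times; by the collar lemma together with the standard description of the effect of Dehn twisting on geodesic length, there is a constant $c_1>0$ (depending only on $g,n,\gamma,\mathcal{P}$) with
\[
\ell_{\gamma_{j(i)}}(Z) \;\geq\; c_1\, k_i\,\bigl(|\tau_i(Z)| - \ell_i(Z)\bigr)_+ \;\geq\; c_1\, k_i\,(M_i-2)_+\,\ell_i(Z).
\]
Combining the two regimes: if $M_i\leq 4$ then $\ell_i(Z)\leq C_0 L\leq 4C_0 L/M_i$; if $M_i\geq 4$ then $(M_i-2)_+\geq M_i/2$, and the displayed inequality with $\ell_\gamma(Z)\geq \ell_{\gamma_{j(i)}}(Z)$ and $\ell_\gamma(Z)\leq L$ gives $\ell_i(Z)\leq 2L/(c_1 k_i M_i)$. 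In either case $\ell_i(Z)\leq C\,L/M_i$ for a constant $C$ depending only on $g,n,\gamma,\mathbf{a},\mathcal{P}$, which is the bounding property.

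I expect the main obstacle to be the large-$M_i$ case, namely establishing the twist--length inequality above with a constant independent of $Z$: one must control the geodesic representative of $\gamma_{j(i)}$ inside the collar of $\gamma_i$ (which may be very wide or very thin) and show each passage contributes length of order $|\tau_i(Z)|$. This is classical hyperbolic geometry underlying Theorem \ref{theo:length_APL} and is carried out in \cite[\S 9.4]{Mir16}; by contrast the bounded-$M_i$ case is a clean consequence of compactness of the Thurston compactification and the definition of filling.
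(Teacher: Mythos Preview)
The paper does not give its own proof of this proposition: it simply cites \cite[\S 9.4]{Mir16}. Your proposal therefore goes further than the paper by actually sketching the argument, and the sketch is sound. The reduction to unit weights is harmless; the observation that filling forces $i(\gamma,\gamma_i)\geq 1$ for every pants curve is exactly the way the hypothesis enters; and your split into bounded and large $M_i$ is the natural one.

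Two remarks on the details. Your bounded-$M_i$ argument via the Thurston compactification is clean and correct: the ratio $\ell_{\gamma_i}/\ell_\gamma$ extends continuously to $P\mathcal{ML}_{g,n}$ with boundary value $i(\gamma_i,\lambda)/i(\gamma,\lambda)$ (finite since $\gamma$ fills), and compactness gives a uniform bound; note that this uses the extension of length ratios for \emph{non-simple} closed curves to the boundary, which is standard via geodesic currents but worth flagging since $\gamma$ need not be simple. For large $M_i$, the inequality $\ell_{\gamma_{j(i)}}(Z)\geq c_1 k_i(|\tau_i(Z)|-\ell_i(Z))_+$ with $c_1$ independent of $Z$ is indeed the crux; it amounts to the statement that each crossing of $\gamma_i$ contributes length of order $|\tau_i|$ once the twist dominates, and you correctly identify this as the content of \cite[\S 9.4]{Mir16} (it is essentially the quantitative input behind Theorem~\ref{theo:length_APL}). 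Your observation that the resulting constant is independent of $Y$, so that the estimate holds for all $Z\in\mathcal{T}_{g,n}$ rather than just $Z\in\mathrm{Mod}_{g,n}\cdot Y$, is a genuine strengthening and is correct.
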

$ $

Another important property of the total hyperbolic length function of a filling closed multi-curve on $S_{g,n}$ is its properness. See \cite[Lemma 3.1]{Ker83} for a proof of the following result. \\

\begin{lemma}
	\label{lem:proper_len}
	Let $\gamma := (\gamma_1,\dots,\gamma_k)$ with $k \geq 1$ be an ordered filling closed multi-curve on $S_{g,n}$ and $\mathbf{a} := (a_1,\dots,a_k) \in (\mathbf{R}_{>0})^k$ be a vector of positive weights on the components of $\gamma$. The total hyperbolic length function
	\[
	\ell_{\mathbf{a} \cdot \gamma} \colon \mathcal{T}_{g,n} \to \mathbf{R}_{>0}
	\]
	is proper.
\end{lemma}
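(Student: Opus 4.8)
The plan is to prove properness in the usual topological form: show that $\ell_{\mathbf{a}\cdot\gamma}^{-1}\big((0,L]\big)$ is compact for every $L>0$. Since $\ell_{\mathbf{a}\cdot\gamma}$ is continuous and strictly positive, this set is closed in $\mathcal{T}_{g,n}$, and since every compact subset of $\mathbf{R}_{>0}$ lies in some $(0,L]$, the statement follows once these sets are known to be compact. So I would fix $L>0$ and a sequence $(X_n)_{n\in\mathbf{N}}$ in $\mathcal{T}_{g,n}$ with $\ell_{\mathbf{a}\cdot\gamma}(X_n)\le L$, and try to extract a subsequence converging in $\mathcal{T}_{g,n}$, arguing by a thick--thin dichotomy on the systoles $\mathrm{sys}(X_n)$, i.e.\ on the lengths of the shortest essential non-peripheral simple closed geodesics of the $X_n$.

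First I would treat the thin case. Suppose $\liminf_n \mathrm{sys}(X_n)=0$ and pass to a subsequence realizing this, so $\ell_{\alpha_n}(X_n)\to 0$ for essential non-peripheral simple closed geodesics $\alpha_n$. Because $\gamma$ is filling, every essential non-peripheral simple closed curve on $S_{g,n}$ must cross $\gamma_1\cup\dots\cup\gamma_k$ --- otherwise it would be contained in a complementary polygon with at most one puncture and hence be inessential or peripheral --- so for each $n$ there is an index $j_n$ with $i(\gamma_{j_n},\alpha_n)\ge 1$. By the collar lemma the geodesic representative of $\gamma_{j_n}$ must traverse the standard collar about $\alpha_n$, whose half-width tends to $+\infty$ as $\ell_{\alpha_n}(X_n)\to 0$; therefore $\ell_{\gamma_{j_n}}(X_n)\to\infty$, and then $\ell_{\mathbf{a}\cdot\gamma}(X_n)\ge(\min_j a_j)\,\ell_{\gamma_{j_n}}(X_n)\to\infty$, contradicting $\ell_{\mathbf{a}\cdot\gamma}(X_n)\le L$. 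Hence, after discarding finitely many terms, $\mathrm{sys}(X_n)\ge\delta$ for some $\delta>0$ and all $n$.

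Now the thick case: $\mathrm{sys}(X_n)\ge\delta>0$. By Mumford's compactness criterion $\text{Mod}_{g,n}$ acts cocompactly on the $\delta$-thick part of $\mathcal{T}_{g,n}$, so I can write $X_n=\phi_n\cdot Y_n$ with $\phi_n\in\text{Mod}_{g,n}$ and $Y_n$ in a fixed compact subset of $\mathcal{T}_{g,n}$. If the $\phi_n$ take only finitely many values then $(X_n)$ sub-converges and we are done; otherwise pass to a subsequence with the $\phi_n$ pairwise distinct. Fixing a basepoint $Y_0$ and using that $\{Y_n\}$ has finite $d_\text{Thu}$-diameter together with (\ref{eq:thurston_met}), there is a constant $D$ with $\ell_c(X_n)=\ell_{\phi_n^{-1}c}(Y_n)\ge e^{-D}\ell_{\phi_n^{-1}c}(Y_0)$ for every closed curve $c$, hence $\ell_{\mathbf{a}\cdot\gamma}(X_n)\ge e^{-D}\,\ell_{\mathbf{a}\cdot\gamma}(\phi_n\cdot Y_0)$. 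It then suffices to know that $\ell_{\mathbf{a}\cdot\gamma}(\phi_n\cdot Y_0)\to\infty$, which follows from two soft finiteness facts: a complete finite-area hyperbolic surface has only finitely many closed geodesics of length at most $R$ for each $R>0$, and, since $\gamma$ is filling, $\text{Stab}(\gamma)$ is finite while $[\phi]\mapsto(\phi\gamma_1,\dots,\phi\gamma_k)$ is injective on $\text{Mod}_{g,n}/\text{Stab}(\gamma)$; together these make $[\phi]\mapsto\ell_{\mathbf{a}\cdot\gamma}(\phi\cdot Y_0)$ a proper function on $\text{Mod}_{g,n}/\text{Stab}(\gamma)$, so it tends to $\infty$ along the pairwise distinct classes $[\phi_n]$ --- again contradicting $\ell_{\mathbf{a}\cdot\gamma}(X_n)\le L$. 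Thus every sequence in $\ell_{\mathbf{a}\cdot\gamma}^{-1}\big((0,L]\big)$ sub-converges in $\mathcal{T}_{g,n}$, the set is compact, and properness follows.

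The main point to spell out carefully is the thick-case properness of $[\phi]\mapsto\ell_{\mathbf{a}\cdot\gamma}(\phi\cdot Y_0)$; I expect this to be the only genuinely non-formal step, although it is soft and uses nothing beyond finiteness of the bounded-length geodesic spectrum and finiteness of $\text{Stab}(\gamma)$, not the delicate asymptotic counting elsewhere in the paper. As an alternative to this step one could instead run the thick case through Thurston's compactification $P\overline{\mathcal{Y}_{g,n}}\cong\mathcal{T}_{g,n}\sqcup P\mathcal{ML}_{g,n}$: an escaping subsequence would converge to some nonzero $[\lambda]\in P\mathcal{ML}_{g,n}$, the filling hypothesis would force the extended geometric intersection number $\sum_j i(\gamma_j,\lambda)$ to be positive, and the standard asymptotics of length functions near the Thurston boundary would once more give $\ell_{\mathbf{a}\cdot\gamma}(X_n)\to\infty$.
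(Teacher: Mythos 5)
Your argument is essentially correct, but note that the paper does not prove this lemma at all: it simply quotes it as \cite[Lemma 3.1]{Ker83}. So any complete argument is necessarily independent of the paper. Your route --- closed sublevel sets, a thick--thin dichotomy, the collar lemma in the thin case (using that every essential non-peripheral simple closed curve must cross a filling multi-curve), and in the thick case Mumford compactness plus discreteness of the bounded-length geodesic spectrum of a fixed finite-area surface together with finiteness of $\text{Stab}(\gamma)$ for filling $\gamma$ --- is a standard, self-contained proof of Kerckhoff's lemma, and the key non-formal step you isolate (properness of $\phi \mapsto \ell_{\mathbf{a}\cdot\gamma}(\phi\cdot Y_0)$ on $\text{Mod}_{g,n}/\text{Stab}(\gamma)$) is argued correctly: bounded total length forces each $\phi^{-1}\gamma_j$ into a finite set of free homotopy classes, and the fibers of $\phi\mapsto(\phi^{-1}\gamma_1,\dots,\phi^{-1}\gamma_k)$ are cosets of the finite group $\text{Stab}(\gamma)$, a fact the paper itself records at the beginning of \S 5.

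One step needs a better justification than the one you cite. You invoke (\ref{eq:thurston_met}) to get $\ell_{\phi_n^{-1}c}(Y_n)\geq e^{-D}\,\ell_{\phi_n^{-1}c}(Y_0)$ ``for every closed curve $c$'', but (\ref{eq:thurston_met}) is stated only for $\lambda\in\mathcal{ML}_{g,n}$, i.e.\ for simple closed curves and measured laminations, whereas the components of a filling multi-curve, and hence the curves $\phi_n^{-1}\gamma_j$, are in general non-simple. The comparison you want is nevertheless true: either use Thurston's characterization of $d_{\text{Thu}}'(X,Y)$ as the logarithm of the least Lipschitz constant of a marking-compatible map $X\to Y$, which distorts the length of \emph{every} closed geodesic (simple or not) by a factor at most $e^{d}$; or avoid the Thurston metric altogether and observe that over the compact set $K$ containing the $Y_n$ the marked hyperbolic metrics are uniformly bi-Lipschitz to that of a basepoint $Y_0$, which yields the same uniform constant $D$. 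The same caveat applies to your alternative ending through the Thurston boundary, where $i(\gamma_j,\lambda)$ for non-simple $\gamma_j$ has to be interpreted via geodesic currents; as an aside that route is fine, but the main argument does not need it.
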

$ $

Let $\mathcal{F} \colon \mathcal{T}_{g,n} \to \mathbf{R}_{>0}$ be a positive, continuous, proper function which is asymptotically piecewise linear and bounding with respect to some set of Fenchel-Nielsen coordinates. For every $L > 0$ consider the counting function
\[
f(X,\mathcal{F},L) := \#\{\phi \in \text{Mod}_{g,n} \ | \ \mathcal{F}(\phi \cdot X) \leq L\}.
\]
Consider also the limit
\begin{equation}
\label{eq:c(F)}
r(\mathcal{F}) := \lim_{L \to \infty} \frac{\mu_{\text{wp}}(\{ Y \in \mathcal{T}_{g,n} \ | \ \mathcal{F}(Y) \leq L\})}{L^{6g-6+2n}}.
\end{equation}
One can check this limit exists using Wolpert's magic formula and the properties of the function $\mathcal{F}$. \\

We are now ready to present the generalized version of the filling case of Theorem \ref{theo:mir_count} that we will need to prove Theorem \ref{theo:fil_count_comp}. Appropriately modifying the arguments in the proof of \cite[Theorem 1.1]{Mir16} yields the following result.\\

\begin{theorem}
	\label{theo:mir_closed_count_gen}
	Let $\mathcal{F} \colon \mathcal{T}_{g,n} \to \mathbf{R}_{>0}$ be a positive, continuous, proper function which is asymptotically piecewise linear and bounding with respect to some set of Fenchel-Nielsen coordinates. Then
	\[
	\lim_{L \to \infty} \frac{f(X, \mathcal{F},L)}{L^{6g-6+2n}} = \frac{B(X) \cdot r(\mathcal{F})}{b_{g,n}}.
	\]
\end{theorem}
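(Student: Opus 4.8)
The plan is to run Mirzakhani's proof of \cite[Theorem 1.1]{Mir16} with the gauge $\ell_{\mathbf{a}\cdot\gamma}$ replaced by the abstract $\mathcal{F}$, after first checking that the only features of $\ell_{\mathbf{a}\cdot\gamma}$ actually used there are the ones we have hypothesized: positivity and continuity, properness (used via Lemma \ref{lem:proper_len}), asymptotic piecewise linearity with respect to \emph{some} set of Fenchel--Nielsen coordinates (Theorem \ref{theo:length_APL}), and the bounding property with respect to those same coordinates (Proposition \ref{prop:len_bounding}). Once the dependence on these properties is made explicit, no new ingredient is needed; I would organize the write-up in three steps.

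\emph{Step 1 (existence of $r(\mathcal{F})$).} Fix Fenchel--Nielsen coordinates $(\ell_i,\tau_i)_{i=1}^{3g-3+n}$ adapted to a pants decomposition $\mathcal{P}$ for which $\mathcal{F}$ is asymptotically piecewise linear and bounding, and use Wolpert's formula to identify $\mu_{\text{wp}}$ with Lebesgue measure in these coordinates. Partition $\mathcal{T}_{g,n}$ into the polytopes $\mathcal{C}_\mathcal{P}^{\mathbf{m}}$; on the part of each polytope far from its boundary $\mathcal{F}$ agrees with an affine function up to a uniform additive error, while the bounding property forces $\ell_i = O\big(L/\max\{|m_i|,|m_i+1|\}\big)$ on $\mathcal{C}_\mathcal{P}^{\mathbf{m}}\cap\mathcal{F}^{-1}([0,L])$. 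Combined with the thin-part volume estimates coming from Theorem \ref{theo:vol_pol}, this shows that the polytopes with $\|\mathbf{m}\|$ large, and the $\ell_i$-thin parts of each polytope, contribute $o(L^{6g-6+2n})$ to $\mu_{\text{wp}}(\mathcal{F}^{-1}([0,L]))$; on the bounded family of remaining polytopes one integrates the indicator of an affine sublevel set by hand. This yields the existence of the limit $r(\mathcal{F})$ in (\ref{eq:c(F)}) and a concrete formula for it. Working then in the enlarged Yamabe space $\overline{\mathcal{Y}_{g,n}}$, the asymptotic piecewise linearity of $\mathcal{F}$ produces a continuous, degree-one homogeneous function $\widehat{\mathcal{F}}\colon\mathcal{ML}_{g,n}\to\mathbf{R}_{\geq0}$ such that the rescaled sublevel sets $\tfrac1L\cdot\mathcal{F}^{-1}([0,L])\subseteq\mathcal{Y}_{g,n}$ converge in $\overline{\mathcal{Y}_{g,n}}$ to $\widehat{\mathcal{F}}^{-1}([0,1])\subseteq\mathcal{ML}_{g,n}$ — the bounding property controls the non-compact directions in this convergence, in the spirit of the systole function $s_\gamma$ in the proof of Proposition \ref{prop:no_escape_of_mass} — and combining this with Proposition \ref{prop:asymp_wp_meas} gives $r(\mathcal{F})=\mu_{\text{Thu}}\big(\widehat{\mathcal{F}}^{-1}([0,1])\big)$.

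\emph{Step 2 (the main counting estimate).} This is the heart of \cite{Mir16} and the step I expect to be the main obstacle. Since $f(X,\mathcal{F},L)=\#\big(\mathrm{Mod}_{g,n}\cdot X\cap\mathcal{F}^{-1}([0,L])\big)$ is not, for a general $\mathcal{F}$, a count of the form $\#\{\alpha\in\mathrm{Mod}_{g,n}\cdot\gamma:\ell_\alpha(X)\leq L\}$, one cannot simply invoke equidistribution of $\mathcal{ML}_{g,n}$-orbits; instead one runs Mirzakhani's lattice-point argument: decompose the count over the polytopes $\mathcal{C}_\mathcal{P}^{\mathbf{m}}$, linearize $\mathcal{F}$ on each (error $O(1)$, hence negligible after rescaling), truncate the sum over $\mathbf{m}$ and the thin parts using the bounding property and the volume estimates exactly as in Step 1, and compare the resulting sum — after integrating over $\mathcal{M}_{g,n}$ against the Weil--Petersson measure and applying the lattice-point estimates of \cite{Mir16} — to $\mu_{\text{wp}}(\mathcal{F}^{-1}([0,L]))$; the passage from the $\mu_{\text{wp}}$-average over $\mathcal{M}_{g,n}$ to the value at the specific point $X$ is what produces the factor $B(X)/b_{g,n}$. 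Equivalently, the argument amounts to the weak-$\star$ convergence
\[
\frac{1}{L^{6g-6+2n}}\sum_{\phi\in\mathrm{Mod}_{g,n}}\delta_{\frac1L\cdot(\phi\cdot X)}\ \longrightarrow\ \frac{B(X)}{b_{g,n}}\,\mu_{\text{Thu}}
\]
of measures on $\overline{\mathcal{Y}_{g,n}}$ (here $\tfrac1L\cdot(\phi\cdot X)$ denotes the rescaled metric in $\mathcal{Y}_{g,n}=\mathbf{R}_{>0}\times\mathcal{T}_{g,n}$), all of whose limiting mass is carried by the boundary $\mathcal{ML}_{g,n}$.

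\emph{Step 3 (conclusion).} Feeding the convergence of Step 2 into a Portmanteau argument, using the convergence of limit shapes $\tfrac1L\cdot\mathcal{F}^{-1}([0,L])\to\widehat{\mathcal{F}}^{-1}([0,1])$ from Step 1 and a no-escape-of-mass estimate near the boundary of $\overline{\mathcal{Y}_{g,n}}$ (again as in Proposition \ref{prop:no_escape_of_mass}, with the bounding property replacing the role of $s_\gamma$), one obtains
\[
\lim_{L\to\infty}\frac{f(X,\mathcal{F},L)}{L^{6g-6+2n}}=\frac{B(X)}{b_{g,n}}\,\mu_{\text{Thu}}\big(\widehat{\mathcal{F}}^{-1}([0,1])\big)=\frac{B(X)\cdot r(\mathcal{F})}{b_{g,n}}.
\]
The delicate point throughout, and the reason the hypotheses on $\mathcal{F}$ are exactly these, is uniformity: the linearization errors, the truncation of the $\mathbf{m}$-sum, and the thin-part contributions must all be bounded by $o(L^{6g-6+2n})$ with constants independent of $L$, and this is precisely what the bounding condition together with Mirzakhani's Weil--Petersson volume estimates (Theorem \ref{theo:vol_pol} and its thin-part refinements) deliver.
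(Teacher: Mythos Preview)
Your high-level approach matches the paper exactly: the paper's entire ``proof'' is the single sentence ``Appropriately modifying the arguments in the proof of \cite[Theorem 1.1]{Mir16} yields the following result,'' and you correctly isolate positivity, continuity, properness, asymptotic piecewise linearity, and the bounding property as the only features of $\ell_{\mathbf{a}\cdot\gamma}$ that \cite{Mir16} actually uses.

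Where your elaboration diverges is in the packaging. You route Steps 1 and 3 through the enlarged Yamabe space $\overline{\mathcal{Y}_{g,n}}$, a limiting function $\widehat{\mathcal{F}}$ on $\mathcal{ML}_{g,n}$, and a Portmanteau/no-escape-of-mass argument in the style of \S4. The paper keeps these two threads separate: Theorem \ref{theo:mir_closed_count_gen} is proved (by reference to \cite{Mir16}) entirely inside Fenchel--Nielsen coordinates via Wolpert's formula and direct lattice-point/volume estimates, with no need to define $\widehat{\mathcal{F}}$ or pass to $\overline{\mathcal{Y}_{g,n}}$; the Yamabe-space interpretation $r(\mathcal{F})=\mu_{\text{Thu}}(\overline{\mathcal{F}}^{-1}([0,1]))$ is established afterwards, as the independent Proposition \ref{prop:top_int_c}, and only for $\mathcal{F}$ of the special form (\ref{eq:induced_F}). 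For a general $\mathcal{F}$ satisfying your hypotheses the existence of a continuous homogeneous extension $\widehat{\mathcal{F}}$ to all of $\mathcal{ML}_{g,n}$ is not obvious (asymptotic piecewise linearity is stated relative to one fixed Fenchel--Nielsen chart, not intrinsically), so your route would require an extra lemma there. The paper's (i.e.\ Mirzakhani's) direct approach sidesteps this: $r(\mathcal{F})$ only needs to exist as the limit (\ref{eq:c(F)}), which follows from the FN-coordinate volume computation alone, and the counting then compares $f(X,\mathcal{F},L)$ to $\mu_{\text{wp}}(\mathcal{F}^{-1}([0,L]))$ polytope-by-polytope without ever naming a boundary function.
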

$ $

\begin{remark}
	According to Theorem \ref{theo:length_APL}, Proposition \ref{prop:len_bounding}, and Lemma \ref{lem:proper_len}, given any set of positive rational weights $\mathbf{a}:=(a_1,\dots,a_k) \in (\mathbf{Q}_{>0})^k$ on the components of $\gamma$, the total hyperbolic length function $\ell_{\mathbf{a} \cdot\gamma} \colon \mathcal{T}_{g,n} \to \mathbf{R}_{>0}$ satisfies the hypothesis of  Theorem \ref{theo:mir_closed_count_gen}. It follows that we can recover the filling case of Theorem \ref{theo:mir_count} from Theorem \ref{theo:mir_closed_count_gen} by setting $\mathcal{F} := \ell_{\mathbf{a} \cdot \gamma}$.\\
\end{remark}

\textit{Topological interpretation of $r(\mathcal{F})$.} Following ideas similar to the ones introduced in the proof of Proposition \ref{prop:topo_dist_red}, one can give a topological interpretation of the limit $r(\mathcal{F})$ defined in (\ref{eq:c(F)}) for a particular class of maps $\mathcal{F} \colon \mathcal{T}_{g,n} \to \mathbf{R}_{>0}$, which we now describe.\\

Let $F \colon (\mathbf{R}_{\geq 0})^k \to \mathbf{R}_{\geq0}$ be a continuous, homogeneous, and proper function. In particular, $F$ is positive away from the origin. Consider the map $\mathcal{F} \colon \mathcal{T}_{g,n} \to \mathbf{R}_{>0}$ which to every $Y \in \mathcal{T}_{g,n}$ assigns the positive value
\begin{equation}
\label{eq:induced_F}
\mathcal{F}(Y) := F(\ell_{\gamma_1}(Y),\dots,\ell_{\gamma_k}(Y)).
\end{equation}
More generally, consider the map $\overline{\mathcal{F}} \colon \overline{\mathcal{Y}_{g,n}} \to \mathbf{R}_{>0}$ which to every $\alpha \in \overline{\mathcal{Y}_{g,n}}$ assigns the positive value
\begin{equation}
\label{eq:extended_F}
\overline{\mathcal{F}}(\alpha) := F(i(\gamma_1,\alpha),\dots,i(\gamma_k,\alpha)).
\end{equation}
For any map $\mathcal{F} \colon \mathcal{T}_{g,n} \to \mathbf{R}_{>0}$ as in (\ref{eq:induced_F}), the following result holds.\\

\begin{proposition}
	\label{prop:top_int_c}
	Let $\mathcal{F} \colon \mathcal{T}_{g,n} \to \mathbf{R}_{>0}$ be as in (\ref{eq:induced_F}). Then,
	\[
	r(\mathcal{F}) = \mu_{\text{Thu}}\left( \{ \lambda \in \mathcal{ML}_{g,n} \ | \ \overline{\mathcal{F}}(\lambda) \leq 1 \} \right),
	\]
	where $\overline{\mathcal{F}} \colon \overline{\mathcal{Y}_{g,n}} \to \mathbf{R}_{\geq0}$ is as in (\ref{eq:extended_F}).
\end{proposition}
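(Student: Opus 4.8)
The plan is to run the Yamabe–space argument of the proof of Proposition~\ref{prop:topo_dist_red}, noting that the filling hypothesis on $\gamma$ trivializes the no–escape–of–mass step (Proposition~\ref{prop:no_escape_of_mass}). First I would record the scaling identity tying $\mathcal{F}$ on $\mathcal{T}_{g,n}$ to $\overline{\mathcal{F}}$ on $\overline{\mathcal{Y}_{g,n}}$. For $\alpha = (t,Y) \in \mathcal{Y}_{g,n}$ the pairing of \S 2 gives $i(\gamma_j,\alpha) = t\,\ell_{\gamma_j}(Y)$, so by homogeneity of $F$,
\[
\overline{\mathcal{F}}\big((t,Y)\big) = F\big(t\,\ell_{\gamma_1}(Y),\dots,t\,\ell_{\gamma_k}(Y)\big) = t\cdot\mathcal{F}(Y).
\]
Hence, with $\mu_{\text{wp}}^t$ the pushforward of $\mu_{\text{wp}}$ to the slice $\{t\}\times\mathcal{T}_{g,n}\subseteq\overline{\mathcal{Y}_{g,n}}$ as in \S 4, the open set $B := \{\alpha\in\overline{\mathcal{Y}_{g,n}} \mid \overline{\mathcal{F}}(\alpha) < 1\}$ satisfies, for every $t>0$,
\[
\mu_{\text{wp}}^t(B) = \mu_{\text{wp}}\big(\{Y\in\mathcal{T}_{g,n}\mid \mathcal{F}(Y) < 1/t\}\big),
\]
so that $t^{6g-6+2n}\,\mu_{\text{wp}}^t(B)$ evaluated at $t=1/L$ equals $L^{-(6g-6+2n)}\,\mu_{\text{wp}}(\{\mathcal{F}<L\})$.

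Second, I would check that $\overline{B}$ is compact and that $\mu_{\text{Thu}}(\partial B)=0$. The function $\overline{\mathcal{F}}$ is continuous on $\overline{\mathcal{Y}_{g,n}}$ (the pairing $i$ is continuous there by definition of the topology, and $F$ is continuous) and homogeneous of degree one. Since $F$ is proper it is positive away from the origin of $(\mathbf{R}_{\geq0})^k$, and since $\gamma$ is filling the vector $(i(\gamma_1,\alpha),\dots,i(\gamma_k,\alpha))$ is nonzero for every $\alpha\in\overline{\mathcal{Y}_{g,n}}$ other than the apex: for $\alpha\in\mathcal{Y}_{g,n}$ this is immediate, and for $\alpha=\lambda\in\mathcal{ML}_{g,n}$ nonzero it follows from the filling characterization of Proposition~\ref{prop:ml_fil}. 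Thus $\overline{\mathcal{F}}>0$ on $\overline{\mathcal{Y}_{g,n}}$ away from the apex; together with the projective compactness of $P\overline{\mathcal{Y}_{g,n}}$ and the homogeneity of $\overline{\mathcal{F}}$ this forces the sublevel set $\{\overline{\mathcal{F}}\leq1\}$ to be compact, so $\overline{B}$ is compact with $\partial B\subseteq\{\overline{\mathcal{F}}=1\}$. Finally $\{\overline{\mathcal{F}}=1\}\cap\mathcal{ML}_{g,n}$ has zero Thurston measure by Lemma~\ref{lem:thu_meas_zero} applied to the homogeneous continuous function $\overline{\mathcal{F}}|_{\mathcal{ML}_{g,n}}$, and since $\mu_{\text{Thu}}$ is carried by $\mathcal{ML}_{g,n}\subseteq\overline{\mathcal{Y}_{g,n}}$ this gives $\mu_{\text{Thu}}(\partial B)=0$. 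This is the step that required Proposition~\ref{prop:no_escape_of_mass} in the simple case; here the filling hypothesis makes it automatic, and I expect it to be the only place where a genuine idea (rather than bookkeeping) enters.

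Third, I would invoke Proposition~\ref{prop:asymp_wp_meas}, which gives $t^{6g-6+2n}\,\mu_{\text{wp}}^t\to\mu_{\text{Thu}}$ in the weak-$\star$ topology on $\overline{\mathcal{Y}_{g,n}}$. Applying Portmanteau's theorem to $B$ (open, with compact closure, and $\mu_{\text{Thu}}$-null boundary) yields
\[
\lim_{t\to0}\, t^{6g-6+2n}\,\mu_{\text{wp}}^t(B) = \mu_{\text{Thu}}(B) = \mu_{\text{Thu}}\big(\{\lambda\in\mathcal{ML}_{g,n}\mid\overline{\mathcal{F}}(\lambda)<1\}\big).
\]
Combining with the first step and setting $t=1/L$ gives $\lim_{L\to\infty}L^{-(6g-6+2n)}\mu_{\text{wp}}(\{\mathcal{F}<L\}) = \mu_{\text{Thu}}(\{\overline{\mathcal{F}}<1\}\cap\mathcal{ML}_{g,n})$. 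To conclude I would compare $\{\mathcal{F}<L\}$ and $\{\mathcal{F}\leq L\}$ via $\{\mathcal{F}<L\}\subseteq\{\mathcal{F}\leq L\}\subseteq\{\mathcal{F}<(1+\epsilon)L\}$; together with the existence of the limit $r(\mathcal{F})$ in (\ref{eq:c(F)}) this squeezes $\mu_{\text{Thu}}(\{\overline{\mathcal{F}}<1\}\cap\mathcal{ML}_{g,n})\leq r(\mathcal{F})\leq(1+\epsilon)^{6g-6+2n}\,\mu_{\text{Thu}}(\{\overline{\mathcal{F}}<1\}\cap\mathcal{ML}_{g,n})$, and letting $\epsilon\to0$ collapses it. Replacing the strict inequality by $\overline{\mathcal{F}}(\lambda)\leq1$ is again Lemma~\ref{lem:thu_meas_zero}, and one is left with $r(\mathcal{F})=\mu_{\text{Thu}}(\{\lambda\in\mathcal{ML}_{g,n}\mid\overline{\mathcal{F}}(\lambda)\leq1\})$, where $\overline{\mathcal{F}}$ is the map in (\ref{eq:extended_F}), as claimed.
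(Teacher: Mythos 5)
Your argument is correct and takes essentially the same route as the paper's: Proposition \ref{prop:asymp_wp_meas} plus Portmanteau applied to a relatively compact, $\mu_{\text{Thu}}$-boundary-null sublevel set of $\overline{\mathcal{F}}$, with compactness coming from the properness of $F$ and the filling hypothesis, and Lemma \ref{lem:thu_meas_zero} handling the boundary. The only difference is cosmetic: the paper applies Portmanteau directly to the closed set $\{\overline{\mathcal{F}} \leq 1\}$, which makes your final $(1+\epsilon)$-squeeze converting the strict to the non-strict sublevel set unnecessary.
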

$ $

\begin{proof}
	By Proposition \ref{prop:asymp_wp_meas},
	\[
	\lim_{t \to 0} \ t^{6g-6+2n} \cdot \mu_\text{wp}^t = \mu_{\text{Thu}}
	\]
	in the weak-$\star$ topology for measures on $\overline{\mathcal{Y}_{g,n}}$. Consider the subset of $\overline{\mathcal{Y}_{g,n}}$ given by
	\[
	D(\overline{\mathcal{F}}) := \{\alpha \in \overline{\mathcal{Y}_{g,n}} \ | \ \overline{\mathcal{F}}(\alpha) \leq 1 \}.
	\]
	Using the properness of the function $F \colon (\mathbf{R}_{\geq 0})^k \to \mathbf{R}_{\geq0}$ and the fact that $\gamma$ is filling, one can check that $D(\overline{\mathcal{F}}) \subseteq \overline{\mathcal{Y}_{g,n}}$ is compact. By Lemma \ref{lem:thu_meas_zero},
	\[
	\mu_{\text{Thu}}(\partial D(\overline{\mathcal{F}})) = 0.
	\]
	It follows from Portmanteau's theorem that
	\[
	\lim_{t \to 0} \ t^{6g-6+2n} \cdot \mu_\text{wp}^t(D(\overline{\mathcal{F}}) ) = \mu_{\text{Thu}}(D(\overline{\mathcal{F}})).
	\]
	Letting $t := 1/L$ for $L > 0$ and taking $L \to \infty$ we deduce
	\[
	\lim_{L \to \infty} \frac {\mu_\text{wp}^{1/L}(D(\overline{\mathcal{F}}))}{L^{6g-6+2n}} = \mu_{\text{Thu}}(D(\overline{\mathcal{F}})).
	\]
	As the function $F \colon (\mathbf{R}_{\geq 0})^k \to \mathbf{R}_{\geq0}$ is homogeneous,
	\[
	\mu_\text{wp}^{1/L}(D(\overline{\mathcal{F}})) = \mu_{\text{wp}}(\{Y \in \mathcal{T}_{g,n} \ | \ \mathcal{F}(Y) \leq L\})
	\]
	for every $L > 0$. Notice also that
	\[
	\mu_{\text{Thu}}(D(\overline{\mathcal{F}})) = \mu_{\text{Thu}}\left( \{ \lambda \in \mathcal{ML}_{g,n} \ | \ \overline{\mathcal{F}}(\lambda) \leq 1 \} \right).
	\]
	Putting things together we deduce
	\[
	r(\mathcal{F}) := \lim_{L \to \infty} \frac{\mu_{\text{wp}}(\{Y \in \mathcal{T}_{g,n} \ | \ \mathcal{F}(Y) \leq L\})}{L^{6g-6+2n}} = \mu_{\text{Thu}}\left( \{ \lambda \in \mathcal{ML}_{g,n} \ | \ \overline{\mathcal{F}}(\lambda) \leq 1 \} \right).
	\]
	This finishes the proof.
\end{proof}
$ $

\textit{Proof of Theorem \ref{theo:fil_count_comp}.} We are now ready to prove Theorem \ref{theo:fil_count_comp} and thus finish the proof of Theorem \ref{theo:length_spec_filling}.\\

\begin{proof}[Proof of Theorem \ref{theo:fil_count_comp}]
	Let $\mathbf{b} := (b_1,\dots,b_k) \in (\mathbf{R}_{>0})^k$ be arbitrary. Consider the function $F \colon (\mathbf{R}_{\geq 0})^k \to \mathbf{R}_{\geq 0}$ given by
	\[
	F(x_1,\dots,x_k) := \max \{x_1/b_1,\dots,x_k/b_k\}
	\]
	for every $(x_1,\dots,x_k) \in (\mathbf{R}_{\geq 0})^k$. Let $\mathcal{F} \colon \mathcal{T}_{g,n} \to \mathbf{R}_{>0}$ be the map induced by $F$ on $\mathcal{T}_{g,n}$ as defined in (\ref{eq:induced_F}). Notice that, as a consequence of Theorem \ref{theo:length_APL}, $\mathcal{F}$ is asymptotically piecewise linear with respect to any set of Fenchel-Nielsen coordinates. Let $\mathbf{1}:=(1,\dots,1) \in (\mathbf{Q}_{>0})^k$. The bound
	\[
	\ell_{\mathbf{1} \cdot \gamma} \leq k \cdot \max\{b_1,\dots,b_k\} \cdot \mathcal{F}
	\]
	together with Proposition \ref{prop:len_bounding} imply $\mathcal{F}$ is bounding with respect to any set of Fenchel-Nielsen coordinates. The same bound together with Lemma \ref{lem:proper_len} imply $\mathcal{F}$ is proper. By Theorem \ref{theo:mir_closed_count_gen} it follows that
	\[
	\lim_{L \to \infty} \frac{f(X,\mathcal{F},L)}{L^{6g-6+2n}} = \frac{B(X) \cdot r(\mathcal{F})}{b_{g,n}}.
	\]
	Notice that
	\[
	f(X,\mathcal{F},L)  = f(X,\gamma,\mathbf{b},L)
	\]
	for every $L > 0$. As a consequence of Proposition \ref{prop:top_int_c},
	\[
	r(\mathcal{F}) = \mu_{\text{Thu}}(\{\lambda \in \mathcal{ML}_{g,n} \ | \ i(\gamma_i,\lambda) \leq b_i, \ \forall i=1,\dots,k\}).
	\]
	Putting things together we deduce
	\[
	\lim_{L \to \infty} \frac{f(X,\gamma,\mathbf{b},L)}{L^{6g-6+2n}} = \frac{B(X)}{b_{g,n}} \cdot \mu_{\text{Thu}}(\{\lambda \in \mathcal{ML}_{g,n} \ | \ i(\gamma_i,\lambda) \leq b_i, \ \forall i=1,\dots,k\}).
	\]
	This finishes the proof.
\end{proof}
$ $


\bibliographystyle{amsalpha}


\bibliography{bibliography}

\providecommand{\bysame}{\leavevmode\hbox to3em{\hrulefill}\thinspace}
\providecommand{\MR}{\relax\ifhmode\unskip\space\fi MR }
\providecommand{\MRhref}[2]{%
  \href{http://www.ams.org/mathscinet-getitem?mr=#1}{#2}
}
\providecommand{\href}[2]{#2}
\begin{thebibliography}{{Ara}19b}

\bibitem[AA19]{AA19}
Francisco {Arana-Herrera} and Jayadev~S. {Athreya}, \emph{{Square-integrability
  of the Mirzakhani function and statistics of simple closed geodesics on
  hyperbolic surfaces}}, arXiv e-prints (2019), arXiv:1907.06287.

\bibitem[ABEM12]{ABEM12}
Jayadev Athreya, Alexander Bufetov, Alex Eskin, and Maryam Mirzakhani,
  \emph{Lattice point asymptotics and volume growth on {T}eichm\"uller space},
  Duke Math. J. \textbf{161} (2012), no.~6, 1055--1111. \MR{2913101}

\bibitem[{Ara}19a]{Ara20a}
Francisco {Arana-Herrera}, \emph{{Counting multi-geodesics on hyperbolic
  surfaces with respect to the lengths of individual components}}, In
  preparation, 2019.

\bibitem[{Ara}19b]{Ara19b}
\bysame, \emph{{Equidistribution of horospheres on moduli spaces of hyperbolic
  surfaces}}, arXiv e-prints (2019), arXiv:1912.03856.

\bibitem[Do13]{Do13}
Norman Do, \emph{Moduli spaces of hyperbolic surfaces and their
  {W}eil-{P}etersson volumes}, Handbook of moduli. {V}ol. {I}, Adv. Lect. Math.
  (ALM), vol.~24, Int. Press, Somerville, MA, 2013, pp.~217--258. \MR{3184165}

\bibitem[EM93]{EM93}
Alex Eskin and Curt McMullen, \emph{Mixing, counting, and equidistribution in
  {L}ie groups}, Duke Math. J. \textbf{71} (1993), no.~1, 181--209.
  \MR{1230290}

\bibitem[EM18]{EM18}
Viveka {Erlandsson} and Gabriele {Mondello}, \emph{{Ergodic invariant measures
  on the space of geodesic currents}}, arXiv e-prints (2018), arXiv:1807.02144.

\bibitem[EMM19]{EMM19}
Alex {Eskin}, Maryam {Mirzakhani}, and Amir {Mohammadi}, \emph{{Effective
  counting of simple closed geodesics on hyperbolic surfaces}}, arXiv e-prints
  (2019), arXiv:1905.04435.

\bibitem[EPS16]{EPS16}
V.~{Erlandsson}, H.~{Parlier}, and J.~{Souto}, \emph{{Counting curves, and the
  stable length of currents}}, ArXiv e-prints (2016).

\bibitem[ES16]{ES16}
Viveka Erlandsson and Juan Souto, \emph{Counting curves in hyperbolic
  surfaces}, Geom. Funct. Anal. \textbf{26} (2016), no.~3, 729--777.
  \MR{3540452}

\bibitem[ES20]{ES20}
Viveka {Erlandsson} and Juan {Souto}, \emph{Geodesic currents and
  {M}irzakhani's curve counting}, In preparation, 2020.

\bibitem[EU18]{EU18}
V.~{Erlandsson} and C.~{Uyanik}, \emph{{Length functions on currents and
  applications to dynamics and counting}}, ArXiv e-prints (2018).

\bibitem[FLP12]{FLP12}
Albert Fathi, Fran\c{c}ois Laudenbach, and Valentin Po\'enaru, \emph{Thurston's
  work on surfaces}, Mathematical Notes, vol.~48, Princeton University Press,
  Princeton, NJ, 2012, Translated from the 1979 French original by Djun M. Kim
  and Dan Margalit. \MR{3053012}

\bibitem[FM12]{FM11}
Benson Farb and Dan Margalit, \emph{A primer on mapping class groups},
  Princeton Mathematical Series, vol.~49, Princeton University Press,
  Princeton, NJ, 2012. \MR{2850125}

\bibitem[Ker83]{Ker83}
Steven~P. Kerckhoff, \emph{The {N}ielsen realization problem}, Ann. of Math.
  (2) \textbf{117} (1983), no.~2, 235--265. \MR{690845}

\bibitem[KM96]{KM96}
D.~Y. Kleinbock and G.~A. Margulis, \emph{Bounded orbits of nonquasiunipotent
  flows on homogeneous spaces}, Sina\u{\i}'s {M}oscow {S}eminar on {D}ynamical
  {S}ystems, Amer. Math. Soc. Transl. Ser. 2, vol. 171, Amer. Math. Soc.,
  Providence, RI, 1996, pp.~141--172. \MR{1359098}

\bibitem[{Liu}19]{Liu19}
Mingkun {Liu}, \emph{{Length statistics of random multicurves on closed
  hyperbolic surfaces}}, arXiv e-prints (2019), arXiv:1912.11155.

\bibitem[Mar04]{Mar04}
Grigoriy~A. Margulis, \emph{On some aspects of the theory of {A}nosov systems},
  Springer Monographs in Mathematics, Springer-Verlag, Berlin, 2004, With a
  survey by Richard Sharp: Periodic orbits of hyperbolic flows, Translated from
  the Russian by Valentina Vladimirovna Szulikowska. \MR{2035655}

\bibitem[{Mar}16]{Mar16}
Bruno {Martelli}, \emph{{An Introduction to Geometric Topology}}, ArXiv
  e-prints (2016).

\bibitem[Mas85]{Mas85}
Howard Masur, \emph{Ergodic actions of the mapping class group}, Proc. Amer.
  Math. Soc. \textbf{94} (1985), no.~3, 455--459. \MR{787893}

\bibitem[Mir04]{Mir04}
Maryam Mirzakhani, \emph{Simple geodesics on hyperbolic surfaces and the volume
  of the moduli space of curves}, ProQuest LLC, Ann Arbor, MI, 2004, Thesis
  (Ph.D.)--Harvard University. \MR{2705986}

\bibitem[Mir07a]{Mir07b}
\bysame, \emph{Random hyperbolic surfaces and measured laminations}, In the
  tradition of {A}hlfors-{B}ers. {IV}, Contemp. Math., vol. 432, Amer. Math.
  Soc., Providence, RI, 2007, pp.~179--198. \MR{2342816}

\bibitem[Mir07b]{Mir07a}
\bysame, \emph{Simple geodesics and {W}eil-{P}etersson volumes of moduli spaces
  of bordered {R}iemann surfaces}, Invent. Math. \textbf{167} (2007), no.~1,
  179--222. \MR{2264808}

\bibitem[Mir07c]{Mir07c}
\bysame, \emph{Weil-{P}etersson volumes and intersection theory on the moduli
  space of curves}, J. Amer. Math. Soc. \textbf{20} (2007), no.~1, 1--23.
  \MR{2257394}

\bibitem[Mir08a]{Mir08a}
\bysame, \emph{Ergodic theory of the earthquake flow}, Int. Math. Res. Not.
  IMRN (2008), no.~3, Art. ID rnm116, 39. \MR{2416997}

\bibitem[Mir08b]{Mir08b}
\bysame, \emph{Growth of the number of simple closed geodesics on hyperbolic
  surfaces}, Ann. of Math. (2) \textbf{168} (2008), no.~1, 97--125.
  \MR{2415399}

\bibitem[{Mir}16]{Mir16}
M.~{Mirzakhani}, \emph{{Counting Mapping Class group orbits on hyperbolic
  surfaces}}, ArXiv e-prints (2016).

\bibitem[Pap88]{Pap88}
Athanase Papadopoulos, \emph{Sur le bord de {T}hurston de l'espace de
  {T}eichm\"{u}ller d'une surface non compacte}, Math. Ann. \textbf{282}
  (1988), no.~3, 353--359. \MR{967017}

\bibitem[Pap91]{Pap91}
\bysame, \emph{On {T}hurston's boundary of {T}eichm\"{u}ller space and the
  extension of earthquakes}, Topology Appl. \textbf{41} (1991), no.~3,
  147--177. \MR{1135095}

\bibitem[PH92]{PH92}
R.~C. Penner and J.~L. Harer, \emph{Combinatorics of train tracks}, Annals of
  Mathematics Studies, vol. 125, Princeton University Press, Princeton, NJ,
  1992. \MR{1144770}

\bibitem[PP93]{PP93}
A.~Papadopoulos and R.~C. Penner, \emph{The {W}eil-{P}etersson symplectic
  structure at {T}hurston's boundary}, Trans. Amer. Math. Soc. \textbf{335}
  (1993), no.~2, 891--904. \MR{1089420}

\bibitem[PS15]{Pap15}
A.~Papadopoulos and W.~Su, \emph{On the {F}insler structure of
  {T}eichm\"{u}ller's metric and {T}hurston's metric}, Expo. Math. \textbf{33}
  (2015), no.~1, 30--47. \MR{3310926}

\bibitem[RS19]{RS19}
Kasra Rafi and Juan Souto, \emph{Geodesic currents and counting problems},
  Geom. Funct. Anal. \textbf{29} (2019), no.~3, 871--889. \MR{3962881}

\bibitem[RS20]{RS20}
Kasra {Rafi} and Juan {Souto}, \emph{Statistics of simple curves on surfaces,
  revisited}, In preparation, 2020.

\bibitem[SB01]{BS01}
Ya\c{s}ar S\"{o}zen and Francis Bonahon, \emph{The {W}eil-{P}etersson and
  {T}hurston symplectic forms}, Duke Math. J. \textbf{108} (2001), no.~3,
  581--597. \MR{1838662}

\bibitem[{Thu}98]{Thu98}
William~P. {Thurston}, \emph{{Minimal stretch maps between hyperbolic
  surfaces}}, arXiv Mathematics e-prints (1998), math/9801039.

\bibitem[{Wri}19]{Wri19}
Alex {Wright}, \emph{{A tour through Mirzakhani's work on moduli spaces of
  Riemann surfaces}}, arXiv e-prints (2019), arXiv:1905.01753.

\end{thebibliography}

$ $

\end{document}